\newtheorem{definition}{Definition}[section]
\newtheorem{lemma}[definition]{Lemma}
\newtheorem{proposition}[definition]{Proposition}
\newtheorem{theorem}[definition]{Theorem}
\theoremstyle{remark}
\newtheorem{remark}[definition]{Remark}
\newcommand{\C}{\mathbb{C}}
\newcommand{\R}{\mathbb{R}}
\newcommand{\N}{\mathbb{N}}
\renewcommand{\Re}{\operatorname{Re}}
\renewcommand{\Im}{\operatorname{Im}}
\newcommand{\setleft}[2]{ \left\{ \left. #1 \,\right|\, #2 \right\} }
\newcommand{\setright}[2]{ \left\{ #1 \,\left|\, #2 \right.\right\} }
\newcommand{\Rono}{\R^{1,n+1}}
\newcommand{\Hn}{\mathbb{H}^{n+1}}
\newcommand{\Sn}{\mathbb{S}^n}
\newcommand{\End}{\operatorname{End}}
\newcommand{\T}{\mathrm{T}}
\newcommand\WF{\mathrm{WF}}
\newcommand\Char{\mathrm{Char}}
\newcommand{\norm}[2]{\| #1 \|_{#2}}
\newcommand{\supp}{\mathrm{supp}}
\newcommand{\vol}{\mathrm{vol}}
\newcommand{\even}{\mathrm{even}}
\newcommand{\loc}{\mathrm{loc}}
\newcommand{\grad}{\operatorname{grad}}
\newcommand{\n}{\nabla}
\newcommand\tr{\operatorname{tr}}
\newcommand{\Diff}{\mathrm{Diff}}
\newcommand\chris[2]{\Gamma_{#1}^{#2}}
\newcommand{\ve}{\varepsilon}
\newcommand\tsum{\textstyle\sum}
\newcommand{\rlnm}{\rho^{\lambda+\frac n2 - m}}
\newcommand{\drr}{\tfrac{d\rho}{\rho}}
\newcommand{\rdr}{{\rho\partial_\rho}}
\newcommand{\Sym}{\mathrm{Sym}}
\newcommand\sA{\mathscr{A}}
\newcommand\co[3]{{\{#1\to#2\}#3}}
\newcommand\ct[5]{{\{#1\to#2,#3\to#4\}#5}}
\newcommand\cthree[7]{{\{#1\to#2,#3\to#4,#5\to#6\}#7}}
\newcommand{\ip}{\operatorname{\lrcorner}}
\renewcommand{\sp}{\operatorname{\cdot}}
\newcommand{\trace}{\operatorname{\Lambda}}
\newcommand{\adjtrace}{\operatorname{L}}
\renewcommand{\d}{\operatorname{d}}
\renewcommand{\div}{\operatorname{\delta}}
\newcommand{\Riemann}{\operatorname{R}}
\newcommand\Ric{\operatorname{Ric}}
\newcommand\threerow[3]{\left[\begin{array}{ccc}	#1 & #2 & #3 \end{array}\right]}
\newcommand\threevector[3]{\left[\begin{array}{c} 	#1 \\ #2 \\ #3 \end{array}\right]}
\newcommand\threematrix[9]{\left[\begin{array}{ccc}	#1&#2&#3 \\
										#4&#5&#6 \\
										#7&#8&#9 \end{array}\right]}
\newcommand{\Xone}{X}
\newcommand{\Xtwo}{X_{cs}}
\newcommand{\Xthree}{X_{e}}
\newcommand{\Xdouble}{X^2}
\newcommand{\Mone}{M}
\newcommand{\Mthree}{M_{e}}
\newcommand{\CS}{S}
\newcommand{\gamb}{\eta}
\newcommand{\meas}{dx}
\newcommand\nM{\leftidx{^{\Mone}}{\n}}
\newcommand\nMthree{\leftidx{^{\Mthree}}{\n}}
\newcommand\nY{\leftidx{^Y}{\n}}
\newcommand{\Resl}{\operatorname{\mathcal{R}}_\lambda}
\newcommand\japbrak[1]{\langle#1\rangle}
\newcommand{\terms}{\mathrm{terms}} 			
\newcommand{\scal}{\mathrm{scal}}				
\newcommand{\divterm}{\textrm{div}}
\newcommand\bbb{\mathrm{b}}					
\newcommand\MRiemann{\leftidx{^{\Mone}}{\mathrm{R}}{}}
\newcommand\MthreeRiemann{\leftidx{^{\Mthree}}{\mathrm{R}}{}}
\newcommand{\s}{s}
\newcommand{\RplusS}{\R^+_\s}
\newcommand{\sds}{\s\partial_\s}
\newcommand{\dss}{\tfrac{d\s}{\s}}
\renewcommand{\t}{t}
\newcommand{\RplusT}{\R^+_\t}
\newcommand{\tdt}{\t\partial_\t}
\newcommand{\dtt}{\tfrac{d\t}{\t}}
\newcommand{\trETA}{\tr_\gamb}
\newcommand{\trH}{\tr_h}
\newcommand\innprod[2]{ \langle #1, #2 \rangle }
\newcommand\innprodF[2]{ \langle #1, #2 \rangle_F }
\newcommand\innprodSETA[2]{ \langle #1, #2 \rangle_{\s^{-2}\gamb} }
\newcommand\innprodTETA[2]{ \langle #1, #2 \rangle_{\t^{-2}\gamb} }
\newcommand\innprodETA[2]{ \langle #1, #2 \rangle_\gamb }
\newcommand\innprodTT[2]{ \langle #1, #2 \rangle_\t }
\newcommand\innprodSS[2]{ \langle #1, #2 \rangle_\s }
\newcommand\innprodHT[2]{ \langle #1, #2 \rangle_{h,\t} }
\newcommand{\Lsections}{L^2}
\newcommand{\LsectionsS}{L^2_\s}
\newcommand{\LsectionsT}{L^2_\t}
\newcommand\LinnprodSETA[2]{ ( #1 , #2 )_{\s^{-2}\eta}}
\newcommand\LinnprodTETA[2]{ ( #1 , #2 )_{\t^{-2}\eta}}
\newcommand\LinnprodETA[2]{ ( #1 , #2 )_\eta}
\newcommand\LinnprodSS[2]{ ( #1 , #2 )_\s}
\newcommand\LinnprodTT[2]{ ( #1 , #2 )_\t}
\newcommand\hookprod[2]{#1 \ip #2}
\newcommand\hookprodETA[2]{#1 \ip_\gamb #2}
\newcommand{\E}{\mathcal{E}}
\newcommand{\F}{\mathcal{F}}
\newcommand{\Ek}{\E^{(k)}}
\newcommand{\opkm}{\oplus_{k=0}^m\,}
\newcommand{\pistarS}{\pi^*_\s}
\newcommand{\pistarT}{\pi^*_\t}
\newcommand{\traceETA}{\trace_\gamb}
\newcommand{\traceF}{\trace_F}
\newcommand{\traceSETA}{ \Lambda_{\s^{-2}\gamb}}
\newcommand{\dETA}{\d_\gamb}
\newcommand{\divETA}{\div_\gamb}
\newcommand{\DeltaAmb}{\operatorname{\square}}
\newcommand{\QVasyUp}{\operatorname{\mathbf{Q}}}
\newcommand{\QVasyDown}{\operatorname{\mathcal{Q}}}
\newcommand{\QVasyDOWNL}{\QVasyDown_\lambda}
\newcommand{\PVasyUp}{\operatorname{\mathbf{P}}}
\newcommand{\PVasyDown}{\operatorname{\mathcal{P}}}
\newcommand{\PVasyDOWNL}{\PVasyDown_\lambda}
\newcommand{\DVasyUp}{\mathbf{D}}
\newcommand{\DVasyDown}{\mathcal{D}}
\newcommand{\ZeroOrderVasyUp}{\mathbf{G}}
\newcommand{\ZeroOrderVasyDown}{\mathcal{G}}
\newcommand{\BVasyDOWNL}{\mathcal{B}_\lambda}
\newcommand{\LowerOrderTermVasyUp}{\mathbf{A}}
\newcommand{\LowerOrderTermVasyDown}{\mathcal{A}}
\newcommand{\LowerOrderTermVasyDOWNL}{\LowerOrderTermVasyDown_\lambda}
\newcommand{\HVasyDown}{\mathcal{H}}
\newcommand\bT{\leftidx{^\bbb}{\T}}
\newcommand\bF{\leftidx{^\bbb}{\F}}
\newcommand\IF{\operatorname{I}}
\newcommand{\XVasyML}{\mathcal{X}}
\newcommand{\YVasyML}{\mathcal{Y}}
\newcommand\rct{\overline{\T}\vphantom{\T}^*}
\newcommand\overlineH{\overline{H}\vphantom{H}}
\newcommand\principalsymbol[1]{\sigma(#1)}
\newcommand\Hamiltonian[1]{H_{#1}}
\newcommand\Source{\Sigma_+}
\newcommand\Sink{\Sigma_-}
\newcommand\cSource{\Gamma_+}
\newcommand\cSink{\Gamma_-}
\newcommand\SourceSink{\Sigma_\pm}
\newcommand\cSourceSink{\Gamma_\pm}
\title[Resonances for symmetric tensors]
{Resonances for Symmetric Tensors on Asymptotically Hyperbolic Spaces}
\author{Charles Hadfield}
\address{DMA, \'Ecole Normale Sup\'erieure, 45 rue d'Ulm,
75230 Paris cedex 05, France}
\email{charles.hadfield@ens.fr}
\begin{document}

\begin{abstract}
On manifolds with an even Riemannian conformally compact Einstein metric, the resolvent of the Lichnerowicz Laplacian, acting on trace-free, divergence-free, symmetric 2-tensors is shown to have a meromorphic continuation to the complex plane, defining quantum resonances of this Laplacian. For higher rank symmetric tensors, a similar result is proven for (convex cocompact) quotients of hyperbolic space.
\end{abstract}

\maketitle

\thispagestyle{empty}


\section{Introduction}

This paper studies the meromorphic extension of the resolvent of the Laplacian acting on symmetric tensors above asymptotically hyperbolic manifolds. 
The geometric setting of asymptotically hyperbolic manifolds, modelled on convex cocompact quotients of hyperbolic space, dates to work of Mazzeo and Melrose \cite{Mazzeo, mazzeo-melrose} and  of Fefferman and Graham \cite{fefferman-graham:ci}. The meromorphic extension with finite rank poles of the resolvent of the Laplacian on functions is obtained in \cite{mazzeo-melrose} excluding certain exceptional points in $\C$. Refining the definition of asymptotically hyperbolic manifolds by introducing a notion of evenness, Guillarmou \cite{g:duke} provides the meromorphic extension to all of $\C$ and shows that for such an extension, said evenness is essential, see also \cite{guillope-zworski:pb}. 
By shifting viewpoint and studying a Fredholm problem, rather than using Melrose's pseudodifferential calculus on manifolds with corners, Vasy \cite{v:ml:inventiones,v:ml:functions} is also able to recover the result of \cite{g:duke}. This technique is presented in a very accessible article of Zworski \cite{zworski:vm} in a microlocal language (non-semiclassical).
This alternative method is more appropriate when one considers vector bundles, and, for symmetric tensors, is lightly explained later in this introduction. Effectively contained in \cite{v:ml:inventiones}, the meromorphic extension is explicitly obtained in \cite{v:ml:forms} for the resolvent of the Hodge Laplacian upon restriction to coclosed forms (or excluding top forms, for closed forms). Such a restriction is natural in light of works in a conformal setting \cite{aubry-g, branson-gover}, i.e. the boundary of the asymptotic space. In fact, from the conformal geometry viewpoint, Vasy's method of placing the asympotically hyperbolic manifold in an ambient manifold equipped with a Lorentzian metric is very much in the spirit of both the tractor calculus \cite{branson-eastwood-gover} as well as the ambient metric construction \cite{fefferman-graham:am}.

We announce the theorems (with precise definitions of the objects involved left to the body of the article) and sketch their resolution. Let $\overline\Xone$ be a compact manifold with boundary $Y=\partial \overline\Xone$. That $(\Xone, g)$ is asymptotically hyperbolic means that, locally near $Y$ in $\overline\Xone$, there exists a chart $[0,\ve)_\rho\times Y$ such that on $(0,\ve)\times Y$, the metric $g$ takes the form
\[
g = \frac{d\rho^2 + h}{\rho^2}
\]
where $h$ is a family of Riemannian metrics on $Y$, depending smoothly on $\rho\in [0,\ve)$. That $g$ is even means that $h$ has a Taylor series about $\rho=0$ in which only even powers of $\rho$ appear. Above $\Xone$, we consider the set of symmetric cotensors of rank $m$, denoting this vector bundle $\E^{(m)}=\Sym^m \T^*\Xone$. On symmetric tensors, there exist two common Laplacians. The (positive) rough Laplacian $\n^*\n$ and the Lichnerowicz Laplacian $\Delta$, originally defined on 2-cotensors \cite{lichnerowicz}, but easily extendible to arbitrary degree \cite{hms}. On functions, these two Laplacians coincide, on one forms, the Lichnerowicz Laplacian agrees with the Hodge Laplacian, and in general, for symmetric $m$-cotensors, the Lichnerowicz Laplacian differs from the rough Laplacian by a zeroth order curvature operator
\[
\Delta = \n^*\n + q(\Riemann).
\] 
We construct the Lorentzian cone $\Mone=\RplusS\times\Xone$ with metric $\gamb=-d\s\otimes d\s + \s^2 g$ (and call $\s$ the Lorentzian scale). Pulling $\E^{(m)}$ back to $\Mone$ we naturally see $\E^{(m)}$ as a subbundle of the bundle of all symmetric cotensors of rank $m$ above $\Mone$, this larger bundle is denoted $\F = \Sym^m \T^*\Mone$. On $\F$ we consider the Lichnerowicz d'Alembertian $\DeltaAmb$. Up to symmetric powers of $\dss$ we may identify $\F$ with the direct sum of $\E^{(k)} = \Sym^k \T^*\Xone$ for all $k\le m$. Indeed by denoting $\E=\opkm \Ek$ the bundle of all symmetric tensors above $\Xone$ of rank not greater than $m$, we are able to pull back  sections of this bundle and see them as sections of $\F$:
\[
\pistarS : 
			C^\infty(\Xone; \E)  \to  C^\infty (\Mone; \F).
\]
A long calculation gives the structure of the Lichnerowicz d'Alembertian with respect to this identification. It is seen that $\s^2\DeltaAmb$ decomposes as the Lichnerowicz Laplacian $\Delta$ acting on each subbundle of $\Ek$ for $0\le k\le m$ however these fibres are coupled via off-diagonal terms consisting of the symmetric differential $\d$ and its adjoint, the divergence $\div$. (There are also less important couplings due to the trace $\trace$ and its adjoint $\adjtrace$.) Also present in the diagonal are terms involving $\sds$ and $(\sds)^2$. By conjugating by $\s^{-\frac n2+m}$ we obtain an operator 
\[
\QVasyUp = \n^*\n + (\sds)^2 + \DVasyUp + \ZeroOrderVasyUp
\]
where $\DVasyUp$ is of first order consisting of the symmetric differential and the divergence, while $\ZeroOrderVasyUp$ is a smooth endomorphism on $\F$. By appealing to the b-calculus of Melrose \cite{melrose:aps}, we can push this operator acting on $\F$ above $\Mone$ to a family of operators (holomorphic in the complex variable $\lambda$) acting on $\E$ above $\Xone$ of the form
\[
\QVasyDOWNL = \n^*\n + \lambda^2 + \DVasyDown + \ZeroOrderVasyDown
\]
where $\DVasyDown$ is of first order consisting of the symmetric differential and the divergence, while $\ZeroOrderVasyDown$ is a smooth endomorphism on $\E$. Explicitly, in matrix notation writing
\[
u = \threevector{u^{(m)}}{\vdots}{u^{(0)}},
\qquad
u\in C^\infty(\Xone; \E), 
u^{(k)}\in C^\infty(\Xone;\Ek)
\]
the operator $\QVasyDOWNL$ takes the following form
\newcommand\tikzmark[1]{%
  \tikz[overlay,remember picture,baseline] \node [anchor=base] (#1) {};}
\newcommand\MyLine[3][]{%
  \begin{tikzpicture}[overlay,remember picture]
    \draw[#1,loosely dotted] (#2.north west) -- (#3.south east);
  \end{tikzpicture}}
\[\setlength\unitlength{10pt}	
\left[
\begin{picture}(35.5,6)(-4.5,0)
\put(0,5){\makebox(0,0){$\Delta+\lambda^2 - c_m - \adjtrace\trace$}}
																					\put(6.8,4.9){\makebox(0,0){$2b_{m-1}\d$}}
																																			\put(12,4.9){\makebox(0,0){$-b_{m-2}b_{m-1}\adjtrace$}}
\put(0,3){\makebox(0,0){$-2b_{m-1}\div$}}
\put(0,1){\makebox(0,0){$-b_{m-2}b_{m-1}\trace$}}

																																			\put(26.7,-1){\makebox(0,0){$-b_0b_1\adjtrace$}}
																																			\put(26.7,-3){\makebox(0,0){$2b_0\d$}}
\put(14.7,-5.05){\makebox(0,0){$-b_{0}b_{1}\trace$}}
																					\put(19.9,-5.05){\makebox(0,0){$-2b_0\div$}}					
																																			\put(26.7,-5){\makebox(0,0){$\Delta+\lambda^2 - c_0 - \adjtrace\trace$}}

\put(15,3){\tikzmark{upup1}}				\put(21.7,1){\tikzmark{upup2}}
\put(10,3){\tikzmark{up1}}					\put(21.7,-1){\tikzmark{up2}}
\put(5,3){\tikzmark{center1}}				\put(21.7,-3){\tikzmark{center2}}
\put(5,1){\tikzmark{down1}}				\put(16.7,-3){\tikzmark{down2}}
\put(5,-1){\tikzmark{downdown1}}			\put(11.7,-3){\tikzmark{downdown2}}

\put(23,3){\mbox{\Huge 0}}
\put(3,-4){\mbox{\Huge 0}}
\end{picture}
\right]
\]
for constants
\MyLine[thick]{upup1}{upup2}
\MyLine[thick]{up1}{up2}
\MyLine[thick]{center1}{center2}
\MyLine[thick]{down1}{down2}
\MyLine[thick]{downdown1}{downdown2}
\[
b_k = \sqrt{m-k},
\qquad
c_k = \tfrac{n^2}{4} + m(n+2k+1) - k(2n+3k-1)
\]
and operators: $\Delta$ the Lichnerowicz Laplacian; $\div$ the divergence; $\d$ the symmetric differential; $\trace$ the trace; $\adjtrace$ the adjoint of the trace. (The operator $\QVasyDOWNL$ naively does not appear self-adjoint for $\lambda\in i\R$ since $\div$ is the adjoint of $\d$. The sign discrepancy is due to the Lorentzian signature of $\gamb$. The operator is indeed self-adjoint for $\lambda\in i\R$ as detailed in Proposition~\ref{prop:QQ:selfadjoint}.) When this family of operators acts on $\Lsections$ sections, denoted $\LsectionsS(\Xone;\E)$ described in \eqref{eqn:L2definitionforEwithS}, it has an inverse for $\Re\lambda\gg1$. This family of operators has the following meromorphic family of inverses
\begin{theorem}\label{thm:QQ}
Let $(\Xone^{n+1},g)$ be even asymptotically hyperbolic. Then the inverse of (Definition~\ref{defn:QQ})
\[
\QVasyDOWNL \textrm{ acting on } \LsectionsS(\Xone;\E)
\]
written $\QVasyDOWNL^{-1}$ has a meromorphic continuation from $\Re\lambda\gg 1 $ to $\C$,
\[
\QVasyDOWNL^{-1} : C_c^\infty ( \Xone ; \E )\to \rlnm \bigoplus_{k=0}^m \rho^{-2k} C^{\infty}_\even ( \overline\Xone ; \Ek )
\]
with finite rank poles.
\end{theorem}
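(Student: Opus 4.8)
The plan is to follow Vasy's Fredholm strategy, now adapted to the bundle $\E$ over $\overline{\Xone}$. The first step is to reinterpret $\QVasyDOWNL$ as the indicial family, in the sense of the b-calculus, of the ambient operator $\QVasyUp$ on $\Mone$. Concretely, one passes from the Lorentzian cone to a compactification in which the Lorentzian scale $\s$ becomes a boundary defining function, conjugates by the appropriate power of $\s$ as indicated in the introduction, and reads off that the Mellin transform in $\s$ produces exactly the holomorphic family $\QVasyDOWNL = \n^*\n + \lambda^2 + \DVasyDown + \ZeroOrderVasyDown$ displayed above. This reduces the theorem to establishing Fredholm properties for $\QVasyDOWNL$, uniformly in $\lambda$, on suitable function spaces over $\overline{\Xone}$.

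The second, and central, step is the microlocal/radial-point analysis. One complexifies past the boundary: the even structure lets one introduce the variable $\mu = \rho^2$ and double $\overline{\Xone}$ across $Y$ (or rather, extend to the ``Vasy space'' $\Xtwo$), on which $\QVasyDOWNL$ extends to an operator whose principal symbol is that of a wave-type operator with a conormal singularity at $Y = \{\mu = 0\}$. The characteristic set over $Y$ contains two radial sets $\SourceSink$, a source and a sink for the rescaled Hamilton flow. The classical propagation-of-singularities theorem away from radial points, together with the radial-point estimates of Melrose/Vasy at $\SourceSink$, yields estimates of the form $\norm{u}{\XVasyML} \le C(\norm{\QVasyDOWNL u}{\YVasyML} + \norm{u}{H^{-N}})$ and a dual estimate for $\QVasyDOWNL^*$; here the threshold condition on the Sobolev order is governed by the skew-adjoint subprincipal symbol, which is where the constants $c_k$ and $\lambda$ enter, so one must check that for $\Re\lambda \gg 1$ one lands on the correct side of the threshold. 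Because the bundle is not scalar, $\ZeroOrderVasyDown$ and the first-order coupling $\DVasyDown$ contribute to the subprincipal symbol along the radial sets; I expect the bookkeeping here — verifying that the relevant endomorphism is, after accounting for the block structure with the $b_k \d$, $b_k \div$ couplings, still scalar-plus-nilpotent at $\SourceSink$ so that the scalar threshold analysis survives — to be the main obstacle. This is presumably where the explicit matrix form and the identities among $b_k, c_k$ are needed.

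The third step is analytic Fredholm theory: the estimates above show $\QVasyDOWNL : \XVasyML \to \YVasyML$ is Fredholm of index zero for $\lambda$ in the relevant half-plane, and the family is holomorphic, so by the analytic Fredholm theorem $\QVasyDOWNL^{-1}$ extends meromorphically to all of $\C$ with poles of finite rank, provided invertibility holds somewhere. Invertibility for $\Re\lambda \gg 1$ is inherited from the $\LsectionsS$ mapping property asserted just before the theorem (equivalently, from a positive-commutator/energy estimate on the cone, using self-adjointness at $\lambda \in i\R$ from Proposition~\ref{prop:QQ:selfadjoint} and a perturbation argument), which anchors the continuation.

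Finally, one has to upgrade the mapping property to the stated form, namely that the output lies in $\rlnm \bigoplus_{k=0}^m \rho^{-2k} C^\infty_\even(\overline{\Xone}; \Ek)$. This is a regularity statement: elliptic regularity in the interior gives smoothness on $\Xone$; the conormal regularity at $Y$ coming from the radial-point propagation gives a polyhomogeneous expansion at $\rho = 0$; and the evenness of $g$ forces that expansion to proceed in even powers of $\rho$ once the leading weight $\rho^{\lambda + n/2 - m}$ and the block-dependent shifts $\rho^{-2k}$ are factored out — the $\rho^{-2k}$ being exactly the discrepancy between the natural ambient trivialization of $\F$ and the bundle $\Ek$ recorded by the $\pistarS$ identification. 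One transfers the expansion from $\Xtwo$ back to $\overline{\Xone}$ by restriction, and checks that the poles produced by the analytic Fredholm argument are the claimed resonances. I would present steps one and three as essentially formal given the earlier sections, devote the bulk of the argument to step two, and treat step four as a propagation-of-conormality statement citing the standard b-/edge regularity results.
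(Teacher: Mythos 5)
Your overall strategy is the paper's: pass to the ambient/b-calculus picture, extend across $Y$ using the even structure, run radial-point estimates over $Y$ together with elliptic and hyperbolic estimates to get a Fredholm family, invoke analytic Fredholm theory, and then read off the weighted even asymptotics. Two points, however, need attention, and the second is a genuine gap.

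First, it is not $\QVasyDOWNL$ itself that extends to the Vasy space $\Xtwo$: what extends smoothly and non-degenerately past $Y$ is the conjugated, rescaled family $\PVasyDOWNL=\rho^{-\lambda-\frac n2+m-2}\,J\,\QVasyDOWNL\,J^{-1}\,\rlnm$, i.e.\ the indicial family of the ambient operator taken with respect to the Euclidean scale $\t=\s/\rho$ rather than the Lorentzian scale $\s$. This is more than bookkeeping: the $\rho^{\lambda}$-conjugation is where $\lambda$ enters the subprincipal symbol (the term $-4\lambda\partial_\mu$), hence the radial thresholds $s>\tfrac12-\Re\lambda$, and the change-of-scale endomorphism $J$ is exactly what produces the factors $\rho^{-2k}$ in the stated mapping property. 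Relatedly, the difficulty you single out as the ``main obstacle'' — bundle-valued contributions to the threshold conditions from $\DVasyDown$, $\ZeroOrderVasyDown$ and the $b_k,c_k$ structure — dissolves in the paper: since $\PVasyDOWNL^{*}=\PVasyDown_{-\bar\lambda}$ and the first-order part of $\PVasyDOWNL$ is $\lambda$-independent modulo zeroth-order terms, $\principalsymbol{\Im\PVasyDOWNL}=-4\Re\lambda\,\xi$ is scalar, so the scalar threshold analysis applies verbatim and no identities among the $b_k,c_k$ are used at the radial sets.

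Second, the anchoring of the meromorphic continuation is not addressed. You assert that invertibility for $\Re\lambda\gg1$ is ``inherited from the $\LsectionsS$ mapping property,'' but the family being continued is the extended operator acting $\XVasyML^{s}\to\YVasyML^{s-1}$ on $\Xtwo$, and its invertibility does not follow from invertibility of $\QVasyDOWNL$ on $\LsectionsS(\Xone;\E)$ (which, in any case, is itself established only through the extended problem). Even granting a coercivity estimate for $\QVasyDOWNL$ on $\{\mu>0\}$, which forces a kernel element of $\PVasyDOWNL$ to vanish there and hence to infinite order at $Y$, one still has to kill it on $\{\mu\le0\}$: this requires a unique continuation argument across $Y$, where the operator is neither elliptic nor strictly hyperbolic (the paper uses a weighted energy functional with weights $|\mu|^{-N}$), followed by hyperbolic estimates in the extension region. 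Separately, one must show the cokernel is trivial, i.e.\ $\ker\PVasyDOWNL^{*}=0$ in $\dot H^{1-s}(\overline\Xtwo;\E)$ for large real $\lambda$, which needs its own argument (improved regularity at the radial sinks plus a duality construction with weights $\rho^{N}$). Without these two ingredients the analytic Fredholm argument has nothing to anchor it, so this step cannot be dispatched by appeal to the $L^2$ statement on $\Xone$. The final step of your outline — obtaining $\rlnm\bigoplus_{k=0}^m\rho^{-2k}C^\infty_\even(\overline\Xone;\Ek)$ by factoring the leading weight and the $J$-discrepancy out of the even expansion on $\Xtwo$ — is correct and is exactly how the paper concludes.
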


Consider $u\in C^\infty(\Xone; \E)$. Although the trace operator $\trace$ acting on each subbundle $\Ek$ gives a notion of $u$ being trace-free, it is more natural to consider the ambient trace operator from $\F$, denoted $\traceETA$ (Subsection~\ref{subsec:vecbundles}). Pulling $u$ back to $\Mone$, we have $\pistarS u\in C^\infty (\Mone; \F)$ and we may consider the condition that $\pistarS u \in \ker \traceETA$. Avoiding extra notation for this subbundle of $\E$ (consisting of symmetric tensors above $\Xone$ which are trace-free with respect to the ambient trace operator $\traceETA$) we will simply refer to its sections using the notation
\[
C^\infty(\Xone;\E) \cap \ker ( \traceETA \circ \pistarS)
\]
On this subbundle, the operator $\QVasyDOWNL$ takes the following form
\[\setlength\unitlength{10pt}	
\left[
\begin{picture}(25.8,5.5)(-3,0)
\put(0,4.3){\makebox(0,0){$\Delta+\lambda^2 - c_m'$}}
																					\put(6.8,4.2){\makebox(0,0){$2b_{m-1}\d$}}
\put(0,2){\makebox(0,0){$-2b_{m-1}\div$}}

																																			\put(19.9,-2.1){\makebox(0,0){$2b_0\d$}}
																					\put(13.1,-4.35){\makebox(0,0){$-2b_0\div$}}					
																																			\put(19.9,-4.3){\makebox(0,0){$\Delta+\lambda^2 - c_0'$}}

\put(8.5,2.5){\tikzmark{up1}}					\put(16,-0.5){\tikzmark{up2}}
\put(4,2.5){\tikzmark{center1}}				\put(16,-2.5){\tikzmark{center2}}
\put(4,0.5){\tikzmark{down1}}				\put(11.5,-2.5){\tikzmark{down2}}

\put(17,2){\mbox{\Huge 0}}
\put(2,-3){\mbox{\Huge 0}}
\end{picture}
\right]
\]
with the modified constants
\MyLine[thick]{up1}{up2}
\MyLine[thick]{center1}{center2}
\MyLine[thick]{down1}{down2}
\[
c_k'=c_k - (m-k)(m-k-1).
\]
Note that if $u=u^{(m)}\in C^\infty(\Xone;\E^{(m)})$ then $u\in \ker\trace$ if and only if $\pistarS u \in \ker \traceETA$. Again, a similar meromorphic extension of the inverse may be obtained.
\begin{theorem}\label{thm:QQ:tracefree}
Let $(\Xone^{n+1},g)$ be even asymptotically hyperbolic. Then the inverse of (Definition~\ref{defn:QQ})
\[
\QVasyDOWNL \textrm{ acting on } \LsectionsS(\Xone;\E) \cap \ker ( \traceETA \circ \pistarS)
\]
written $\QVasyDOWNL^{-1}$ has a meromorphic continuation from $\Re\lambda\gg 1 $ to $\C$,
\[
\QVasyDOWNL^{-1} : C_c^\infty ( \Xone ; \E ) \cap \ker ( \traceETA \circ \pistarS) \to \rlnm \left( \bigoplus_{k=0}^m \rho^{-2k} C^{\infty}_\even ( \overline\Xone ; \Ek ) \right) \cap \ker ( \traceETA \circ \pistarS)
\]
with finite rank poles.
\end{theorem}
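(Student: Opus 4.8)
The plan is to derive Theorem~\ref{thm:QQ:tracefree} from Theorem~\ref{thm:QQ} by showing that the subbundle cut out by $\traceETA\circ\pistarS$ is preserved both by $\QVasyDOWNL$ and by its meromorphically continued inverse; the inverse on the subbundle is then literally the restriction of $\QVasyDOWNL^{-1}$ from Theorem~\ref{thm:QQ}, and meromorphy, the mapping property, and the finite rank of the poles are inherited. (When $m\le1$ the condition $\pistarS u\in\ker\traceETA$ is vacuous, so assume $m\ge2$.)

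The structural input is that the Lichnerowicz d'Alembertian commutes with the ambient trace, $\traceETA\circ\DeltaAmb=\DeltaAmb\circ\traceETA$, where on the right $\DeltaAmb$ acts on $\Sym^{m-2}\T^*\Mone$; this is precisely the sense in which the curvature correction $q(\Riemann)$ is better adapted to the trace than the one hidden in $\n^*\n$ (on $2$-tensors the $\gamb$-trace of $q(\Riemann)$ vanishes because the Ricci contributions cancel the curvature term, and the general statement is part of the long computation sketched in the introduction). I would then transport this commutation through the constructions of the introduction: in the $\dss$-decomposition of $\F$ the map $\traceETA$ is homogeneous of $\s$-weight $-2$, which matches the difference $(\tfrac n2-m)-(\tfrac n2-(m-2))$ between the powers of $\s$ used to conjugate $\DeltaAmb$ into $\QVasyUp$ in ranks $m$ and $m-2$, and the b-calculus pushforward commutes with the ($\s$-independent) bundle map induced by $\traceETA$. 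One thus obtains the intertwining
\[
(\traceETA\circ\pistarS)\circ\QVasyDOWNL=\QVasyDown_\lambda'\circ(\traceETA\circ\pistarS),
\]
where $\QVasyDown_\lambda'$ is the operator of Theorem~\ref{thm:QQ} in rank $m-2$. In particular $\QVasyDOWNL$ preserves the smooth subbundle $\ker(\traceETA\circ\pistarS)\subset\E$, and restricted to it its matrix collapses to the displayed block-tridiagonal form with constants $c_k'=c_k-(m-k)(m-k-1)$, the shift coming from eliminating the $\trace$ and $\adjtrace$ entries using the defining relations of the subbundle.

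The continuation is then immediate. For $\Re\lambda\gg1$ the operator $\QVasyDown_\lambda'$ is invertible on the corresponding $L^2$ space (Theorem~\ref{thm:QQ} with $m$ replaced by $m-2$), and $\traceETA\circ\pistarS$ is a bounded zeroth-order map between the $L^2$ spaces of \eqref{eqn:L2definitionforEwithS}; so if $f\in\LsectionsS(\Xone;\E)\cap\ker(\traceETA\circ\pistarS)$ and $u=\QVasyDOWNL^{-1}f$, then $\QVasyDown_\lambda'\big((\traceETA\circ\pistarS)u\big)=(\traceETA\circ\pistarS)f=0$ forces $(\traceETA\circ\pistarS)u=0$. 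Thus $\QVasyDOWNL^{-1}$ maps $\ker(\traceETA\circ\pistarS)$ into itself for $\Re\lambda\gg1$; since for fixed $f\in C_c^\infty(\Xone;\E)\cap\ker(\traceETA\circ\pistarS)$ the map $\lambda\mapsto(\traceETA\circ\pistarS)\QVasyDOWNL^{-1}f$ is meromorphic and vanishes for $\Re\lambda\gg1$, it vanishes wherever $\QVasyDOWNL^{-1}$ is holomorphic. Hence the operator in the statement is invertible for $\Re\lambda\gg1$ with inverse the restriction of $\QVasyDOWNL^{-1}$, this restriction continues meromorphically to $\C$, its image lies in the asserted intersection with $\ker(\traceETA\circ\pistarS)$, and its poles form a subset of the finite-rank poles of $\QVasyDOWNL^{-1}$, with residues the restrictions of those in Theorem~\ref{thm:QQ}.

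The hard part is the intertwining of the second paragraph: one must make sure the commutation of the Lichnerowicz d'Alembertian with the ambient trace survives, with no residual lower-order term, through the conjugation by the $m$-dependent power of $\s$, the identification of $\F$ with $\opkm\Ek$ modulo powers of $\dss$, and the b-calculus pushforward producing $\QVasyDOWNL$ — and here the Lorentzian signature of $\gamb$, already responsible for the sign anomaly noted before Proposition~\ref{prop:QQ:selfadjoint}, must be tracked carefully. Everything else is bookkeeping; alternatively one could forgo Theorem~\ref{thm:QQ} and rerun Vasy's Fredholm argument directly on the subbundle, which goes through verbatim since the principal symbol of $\QVasyDOWNL$ is unchanged, the only new point being that the radial-point threshold conditions at $\cSourceSink$ must be re-verified for the shifted constants $c_k'$.
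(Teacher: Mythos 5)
Your proposal is essentially the paper's own proof: Theorem~\ref{thm:QQ:tracefree} is deduced from Theorem~\ref{thm:QQ} by lifting $\QVasyDOWNL u=f$ to $\Mone$, commuting the ambient trace through $\QVasyUp$ (this is Lemma~\ref{lem:Qcommutestrace}, resting on the first part of Lemma~\ref{lem:laplaciancommutestracediv}, which holds with no curvature assumption), then using injectivity of the resulting lower-rank indicial operator for $\Re\lambda\gg1$ and meromorphy in $\lambda$ to conclude that $\QVasyDOWNL^{-1}$ preserves $\ker(\traceETA\circ\pistarS)$. One small correction: the intertwined operator is the rank-$(m-2)$ family at the shifted parameter $\IF_\s(\QVasyUp,\lambda+2)$, not at $\lambda$ (your weight count for $\traceETA$ versus the conjugating powers of $\s$ misses this), but since the argument only uses $\Re\lambda\gg1$ this slip is harmless.
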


In order to uncouple the Lichnerowicz Laplacian acting on $\E^{(m)}$ and obtain the desired meromorphic extension of the resolvent, we need to restrict further from simply trace-free tensors to trace-free divergence-free tensors. Equivalently, we must be able to commute the Lichnerowicz Laplacian with both the trace operator and the divergence operator. The first commutation is always possible giving the preceding structure of $\QVasyDOWNL$ however, unlike in the setting of differential forms (where the Hodge Laplacian always commutes with the divergence), such a commutation on symmetric tensors depends on the geometry of $(\Xone,g)$. For $m=2$ the condition is that the Ricci tensor be parallel, while for $m\ge 3$, the manifold must be locally isomorphic to hyperbolic space.
\begin{theorem}\label{thm:main:2}
Let $(\Xone^{n+1},g)$ be even asymptotically hyperbolic and Einstein. Then the inverse of
\[
\Delta - \frac{n(n-8)}{4}+\lambda^2 \textrm{ acting on } \Lsections( \Xone ; \E^{(2)} )\cap\ker\trace\cap\ker\div
\]
written $\Resl$ has a meromorphic continuation from $\Re\lambda\gg 1 $ to $\C$,
\[
\Resl : C_c^\infty ( \Xone ; \E^{(2)} )\cap\ker\trace\cap\ker\div\to \rho^{\lambda+\frac n2-2} C^{\infty}_\even ( \overline\Xone ; \E^{(2)} )\cap\ker\trace\cap\ker\div
\]
with finite rank poles.
\end{theorem}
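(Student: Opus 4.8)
The plan is to deduce Theorem~\ref{thm:main:2} from Theorem~\ref{thm:QQ:tracefree} by showing that, on the Einstein geometry, the operator $\QVasyDOWNL$ restricted to the trace-free and divergence-free sector decouples into a single scalar equation for the top component $u^{(m)}=u^{(2)}$. First I would recall that for $m=2$ the structure of $\QVasyDOWNL$ on $C^\infty(\Xone;\E)\cap\ker(\traceETA\circ\pistarS)$ is the $3\times3$ matrix displayed above with constants $c_k'$. The key geometric input is the commutation identity for the Lichnerowicz Laplacian with the divergence: as stated in the paragraph preceding the theorem, when $(\Xone,g)$ is Einstein (so $\Ric$ is parallel and a constant multiple of $g$), the divergence operator $\div:C^\infty(\Xone;\E^{(2)})\to C^\infty(\Xone;\E^{(1)})$ intertwines $\Delta$ on $2$-tensors with $\Delta$ on $1$-tensors up to an explicit constant, and similarly $\d$ intertwines in the other direction. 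I would carry out this commutator computation explicitly, normalizing the Einstein constant to that of hyperbolic space (since any even AH Einstein metric has $\MRic=-ng$), and record the resulting shift so that the off-diagonal blocks $\pm 2b_{1}\div$ and $2b_{0}\d$ become genuine intertwiners between the diagonal blocks $\Delta+\lambda^2-c_2'$ and $\Delta+\lambda^2-c_1'$ and $\Delta+\lambda^2-c_0'$.

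Next I would invert the triangular-plus-coupling structure. Because the matrix is block upper-triangular except for the single subdiagonal entries, and because on $\ker\div\cap\ker\trace$ the lower components are constrained, I would argue that a section $u$ in the domain of Theorem~\ref{thm:main:2}, namely $u=u^{(2)}$ with $\trace u^{(2)}=0$ and $\div u^{(2)}=0$, is a genuine element of the domain of $\QVasyDOWNL$ in Theorem~\ref{thm:QQ:tracefree} (here one checks that $\div u^{(2)}=0$ forces the ambient-trace-free pull-back to have vanishing lower components, using the explicit relation between $\trace$, $\traceETA$, and $\div$ from Subsection~\ref{subsec:vecbundles}), and that on this subspace $\QVasyDOWNL u = (\Delta+\lambda^2-c_2')u^{(2)}\oplus 0\oplus 0$. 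Computing $c_2'$ for $m=2$ from $c_k'=c_k-(m-k)(m-k-1)$ and $c_k=\tfrac{n^2}{4}+m(n+2k+1)-k(2n+3k-1)$ gives exactly $c_2' = \tfrac{n(n-8)}{4}$ — this arithmetic check is routine but must be done to match the statement. Thus on the restricted domain, inverting $\QVasyDOWNL$ is the same as inverting $\Delta-\tfrac{n(n-8)}{4}+\lambda^2$, and $\Resl$ is simply the $(2,2)$-component of $\QVasyDOWNL^{-1}$.

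It then remains to transfer the mapping property and the meromorphy. From Theorem~\ref{thm:QQ:tracefree}, $\QVasyDOWNL^{-1}$ continues meromorphically with finite-rank poles and maps $C_c^\infty$ into $\rlnm\bigl(\bigoplus_k\rho^{-2k}C^\infty_\even\bigr)\cap\ker(\traceETA\circ\pistarS)$; restricting the source to divergence-free tensors, I must show the image is again divergence-free so that the target simplifies to $\rho^{\lambda+\frac n2-2}C^\infty_\even(\overline\Xone;\E^{(2)})\cap\ker\trace\cap\ker\div$ (the $\rho^{-2k}$ factors for $k<2$ drop out once the lower components vanish, and $\lambda+\tfrac n2-m = \lambda+\tfrac n2-2$). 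This is the step I expect to be the main obstacle: one must verify that $\div$ commutes through the constructed inverse, i.e. that $\div\,\Resl f = \Resl'\,\div f$ for the appropriate $1$-tensor resolvent, which on the nose requires the Einstein condition again and a density argument to justify applying $\div$ under the meromorphic continuation (since a priori $\div$ is only bounded between the weighted spaces for $\Re\lambda$ large, one continues the identity $\div\QVasyDOWNL = \QVasyDOWNL'\div$ — valid as an operator identity from the explicit form — and uses uniqueness of meromorphic continuation). Finiteness of rank of the poles and the precise weight are then inherited directly from Theorem~\ref{thm:QQ:tracefree}. I would close by remarking that the relation between $\QVasyDOWNL^{-1}$ and the genuine resolvent of the Laplacian (as opposed to the Fredholm-type operator $\QVasyDOWNL$) is the content already encoded in Definition~\ref{defn:QQ} and the b-calculus pushforward, so no further work is needed there.
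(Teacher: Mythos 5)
Your overall skeleton (deduce the theorem from Theorems~\ref{thm:QQ} and~\ref{thm:QQ:tracefree}, check $c_2=\tfrac{n(n-8)}{4}$, observe that on trace-free divergence-free $u=u^{(2)}$ the operator $\QVasyDOWNL$ acts as $\Delta+\lambda^2-c_2$) matches the paper, and you correctly isolate the real difficulty: showing that $\QVasyDOWNL^{-1}f$, for $f$ trace-free and divergence-free, has vanishing lower components and divergence-free top component. But the mechanism you propose for that step does not work. There is no operator identity $\div\,\QVasyDOWNL=\QVasyDOWNL'\,\div$ ``valid from the explicit form'': the component-wise divergence fails to commute with the off-diagonal couplings, since by the characterisation $\Delta=\div\d-\d\div+2q(\Riemann)$ one has $[\div,\d]=\Delta-2q(\Riemann)\neq0$, and the diagonal constants also obstruct it, since even with $[\div,\Delta]=0$ (Lemma~\ref{lem:laplaciancommutestracediv}, Einstein case) one gets $\div(\Delta+\lambda^2-c_2')=(\Delta+\lambda^2-c_1')\div+(c_1'-c_2')\div$ with $c_1'\neq c_2'$. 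So the intertwining you want to continue meromorphically is false at the level of the graded operator, and the ``commute $\div$ through the constructed inverse'' step collapses. (A smaller slip: the check you describe on the domain side is not the issue at all; the forward inclusion is trivial, and everything hinges on the image of the inverse.)

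The paper closes this gap differently, and only uses $[\div,\Delta]=0$ on the top rank. For $\Re\lambda\gg1$, $|\Im\lambda|\ll1$, one solves the coupled system from the bottom up: the $\E^{(0)}$-equation gives $u^{(0)}=2\sqrt m\,\Resl^{(0)}\div u^{(1)}$ with $\Resl^{(0)}=(\Delta+\lambda^2-c_0')^{-1}$, and inductively $u^{(k)}=2\sqrt{m-k}\,\Resl^{(k)}\div u^{(k+1)}$ with $\Resl^{(k)}=(\Delta+\lambda^2-c_k'+4(m-k+1)\d\Resl^{(k-1)}\div)^{-1}$, the key quantitative input being that $\d\Resl^{(k)}\div$ has operator norm $O(1)$ on $\LsectionsS$ while $\lambda^2\to\infty$. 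This yields a single equation $(\Delta+\lambda^2-c_m+4\d\Resl^{(m-1)}\div)u^{(m)}=f$; applying $\div$ to \emph{this} equation, using $\div f=0$ and the Einstein hypothesis only through $[\div,\Delta]=0$ on $\E^{(2)}$, gives $(\Delta+\lambda^2-c_m+4\div\d\Resl^{(m-1)})\div u^{(m)}=0$, and invertibility of that operator (again $O(1)$ perturbation versus $\lambda^2$) forces $\div u^{(m)}=0$, hence all lower components vanish. Only then does the system decouple, Remark~\ref{rem:asymptotics} upgrades the weight to $\rho^{\lambda+\frac n2-2}C^\infty_\even$, and uniqueness of the $\Lsections$ resolvent for $\Re\lambda\gg1$ identifies $\Resl$ with the meromorphically continued family. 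If you want to salvage your route, you would have to replace the claimed intertwining by this elimination-plus-perturbation argument (or by a genuinely ambient commutation with $\divETA$ on the Ricci-flat cone, which is a different computation you have not set up).
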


\begin{theorem}\label{thm:main:m}
Let $(\Xone^{n+1},g)$ be a convex cocompact quotient of $\Hn$. Then the inverse of
\[
\Delta - \frac{n^2 - 4m(n+m-2)}{4} +\lambda^2 \textrm{ acting on } \Lsections( \Xone ; \E^{(m)} )\cap\ker\trace\cap\ker\div
\]
written $\Resl$ has a meromorphic continuation from $\Re\lambda\gg 1 $ to $\C$,
\[
\Resl : C_c^\infty ( \Xone ; \E^{(m)} )\cap\ker\trace\cap\ker\div\to \rlnm C^{\infty}_\even ( \overline\Xone ; \E^{(m)} )\cap\ker\trace\cap\ker\div
\]
with finite rank poles.
\end{theorem}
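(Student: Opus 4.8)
The plan is to deduce this result from Theorem~\ref{thm:QQ:tracefree} by exploiting the fact that, on a hyperbolic quotient, the Lichnerowicz Laplacian commutes with the divergence (and trace), so that the operator $\QVasyDOWNL$ restricted to the subbundle $C^\infty(\Xone;\E)\cap\ker(\traceETA\circ\pistarS)$ further decouples on the subspace where all the divergence constraints hold. First I would record the commutation identities: on $(\Xone,g)$ locally isometric to $\Hn$ one has $[\Delta,\div]=0$ and $[\Delta,\trace]=0$ acting on symmetric tensors of every rank; this is exactly the geometric hypothesis singled out in the paragraph preceding the statement, and for $\Hn$ it follows from the constant-curvature form of $q(\Riemann)$ together with the standard Weitzenböck commutators. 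Granting this, consider $u=u^{(m)}\in C^\infty(\Xone;\E^{(m)})$ with $\trace u=0$ and $\div u=0$; by the remark after Theorem~\ref{thm:QQ:tracefree}, $\pistarS u\in\ker\traceETA$, so $u$ sits in the domain of the trace-free operator displayed above. The point is that on $\ker\div$ all the off-diagonal entries $-2b_{k-1}\div$ of that matrix annihilate $u^{(m)}$ in the top slot, while the entries $2b_{m-1}\d$ feeding the lower slots are killed once we also impose the divergence constraint on the image — and the Laplacian preserves $\ker\div\cap\ker\trace$ by the commutation identities. Hence on this subspace $\QVasyDOWNL$ acts as the single scalar-type operator $\Delta+\lambda^2-c_m'$ on the $\E^{(m)}$ component, decoupled from all lower components.

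Next I would compute the constant. From $c_k=\tfrac{n^2}{4}+m(n+2k+1)-k(2n+3k-1)$ and $c_k'=c_k-(m-k)(m-k-1)$, evaluating at $k=m$ gives $c_m'=c_m=\tfrac{n^2}{4}+m(n+2m+1)-m(2n+3m-1)=\tfrac{n^2}{4}-2mn-2m^2+2m=\tfrac{n^2-4m(n+m-2)}{4}$, which matches the shift in the statement. So the operator in Theorem~\ref{thm:main:m} is precisely $\QVasyDOWNL$ restricted to the top component on $\ker\trace\cap\ker\div$, up to this identification of constants. The meromorphic continuation with finite-rank poles then follows by restricting the continuation furnished by Theorem~\ref{thm:QQ:tracefree}: feeding in $f\in C_c^\infty(\Xone;\E^{(m)})\cap\ker\trace\cap\ker\div$, the output $\QVasyDOWNL^{-1}f$ lies in $\rlnm\bigoplus_{k=0}^m\rho^{-2k}C^\infty_\even\cap\ker(\traceETA\circ\pistarS)$, and I must check its lower components vanish and the top one lies in $\ker\div$. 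Vanishing of the lower components is forced because $\QVasyDOWNL$ is block-triangular with respect to the decoupling and the source has no lower components; membership of the top component in $\ker\div$ follows by applying $\div$ to the equation $(\Delta+\lambda^2-c_m')v=f$ and using $[\Delta,\div]=0$, $\div f=0$, which gives $(\Delta+\lambda^2-c_m'-\textrm{(shift)})\div v=0$ on the relevant weighted space, whence $\div v=0$ by uniqueness for $\Re\lambda\gg1$ followed by analytic continuation.

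The main obstacle I anticipate is not the soft functional-analytic part but the verification that the divergence constraint is genuinely preserved all the way through the Fredholm/b-calculus construction underlying Theorem~\ref{thm:QQ:tracefree}: one needs the divergence operator $\div$ (equivalently its ambient counterpart) to commute with the globally constructed family $\QVasyDOWNL$, not merely with $\Delta$ pointwise, which requires tracking how $\DVasyDown$, the $\sds$-terms, and the conjugation by $\s^{-\frac n2+m}$ interact with $\div$ — and, relatedly, checking that $\div$ maps the weighted even space $\rlnm\rho^{-2k}C^\infty_\even(\overline\Xone;\Ek)$ into the analogous space one index down without picking up spurious half-integer powers of $\rho$. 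On a hyperbolic quotient these interactions are explicit and should close; the care is entirely in confirming no extra zeroth-order terms obstruct the commutation, and in matching indicial roots so that the restricted poles are exactly the poles of $\Resl$ and remain finite rank.
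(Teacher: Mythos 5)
There is a genuine gap at the heart of your decoupling step. The operator $\QVasyDOWNL$ on the ambient-trace-free subbundle is \emph{not} block-triangular: it is tridiagonal, with the entries $-2b_k\div$ coupling each slot downwards and $2b_{k-1}\d$ coupling upwards. So if $f$ sits only in the top slot, the component equations read $(\Delta+\lambda^2-c_m')u^{(m)}+2b_{m-1}\d\,u^{(m-1)}=f$ and, for $k<m$, $-2b_k\div u^{(k+1)}+(\Delta+\lambda^2-c_k')u^{(k)}+2b_{k-1}\d\,u^{(k-1)}=0$; the lower components are sourced by $\div u^{(k+1)}$ and do not vanish ``automatically.'' Your argument is in fact circular: you deduce $\div u^{(m)}=0$ by applying $\div$ to the decoupled equation $(\Delta+\lambda^2-c_m')v=f$, but that equation only holds once you already know $u^{(m-1)}=0$, and the vanishing of the lower components in turn requires $\div u^{(m)}=0$. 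What is missing is the elimination step the paper performs: solve the lower-slot equations from the bottom up using the nested (Schur-complement-type) resolvents $\Resl^{(0)}=(\Delta+\lambda^2-c_0')^{-1}$, $\Resl^{(k)}=\bigl(\Delta+\lambda^2-c_k'+4(m-k+1)\,\d\Resl^{(k-1)}\div\bigr)^{-1}$, obtaining $u^{(k)}=2\sqrt{m-k}\,\Resl^{(k)}\div u^{(k+1)}$, so that the top equation becomes the \emph{modified} equation $(\Delta+\lambda^2-c_m+4\,\d\Resl^{(m-1)}\div)u^{(m)}=f$. Only then does one apply $\div$, use $[\div,\Delta]=0$ on quotients of $\Hn$ (Lemma~\ref{lem:laplaciancommutestracediv}) and $\div f=0$ to get $(\Delta+\lambda^2-c_m+4\,\div\d\Resl^{(m-1)})\div u^{(m)}=0$, invert this for $\Re\lambda\gg1$, $|\Im\lambda|\ll1$ (since $\div\d\Resl^{(m-1)}$ is $O(1)$ while $\lambda^2$ dominates) to conclude $\div u^{(m)}=0$, and back-substitute to kill all $u^{(k)}$, $k<m$.

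Two smaller points. First, your closing worry is misdirected: one does not need $\div$ to commute with the full family $\QVasyDOWNL$ (it does not), only with $\Delta$ on $\E^{(m)}$, applied after the elimination, together with invertibility of the auxiliary operators in the physical half-plane; the extension to all of $\C$ is then by meromorphy of the already-constructed $\QVasyDOWNL^{-1}$, not by re-running the commutation at general $\lambda$. Second, your intermediate arithmetic $c_m=\tfrac{n^2}{4}-2mn-2m^2+2m$ is off (it should be $\tfrac{n^2}{4}-mn-m^2+2m$), though the final identification $c_m=\tfrac{n^2-4m(n+m-2)}{4}$ is correct; and to land in $\rlnm C^\infty_\even(\overline\Xone;\E^{(m)})$ without the $\rho^{-2k}$ losses you need the observation of Remark~\ref{rem:asymptotics}, i.e.\ that once $u=u^{(m)}$ the map $J^{-1}$ acts as the identity on the top slot.
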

Note that on $\Hn$, the difference between the Lichnerowicz Laplacian and the rough Laplacian is $q(\Riemann) = -m(n+m-1)$. Thus by introducing a spectral parameter $s = \lambda + \tfrac n2$ (not to be confused with the Lorentzian scale), the previous operator $\Delta - c_m + \lambda^2$ may be equivalently written
\[
\n^*\n - s(n-s) - m
\] 
in the spirit of \cite{dfg}.

In order to demonstrate Theorem~\ref{thm:QQ}, Vasy's technique is to consider a slightly larger manifold $\Xthree$ as well as the ambient space $\Mthree=\R^+\times \Xthree$. Using two key tricks near the boundary $Y=\partial\overline\Xone$: the evenness property allows us to introduce the coordinate $\mu=\rho^2$ and twisting the Lorentzian scale with the boundary defining function gives (what is termed the Euclidean scale) $\t=\s/\rho$, it is seen that the ambient metric $\eta$ may be extended non-degenerately past $\R^+\times Y$ to $\Mthree$. On $\Sym^m \T^*\Mthree$ we construct analogous to $\QVasyUp$, an operator $\PVasyUp$ replacing appearances of $\s$ by $\t$ which, on $\Mone$ is easily related to $\QVasyUp$. Again the b-calculus provides a family of operators $\PVasyDown$ on $\opkm \Sym^k \T^*\Xthree$ above $\Xthree$. Section~\ref{sec:analysis} shows precisely how this family of operators fits into a Fredholm framework giving a meromorphic inverse, and very quickly also provides Theorem~\ref{thm:QQ}.

Such theorems are desirable for several reasons. Firstly, the quantum/classical correspondence between the spectrum of the Laplacian on a closed hyperbolic surface and Ruelle resonances of the generator of the geodesic flow on the unit tangent bundle \cite[Proposition 4.1]{faure-tsujii} has been extended to compact hyperbolic manifolds of arbitrary dimension \cite{dfg} at which point the correspondence is between Ruelle resonances and the spectrum of the Laplacian acting on trace-free, divergence-free, symmetric tensors of arbitrary rank. This correspondence is extended in \cite{ghw} to convex cocompact hyperbolic surfaces using the scattering operator \cite{graham-zworski} as well as \cite{dyatlov-g} to obtain Ruelle resonances in this open system. Theorem~\ref{thm:main:m} has been applied, along with results from \cite{dfg, dyatlov-g}, in order to provide such a correspondence in the setting of convex cocompact hyperbolic manifolds of arbitrary dimension \cite{h:qr}. Secondly, with knowledge of the asymptotics of the resolvent of the Laplacian on functions, it is possible to construct the Poisson operator, the Scattering operator, and study in a conformal setting, the GJMS operators and the $Q$-curvature of Branson \cite[Chapters 5,6]{djadli-g-herzlich}. This problem should be particularly interesting on symmetric 2-cotensors above a conformal manifold which, upon extension to a ``bulk" Poincar\'e-Einstein manifold, makes contact with Theorem~\ref{thm:main:2}. Finally, and again with respect to Theorem~\ref{thm:main:2}, the Lichnerowicz Laplacian plays a fundamental role in problems involving deformations of metrics and their Ricci tensors \cite{biquard, delay:el, graham-lee} as well as to linearised gravity \cite{wang}. Spectral analysis of the Lichnerowicz Laplacian \cite{delay:es,delay:tt} as well as the desire to build a scattering operator emphasise the importance of considering this Laplacian acting on more general spaces than that of $\Lsections$ sections. From the viewpoint of gravitational waves, the recent work \cite{baskin-v-wunsch} studies decay rates of solutions to the wave equation (acting on the trivial bundle) on Minkowski space with metrics similar to \eqref{eqn:rho-2eta}. It is very natural to consider this problem on symmetric 2-cotensors acted upon by the Lichnerowicz d'Alembertian.

Theorem~\ref{thm:main:2} requires the global condition that the manifold be Einstein. It is unclear whether such a condition is necessary. Vasy's technique deals with the condition of even asymptotic hyperbolicity near the boundary. Indeed, this is reflected in Theorem~\ref{thm:QQ:tracefree}. However to obtain our desired result, uncoupling the Lichnerowicz Laplacian from the operator $\QVasyDown$ currently requires a global condition on the base manifold. One should study whether perturbation techniques could provide a more general theorem giving precise conditions for when such a meromorphic continuation exists.

The paper is structured as follows. Section~\ref{sec:geom} sets up the geometric side of the problem introducing the various manifolds of interest as well as the construction of the ambient metric $\gamb$. This section also includes a digression into the model geometry $\Xone=\Hn$ to motivate Vasy's construction. Section~\ref{sec:symtensors} introduces the algebraic aspects of symmetric tensors, introduces many notational conventions and establishes several relationships between symmetric tensors when working relative to the Lorentzian and Euclidean scales. Section~\ref{sec:bcalc:microlocal} recalls standard notions from microlocal analysis and explicits several notions from the b-calculus framework adapted to vector bundles. Section~\ref{sec:Lap.DAlem.Q} contains the bulk of the calculations of this paper, relating $\QVasyUp$ and $\QVasyDown$ with the Lichnerowicz Laplacian. 
Sections~\ref{sec:P} and~\ref{sec:analysis} introduce the operators $\PVasyUp$ and $\PVasyDown$ and provide the desired meromorphic inverse. Section~\ref{sec:proofs:main} establishes the four theorems.
Section~\ref{sec:m=2} details the particular case of symmetric cotensors of rank $m=2$. It is useful to gain insight into this problem via this low rank setting, and it is hoped that the presentation of this case will aid the reader particularly during Sections~\ref{sec:Lap.DAlem.Q} and~\ref{sec:proofs:main}.
Finally, Section~\ref{sec:highenergyestimates} announces the high energy estimates one would obtain if the microlocal analysis performed in Section~\ref{sec:analysis} was performed using semiclassical notions.

\subsection*{Acknowledgements} I would like to thank Colin Guillarmou for his guidance and for the many discussions placing this work in a wider context. Also Andrei Moroianu for his continued support and several useful comments on Section~\ref{sec:Lap.DAlem.Q}. Finally Rod Gover for originally sparking my interest in all things conformal.


\section{Geometry}\label{sec:geom}

\subsection{Model geometry}

It is worth mentioning the model geometry which provides a clear geometric motivation for the construction of the ambient space as well as the Minkowski and Euclidean scales.

Let 
$\Rono$ 
be Minkowski space with the Lorentzian metric
\begin{align}
\gamb 
:= 
-dx_0^2 + \sum_{i=1}^{n+1} dx_i^2
\end{align}
and set 
$\Mthree$ 
to be Minkowski space minus the closure of the backward light cone. The metric gives the Minkowski distance function, denoted 
$\gamb^2$, 
on 
$\Rono$ 
from the origin:
\begin{align}
\gamb^2(x)
:= 
- x_0^2 + \sum_{i=1}^{n+1} x_i^2.
\end{align}
Hyperbolic space 
$\Xone=\Hn$ 
is then identified with the (connected) hypersurface 
\begin{align}
\Xone
:=
\setleft{x\in\Rono}{\gamb^2(x)=-1, x_{0}>0}
\end{align}
and is given the metric 
$g$ 
induced by the restriction of 
$\gamb$. 
The boundary at infinity of hyperbolic space, i.e. the sphere 
$Y=\Sn$, 
is identified with the (connected) submanifold
\begin{align}
Y
:=
\setleft{x\in\Rono}{\gamb^2(x)=0, x_{0}=1}
\end{align}
which, as an aside, inherits the standard metric, denoted 
$h$,
by restriction of 
$\gamb$. 
For completeness we introduce de Sitter space 
$dS^{n+1}$ 
as the hypersurface 
\begin{align}
dS^{n+1}
:=
\setleft{x\in\Rono}{\gamb^2(x)=1}.
\end{align}
We define the forward light cone 
\begin{align}
\Mone
:= 
\setleft{x \in \Rono}{\gamb^2(x)<0, x_{0}>0}
\end{align}
and note the decomposition 
$\Mone=\RplusS\times \Xone$ 
via the identification
\begin{align}
\RplusS \times \Xone \ni (\s, x) \mapsto \s\cdot x \in \Xone.
\end{align}
In these coordinates, the metric 
$\gamb$ 
restricted to 
$\Mone$ 
takes the form
\begin{align}
\gamb=-d\s \otimes d\s + \s^{2}g
\end{align}
and we refer to 
$\s$ 
as the Minkowski scale. We define 
$\Xthree$ 
to be the subset of the 
$(n+1)$-sphere 
contained in 
$\Mthree$
\begin{align}
\Xthree 
:= 
\setright{x\in\Rono}{\sum_{i=0}^{n+1} x_i^2 =1, x_{0} > \tfrac{-1}{\sqrt2}}
\end{align}
and note that the ambient space 
$\Mthree$ 
is diffeomorphic to 
$\RplusT\times \Xthree$ 
via the identification
\begin{align}
\RplusT \times \Xthree \ni (\t, x) \mapsto \t\cdot x \in \Mthree.
\end{align}
We refer to 
$\t$ 
as the Euclidean scale. The dilations induced by the Euclidean scale allow the following identification
\begin{align}
\Xthree \simeq \Xone \sqcup Y \sqcup dS^{n+1}.
\end{align}

\subsection{General setting}

We now properly introduce the geometric setting of the article. Let 
$(\Xone,g)$ 
be a Riemannian manifold of dimension 
$n+1$ 
which is even asymptotically hyperbolic 
\cite[Definition 1.2]{g:duke}
with boundary at infinity denoted 
$Y$. 
We recall the definition of evenness.
\begin{definition}
Let 
$(\Xone,g)$ 
be an asymptotically hyperbolic manifold. 
We say that 
$g$ 
is even if there exists a boundary defining function 
$\rho$ 
and a family of tensors 
$(h_{2i})_{i\in\N_0}$ 
on 
$Y=\partial \overline\Xone$ 
such that, for all 
$N$, 
one has the following decomposition of 
$g$ 
near 
$Y$
\begin{align}
\phi^*(\rho^2 g) = dr^2 + \sum_{i=0}^N h_{2i}r^{2i} + O(r^{2N+2})
\end{align}
where 
$\phi$ 
is the diffeomorphism induced by the flow 
$\phi_r$ 
of the gradient 
$\grad_{\rho^2g}(\rho)$:
\begin{align}
\phi : \left\{	 \begin{array}{rcl}
			[0,1)\times Y & \to & \phi([0,1)\times Y)\subset \overline\Xone \\
			(r,y) & \mapsto & \phi_r(y)
		\end{array}
		\right.
\end{align}
\end{definition}

We define 
$\Xdouble := (\overline\Xone\sqcup\overline\Xone)/Y$ 
to be the topological double of 
$\overline\Xone$. 
(For a slicker definition, we stray ever so slightly from the model geometry.) 
From the diffeomorphism 
$\phi$ 
we initially construct a 
$C^\infty$ 
atlas on 
$\Xdouble$ 
by noting that 
$Y\subset \Xdouble$ 
is contained in an open set 
$U^2:=(U_-\sqcup U_+)/ Y$ 
with 
$U_\pm:=\phi([0,1)\times Y)$ 
and we declare this set to be 
$C^\infty$ 
diffeomorphic to 
$(-1,1)\times Y$ 
via
\begin{eqnarray*}
(-1,1)\times Y &\simeq& U^2 \\
(t,y)&\mapsto& \left\{	\begin{array}{ll}
				\phi_{-t}(y)\in U_-, & \textrm{if }t\le 0 \\
				\phi_{+t}(y)\in U_+, & \textrm{if } t\ge 0
			\end{array}
		\right.
\end{eqnarray*}
Charts on the interior of $\Xone$ in $\overline\Xone$ complete the atlas on $\Xdouble$.

We want to consider the boundary defining function 
$\rho$ 
as a function from 
$\Xdouble$ 
to 
$[-1,1]$ 
such that 
$\Xone$ 
may be identified with 
$\{\rho>0\}$. 
Using the previous chart for 
$U^2\simeq (-1,1)\times Y$ 
we initially set 
\begin{align}
\rho : \left\{	 \begin{array}{rcl}
			(-1,1)\times Y & \to & (-1,1) \\
			(r,y) & \mapsto & r
		\end{array}
		\right.
\end{align}
and extend 
$\rho$ 
to a continuous function on 
$\Xdouble$ 
by demanding that 
$\rho$ 
be constant on 
$\Xdouble\backslash U^2$. 
In order to ensure smoothness at 
$\partial \overline{U^2}$ 
we deform 
$\rho$ 
smoothly on the two subsets 
$(-1,-1+\ve)\times Y$ 
and 
$(1-\ve,1)\times Y$ 
of $U^2$. 
This achieves our goal. We now define the function 
$\mu$ 
on 
$\Xdouble$ 
by declaring
\begin{align}
\mu : \Xdouble\to [-1,1] :
		\left\{	\begin{array}{ll}
				\mu=-\rho^2, & \textrm{if }\rho\le 0 \\
				\mu=\rho^2 & \textrm{if } \rho\ge 0
			\end{array}
		\right.
\end{align}
\begin{remark}
Although we have performed a deformation of 
$\rho$ 
near 
$\partial \overline{U^2}$ 
we will continue to think of 
$\rho$ 
and 
$\mu$ 
as coordinates for the first factor of 
$U^2=(-1,1)\times Y$ 
(if we wanted to be correct, in what follows we would replace 
$(-1,1)$ 
with 
$(-1+\ve,1-\ve)$ 
but this is cumbersome and we prefer to free up the variable 
$\ve$). 
Of course, only the coordinates 
$(\mu,y)$ 
provide a smooth chart for 
$\Xdouble$ 
near 
$Y$.
\end{remark}
We now weaken the atlas on 
$\Xdouble$ 
near 
$Y$. 
By the previous remark, we may think of 
$\mu$ 
as coordinates for the first factor of 
$U^2$ 
and we thus demand that the 
$C^\infty$ 
atlas is with respect to this coordinate rather than 
$\rho$ 
(as was the case for the initial atlas). It is now the case that on 
$\Xdouble$, 
only 
$\mu$ 
(and not $\rho$) 
is a smooth function.

We define the set 
$C^\infty_\even(\overline\Xone)$ 
to be the subset of functions in 
$C^\infty(\Xone)$ 
which are extensible to 
$C^\infty(\Xdouble)$ 
and whose extension is invariant with respect to the natural involution on 
$\Xdouble$. 
(For example, the restriction of 
$\mu$ 
to 
$\Xone$. 
However such an invariant extension would of course not give the function 
$\mu$ 
previously constructed due to a sign discrepency.) We remark that 
$\dot C^\infty(\Xone)$,
the subset of functions in 
$C^\infty(\overline\Xone)$ 
which vanish to all orders at 
$Y$, 
injects naturally into 
$C^\infty(\Xdouble)$ 
and may be identified with the subset of 
$C^\infty (\Xdouble)$ 
whose elements vanish on 
$\{\rho<0\}$. 
Such constructions may also readily be extended to the setting of vector bundles above 
$\Xone$
by using a local basis near
$Y$
of such a vector bundle which smoothly extends across
$Y$.

\begin{definition}\label{def:threebasemanifolds}
We denote by 
$\Xthree$ 
the following extension of 
$\Xone$
\begin{align}
\Xthree
:=
\{\mu>-1\}\subset \Xdouble,
\end{align}
by 
$\CS$ 
the hypersurface 
$\{\mu=-\tfrac 12\}\subset\Xthree$, 
and by 
$\Xtwo$ 
the open submanifold 
$\{\mu> -\tfrac 12\}\subset\Xthree$ 
such that 
$\partial \overline\Xtwo=\CS$.
\end{definition}

We construct two product manifolds 
$\Mone:=\RplusS \times \Xone$ 
and 
$\Mthree:=\RplusT\times \Xthree$. 
We supply 
$\Mone$ 
with the Lorentzian cone metric
\begin{align}\label{eqn:etaLcone}
\gamb := -d\s\otimes d\s + \s^{2}g
\end{align}
and explain how this structure may be smoothly extended to $\Mthree$. 

Using the even neighbourhod at infinity 
$U:=(0,1)_\mu\times Y$ 
we remark that, on 
$\RplusS\times U$, 
the Lorentzian metric takes the form
\begin{align}\label{eqn:etaLcone:even}
\gamb = - d\s\otimes d\s + \s^{2}\left( \frac{d\mu\otimes d\mu}{4\mu^2} + \frac{h}{\mu} \right)
\end{align}
where 
$h$ 
has a smooth Taylor expansion about 
$\mu=0$ 
by the evenness hypothesis. Upon the change of variables 
$\t=\s/\rho$ 
with 
$\t\in\R^+$, 
the metric, on 
$\RplusT\times U$ 
takes the form
\begin{align}
\gamb = -\mu d\t\otimes d\t - \tfrac12 \t(d\mu\otimes d\t + d\t\otimes d\mu) + \t^2 h
\end{align}
or, in a slightly more attractive convention,
\begin{align}\label{eqn:rho-2eta}
\t^{-2} \gamb = -\tfrac\mu2 (\dtt)^2 - \tfrac12 \dtt\sp d\mu + h
\end{align}
with the convention for the symmetric product $\cdot$ introduced in the following section.
From this display we see that, by extending 
$h$ 
to a family of Riemannian metrics on 
$Y$ 
parametrised smoothly by 
$\mu\in(-1,1)$, 
we can extend 
$\gamb$ 
smoothly onto the chart 
$\RplusT\times U^2 \subset \Mthree$. 
We do this, thus furnishing 
$\Mthree$ 
with a Lorentzian metric. As in the model geometry we refer to 
$\s$ 
(which is only defined on 
$\Mone$) 
as the Minkowski scale, and to 
$\t$ 
(which is defined on 
$\Mthree$) as the Euclidean scale.

From 
\eqref{eqn:rho-2eta}, 
the measure associated with 
$\t^{-2}\gamb$ 
on 
$\RplusT\times U^2$ 
is 
$\dtt \meas$ 
where 
$\meas=\frac12 d\mu\,d\vol_h$. 
On 
$U$, 
we have 
$\meas = \rho^{n+2} d\vol_g$, 
hence 
$\meas$ 
extends smoothly to a measure on 
$\Xthree$, 
also denoted 
$\meas$, 
and which agrees with 
$d\vol_g$ 
on 
$\Xone\backslash U$.


\section{Symmetric Tensors}\label{sec:symtensors}

This section introduces the necessary algebraic aspects of symmetric tensors and establishes conventions (which follow \cite{hms}).

\subsection{A single fibre}\label{subsec:singlefibre}

Let $E$ be a vector space of dimension $n+1$ equipped with an inner product $g$ and let $\{e_i\}_{i=0}^n$ be an orthonormal basis and $\{e^i\}_{i=0}^n$ the corresponding dual basis for $E^*$. We denote by $\Sym^kE^*$ the $k$-fold symmetric tensor product of $E^*$. Elements are symmetrised tensor products
\[
u_1\sp \ldots \sp u_k := \sum_{\sigma \in \Pi_k} u_{\sigma(1)}\otimes \ldots\otimes u_{\sigma(k)},
\qquad
u_i\in E^*
\]
where $\Pi_k$ is the permutation group of $\{1,\dots,k\}$. By linearity, this extends the operation $\sp$ to a map from $\Sym^kE^*\times \Sym^{k'}E^*$ to $\Sym^{k+k'}E^*$. Note the inner product takes the form $g=\tfrac12\sum_{i=0}^n e^i\sp e^i$ and that for $u\in E^*$ we write $u^k$ to denote the symmetric product of $k$ copies of $u$. The inner product induces an inner product on $\Sym^kE^*$ defined by
\[
\langle u_1\sp\ldots\sp u_k, v_1\sp\ldots\sp v_k \rangle := \sum_{\sigma\in\Pi_k} g^{-1}(u_1,v_{\sigma(1)})\ldots g^{-1}(u_k,v_{\sigma(k)}),
\qquad 
u_{i},v_{i}\in E^*.
\]
For $u\in E^*$, the metric adjoint of the linear map $u\sp : \Sym^kE^*\to\Sym^{k+1}E^*$ is the contraction $\hookprod{u}{} : \Sym^{k+1}E^*\to\Sym^kE^*$ defined by
\[
(\hookprod{u}{v}) (w_1,\dots,w_k) := v(u^\#,w_1,\dots,w_k),
\qquad
u\in E^*, 
v\in \Sym^k E^*,
w_i\in E
\]
where $u^\#$ is dual to $u$ relative to the inner product on $E$. Contraction and multiplication with the metric $g$ define two additional linear maps:
\[
\trace : \left\{	 \begin{array}{rcl}
			\Sym^k E^* & \to & \Sym^{k-2} E^* \\
			u & \mapsto & \sum_{i=0}^n   \hookprod{e^i}{\hookprod{e^i}{u}}
		\end{array}
		\right.
\]
and
\[
\adjtrace : \left\{	 \begin{array}{rcl}
			\Sym^k E^* & \to & \Sym^{k+2} E^* \\
			u & \mapsto & \sum_{i=0}^n e^i\sp e^i \sp u
		\end{array}
		\right.
\]
which are adjoint to each other. As the notation is motivated by standard notation from complex geometry, we will refer to these two operators as Lefschetz-type operators.

Let $F$ be the vector space $\R\times E$ equipped with the standard Lorentzian inner product $-f\otimes f + g$ where $f$ is the canonical vector in $\R^*$. The previous constructions have obvious counterparts on $F$ which will not be detailed. (For this subsection, we write $\innprodF{\cdot}{\cdot}$ for the Lorentzian inner product on $\Sym^m F^*$.) The decomposition of $F$ provides a decomposition of $\Sym^mF^*$:
\[
\Sym^mF^* = \bigoplus_{k=0}^m  a_{k} \, f^{m-k} \sp \Sym^k E^*,
\qquad
a_k = \tfrac{1}{\sqrt{(m-k)!}}
\]
and we write
\[
u = \sum_{k=0}^m a_k \, f^{m-k} \sp u^{(k)},
\qquad
u\in \Sym^mF^*, 
u^{(k)}\in \Sym^kE^*.
\]
The choice of the normalising constant $a_k$ is chosen so that $\innprodF{u}{v} = \sum_{k=0}^m (-1)^{m-k} \innprod{u^{(k)}}{v^{(k)}}$.
There is a simple relationship between the terms $u^{(k)}$ in this decomposition of $u$ when $u$ is trace-free.
\begin{lemma}\label{lem:trace:FE}
Let $\traceF$ and $\trace$ denote the Lefschetz-type trace operators obtained from the inner products on $F$ and $E$ respectively. For $u\in \Sym^m F^*$ in the kernel of $\traceF$, we have
\[
\trace u^{(k)} = - b_{k-2}b_{k-1} u^{(k-2)}
\]
where $u=\sum_{k=0}^m a_k f^{m-k}\sp u^{(k)}$ for $u^{(k)}\in\Sym^k E^*$ and constants $b_k = \sqrt{m-k}$.
\end{lemma}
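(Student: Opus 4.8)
The plan is to compute $\traceF u$ in the decomposition $F^* = \R f \oplus E^*$ and read off the coefficient of each $f^{m-2-k}\sp\Sym^kE^*$ summand. Since $\traceF$ is defined by contracting twice against a basis adapted to the Lorentzian inner product $-f\otimes f + g$, I would pick the orthonormal-type basis $\{f, e^0,\dots,e^n\}$ of $F^*$, so that $\traceF = -\hookprod{f}{\hookprod{f}{\,\cdot\,}} + \sum_{i} \hookprod{e^i}{\hookprod{e^i}{\,\cdot\,}}$, the minus sign coming from the timelike direction (here $f^\#$ is timelike, $g(f^\#,f^\#)=-1$). The second piece acts on each summand $f^{m-k}\sp u^{(k)}$ as $f^{m-k}\sp \trace u^{(k)}$, since the $e^i$-contractions pass through the factor $f^{m-k}$ and only see $u^{(k)}\in\Sym^kE^*$. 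So this contributes $\sum_k a_k f^{m-k}\sp \trace u^{(k)}$.

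The main computation is the first piece, $\hookprod{f}{\hookprod{f}{\,\cdot\,}}$ applied to $f^{m-k}\sp u^{(k)}$. I would establish the elementary identity $\hookprod{f}{(f^{j}\sp v)} = j\, f^{j-1}\sp v$ for $v\in\Sym^kE^*$ (the contraction $\hookprod{f}{}$ annihilates $E^*$-factors because $f^\#$ is $g$-orthogonal to $E$, wait — more precisely $f(e_i^\#)=0$ — and hits each of the $j$ copies of $f$, each contributing $f(f^\#) = -1$; so in fact $\hookprod f{(f^j\sp v)} = -j f^{j-1}\sp v$, and the signs must be tracked carefully against the normalization of $\sp$). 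Iterating, $\hookprod f{\hookprod f{(f^{m-k}\sp u^{(k)})}} = (m-k)(m-k-1)\,f^{m-k-2}\sp u^{(k)}$ up to the sign bookkeeping. Collecting the two contributions and using $\traceF u = 0$ yields, for each fixed power $f^{m-2-k}\sp(\cdot)$, a relation between $a_{k+2}u^{(k+2)}$ (from the first piece, reindexed) and $a_k \trace u^{(k)}$ (from the second). Writing this out,
\[
a_k\, \trace u^{(k)} \;=\; (m-k)(m-k-1)\,a_k\, u^{(k)}\Big|_{\text{shifted}} \quad\Longrightarrow\quad \trace u^{(k)} \;=\; -\,\frac{a_{k-2}}{a_k}\,\big((m-k+1)(m-k)\ \text{type factor}\big)\, u^{(k-2)},
\]
and plugging $a_k = 1/\sqrt{(m-k)!}$ gives $a_{k-2}/a_k = \sqrt{(m-k)!/(m-k+2)!} = 1/\sqrt{(m-k+1)(m-k+2)}$; combined with the combinatorial factor this collapses to $\trace u^{(k)} = -\sqrt{(m-k+1)(m-k+2)}\ \cdot(\text{something})$, which I expect to simplify exactly to $-b_{k-2}b_{k-1}u^{(k-2)}$ with $b_k=\sqrt{m-k}$, since $b_{k-2}b_{k-1} = \sqrt{(m-k+2)(m-k+1)}$.

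The main obstacle is purely bookkeeping: getting every sign and combinatorial constant right simultaneously — the sign from the Lorentzian signature, the signs from contracting $f$ against $f^\#$ (a unit timelike vector), the multiplicities produced by the unnormalized symmetric product $\sp$ as defined in Subsection~\ref{subsec:singlefibre}, and the normalizing constants $a_k$. A clean way to avoid fighting all of this at once is to verify the two elementary lemmas $\hookprod f{(f^j\sp v)} = -j\,f^{j-1}\sp v$ and $g_F^{-1}(f,f)=-1$ first in isolation, then assemble. One should also note the degenerate cases $k=0,1$, where $\trace u^{(k)}$ and $u^{(k-2)}$ are vacuous, so the identity holds trivially; the content is for $2\le k\le m$.
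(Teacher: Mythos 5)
Your overall strategy is the same as the paper's: decompose $u$ along powers of $f$, push the ambient trace through each summand, and equate coefficients of each $f^{m-k}$. The gap is the sign, which (together with the constant) is the entire content of the lemma, and which you explicitly defer to ``bookkeeping''. Moreover, the convention you commit to, $\traceF = -\hookprod{f}{\hookprod{f}{\,\cdot\,}}+\sum_i\hookprod{e^i}{\hookprod{e^i}{\,\cdot\,}}$, leads to the \emph{opposite} sign from the statement. Indeed, since $f(f^\#)=-1$, your own (corrected) identity $\hookprod{f}{(f^j\sp v)}=-j\,f^{j-1}\sp v$ gives $\hookprod{f}{\hookprod{f}{(f^{j}\sp v)}}=+j(j-1)\,f^{j-2}\sp v$ exactly (the two minus signs cancel), so with your inserted signature weight the timelike part contributes $-(m-k)(m-k-1)f^{m-k-2}\sp u^{(k)}$; equating the coefficient of $f^{m-k}$ in $\traceF u=0$ then yields $\trace u^{(k)}=+b_{k-2}b_{k-1}u^{(k-2)}$. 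A quick sanity check at $m=2$: your $\traceF$ is the genuine Lorentzian trace, and for $u^{(2)}+f\sp u^{(1)}+\tfrac1{\sqrt2}f^2\sp u^{(0)}$ it forces $\trace u^{(2)}=+\sqrt2\,u^{(0)}$, whereas the lemma (and its later uses, e.g.\ Proposition~\ref{prop:decompQ:2:mink}) requires $\trace u^{(2)}=-\sqrt2\,u^{(0)}$.

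The convention the paper actually uses is the verbatim analogue of the definition of $\trace$ on $E$: $\traceF u=\hookprod{f}{\hookprod{f}{u}}+\sum_i\hookprod{e^i}{\hookprod{e^i}{u}}$, the inner product entering only through the musical isomorphism $\#$, with no extra signature factor on the timelike term. Under that convention $\traceF f^{m-k}=+(m-k)(m-k-1)f^{m-k-2}$, which is precisely the identity the paper's proof starts from, and the minus sign in the lemma then appears when the cross term is moved across the equality: $a_k\trace u^{(k)}+a_k\sqrt{(m-k+2)(m-k+1)}\,u^{(k-2)}=0$. Your treatment of the constants is fine ($a_{k-2}/a_k=1/\sqrt{(m-k+1)(m-k+2)}$, which combines with the double-contraction factor to give $b_{k-2}b_{k-1}$), so the repair is to use the paper's (unsigned) $\traceF$ and carry the signs through to the end rather than leaving them open; as written, completing your computation with your stated definition contradicts the lemma.
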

\begin{proof}
Beginning with $\traceF f^{m-k} = (m-k)(m-k-1)f^{m-k-2}$ we obtain
\[
\traceF\left( a_k \, f^{m-k} \sp u^{(k)} \right)
 =
a_{k+2} \sqrt{(m-k)(m-k-1)} f^{m-k-2} \sp  u^{(k)} + a_k \, f^{m-k} \sp \trace u^{(k)}.
\]
Therefore, as $u\in\ker\traceF$, equating powers of $f$ in the resulting formula for
\[
\traceF\left( \sum_{k=0}^m a_k \, f^{m-k} \sp u^{(k)} \right)
\]
gives
\[
a_k f^{m-k} \trace u^{(k)} + a_{k} \sqrt{(m-k+2)(m-k+1)} f^{m-k} u^{(k-2)}=0. \qedhere
\]
\end{proof}

We introduce some notation for finite sequences to simplify the calculations below. Denote by $\sA^k$ the space of all sequences $K=k_1\dots k_k$ with $0\leq k_r\leq n$. We write $\co{k_r}{j}{K}$ for the result of replacing the $r$\textsuperscript{th} element of $K$ by $j$. If $j$ is not present, this implies we remove the $r$\textsuperscript{th} element from $K$, while if $k_r$ is not present, this implies we add $j$ to $K$ to obtain $jK$. This notation extends to replacing multiple indices at once. For example, $\ct{k_p}{}{k_r}{}{K}$ indicates we first remove the $r$\textsuperscript{th} element from $K$ and then remove the $p$\textsuperscript{th} element from $\co{k_r}{}{K}$. We set
\[
e^K = e^{k_1}\sp \ldots \sp e^{k_m} \in\otimes^k E^*,
\qquad
K=k_1\dots k_m \in \sA^k.
\]

\subsection{Vector bundles}\label{subsec:vecbundles}
These constructions are naturally extended to vector bundles above manifolds. 
We include this subsection in order to announce our notations and conventions. 
Consider $\Mone$ and $\Xone$ (with similar constructions for $\Mthree$ and $\Xthree$). 
We denote 
\[
\F := \Sym^m \T^*\Mone,
\qquad
\Ek := \Sym^k \T^*\Xone,
\qquad
\E := \opkm\Ek.
\]
If we want to make precise that $\F$ consists of rank $m$ symmetric cotensors, we will write $\F^{(m)}$. 
The Minkowski scale gives the decomposition $\Mone=\RplusS\times \Xone$ and we denote by $\pi$ the projection onto the second factor $\pi:\Mone\to\Xone$. (Remark that on $\Mone$ this gives the same map as the projection $\pi:\Mthree\to\Xthree$ using the Euclidean scale $\Mthree=\RplusT\times \Xthree$.) This enables $\Ek$ to be pulled back to a bundle over $\Mone$ which we will also denote by $\Ek$.

Given $u\in C^\infty(\Mone ; \F)$, we decompose $u$ in the following way
\begin{align}\label{eqn:decomp:mink}
u  = \sum_{k=0}^m a_k \, (\dss)^{m-k} \sp u^{(k)},
\qquad
u^{(k)}\in C^\infty(\Mone; \Ek)
\end{align}
where $a_k$ is the previously introduced constant $((m-k)!)^{-1/2}$. We say that such a decomposition is relative to the Minkowski scale. 

For a fixed value of $\s$, say $\s_0$, there is an identification of the corresponding subset of $\Mone $ with $\Xone$ via the map $\pi_{|\s=\s_0}$. We will thus reuse $\pi$ for the following map
\[
\pi_{\s=\s_0} : \left\{	 \begin{array}{rcl}
			C^\infty(\Mone; \F) & \to & C^\infty (\Xone; \E)\\
			u=\sum_{k=0}^m a_k \, (\dss)^{m-k} \sp u^{(k)} & \mapsto & \sum_{k=0}^m \pi_{|\s=\s_0} {u^{(k)}}
		\end{array}
		\right.
\]
and in order to map from $C^\infty(\Xone;\E)$ to $C^\infty(\Mone;\F)$, taking into account the Minkowski scale, we introduce
\[
\pistarS : \left\{	 \begin{array}{rcl}
			C^\infty(\Xone; \E) & \to & C^\infty (\Mone; \F)\\
			u=\sum_{k=0}^m u^{(k)} & \mapsto & \sum_{k=0}^m a_k \, (\dss)^{m-k} \sp \pi^* u^{(k)}
		\end{array}
		\right.
\]

On $\Mone$ we have two useful metrics. First, $\s^{-2}\eta$ which takes the model form of the metric on $F$ introduced in the previous subsection
\[
\s^{-2}\eta = -\tfrac12 (\dss)^2 + g.
\]
Second, we have the metric $\eta$ which is geometrically advantageous as it gives the Lorentzian cone metric on $\Mone$. Notationally we will distinguish the two constructions by decorating the Lefschetz-type operators with a subscript of the particular metric used. A similar decoration will be used for the two inner products on $\F$. There are two useful relationships. First,
\begin{align}\label{eqn:twoLambdas:eta}
\traceSETA u = \s^4 \traceETA u,
\qquad
u\in \F
\end{align}
and second,
\begin{align}\label{eqn:twoinnprods:eta}
\innprodSETA{u}{v} = \s^{2m} \innprodETA{u}{v},
\qquad
u,v\in \F
\end{align}
On $\Xone$, when the metric $g$ is used, no such decoration will be added. We can however make use of the metric $\s^{-2}\eta$ by appealing to $\pistarS$. We introduce $\innprodSS{\cdot}{\cdot}$ on $C^\infty(\Xone;\E)$ by declaring
\[
\innprodSS{u}{v} := \innprodSETA{\pistarS u}{\pistarS v},
\qquad
u,v\in C^\infty(\Xone;\E).
\]
Note that such a definition does not depend on the value of $\s\in\R^+$ at which point the inner product on $\F$ is applied. With this inner product given, and the measure $d\vol_g$ previously introduced, we obtain the notion of $\Lsections$ sections and define
\begin{align}\label{eqn:L2definitionforEwithS}
\LsectionsS(\Xone;\E) := \Lsections(\Xone, d\vol_g ; \E , \innprodSS{\cdot}{\cdot} )
\end{align}
whose inner product is provided by
\[
\LinnprodSS{u}{v} := \int_{\Xone} \innprodSS{ u }{ v } \, d\vol_g,
\qquad
u,v\in C^\infty_c(\Xone ; \E).
\]
On $\Xthree$, we define $\Lsections$ sections with respect to the measure $\meas$,
\[
\LsectionsT(\Xthree;\E) := \Lsections(\Xthree, \meas ; \E , \innprodTT{\cdot}{\cdot} ).
\]
On $\Xone$, the necessary correspondences between the constructions using the Lorentzian and Euclidean scales are given in the following lemma.
\begin{lemma}\label{lem:innprod:changeofscales}
There exists $J\in C^\infty(\Xone;\End\E)$ such that
\[
\pistarS u = \pistarT Ju,
\qquad
u\in C^\infty(\Xone;\E)
\]
whose entries are homogeneous polynomials of degree at most $m$ in $\drr$, upper triangular in the sense that $J ( \E^{(k_0)} ) \subset \oplus_{k=k_0}^m \Ek$, and whose diagonal entries are the identity. Moreover,
\[
\innprodSS{u}{v} =\rho^{2m} \innprodTT{J u}{J v},
\qquad
u,v\in C^\infty(\Xone;\E).
\]
Finally, 
\[
\LsectionsS(\Xone;\E) = \rho^{\frac n2 - m + 1} J^{-1} \LsectionsT(\Xone;\E).
\]
\end{lemma}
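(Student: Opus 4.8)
The plan is to build $J$ explicitly from the relation $\t = \s/\rho$ between the two scales, then verify the three claimed consequences one after the other. First I would compute the relationship between the two coframes $\dss$ and $\dtt$. Since $\s = \rho\t$ on the collar (with $\rho$ a function on $\Xone$ pulled back), we have $\dss = \dtt + \drr$, where $\drr$ is the pullback of $d\rho/\rho \in C^\infty(\Xone;\T^*\Xone)$. Substituting this into the decomposition $\pistarS u = \sum_k a_k (\dss)^{m-k}\sp u^{(k)}$ and expanding $(\dtt + \drr)^{m-k}$ by the symmetric binomial theorem, I collect terms by powers of $\dtt$; the coefficient of $(\dtt)^{m-j}$ is a symmetric product of $\drr$ with the various $u^{(k)}$ for $k\ge j$. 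Renormalising by the constants $a_k$ to match the decomposition $\pistarT (Ju) = \sum_j a_j (\dtt)^{m-j}\sp (Ju)^{(j)}$ then \emph{defines} the entries of $J$: one reads off $(Ju)^{(j)} = u^{(j)} + \sum_{k>j} (\text{const})\,\drr^{\,k-j}\sp u^{(k)}$ plus contributions where $\drr$ is contracted into lower components is \emph{not} needed since $\drr$ only multiplies. Hence $J$ is upper triangular with $J(\E^{(k_0)})\subset\bigoplus_{k\ge k_0}\Ek$, its diagonal entries are the identity (the $k=j$ term is $u^{(j)}$ untouched), and each off-diagonal entry is a homogeneous polynomial of degree $k-j\le m$ in $\drr$, as claimed. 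In particular $J$ is invertible with $J^{-1}$ of the same triangular shape.

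Next I would establish the inner-product identity. By definition $\innprodSS{u}{v} = \innprodSETA{\pistarS u}{\pistarS v}$, and likewise the natural $\t$-inner product on $\E$ is $\innprodTETA{\pistarT(\cdot)}{\pistarT(\cdot)}$; the point is to relate the two fibrewise Lorentzian inner products on $\F$ coming from $\s^{-2}\gamb$ and $\t^{-2}\gamb$. Since $\t^{-2}\gamb = \rho^2 \s^{-2}\gamb$ as metrics on $\Mone$, and the induced inner product on $\Sym^m\T^*\Mone$ scales like the $(-m)$-th power of the metric (each of the $m$ contractions brings a factor of the inverse metric, which scales by $\rho^{-2}$), we get $\innprodTETA{w}{w'} = \rho^{-2m}\innprodSETA{w}{w'}$ for $w,w'\in\F$ — wait, I need to track the direction: the inner product on cotensors uses $g^{-1}$, so scaling the metric up by $\rho^2$ scales the cotensor inner product down by $\rho^{-2m}$, giving $\innprodTETA{w}{w'} = \rho^{2m}\innprodSETA{w}{w'}$. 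Combining with $\pistarS u = \pistarT Ju$ yields $\innprodSS{u}{v} = \innprodSETA{\pistarT Ju}{\pistarT Jv} = \rho^{2m}\innprodTETA{\pistarT Ju}{\pistarT Jv} = \rho^{2m}\innprodTT{Ju}{Jv}$, which is the stated formula. (I should double-check the sign/power of $\rho$ against the analogue of \eqref{eqn:twoinnprods:eta} written for the Euclidean scale, which is where the bookkeeping is most error-prone.)

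Finally, for the $L^2$ identification I would compare measures. On $\Xone$ near $Y$ one has $\meas = \rho^{n+2}\,d\vol_g$ (recorded just after \eqref{eqn:rho-2eta}), and globally $\meas$ agrees with $d\vol_g$ away from the collar, so $\meas = \rho^{n+2}d\vol_g$ as measures on $\Xone$ up to a smooth positive factor that is harmless for the $L^2$ space. Then for $u\in C^\infty_c(\Xone;\E)$,
\[
\norm{u}{\LsectionsS}^2 = \int_{\Xone}\innprodSS{u}{u}\,d\vol_g = \int_{\Xone}\rho^{2m}\innprodTT{Ju}{Ju}\,d\vol_g = \int_{\Xone}\rho^{2m-n-2}\innprodTT{Ju}{Ju}\,\meas,
\]
so $u\in\LsectionsS(\Xone;\E)$ iff $\rho^{m-(n+2)/2}Ju \in \LsectionsT(\Xone;\E)$, i.e.\ $\LsectionsS(\Xone;\E) = \rho^{(n+2)/2-m}J^{-1}\LsectionsT(\Xone;\E) = \rho^{n/2-m+1}J^{-1}\LsectionsT(\Xone;\E)$, as claimed. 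The main obstacle is not any single step but the consistent tracking of powers of $\rho$ and of the normalising constants $a_k$ through the scale change $\dss = \dtt + \drr$: getting the binomial coefficients right in $J$ and the exponent $2m$ versus $-2m$ right in the inner-product scaling. I would cross-check the final exponent $n/2-m+1$ against the homogeneity of the Poisson/resolvent mapping properties in Theorem~\ref{thm:QQ} as a sanity check.
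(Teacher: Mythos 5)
Your proposal follows the paper's proof essentially step for step: define $J$ by expanding $(\dss)^{m-k}=(\dtt+\drr)^{m-k}$ binomially, obtain the inner-product identity from $\s^{-2}\gamb=\rho^{-2}\,\t^{-2}\gamb$ (the $m$ applications of the inverse metric producing the factor $\rho^{2m}$), and deduce the $L^2$ statement from $\meas=\rho^{n+2}\,d\vol_g$; the substance and the final exponents are right. Two bookkeeping slips in the write-up should nevertheless be fixed. First, your component formula for $J$ has the indices backwards: since $(\dtt+\drr)^{m-k}$ contains no power of $\dtt$ higher than $m-k$, the coefficient of $(\dtt)^{m-j}$ involves $u^{(k)}$ only for $k\le j$; the correct statement is $Ju^{(k)}=\sum_{j=0}^{m-k}\binom{m-k}{j}\frac{a_k}{a_{k+j}}(\drr)^{j}\sp u^{(k)}$, i.e.\ the $\E^{(j)}$-component of $Ju$ is $\sum_{k\le j}\binom{m-k}{j-k}\frac{a_k}{a_j}(\drr)^{j-k}\sp u^{(k)}$. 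As written, your expression $u^{(j)}+\sum_{k>j}(\mathrm{const})\,(\drr)^{k-j}\sp u^{(k)}$ is not even a tensor of rank $j$, and it would give the opposite triangularity to the one you (correctly) assert, namely $J(\E^{(k_0)})\subset\oplus_{k\ge k_0}\E^{(k)}$. Second, your ``wait'' correction of the scaling relation lands on the wrong side: your first instinct $\innprodTETA{w}{w'}=\rho^{-2m}\innprodSETA{w}{w'}$, equivalently $\innprodSETA{w}{w'}=\rho^{2m}\innprodTETA{w}{w'}$, is the right one, because $(\t^{-2}\gamb)^{-1}=\rho^{-2}(\s^{-2}\gamb)^{-1}$; fortunately your displayed chain then uses this correct relation, so the identity $\innprodSS{u}{v}=\rho^{2m}\innprodTT{Ju}{Jv}$ and the exponent $\frac n2-m+1$ in the $L^2$ identification come out as in the lemma.
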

\begin{proof}
As $\t=\s/\rho$, the differentials are related by
\[
\dss = \dtt + \drr
\]
hence by the binomial expansion
\[
a_k (\dss)^{m-k}\sp  \pi^* u^{(k)} = \sum_{j=0}^{m-k} a_{k+j} (\dtt)^{m-k-j} \sp \textstyle\binom{m-k}{j} \frac{a_k}{a_{k+j}} (\drr)^j  \sp \pi^* u^{(k)}.
\]
where $u^{(k)}\in C^\infty(\Xone;\Ek)$. This defines the endomorphism $J$ by declaring
\[
J u^{(k)} = \sum_{j=0}^{m-k} \textstyle\binom{m-k}{j} \frac{a_k}{a_{k+j}} (\drr)^j  \sp u^{(k)}.
\]
The second claim is direct from $\s^{-2}\eta=\rho^{-2}\t^{-2}\eta$, hence on $\F$, where the inner product requires $m$ applications of the inverse metric, $\innprodSETA{\cdot}{\cdot} = \rho^{2m}\innprodTETA{\cdot}{\cdot}$. The final claim follows from the second claim and the previously remarked correspondence, $\meas = \rho^{n+2} d\vol_g$.
\end{proof}


\section{$\mathrm{b}$-Calculus and Microlocal Analysis}\label{sec:bcalc:microlocal}

This section introduces the necessary b-calculus formalism on symmetric cotensors. The standard reference is \cite{melrose:aps}, in particular we make much use of Chapters 2 and 5. We also recall some now standard ideas from microlocal analysis.

\subsection{b-calculus}\label{subsec:b-calculus}

For convenience we will only work on $\Mone=\RplusS\times \Xone$ rather than on both $\Mone$ and $\Mthree$.  We define $\overline\Mone$ to be the closure of $\Mone$ seen as a submanifold of $\R_\s\times \Xone$ with its usual topology. Then
\[
\overline\Mone = \Mone \sqcup \Xone
\]
where $\Xone$ is naturally identified with the boundary $\partial\overline\Mone=\{\s=0\}$.


We let $\{e_i\}_{i=0}^n$ denote a (local) holonomic frame for $\T\Xone$ and $\{e^i\}_{i=0}^n$ its dual frame for $\T^*\Xone$. The Lie algebra of b-vector fields consists of smooth vector fields on $\overline\Mone$ tangent to the boundary $\Xone$. It is thus generated by $\{\sds, e_i\}$. This provides the smooth vector bundle $\bT \overline\Mone$. The dual bundle, $\bT^*\overline\Mone$, has basis $\{ \dss, e^i \}$. This dual bundle is used to construct the b-symmetric bundle of $m$-cotensors, denoted $\bF$. On the interior of $\overline\Mone$, this bundle is canonically isomorphic to $\F$. 

An operator $\QVasyUp$ belongs to $\Diff^p_\bbb(\overline\Mone;\End\bF)$ if, relative to a frame generated by $\{\dss,e^i\}$ the operator $\QVasyUp$ may be written as a matrix
\[
\QVasyUp=[\QVasyUp_{i,j}]
\]
whose coefficients $\QVasyUp_{i,j}$ belong to $\Diff^p_\bbb(\overline\Mone)$. That is, each $\QVasyUp_{i,j}$ may be written
\[
\QVasyUp_{i,j} = \sum_{k,|\alpha|\le p} q_{i,j,k,\alpha} (\sds)^k  \partial_{x}^\alpha 
\]
for smooth functions $q_{i,j,k,\alpha}\in C^\infty(\overline\Mone)$. 

Operators in $\Diff^p_\bbb(\overline\Mone;\End\bF)$ provide indicial families of operators belonging to $\Diff^p(\Xone;\End\E)$. In order to define this mapping we recall the operator $\pi_{\s=\s_0}$ defined in the previous section for $\s_0\in\R^+$. This family of maps clearly has an extension to $\overline\Mone$ giving 
\[
\pi_{\s=\s_0} : 
		C^\infty(\overline\Mone; \bF) \to C^\infty (\Xone; \E)
\]
where $\s_0\in [0,\infty)$. The indicial family mapping (with respect to the Minkowski scale $\s$)
\[
\IF_\s : \Diff^p_\bbb(\overline\Mone;\End\bF) \to \mathcal{O}(\C; \Diff^p(\Xone;\End\E)).
\]
is defined by
\[
\IF_\s (\QVasyUp,\lambda) \left( u \right) 
:= 
\pi_{\s=0} \left(
				\s^{\lambda} \QVasyUp \s^{-\lambda} \left(
													\pistarS u
										\right)
		\right),
\qquad
u\in C^\infty(\Xone;\E).
\]
When the scale $\s$ is understood, we will use the convention of removing the bold font from such an operator and write
\[
\QVasyDown := \IF_\s(\QVasyUp,\cdot),
\qquad
\QVasyDOWNL := \IF_\s(\QVasyUp,\lambda).
\]

\begin{remark}
This definition effectively does three things. First, if $\QVasyUp$ is written as a matrix, relative to the decomposition established by the Minkowski scale \eqref{eqn:decomp:mink}, then $\QVasyDown$ will take the same form but without the appearances of $a_k(\dss)^{m-k}\sp$. Next, the functions $q_{i,j,k,\alpha}$ are frozen to their values at $\s=0$. (These two results are due to the appearance of $\pi_{\s=0}$.) Finally, due to the conjugation by $\s^{\lambda}$, all appearances of $\sds$ in $\QVasyUp$ are replaced by the complex parameter $-\lambda$.
\end{remark}

\begin{remark} The choice to conjugate by $\s^{-\lambda}$ is to ensure that the subsequent operators (in particular $\PVasyDown$) acting on $\Lsections$ sections, have physical domains corresponding to $\Re\lambda\gg1$. If one is convinced that the convention ought to be conjugation by $\s^\lambda$ rather than $\s^{-\lambda}$ one can kill two birds with one stone: Considering the model geometry, which motivates the viewpoint of hyperbolic space ``at infinity'' inside the forward light cone of compactified Minkowski space, it would be somewhat more natural to introduce the coordinate $\tilde\s=\s^{-1}$ on $\Mone$, then construct the closure of $\Mone$ as a submanifold of $\R_{\tilde\s}\times\Xone$. The indicial family would then by constructed via a conjugation of $\tilde\s^\lambda$ and appearances of $\tilde \s \partial_{\tilde \s}=-\sds$ would be replaced by $\lambda$. For this article, the aesthetics of such a choice are outweighed by the superfluous introduction of two dual variables, one for each of $\s$ and $\t$.\end{remark}

The b-operators we consider are somewhat simpler than the previous definition in that the coefficients $q_{i,j,k,\alpha}$ do not depend on $\s$ (in the correct basis).
\begin{definition}
A b-operator $\QVasyUp\in \Diff^p_\bbb(\overline\Mone; \bF)$ is b-trivial if, for all $\s_0\in\R^+$,
\[
\IF_\s (\QVasyUp,\lambda) \left( u \right) 
= 
\pi_{\s=\s_0} \left(
				\s^{\lambda} \QVasyUp \s^{-\lambda} \left(
													\pistarS u
										\right)
		\right),
\qquad
u\in C^\infty(\Xone;\E).
\]
\end{definition}
One advantage of this property is that self-adjointness of $\QVasyUp$  easily implies self-adjointness of $\QVasyDOWNL$ for $\lambda\in i\R$.

\begin{lemma}\label{lem:selfadjointpreservation}
Suppose $\QVasyUp$ is b-trivial and formally self-adjoint relative to the inner product
\[
\LinnprodSETA{u}{v} = \int_{\Mone} \innprodSETA{ u }{ v } \, \dss d\vol_g,
\qquad
u,v\in C^\infty_c(\Mone ; \F).
\]
Then, the indicial family $\QVasyDown$ is, upon restriction to $\lambda\in i\R$, formally self-adjoint relative to the inner product
\[
\LinnprodSS{u}{v} = \int_{\Xone} \innprodSS{ u }{ v } \, d\vol_g,
\qquad
u,v\in C^\infty_c(\Xone ; \E).
\]
Moreover, for all $\lambda$, $\QVasyDOWNL^* = \QVasyDown_{-\bar\lambda}$.
\end{lemma}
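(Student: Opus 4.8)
The plan is to exploit the fact that the indicial family is realised as a genuine conjugate of the $\mathbb{b}$-operator at any fixed slice $\s=\s_0$ (this is exactly what $\mathbb{b}$-triviality buys us), and then transfer the formal self-adjointness of $\QVasyUp$ on $\Mone$ down to the slice by an integration argument. First I would fix $\lambda\in i\R$ and $u,v\in C_c^\infty(\Xone;\E)$. The key observation is that for such $\lambda$ we have $\overline{\s^{-\lambda}}=\s^{\lambda}$ (since $\s>0$ and $\lambda$ is purely imaginary), so $\s^{\lambda}\QVasyUp\s^{-\lambda}$ is again formally self-adjoint on $\Mone$ with respect to $\LinnprodSETA{\cdot}{\cdot}$: conjugation by a positive function that is its own inverse-conjugate does not disturb self-adjointness. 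Then, because $\QVasyUp$ is $\mathbb{b}$-trivial, $\QVasyDOWNL(\pistarS u)$ agrees with $\pistarS\bigl(\QVasyDOWNL u\bigr)$ evaluated at \emph{every} slice $\s=\s_0$, not merely in the limit $\s_0\to 0$; equivalently, writing $w=\s^{\lambda}\QVasyUp\s^{-\lambda}\pistarS u$, the section $w$ is of the form $\pistarS$ of a $\s$-independent section, namely $\pistarS(\QVasyDOWNL u)$.

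The core step is then to compare inner products. Since $\innprodSS{a}{b}=\innprodSETA{\pistarS a}{\pistarS b}$ is independent of the chosen value of $\s$ (as recorded in Subsection~\ref{subsec:vecbundles}), I would compute, for any $T>0$,
\[
T\cdot\LinnprodSS{\QVasyDOWNL u}{v}
=\int_0^T\!\!\int_{\Xone}\innprodSETA{\pistarS(\QVasyDOWNL u)}{\pistarS v}\,d\vol_g\,\dss
=\LinnprodSETA{\chi_T\,\s^{\lambda}\QVasyUp\s^{-\lambda}\pistarS u}{\pistarS v}
\]
where $\chi_T$ is the indicator of $\{\s<T\}$; more carefully one takes a smooth cutoff $\chi_T$ supported in $\s<2T$, equal to $1$ on $\s<T$, and controls the error. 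Using the formal self-adjointness of $\s^{\lambda}\QVasyUp\s^{-\lambda}$ on $\Mone$, together with the fact that it is a $\mathbb{b}$-differential operator (so that $[\chi_T,\s^{\lambda}\QVasyUp\s^{-\lambda}]$ involves only derivatives of $\chi_T$ in $\sds$ and $\partial_x$, all supported in the shell $T<\s<2T$ and uniformly $O(1)$ there since $\pistarS u$, $\pistarS v$ are $\s$-independent in the relevant sense), I would move the operator onto $\pistarS v$ and divide by $T$. Letting $T\to\infty$, the shell contributions are $O(1)$ against the growing factor $T$ on the left, so they vanish in the limit, and I obtain $\LinnprodSS{\QVasyDOWNL u}{v}=\LinnprodSS{u}{\QVasyDOWNL v}$, i.e.\ formal self-adjointness of $\QVasyDOWNL$ for $\lambda\in i\R$. (Alternatively, and more cleanly, one argues slice-by-slice: $\LinnprodSETA{\cdot}{\cdot}=\int_{\R^+}(\text{slice pairing})\,\dss$, the slice pairing of $\s^{\lambda}\QVasyUp\s^{-\lambda}\pistarS u$ with $\pistarS v$ equals $\LinnprodSS{\QVasyDOWNL u}{v}$ independently of the slice by $\mathbb{b}$-triviality, and self-adjointness on each slice follows by a density/partition argument from self-adjointness on $\Mone$.)

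For the final claim $\QVasyDOWNL^*=\QVasyDown_{-\bar\lambda}$ for general $\lambda\in\C$, I would observe that $\overline{\s^{-\lambda}}=\s^{-\bar\lambda}$, and that taking the formal adjoint of $\QVasyUp$ (which equals $\QVasyUp$) commutes with the conjugation in the sense that $\bigl(\s^{\lambda}\QVasyUp\s^{-\lambda}\bigr)^*=\s^{\overline{-\lambda}}\QVasyUp^*\s^{\overline{\lambda}}=\s^{-\bar\lambda}\QVasyUp\s^{\bar\lambda}$ — wait, one must be careful with signs: conjugating by $\s^{\lambda}$ on the left corresponds on the adjoint side to conjugating by $\overline{\s^{\lambda}}=\s^{\bar\lambda}$, giving $\s^{\bar\lambda}\QVasyUp\s^{-\bar\lambda}$, which is the operator producing $\QVasyDown_{-\bar\lambda}$ under the convention that $\sds$ is replaced by $-\lambda$ (here $-(-\bar\lambda)$... ). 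I would pin this sign down by tracking the substitution $\sds\mapsto-\lambda$ through the definition of $\IF_\s$ carefully: since $\s^{\lambda}(\sds)\s^{-\lambda}=\sds-\lambda$, the parameter entering $\QVasyDOWNL$ is $-\lambda$, and the adjoint conjugation by $\s^{\bar\lambda}$ produces parameter $\bar\lambda=-(-\bar\lambda)$, i.e.\ $\QVasyDown$ at the point $-\bar\lambda$. The main obstacle I anticipate is \emph{not} the algebra but justifying the limiting/cutoff argument rigorously — making sure the commutator error terms genuinely decay after dividing by $T$ — which is why the slice-by-slice formulation is preferable: there the only nontrivial input is that formal self-adjointness of a $\mathbb{b}$-differential operator on the cone $\Mone$, tested against compactly supported sections, descends to each fixed slice, and this is immediate once one writes the integral over $\Mone$ as an iterated integral and uses $\mathbb{b}$-triviality to pull the $\s$-dependence out.
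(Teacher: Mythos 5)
Your overall strategy is the paper's: regularise the slice pairing by a function of $\s$, transfer the identity to $\Mone$ where the conjugated operator $\s^{\lambda}\QVasyUp\s^{-\lambda}$ is formally self-adjoint for $\lambda\in i\R$, and then kill the commutator with the cutoff. The preliminary steps are fine: b-triviality does give $\s^{\lambda}\QVasyUp\s^{-\lambda}\pistarS u=\pistarS(\QVasyDOWNL u)$, and the $\s$-independence of $\innprodSS{\cdot}{\cdot}$ is the right tool. But your pivotal display is false as written: the measure on $\Mone$ is the b-measure $\dss\,d\vol_g=\tfrac{d\s}{\s}\,d\vol_g$, so $\int_0^T\dss=\infty$ and the right-hand side of
\[
T\cdot\LinnprodSS{\QVasyDOWNL u}{v}=\int_0^T\!\!\int_{\Xone}\innprodSETA{\pistarS(\QVasyDOWNL u)}{\pistarS v}\,d\vol_g\,\dss
\]
diverges — you are implicitly integrating against $d\s$ rather than $d\s/\s$. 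To repair your version the cutoff must also avoid $\s=0$ (say $\chi_T=1$ on $[1,T]$, supported in $[\tfrac12,2T]$), the correct normalisation is $\log T$ rather than $T$, and then the commutator term — which is indeed $O(1)$, since the $\sds$-derivatives of such a cutoff are bounded and supported in regions of $\dss$-measure $O(1)$, while all other factors are $\s$-independent — is killed on dividing by $\log T$. Your ``cleaner'' slice-by-slice alternative is, as stated, circular: the assertion that self-adjointness on $\Mone$ ``descends to each fixed slice by a density/partition argument'' is exactly the content of the lemma and still requires an argument of the above type.

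For comparison, the paper avoids any limiting procedure: it fixes one weight $\psi\in C_c^\infty(\R^+_\s)$ supported away from $\s=0$ with $\int_{\R^+}\psi\,\dss=1$, writes $\LinnprodSS{\QVasyDOWNL u}{v}=\LinnprodSETA{\s^{\lambda}\QVasyUp\s^{-\lambda}\pistarS u}{\psi\pistarS v}$, moves the operator across, and then observes that every term of $[\s^{\lambda}\QVasyUp\s^{-\lambda},\psi]$ carries a factor $(\sds)^{m}\psi$ with $m\ge1$; since $\int_{\R^+}(\sds)^{m}\psi\,\dss=0$ by the fundamental theorem of calculus and the remaining factors are $\s$-independent, the commutator pairing vanishes exactly. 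Your sign bookkeeping for $\QVasyDOWNL^{*}=\QVasyDown_{-\bar\lambda}$ (adjoint conjugation by $\s^{\bar\lambda}$ combined with the substitution $\sds\mapsto-\lambda$) is on the right track and matches the paper's ``same reasoning with obvious changes'', but it too rests on the commutator step being carried out correctly first.
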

\begin{proof}
We prove only the first claim. That $\QVasyDOWNL^* = \QVasyDown_{-\bar\lambda}$ for all $\lambda$ follows by the same reasoning making the obvious changes in the second display provided below.
Let $\psi$ be a smooth function on $\RplusS$ with compact support (away from $\s=0$) and with unit mass $\int_{\R^+} \psi \, \dss =1$. Let $u,v\in C^\infty_c(\Xone ; \E)$. The b-triviality provides
\begin{align*}
\LinnprodSS{ \QVasyDOWNL u }{v }
&=
\int_{\R^+} \LinnprodSS{ \QVasyDOWNL u }{v } \,\psi\, \dss \\
&=
\LinnprodSETA{ \s^{\lambda} \QVasyUp \s^{-\lambda} \pistarS u }{ \psi \pistarS v}
\end{align*}
For $\lambda\in i\R$ this develops as
\begin{align*}
\LinnprodSS{ \QVasyDOWNL u }{v } &= \LinnprodSETA{ \pistarS u }{   \s^{\lambda} \QVasyUp \s^{-\lambda} \psi \pistarS v} \\
&=
\LinnprodSETA{ \pistarS u }{  \psi  \s^{\lambda} \QVasyUp \s^{-\lambda} \pistarS v} + 
\LinnprodSETA{ \pistarS u }{  [\s^{\lambda} \QVasyUp \s^{-\lambda} , \psi ] \pistarS v} \\
&=
\LinnprodSS{ u }{   \QVasyDOWNL v} + 
\LinnprodSETA{ \pistarS u }{  [\s^{\lambda} \QVasyUp \s^{-\lambda} , \psi ] \pistarS v} 
\end{align*}
where the last line has again used the b-triviality. Thus we require
\begin{align}\label{eqn:firstshittyequation}
\LinnprodSETA{ \pistarS u }{  [\s^{\lambda} \QVasyUp \s^{-\lambda} , \psi ] \pistarS v} =0
\end{align}
Consider $\QVasyUp$ as a matrix $\QVasyUp=[\QVasyUp_{i,j}]$ with respect to a basis in which
\[
\QVasyUp_{i,j} = \sum_{k,|\alpha|\le p} q_{i,j,k,\alpha} (\sds)^k  \partial_{x}^\alpha  
\]
for $q_{i,j,k,\alpha}\in C^\infty(\Xone)$. The key is to note that we may write
\begin{align}\label{eqn:secondshittyequation}
[ \s^{\lambda} \QVasyUp_{i,j} \s^{-\lambda} , \psi ] 
=
\sum_{k,|\alpha|\le p-1} \kappa_{i,j,k,\alpha} (\sds)^k \partial_{x}^\alpha
\end{align}
for smooth functions (which depend on $\lambda$) $\kappa_{i,j,k,\alpha}\in C^\infty(\Xone)$ such that every term in each $\kappa_{i,j,k,\alpha}$ is smoothly divisible by some non-zero integer $\sds$-derivative of $\psi$. Factoring out these appearances and integrating over $\R^+$ in \eqref{eqn:firstshittyequation} causes, by the fundamental theorem of calculus, the problematic term to vanish. The factorisation claim involving the functions $\kappa_{i,j,k,\alpha}$ follows directly from the following calculation. First
\begin{align*}
[ \s^{\lambda} \QVasyUp_{i,j} \s^{-\lambda} , \psi ] 
&=
 \sum_{k,|\alpha|\le p} q_{i,j,k,\alpha} [ (\sds-\lambda)^k  \partial_{x}^\alpha  , \psi ] \\
 &=
  \sum_{\substack{k,|\alpha|\le p \\ k\ge1}} q_{i,j,k,\alpha} [ (\sds-\lambda)^k,\psi] \partial_{x}^\alpha
\end{align*}
and for $k>1$,
\begin{align*}
[ (\sds-\lambda)^k,\psi] 
&= \sum_{\ell=1}^k \textstyle\binom{k}{\ell}(-\lambda)^{k-\ell} [ (\sds)^\ell , \psi ] \\
&= \sum_{\ell=1}^k \sum_{m=1}^\ell  \textstyle\binom{k}{\ell}(-\lambda)^{k-\ell} \textstyle\binom{\ell}{m} ((\sds)^m\psi) (\sds)^{\ell-m}
\end{align*}
which, due to the appearance of $(\sds)^m\psi$ gives \eqref{eqn:secondshittyequation} with the desired structure.
\end{proof}
\begin{remark}
The use of $d\vol_g$ is unimportant, the result holds for any measure on $\Xone$ given such a measure also appears as $d\vol_g$ does in the inner product on $\Mone$. 
\end{remark}

We finish this subsection by remarking the effect that the scale (Minkowski or Euclidean) has on the indicial family.

\begin{lemma}\label{lem:indicialfamily:changeofscales}
For $\QVasyUp\in\Diff^p_\bbb(\overline\Mone;\bF)$, the indicial families obtained using the scales $\s$ and $\t$ are related by
\[
\IF_\s (\QVasyUp,\lambda) = \rho^{\lambda} J^{-1} \IF_\t(\QVasyUp,\lambda) J \rho^{-\lambda}
\]
with $J$ presented in Lemma~\ref{lem:innprod:changeofscales}.
\end{lemma}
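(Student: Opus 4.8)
The plan is to unwind the definition of the indicial family map $\IF_\s$ against the definition of $\IF_\t$ and exploit the conjugation identity $\pistarS u = \pistarT J u$ from Lemma~\ref{lem:innprod:changeofscales}. Recall that for $u\in C^\infty(\Xone;\E)$ one has
\[
\IF_\s(\QVasyUp,\lambda)(u) = \pi_{\s=0}\bigl( \s^\lambda \QVasyUp \s^{-\lambda} (\pistarS u) \bigr),
\qquad
\IF_\t(\QVasyUp,\lambda)(u) = \pi_{\t=0}\bigl( \t^\lambda \QVasyUp \t^{-\lambda} (\pistarT u) \bigr).
\]
First I would substitute $\pistarS u = \pistarT (Ju)$ into the first formula and use $\s = \rho\t$, so that $\s^{-\lambda} = \rho^{-\lambda}\t^{-\lambda}$ as functions on $\Mone$ (here $\rho$ is a function of the base variables only, hence commutes with $\t$-scaling). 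Pulling the $\rho^{-\lambda}$ factor inside, $\s^\lambda \QVasyUp \s^{-\lambda}(\pistarT Ju) = \rho^\lambda \t^\lambda \QVasyUp \t^{-\lambda}\bigl( \rho^{-\lambda} \pistarT (Ju)\bigr)$. The subtlety is that $\rho^{-\lambda}$ is a function on the base $\Xone$, whereas $\pistarT$ produces a section on $\Mone$; but since $J$ has polynomial-in-$\drr$ entries and $\rho$ depends only on $\Xone$, we have $\rho^{-\lambda}\pistarT(Ju) = \pistarT(\rho^{-\lambda} J u)$, allowing the factor to be absorbed into the argument.

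Next I would apply $\pi_{\s=0}$, noting that because $\rho$ is bounded and smooth near the boundary and $\pi_{\s=0}$ (equivalently, restriction to $\t=0$ after the change of scales, up to the evenness set-up) factors through the base, $\pi_{\s=0}(\rho^\lambda \,\cdot\,) = \rho^\lambda \pi_{\s=0}(\,\cdot\,)$ — the prefactor $\rho^\lambda$ pulls out of the indicial restriction since it is a function on $\Xone$. What remains inside is precisely $\pi_{\t=0}\bigl(\t^\lambda \QVasyUp \t^{-\lambda} \pistarT(\rho^{-\lambda}Ju)\bigr) = \IF_\t(\QVasyUp,\lambda)(\rho^{-\lambda}Ju) = \IF_\t(\QVasyUp,\lambda)\,\rho^{-\lambda} J\, u$. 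Assembling,
\[
\IF_\s(\QVasyUp,\lambda)(u) = \rho^\lambda\, \IF_\t(\QVasyUp,\lambda)\, \rho^{-\lambda} J\, u.
\]
To match the stated identity $\IF_\s(\QVasyUp,\lambda) = \rho^\lambda J^{-1} \IF_\t(\QVasyUp,\lambda) J \rho^{-\lambda}$, I would use that the indicial operators act on $\E$ and commute appropriately with the pointwise, fibrewise endomorphism $J$ in the right slots; more precisely one checks $\rho^{-\lambda} J = J \rho^{-\lambda}$ trivially (both scalar-times-endomorphism on the base, and $\rho$ is a scalar function so it commutes with $J$), but the left-hand conjugating factor $J^{-1}$ appears because the correct bookkeeping is to read off the operator conjugated back via $\pistarT$ on the \emph{output} side as well, i.e.\ comparing $\pi_{\s=0}$-components requires re-expressing the $\s$-decomposition of the output in terms of the $\t$-decomposition, which is again implemented by $J$.

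The main obstacle — and the step deserving genuine care rather than hand-waving — is precisely this last point: tracking how $J$ enters on the \emph{output} side. The operator $\IF_\s(\QVasyUp,\lambda)$ is defined by taking the $\pi_{\s=0}$-component, i.e.\ reading off the coefficients of $a_k(\dss)^{m-k}\sp(\cdot)$, whereas after rewriting everything in the Euclidean scale the natural output is the $\pi_{\t=0}$-component, reading off coefficients of $a_k(\dtt)^{m-k}\sp(\cdot)$. Since $(\dss)^{m-k} = (\dtt + \drr)^{m-k}$, converting between these two readings is exactly an application of $J$ (on the source) and $J^{-1}$ (to undo it on the target, so that both sides genuinely land in $C^\infty(\Xone;\E)$ with the same decomposition convention). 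So the clean way to organize the proof is: (i) write $\pistarS = \pistarT \circ J$ on the input; (ii) move $\rho^{\pm\lambda}$ through using that $\rho$ is a base function and $\s = \rho\t$; (iii) observe that the $\pi_{\s=0}$-restriction of a section expressed in the Euclidean scale equals $J^{-1}$ applied to its $\pi_{\t=0}$-restriction; and (iv) combine to read off $\IF_\s(\QVasyUp,\lambda) = \rho^\lambda J^{-1}\IF_\t(\QVasyUp,\lambda) J\rho^{-\lambda}$. Steps (ii)--(iii) are the only places where something could go wrong, and both reduce to the explicit binomial formula for $J$ already established in Lemma~\ref{lem:innprod:changeofscales}.
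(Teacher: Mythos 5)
Your proposal is correct and follows essentially the same route as the paper, which combines $\pistarS=\pistarT\circ J$ with its dual relation $\pi_{\s=0}=J^{-1}\circ\pi_{\t=0}$ and the factorisation $\s^{\pm\lambda}=\rho^{\pm\lambda}\t^{\pm\lambda}$ (with $\rho$ a base function commuting through $J$ and the restrictions); your steps (i)--(iv) are exactly this argument. Note only that your intermediate ``Assembling'' display omits the $J^{-1}$ on the output side, but you flag this yourself and the final organised version, using that the $\pi_{\s=0}$-restriction equals $J^{-1}$ applied to the $\pi_{\t=0}$-restriction, repairs it and yields the stated identity.
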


\begin{proof}
Lemma~\ref{lem:innprod:changeofscales} provides $\pistarS = \pistarT\circ J$. Dual to this equation, $\pi_{\s=0}=J^{-1}\circ \pi_{\t=0}$.
Combining these observations gives the result
\begin{align*}
\IF_\s (\QVasyUp,\lambda) \left( u \right) 
&= 
\pi_{\s=0} \left(
				\s^{\lambda} \QVasyUp \s^{-\lambda} \left(
													\pistarS u
										\right)
		\right)\\
&=
J^{-1} \pi_{\t=0}\left(
				\rho^{\lambda}\t^{\lambda} \QVasyUp \t^{-\lambda}\rho^{-\lambda} \left(
													\pistarT J u
										\right)
		\right)\\
&=
\rho^{\lambda} J^{-1} \IF_\t(\QVasyUp,\lambda) (J \rho^{-\lambda} u ). \qedhere
\end{align*}
\end{proof}

\subsection{Microlocal analysis}

We recall standard objects in microlocal analysis (the necessary information is given in \cite{zworski:vm} for pseudodifferential operators acting on the trivial bundle, here we merely indicate the small changes that occur when acting on a vector bundle). 
Recall the open submanifold $\Xtwo=\{ \mu > -\tfrac12\}\subset\Xthree$ from Defintion~\ref{def:threebasemanifolds}.
We will assume that $\LsectionsT(\Xthree;\E)$ provides a notion of sections above $\Xtwo$ with Sobolev regularity $s$, denoted $H^s(\Xtwo;\E)$, with norm $\norm{\cdot}{H^s}$ (see Subsection~\ref{subsec:functionspaces} for subtleties arising due to the boundary $\CS$) . Let $\zeta$ denote the coefficients of a covector relative to some local base for $\T^*\Xtwo$ such that we may define the Japanese bracket $\japbrak{\zeta}$. We denote by
\[
\Psi^p_\scal (\Xtwo; \End\E) \subset \Psi^p(\Xtwo;\End\E)
\]
the space of properly supported pseudo-differential operators of order $p$ acting on $\E$ and which have scalar principal symbol. For $A\in \Psi^a_\scal (\Xtwo; \End\E)$ such a symbol is written
\[
\principalsymbol{A}\in S^a(\T^*\Xtwo \backslash 0 ; \End \E ) / S^{a-1}(\T^*\Xtwo\backslash 0 ; \End \E)
\]
and is scalar.
For such operators, it continues to hold that, for $B\in \Psi^b_\scal (\Xtwo; \End\E)$, the principal symbol of the composition
\[
\principalsymbol{AB} = \principalsymbol{A}\principalsymbol{B} \in S^{a+b}(\T^*\Xtwo \backslash 0 ; \End \E ) / S^{a+b-1}(\T^*\Xtwo\backslash 0 ; \End \E)
\]
remains scalar. However now, as lower order terms are not required to be diagonal, the commutator has principal symbol
\[
\principalsymbol{[A,B]} \in S^{a+b-1}(\T^*\Xtwo \backslash 0 ; \End \E ) / S^{a+b-2}(\T^*\Xtwo\backslash 0 ; \End \E)
\]
which, in general, is not scalar.
In the case that $A\in \Psi^a(\Xtwo)\subset\Psi^a_\scal(\Xtwo;\End\E)$ we get $\principalsymbol{\frac{1}{2i}[A,B]} = \frac12 \Hamiltonian{\principalsymbol{B}}(\principalsymbol{A})$ where $\Hamiltonian{\principalsymbol{B}}$ is the Hamiltonian vector field associated with $\principalsymbol{B}$.
Exactly as in the case that $\E$ is the trivial bundle, associated with the operator $A$ are the notions of the wave front set $\WF(A)$ and the characteristic variety $\Char(A)$.

There are two radial estimates used in the analysis of $\PVasyDown$ (the family of operators introduced in Section~\ref{sec:P}) in order to prove Proposition~\ref{prop:fredholm}. The analysis is performed in \cite[Section 2.4]{v:ml:inventiones} for functions with an alternative description given in \cite[E.5.2]{dyatlov-zworski:book}. We will follow the second approach and translate the results into a (non semiclassical) setting adapted to vector bundles. For this, and to follow closely the referenced works, we introduce \cite[Subsection E.1.2]{dyatlov-zworski:book} the radially compactified cotangent bundle $\rct\Xtwo$ and projection map $\kappa: \T^*\Xtwo\backslash0 \to \partial \rct\Xtwo$. Consider $P\in \Psi^p_\scal(\Xtwo;\End\E)$ with real principal symbol $\principalsymbol{P}$ and Hamiltonian vector field $\Hamiltonian{\principalsymbol{P}}$. Write $P$ in the following way
\[
P = \Re P + i \Im P
\]
for
\[
\Re P = \frac{P+P^*}{2} \in \Psi^p_\scal(\Xtwo;\End\E),
\qquad
\Im P = \frac{P-P^*}{2i} \in \Psi^{p-1}(\Xtwo;\End\E).
\]
In the sense of \cite[Definition E.52]{dyatlov-zworski:book}, let $\cSource$ and $\cSink$ be a source and a sink of $\principalsymbol{P}$ respectively. Suppose that $\japbrak{\zeta}^{1-p}\Hamiltonian{\principalsymbol{P}}$ vanishes on $\cSourceSink$.  Then
\begin{lemma}\label{lem:highregularity} 
Let $s$ satisfy the following threshold condition on $\cSource$ that
\[
\japbrak{\zeta}^{1-p}( \principalsymbol{\Im P} + (s + \tfrac{1-p}{2}) \Hamiltonian{\principalsymbol{P}} \log\japbrak{\zeta} ) \quad\textrm{is negative definite.}
\]
Then for all $B_1\in \Psi^0(\Xtwo)$ with $\WF(I-B_1)\cap \cSource=\varnothing$, there exists $A\in \Psi^0(\Xtwo)$ with $\Char(A)\cap \cSource=\varnothing$ such that for any $u\in C_c^\infty(\Xtwo;\E)$ (and any $N$ large enough)
\[
\norm{Au}{H^s} \le C( \norm{B_1Pu}{H^{s-p+1}} + \norm{u}{H^{-N}}).
\]
\end{lemma}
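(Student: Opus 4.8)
The plan is to adapt the standard radial point positive commutator argument (as in \cite[Section~2.4]{v:ml:inventiones} or \cite[E.5.2]{dyatlov-zworski:book}) to the vector bundle setting, exploiting the fact that $P$ has \emph{scalar} principal symbol so that the usual scalar propagation/radial estimate machinery applies to the symbolic computation, with the non-scalarity of $P$ confined to subprincipal order. First I would fix a boundary defining function $\rho_\partial$ for $\partial\rct\Xtwo$, so that $\japbrak{\zeta}\sim\rho_\partial^{-1}$ near fibre infinity, and localise microlocally near the radial source $\cSource$. The source condition means that the rescaled Hamiltonian flow $\japbrak{\zeta}^{1-p}\Hamiltonian{\principalsymbol{P}}$, which by hypothesis is tangent to (indeed vanishes on) $\cSourceSink$, has $\cSource$ as a source: nearby, with $\rho_1$ a defining function for $\cSource$ inside $\partial\rct\Xtwo$, one has $\japbrak{\zeta}^{1-p}\Hamiltonian{\principalsymbol{P}}\rho_1 = -\beta_1\rho_1$ and $\japbrak{\zeta}^{1-p}\Hamiltonian{\principalsymbol{P}}\rho_\partial = -\beta_\partial\rho_\partial$ modulo the ideal generated by $\rho_1$, with $\beta_1,\beta_\partial>0$ near $\cSource$.

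The core of the argument is to build a commutant $a\in S^{2s-p+1}(\T^*\Xtwo\backslash 0)$, \emph{scalar}, of the form $a = \chi(\rho_1)\chi(\rho_\partial)\japbrak{\zeta}^{2s-p+1}$ with $\chi$ a suitable cutoff supported near $\cSource$ and monotone along the flow, and quantise it to $A_0\in\Psi^{s-\frac{p-1}{2}}(\Xtwo)$ (scalar), setting $A = A_0^*A_0$. Then I would compute $\langle i(P^*A - AP)u,u\rangle$ in two ways. On one hand it equals $\langle i(\Re P\cdot A - A\Re P)u,u\rangle - \langle (A\Im P + (\Im P)^*A)u,u\rangle$, where the commutator with $\Re P$ (whose principal symbol is scalar and real) produces, via $\principalsymbol{\frac{1}{2i}[A,\Re P]} = \frac12\Hamiltonian{\principalsymbol{P}}(\sigma(A))$, the main negative term $-\,b^*b$ up to lower order plus a term supported where $\chi'\neq 0$, i.e. away from $\cSource$, which is controlled by $B_1Pu$ and $\|u\|_{H^{-N}}$; the $\Im P$ term contributes the symbol $\japbrak{\zeta}^{1-p}\principalsymbol{\Im P}$ multiplied against $|\sigma(A_0)|^2$. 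The threshold hypothesis is exactly what makes
\[
\japbrak{\zeta}^{1-p}\bigl(\principalsymbol{\Im P} + (s+\tfrac{1-p}{2})\Hamiltonian{\principalsymbol{P}}\log\japbrak{\zeta}\bigr)
\]
negative definite as an endomorphism, so the $\Im P$ contribution and the $\japbrak{\zeta}$-weight part of the commutator combine with a definite sign, dominating the indefinite lower-order endomorphism terms after absorbing them with an $\varepsilon$-Cauchy--Schwarz and the large-parameter gain. On the other hand $\langle i(P^*A-AP)u,u\rangle = 2\Im\langle Au, Pu\rangle$ is bounded by $\varepsilon\|A_0 u\|^2 + C_\varepsilon\|B_1 Pu\|_{H^{s-p+1}}^2$ once one checks $\WF(A_0)\subset\{B_1\equiv 1\}$, which is arranged by choosing the support of $\chi$ small enough relative to $\WF(I-B_1)$. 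Combining, absorbing, and using a standard elliptic/regularisation argument (inserting $\japbrak{\zeta}^{-\delta}$ regularisers and letting $\delta\to 0$, with an a priori $H^{-N}$ bound to start the bootstrap), yields $\|Au\|_{H^s}\le C(\|B_1Pu\|_{H^{s-p+1}} + \|u\|_{H^{-N}})$ for $A = $ some operator elliptic on a neighbourhood of $\cSource$, which is the claim.

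The main obstacle, relative to the scalar case, is bookkeeping the subprincipal endomorphism-valued terms: the commutator $[A,P]$ and the anticommutator $A\Im P + (\Im P)^*A$ produce operators of order $2s-p$ acting on $\E$ whose principal symbols are genuine endomorphisms, not scalars, and one must verify that the single scalar definiteness condition in the hypothesis suffices to dominate \emph{all} of them uniformly on $\cSource$. This works because every such term carries a factor $\japbrak{\zeta}^{-1}$ relative to the main term, so for $s$ in the threshold range one can close the estimate by choosing the microlocal support and the $\varepsilon$'s appropriately; the only genuinely new input is that ``negative definite'' in the hypothesis is meant in the sense of symmetric endomorphisms of $\E$, and that a Gårding inequality for endomorphism-valued symbols with a definite real part is available — this is the one place where the vector-bundle structure enters substantively, and it is handled exactly as the sharp Gårding inequality generalises, noting $\Re P$ has scalar principal part so the symbol calculus composition laws recalled above apply verbatim.
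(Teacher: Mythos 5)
The paper does not actually prove this lemma: it is imported wholesale from the references, with the text immediately above it stating that the radial estimates of \cite[Section 2.4]{v:ml:inventiones}, in the form of \cite[E.5.2]{dyatlov-zworski:book}, are being ``translated into a (non semiclassical) setting adapted to vector bundles'', the only new observation being that $P$ has scalar principal symbol so the symbolic part of the argument is unchanged and the non-scalar data (the endomorphism-valued $\principalsymbol{\Im P}$) enters only at subprincipal level, where sharp G\aa rding for symbols with definite symmetric part applies. Your plan reconstructs exactly that positive-commutator proof — scalar commutant $\chi(\rho_1)\chi(\rho_\partial)\japbrak{\zeta}^{2s-p+1}$, splitting into the $\Re P$ commutator and the $\Im P$ anticommutator, threshold condition giving definiteness, regularisation in $\japbrak{\zeta}^{-\delta}$ — so in substance you are following the same route the paper delegates to its references, and your identification of where the bundle structure genuinely matters (G\aa rding for endomorphism-valued subprincipal terms) is the same point the paper relies on.

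One step in your sketch is stated in a way that would fail if taken literally: you say the term supported where $\chi'\neq 0$ is ``controlled by $B_1Pu$ and $\|u\|_{H^{-N}}$''. It cannot be — that term has the same order $2s$ as the main term and sits at points near, but off, $\cSource$, which under the backward flow converge to $\cSource$ itself, so there is no independent microlocal region from which to propagate control; this is precisely why the sink estimate (Lemma~\ref{lem:lowregularity}) needs the extra $\norm{Bu}{H^s}$ term and the source estimate does not. The correct mechanism at a source is that, because $\japbrak{\zeta}^{1-p}\Hamiltonian{\principalsymbol{P}}$ applied to both $\rho_1$ and $\rho_\partial$ has a fixed sign near $\cSource$, one chooses $\chi$ monotone with the matching orientation so that \emph{all} cutoff-derivative terms have the same (favourable) sign as the main commutator term above threshold, and are simply discarded rather than estimated. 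You already invoke monotonicity of $\chi$ along the flow, so the fix is the one you are implicitly set up for, but as written the handling of this term is the one genuine gap in the plan.
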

\begin{lemma}\label{lem:lowregularity}
Let $s$ satisfy the following threshold condition on $\cSink$
\[
\japbrak{\zeta}^{1-p} ( \principalsymbol{\Im P} + (s + \tfrac{1-p}{2}) \Hamiltonian{\principalsymbol{P}} \log\japbrak{\zeta} ) \quad\textrm{is negative definite.}
\]
Then for all $B_1\in \Psi^0(\Xtwo)$ with $\WF(I-B_1)\cap \cSink=\varnothing$, there exists $A,B\in \Psi^0(\Xtwo)$ with $\Char(A)\cap \cSink=\varnothing$ and $\WF(B)\cap\cSink=\varnothing$ such that for any $u\in C_c^\infty(\Xtwo;\E)$ (and any $N$ large enough)
\[
\norm{Au}{H^{s}} \le C( \norm{Bu}{H^s} + \norm{B_1Pu}{H^{s-p+1}} + \norm{u}{H^{-N}}).
\]
\end{lemma}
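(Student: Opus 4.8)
The plan is to run the positive-commutator argument at a radial \emph{sink}, exactly as in \cite[Section 2.4]{v:ml:inventiones} or \cite[Section E.5.2]{dyatlov-zworski:book}, and to observe that the only change forced by the vector bundle is the passage from scalar to system versions of the symbolic estimates. Since $\principalsymbol{P}$ is scalar, the Hamilton dynamics of $\principalsymbol{P}$ on $\rct\Xtwo$ — the sink structure at $\cSink$, the vanishing of $\japbrak{\zeta}^{1-p}\Hamiltonian{\principalsymbol{P}}$ there, and the choice of spatial cut-offs along the flow — is literally the scalar one; the bundle enters only through the endomorphism-valued lower-order symbol $\principalsymbol{\Im P}$, which is precisely the object occurring in the threshold hypothesis. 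Accordingly, where the scalar proof invokes the sharp G\aa rding inequality one invokes its matrix-valued version, with a non-negative \emph{Hermitian} endomorphism playing the role of a non-negative function. (Lemma~\ref{lem:highregularity} is proved identically; at a source the error term produced by differentiating the spatial cut-off carries the favourable sign and is absorbed, so no a priori term $\norm{Bu}{H^s}$ appears — at a sink it cannot be absorbed, which is the origin of that term.)

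Concretely, I would work near $\cSink\subset\partial\rct\Xtwo$ with a boundary defining function $\rho_0$ of $\partial\rct\Xtwo$ (comparable to $\japbrak{\zeta}^{-1}$) and a non-negative function cutting out $\cSink$ inside $\partial\rct\Xtwo$, and build a commutant $G=\mathrm{Op}(g)$, with $g$ a power of $\japbrak{\zeta}$ — of the order dictated by the desired $H^s$ estimate — times a product of cut-offs in these two functions, so that $G$ is microsupported in a small conic neighbourhood of $\cSink$ and elliptic on a smaller one, inside which the operator $A\in\Psi^0(\Xtwo)$ of the statement is microsupported. Using that the Hamilton flow is attracted to $\cSink$ and that $\japbrak{\zeta}^{1-p}\Hamiltonian{\principalsymbol{P}}$ vanishes there, one chooses the weight and the cut-offs (following the cited references) so that the self-adjoint operator
\[
\mathcal{G}\ :=\ \tfrac1i\bigl(G^*GP-P^*G^*G\bigr)\ =\ \tfrac1i[G^*G,\Re P]+\{G^*G,\Im P\}
\]
has, on $\WF(G)$, principal symbol equal to a non-negative Hermitian endomorphism of $\E$ plus a residual symbol $e_B+e_\infty$. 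The non-negativity holds exactly when the threshold quantity $\japbrak{\zeta}^{1-p}\bigl(\principalsymbol{\Im P}+(s+\tfrac{1-p}{2})\Hamiltonian{\principalsymbol{P}}\log\japbrak{\zeta}\bigr)$ is negative definite on $\WF(G)$ — which is the hypothesis — because the leading part of $\principalsymbol{\mathcal{G}}$ assembles $\Hamiltonian{\principalsymbol{P}}(|g|^2)$, the subprincipal contribution of $\Im P$, and the $\tfrac{1-p}{2}$-shift of the symbol calculus into minus this matrix, up to a positive scalar weight and terms of lower order. Here $e_B$ is supported on the elliptic set of a fixed $B\in\Psi^0(\Xtwo)$ with $\WF(B)\cap\cSink=\varnothing$, and $e_\infty$ is supported away from fibre infinity.

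The matrix-valued sharp G\aa rding inequality applied to the main term, together with microlocalising $e_B$ against $B$ and bounding $e_\infty$ by $\norm{u}{H^{-N}}$ via elliptic regularity, gives a lower bound
\[
\innprod{\mathcal{G}u}{u}\ \ge\ c\,\norm{Au}{H^s}^2-C\bigl(\norm{Bu}{H^s}^2+\norm{u}{H^{-N}}^2\bigr),
\qquad c>0 .
\]
On the other hand $\innprod{\mathcal{G}u}{u}=2\,\Im\innprod{GPu}{Gu}$ exactly, so, distributing the weight carried by $G$ between the two slots and applying Cauchy--Schwarz with absorption as in the scalar case, $\innprod{\mathcal{G}u}{u}\le\varepsilon\,\norm{Au}{H^s}^2+C_\varepsilon\norm{B_1Pu}{H^{s-p+1}}^2+C\norm{u}{H^{-N}}^2$ for $B_1$ elliptic on $\WF(G)\cup\WF(B)$. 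Choosing $\varepsilon$ small and absorbing $\norm{Au}{H^s}^2$ yields the stated estimate. Since $u\in C_c^\infty(\Xtwo;\E)$ every pairing above is a priori finite; for distributional $u$ subject to the hypotheses on $Pu$ and $Bu$ one runs the same argument with a regulariser $\mathrm{Op}(\japbrak{\zeta}^{-\delta})$ inserted into $g$, derives bounds uniform in $\delta\in(0,1]$, and lets $\delta\downarrow0$.

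The step I expect to be the main obstacle is the symbol computation of the second paragraph: choosing the weight and the cut-offs so that $\Hamiltonian{\principalsymbol{P}}(|g|^2)$, the subprincipal term $\principalsymbol{\Im P}$, and the calculus corrections assemble into \emph{exactly} the Hermitian form appearing in the threshold hypothesis (up to lower order), and so that the residual symbols $e_B$, $e_\infty$ land where they can be absorbed into $\norm{Bu}{H^s}$ and $\norm{u}{H^{-N}}$ respectively. It is precisely the convergence of the Hamilton flow to $\cSink$ that forces $e_B$ to be genuinely present and not of the absorbable sign, which is why, unlike in Lemma~\ref{lem:highregularity}, the a priori term $\norm{Bu}{H^s}$ cannot be removed. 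Apart from this, the argument is the scalar one of the cited works, read with endomorphism-valued lower-order symbols and with the system sharp G\aa rding inequality in place of the scalar one.
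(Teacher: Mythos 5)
Your proposal is correct and follows essentially the same route the paper relies on: the paper states this lemma as a translation of the radial-point (sink) positive-commutator estimates of \cite[Section 2.4]{v:ml:inventiones} and \cite[E.5.2]{dyatlov-zworski:book} to the non-semiclassical, bundle-valued setting, using exactly the observations you make — the scalar principal symbol keeps the Hamilton dynamics scalar, the endomorphism-valued $\principalsymbol{\Im P}$ enters only through the threshold condition, and the sharp G\aa rding inequality is replaced by its matrix-valued version, with the regularisation argument handling distributional $u$ as in Remark~\ref{rem:highlowregularity}.
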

\begin{remark}\label{rem:highlowregularity}
There are two trivial but important points to make. First, a source for $P$ is a sink for $-P$ (and similarly a sink for $P$ is a source for $-P$). Second, we have assumed $P$ has real principal symbol therefore, when considering its adjoint $P^*$, we have $\Hamiltonian{\principalsymbol{P^*}}= \Hamiltonian{\principalsymbol{P}}$.
Less trivially, by approximation \cite[Lemma E.47]{dyatlov-zworski:book}, these results do not need to assume $u\in C_c^\infty(\Xtwo;\E)$. In Lemma~\ref{lem:highregularity}, if $s>\tilde s$ with $\tilde s$ satisfying the threshold condition and $u \in H^{\tilde s}(\Xtwo;\E)$ then the inequality holds (on the condition that the right hand side is finite). Similarly in Lemma~\ref{lem:lowregularity}, if $u$ is a distribution such that the right hand side of the inequality is well defined, then so too is the left hand side, and the inequality holds.
\end{remark}


\section{The Laplacian, the d'Alembertian and the Operator $\QVasyUp$}\label{sec:Lap.DAlem.Q}

This section shows the relationship between the Laplacian on $(\Xone,g)$ and the d'Alembertian on $(M,\eta)$.  We first introduce several differential operators on $\Xone$ using the Levi-Civita connection $\n$ of $g$ extended to all associated vector bundles associated with the principal orthonormal frame bundle. Let $\{e_i\}_{i=0}^n$ be a local orthonormal frame for $\T\Xone$ and $\{e^i\}_{i=0}^n$ the corresponding dual frame for $\T^*\Xone$. We define two first-order differential operators. Let the symmetrisation of the covariant derivative, called the symmetric differential, be denoted $\d$:
\[
\d : \left\{	 \begin{array}{rcl}
			C^\infty(\Xone ; \Ek) & \to & C^\infty(\Xone ; \E^{(k+1)}) \\
			u & \mapsto & \sum_{i=0}^n e^i\sp \n_{e_i} u
		\end{array}
		\right.
\]
and, by $\div$, its formal adjoint, called the divergence:
\[
\div : \left\{	 \begin{array}{rcl}
			C^\infty(\Xone ; \Ek) & \to & C^\infty(\Xone ; \E^{(k-1)}) \\
			u & \mapsto & -\sum_{i=0}^n \hookprod{e^i}{\n_{e_i} u}
		\end{array}
		\right.
\]
The two first order-operators behave nicely with $\adjtrace$ and $\trace$ giving the following commutation relations \cite[Equation 8]{hms}:
\begin{align}\label{eqn:HMScommutations}
[\trace,\div]=0=[\adjtrace,\d],
\quad
[\trace,\d]=-2\div,
\quad
[\adjtrace,\div]=2\d.
\end{align}
The rough Laplacian on this space will be denoted by $\n^*\n$:
\[
\n^*\n : \left\{	 \begin{array}{rcl}
			C^\infty(\Xone ; \Ek) & \to & C^\infty(\Xone ; \Ek) \\
			u & \mapsto &  \n^*\n u
		\end{array}
		\right.
\]
where $\n^*$ is the formal adjoint of $\n : C^\infty(\Xone ; \Ek) \to C^\infty(\Xone ; \T^*\Xone\otimes \Ek)$. Equivalently
\[
\n^*\n \,u = (-\tr \circ \n \circ \n) (u),
\qquad
u\in C^\infty(\Xone;\Ek)
\]
where $\tr : \T^*\Xone\otimes \T^*\Xone \to\R$ is the trace operator obtained from $g$ and is extended to $\tr : \T^*\Xone\otimes \T^*\Xone\otimes \Ek \to \Ek$. For the Lichnerowicz Laplacian, we introduce the Riemann curvature tensor which will be denoted by $\Riemann$:
\[
\Riemann_{{u},{v}}{w} = [\n_{u},\n_{v}] w - \n_{[u,v]}w,
\qquad
u,v,w\in C^\infty(\Xone; TX)
\]
and is extended to all tensor bundles as a derivation. On symmetric $k$-cotensors we introduce the curvature endomorphism which will be denoted by $q(\Riemann)$:
\[
q(\Riemann) \, u = \sum_{i,j=0}^{n} e^j\sp \hookprod{e^i}{\Riemann_{e_i,e_j} u},
\qquad
u\in \Ek.
\]
The Lichnerowicz Laplacian, hereafter simply referred to as the Laplacian, will be denoted by $\Delta$:
\[
\Delta : \left\{	 \begin{array}{rcl}
			C^\infty(\Xone ; \Ek) & \to & C^\infty(\Xone ; \Ek) \\
			u & \mapsto &  (\n^*\n + q(\Riemann) ) u
		\end{array}
		\right.
\]
We decompose symmetric $k$-cotensors using the symmetrised basis elements:
\[
u = \sum_{K\in\sA^k} u_K e^K,
\qquad
u\in C^\infty(\Xone; \Ek),
u_K \in C^\infty(\Xone).
\]
Useful formulae for the preceding operators thus far introduced are given in the following lemma. Recall the notation for finite sequences $\sA^k$ introduced in the final paragraph of Subsection~\ref{subsec:singlefibre}.

\begin{lemma}\label{lem:simpleformulae}
Let $u\in C^\infty(\Xone; \Ek)$. At a point in $\Xone$ about which $\{e_i\}$ are normal coordinates, the trace is
\[
\trace u
=
\sum_{K\in\sA^k}
\sum_{k_r\in K} \sum_{k_{p} \in \co{k_r}{}{K}} 
g^{k_r k_p}u_K e^\ct{k_p}{}{k_r}{}{K},
\]
the symmetric differential is
\[
\d u 
=
 \sum_{K\in\sA^k}	 \sum_{i=0}^n
  (e_i u_K) e^\co{}{i}{K},
\]
the divergence is
\[
 \div  u 
=
- \sum_{K\in\sA^k} \sum_{k_r\in K} \sum_{i=0}^n g^{ik_r} (e_i u_K) e^\co{k_r}{}{K},
\]
the rough Laplacian is
\[
\n^*\n \, u
=
\sum_{K\in \sA^k}	
		\sum_{i,j=0}^n
 	\left(
		-g^{ij} (e_i e_j u_K) e^K
 		+
		\sum_{k_r\in K}
		\sum_{\ell=0}^n
		g^{i\ell} u_K ( {e_\ell} \chris{ij}{k_r}) e^\co{k_r}{j}{K}
	\right)
\]
where the connection coefficients are given locally by $\n_{e_i} e^k = - \sum_{j=0}^n \chris{ij}{k}e^j$. Finally, (at a point using normal coordinates), the Riemann curvature takes the form
\[
\Riemann_{e_i,e_j} e^\ell = -\sum_{k=0}^n \tensor{{\Riemann}}{_{ij}^{\ell}_{k}} e^k,
\quad
\tensor{{\Riemann}}{_{ij}^{\ell}_{k}} = {e_i} \chris{jk}{\ell} - {e_j} \chris{ik}{\ell}.
\]
\end{lemma}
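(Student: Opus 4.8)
The plan is to compute each operator directly in a local holonomic frame $\{e_i\}$ by unwinding the definitions given in Section~\ref{sec:Lap.DAlem.Q}, exploiting the normal coordinate assumption to kill connection coefficients $\chris{ij}{k}$ at the base point wherever first-order terms suffice. For the trace $\trace$, I would start from $\trace u = \sum_{i} \hookprod{e^i}{\hookprod{e^i}{u}}$, note that $\hookprod{e^i}{e^K}$ picks out (and removes) each occurrence $k_r$ of an index in $K$ with coefficient $g^{i k_r}$ (this is just the definition of the contraction $\hookprod{u}{v}(w_1,\dots,w_k) = v(u^\#, w_1, \dots, w_k)$ together with $e^{i\,\#} = \sum_\ell g^{i\ell} e_\ell$), apply this twice, and relabel $i \mapsto k_p$ to land on the stated double sum over $k_r \in K$ and $k_p \in \co{k_r}{}{K}$. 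The formulae for $\d$ and $\div$ are even more immediate: $\d u = \sum_i e^i \sp \n_{e_i} u$ and at a point with $\chris{ij}{k}=0$ we have $\n_{e_i} u = \sum_K (e_i u_K) e^K$, so symmetrising against $e^i$ gives $\sum_{K,i} (e_i u_K) e^{\co{}{i}{K}}$; dualizing (contracting rather than multiplying) gives the divergence with the extra $g^{i k_r}$ from $e^{i\,\#}$.

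For the rough Laplacian the subtlety is that $\n^*\n = -\tr(\n\n)$ involves the \emph{second} covariant derivative, so I cannot discard the connection coefficients entirely even at the base point: $\n$ must be differentiated once. Writing $\n_{e_j} u = \sum_K (e_j u_K) e^K - \sum_K \sum_{k_r} u_K \chris{j k_r}{\ell} e^{\co{k_r}{\ell}{K}}$ and then applying $\n_{e_i}$ and evaluating at the normal point (so the undifferentiated $\Gamma$'s vanish but their $e_i$-derivatives do not), I expect the first term $-g^{ij} e_i e_j u_K \, e^K$ from differentiating the coefficient twice, and a term $g^{ij} u_K (e_i \chris{j k_r}{\ell}) e^{\co{k_r}{\ell}{K}}$ from hitting the Christoffel symbol — after relabeling the summation indices ($\ell \leftrightarrow$ the replaced index, and a sum index $\ell$ for the metric contraction as in the statement) this matches the displayed formula. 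The Riemann curvature formula is a direct specialization of $\Riemann_{u,v} w = [\n_u,\n_v]w - \n_{[u,v]}w$ to the holonomic frame (so $[e_i,e_j]=0$), acting on $e^\ell$ via the derivation extension and reading off coefficients at the normal point, which leaves only the $e_i \Gamma - e_j \Gamma$ difference.

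The main obstacle is purely bookkeeping: getting the index replacement notation $\co{k_r}{j}{K}$, $\ct{k_p}{}{k_r}{}{K}$ consistent with the symmetrization conventions — in particular, being careful that when I symmetrize $e^i \sp e^K$ the resulting multi-index is $iK$ (the notation $\co{}{i}{K}$), and that double contraction in $\trace$ produces a \emph{sum over ordered pairs} of distinct positions, not an unordered one, which is exactly why the statement carries the nested $\sum_{k_r \in K}\sum_{k_p \in \co{k_r}{}{K}}$ rather than a binomial factor. I would verify the whole thing on $k=1$ and $k=2$ to pin down all constants, then present the general case. No genuinely hard analysis is involved; the lemma is a catalog of definitional identities assembled for later use.
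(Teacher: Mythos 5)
Your computation is correct, and it is exactly the argument the paper has in mind: Lemma~\ref{lem:simpleformulae} is stated without proof as a routine unwinding of the definitions of $\trace$, $\d$, $\div$, $\n^*\n$ and $\Riemann$ in a coordinate frame at a point where the Christoffel symbols vanish, with the only non-trivial observation being precisely the one you flag, namely that the second covariant derivative in $\n^*\n$ and the curvature retain first derivatives of the $\chris{ij}{k}$. The one cosmetic caveat is that the formulas $\trace=\sum_i\hookprod{e^i}{\hookprod{e^i}{\cdot}}$ and $\div=-\sum_i\hookprod{e^i}{\n_{e_i}\cdot}$ were defined using an orthonormal frame, so in the holonomic normal-coordinate frame your relabelling $\sum_i g^{ik_r}g^{ik_p}\to g^{k_rk_p}$ is only valid at the base point --- which is all the lemma claims, so nothing is lost.
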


Similar in vein to \eqref{eqn:HMScommutations} we have the following two useful results, the second of which originates from \cite[Section 10]{lichnerowicz}.
\begin{lemma}\label{lem:laplaciancommutestracediv}
Let $u\in C^\infty (\Xone,\Ek)$ The Laplacian commutes with the Lefschetz-type trace operator
\[
[ \trace, \Delta ] u =0
\]
and commutes with the divergence under the following conditions
\[
[  \div , \Delta ] u =0 \textrm{ if } \left\{
								\begin{array}{l}
												k=0,1, \textrm{ } \\
												k=2 \textrm{ and $\Xone$ is Ricci parallel,} \\
												k\ge 3 \textrm{ and $\Xone$ is locally isomorphic to $\Hn$.}
								\end{array}
						\right.
\]
 \end{lemma}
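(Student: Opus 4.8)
The two commutation statements require genuinely different arguments, so I would handle them separately. The identity $[\trace,\Delta]u=0$ is a pointwise algebraic fact valid on any Riemannian manifold, while $[\div,\Delta]u=0$ is a differential‑geometric computation whose outcome is curvature‑sensitive — this is precisely where the hypotheses on $(\Xone,g)$ enter.

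For $[\trace,\Delta]=0$, write $\Delta=\n^*\n+q(\Riemann)$. Since $g$ is parallel, $\trace\colon\Ek\to\E^{(k-2)}$ is a $\n$-parallel bundle map, so it commutes with $\n$, hence with $\n^*\n=-\tr\circ\n\circ\n$ (commute $\trace$ through both derivatives and then contract the two cotangent slots). It therefore suffices to check $[\trace,q(\Riemann)]=0$ pointwise. Parallelism of $\trace$ also forces $\Riemann_{e_i,e_j}\circ\trace=\trace\circ\Riemann_{e_i,e_j}$, and so
\[
[q(\Riemann),\trace]=\sum_{i,j}\bigl[\,e^j\sp\circ(e^i\ip)\,,\ \trace\,\bigr]\circ\Riemann_{e_i,e_j}.
\]
On symmetric tensors $[\trace,e^i\ip]=0$, while $[\trace,e^j\sp]=c\,e^j\ip$ for a nonzero constant $c$ (the algebraic content of \eqref{eqn:HMScommutations}); hence $[q(\Riemann),\trace]$ equals, up to a nonzero constant, $\sum_{i,j}(e^j\ip)(e^i\ip)\,\Riemann_{e_i,e_j}$. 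Contractions commute on symmetric tensors while $\Riemann_{e_i,e_j}=-\Riemann_{e_j,e_i}$, so relabelling $i\leftrightarrow j$ shows this sum equals its own negative and thus vanishes. This settles the first claim for all $k$.

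For $[\div,\Delta]$ I would compute $[\div,\Delta]=[\div,\n^*\n]+[\div,q(\Riemann)]$ in an orthonormal frame normal at a point, expanding the four compositions $\div\n^*\n$, $\n^*\n\div$, $\div q(\Riemann)$, $q(\Riemann)\div$ with the formulae of Lemma~\ref{lem:simpleformulae}. Since $\div$ and $\Delta$ have commuting principal parts (a contraction against a scalar), the third‑ and second‑order contributions in $u$ cancel, so $[\div,\Delta]u$ depends on $u$ only through $u$ and $\n u$: it is a sum of (i) contractions of $\n\Riemann$ against $u$ and (ii) contractions of $\Riemann$ against $\n u$. Now the case analysis. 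For $k=0$ there is nothing to prove ($\E^{(-1)}=0$); for $k=1$, $\Delta$ is the Hodge Laplacian on $1$-forms and $\div$ is $\pm$ the codifferential, so the statement is the classical $\delta\Delta_{\mathrm{Hodge}}=\Delta_{\mathrm{Hodge}}\delta$. For $k=2$ the type (ii) terms cancel identically by the pair symmetry and first Bianchi identity of $\Riemann$ together with the particular structure of $q(\Riemann)$, and the type (i) terms reorganise, via the second Bianchi identity, into a contraction of $\n\Ric$ with $u$; hence $[\div,\Delta]u=0$ as soon as $(\Xone,g)$ is Ricci parallel. For $k\ge3$ the type (ii) terms no longer cancel for a general curvature tensor, but upon substituting the constant‑curvature form of $\Riemann$ — equivalently, assuming $(\Xone,g)$ locally isometric to $\Hn$ — every such contraction with $\n u$ vanishes, and since that curvature tensor is $\n$-parallel the type (i) terms vanish as well.

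The main obstacle is the divergence computation for general rank $k$: the honest index bookkeeping of every curvature term produced by reordering covariant derivatives inside $\div\n^*\n$, $\n^*\n\div$, $\div q(\Riemann)$ and $q(\Riemann)\div$, together with the two cancellations just invoked — that the type (ii) ($\Riemann$ against $\n u$) terms cancel identically for $k\le2$ but only for the constant‑curvature tensor when $k\ge3$, and that the type (i) ($\n\Riemann$ against $u$) terms collapse to $\n\Ric$ by the second Bianchi identity. Organising these sums (say in the finite‑sequence index notation of Subsection~\ref{subsec:singlefibre}) is routine but lengthy, reproducing \cite[Section~10]{lichnerowicz} for $k\le2$ and extending it to higher rank (cf.\ \cite{hms,dfg}).
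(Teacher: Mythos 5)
Your argument for $[\trace,\Delta]u=0$ is correct and is essentially the paper's argument in dual form: the paper proves $[\adjtrace,q(\Riemann)]=0$ using $[\hookprod{e^i}{},\adjtrace]=2\,e^i\sp$ and the antisymmetry of $\Riemann_{e_i,e_j}$, then dualises, and transfers the commutation to $\Delta$ via $\Delta=\div\d-\d\div+2q(\Riemann)$ rather than via parallelism of $\trace$ through $\n^*\n$; both routes are sound.

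The problem is the divergence half. Your plan coincides with the paper's (expand $[\div,\n^*\n]$ and $[\div,q(\Riemann)]$ in normal coordinates and sort the result into $\n\Riemann$-against-$u$ and $\Riemann$-against-$\n u$ terms), but the one substantive claim you make about the outcome is wrong for $k\ge3$. The terms of your type (ii), curvature contracted with $\n u$, cancel identically between $[\div,\n^*\n]$ and $[\div,q(\Riemann)]$ in every rank, with no hypothesis on the curvature; this universal first-order cancellation is the characteristic property of the Lichnerowicz Laplacian and is exactly what the paper's computation establishes: with $\n\Ric=0$ used only to remove divergence-of-curvature terms via the contracted second Bianchi identity, one finds $[\div,\n^*\n]u=\sum_i\hookprod{(\Ric e^i)}{\n_{e_i}u}+2(\Riemann,\n,u)$ and $[\div,q(\Riemann)]u=-[\div,\n^*\n]u-(\n,\Riemann,u)$, hence $[\div,\Delta]u=-(\n,\Riemann,u)$. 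The genuine rank-dependent obstruction is therefore zeroth order in $u$: a contraction of the full covariant derivative $\n\Riemann$ against $u$ which needs three slots of $u$, so it is vacuous for $k\le2$, and for $k\ge3$ it vanishes precisely when $\Riemann$ is parallel (in particular on quotients of $\Hn$; what the argument uses is parallel curvature, not the constant-curvature form of $\Riemann$ inserted against $\n u$). Your assertion that for $k\ge3$ the $\Riemann*\n u$ contributions survive for a general curvature tensor and are killed only after substituting the constant-curvature form is false; and since you defer the entire index computation as ``routine but lengthy,'' the lemma's only nontrivial content --- which terms survive and why the stated hypotheses suffice --- rests on an incorrect prediction. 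If the bookkeeping were carried out faithfully the conclusion would still emerge (everything you propose to check does vanish in constant curvature), but as written the proof is incomplete, and structurally mistaken, at exactly the step the lemma is about.
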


\begin{proof}
The first result is very standard. As the metric is parallel, the Riemann curvature tensor (acting as a derivation on $\Ek$) commutes with $\adjtrace$ hence 
\[
[\adjtrace,q(\Riemann)]u = \sum_{i,j=0}^n ( \adjtrace \hookprod{e^j}{\hookprod{e^i}{}}- \hookprod{e^j}{\hookprod{e^i}{}}\adjtrace) \, \Riemann_{e_i, e_j} u
\]
and developing the second term with the aid of the commutation formula $[\hookprod{e^i}{},\adjtrace]= 2 e^i \sp $ provides
\begin{align*}
[\adjtrace,q(\Riemann)]u 
	&= \sum_{i,j=0}^n -2(e^j \sp \hookprod{e^i}{} + \hookprod{e^j}{e^i\sp}\,) \, \Riemann_{e_i,e_j} u \\
	&= \sum_{i,j=0}^n -2(e^j \sp \hookprod{e^i}{} + \delta^{ij} + e^i \sp \hookprod{e^j}{}) \, \Riemann_{e_i,e_j} u
\end{align*}
which vanishes due to the skew-symmetry of the Riemann curvature tensor. By duality, $[\trace,q(\Riemann)]=0$. Now using the commutation relations \eqref{eqn:HMScommutations} and the following characterisation of the Laplacian \cite[Proposition 6.2]{hms}
\[
\Delta = \div \d - \d\div + 2q(\Riemann)
\]
provides the commutation of $\trace$ with $\Delta$.

The second result is more involved as a demonstration via a direct calculation (however as these statements are well known, we only sketch said calculations). For $k=0,1$ the Laplacian and divergence agree with Hodge Laplacian and the adjoint of the exterior derivative. We will thus assume $\Xone$ is Ricci parallel (and $k\ge 2$). We break the calculation into two parts studying $[ \div ,\n^*\n]$ and $[ \div ,q(\Riemann)]$. As usual, we use a frame $\{e_i\}_{i=0}^n$ for $\T\Xone$ with dual frame $\{e^i\}_{i=0}^n$ and calculate at a point about which the connection coefficients vanish. We act on $u=u_Ke^K\in C^\infty(\Xone;\Ek)$. That the Ricci tensor is parallel implies, by the (second) Bianchi identity, $\sum_\ell \n_{e_\ell} \tensor{{\Riemann}}{_{ij}^{\ell}_{k}}=0$. This observation is repeatedly used. Also, the Ricci endomorphism may be written
$\sum_{i,j} \Ric_i^j e^i\otimes e_j$
 with
$\Ric_i^j = \sum_{k,\ell}g^{k\ell}( \n_{e_i} \chris{k\ell}{j} - \n_{e_k}\chris{\ell i}{j})$.

Consider $[ \div ,\n^*\n]$. Calculating simply $ \div  \n^*\n$ gives
\begin{align*}
\div \n^*\n  &= -\tsum_{k} \hookprod{e^k}{\n_{e_k}} ( -\tr \tsum_{i,j}  e^i \otimes \n_{e_i} ( e^j \otimes \n_{e_j})) \\
&= \tsum_{i,j,k} g^{ij} \hookprod{e^k}{\n_{e_k}\n_{e_i}\n_{e_j}} - \tsum_{i,j,k,\ell} g^{i\ell} (\n_{e_k} \chris{i\ell}{j} )\,\hookprod{e^k}{\n_{e_j}}
\end{align*}
with a similar calculation for $\n^*\n \div $. Combining these results and commuting $\n_{e_k}$ with $\n_{e_i}\n_{e_j}$ gives
\begin{align*}
[ \div , \n^*\n ] & = \tsum_{i,j,k} g^{ij} \hookprod{e^k}{[ \n_{e_k} , \n_{e_i}\n_{e_j}]}- \tsum_i \hookprod{(\Ric  e^i)}{\n_{e_i}} \\
&= -\tsum_{i,j,k} g^{ij} \hookprod{e^k}{ \{ \n_{e_i} , \Riemann_{e_j,e_k} \} } - \tsum_i\hookprod{(\Ric  e^i)}{\n_{e_i}}
\end{align*}
where $\{\cdot,\cdot\}$ is the anticommutator. After a tedious calculation, we obtain
\begin{align}\label{eqn:deltalaprough}
[ \div , \n^*\n ] u = \sum_i\hookprod{(\Ric  e^i)}{\n_{e_i}u} + 2(\Riemann,\n,u)
\end{align}
where $(\Riemann,\n,u)$ is shorthand for the unwieldy term
\[
(\Riemann,\n,u)
=
\sum_{i,j} \sum_{k_r\in K} \sum_{k_p\in\co{k_r}{}{K}} \tensor{\Riemann}{^{i k_r k_p}_{j}} (\n_{e_i} u_K) e^\ct{k_p}{j}{k_r}{}{K}.
\]
For completeness we outline this calculation
\begin{align*}
-\tsum_{i,j,\ell} g^{ij} \hookprod{e^\ell}{ \{ \n_{e_i} , \Riemann_{e_j,e_\ell} \} } u
&= -\tsum_{i,j,\ell} \tsum_{k_r\in K} ( \{ \n_{e_i} , \tensor{\Riemann}{_\ell ^{ik_r} _j } \} u_K) \hookprod{e^\ell}{e^\co{k_r}{j}{K}} \\
&= -2\tsum_{i,j,\ell} \tsum_{k_r\in K} \tensor{\Riemann}{_\ell ^{ik_r} _j } (\n_{e_i} u_K) \hookprod{e^\ell}{e^\co{k_r}{j}{K}}
\end{align*}
where the anticommutator has been removed using $\sum_\ell \n_{e_\ell} \tensor{{\Riemann}}{_{ij}^{\ell}_{k}}=0$. Developing the final term in the preceding display gives
\begin{align*}
\hookprod{e^\ell}{e^\co{k_r}{j}{K}} = g^{j\ell} e^\co{k_r}{}{K} + \tsum_{k_p\in\co{k_r}{}{K}} g^{k_p\ell} e^\ct{k_p}{j}{k_r}{}{K}
\end{align*}
which after a little rearrangement of dummy indices and using the algebraic symmetries of the Riemann curvature tensor gives
\[
-\tsum_{i,j,\ell} g^{ij} \hookprod{e^\ell}{ \{ \n_{e_i} , \Riemann_{e_j,e_\ell} \} } u = 2\sum_i\hookprod{(\Ric  e^i)}{\n_{e_i}u} + 2(\Riemann,\n,u)
\]
upon subtraction of $\tsum_i\hookprod{(\Ric  e^i)}{\n_{e_i}u}$, this provides \eqref{eqn:deltalaprough}.

Consider $[ \div ,q(\Riemann)]$. Similar to the previous calculations we obtain
\begin{align*}
[ \div , q(\Riemann) ] 
&= \tsum_{i,j,k}-\hookprod{e^k}{e^j\sp \hookprod{e^i}{\n_{e_k}\Riemann_{e_i,e_j}}} + e^j\sp \hookprod{e^i}{\Riemann_{e_i,e_j} (\hookprod{e^k}{\n_{e_k}}}) \\
&= \tsum_{i,j,k} e^j\sp \hookprod{e^i}{\hookprod{e^k}{[\Riemann_{e_i,e_j}, \n_{e_k}]}} - g^{jk} \hookprod{e^i}{\n_{e_k} \Riemann_{e_i,e_j}} + e^j\sp \hookprod{e^i}{\hookprod{(\Riemann_{e_i,e_j} e^k) } \n_{e_k}}
\end{align*}
After an even more tedious calculation treating each of the three terms in the previous display, we obtain
\begin{align}\label{eqn:deltacurv}
[ \div , q(\Riemann) ] u = - [ \div ,\n^*\n]u - (\n,\Riemann,u)
\end{align}
where $(\n,\Riemann,u)$ represents the even more unwieldy term
\[
(\n,\Riemann,u) = \sum_{i,j,\ell} \sum_{ \substack{ k_r\in K \\ k_p \in \co{k_r}{}{K} \\ k_s \in \ct{k_p}{}{k_r}{}{K} }} g^{\ell k_s} (\n_{e_\ell} \tensor{\Riemann}{_i ^{k_rk_p} _j}) u_K e^\cthree{k_s}{i}{k_p}{j}{k_r}{}{K}.
\]
Again, we sketch the calculation. One of the three terms is easy to calculate directly giving
\[
\tsum_{i,j,k} e^j\sp \hookprod{e^i}{\hookprod{(\Riemann_{e_i,e_j} e^k) } \n_{e_k}} u = -(\Riemann,\n,u).
\]
Another term is also relatively easy, again using the trick that $\sum_\ell \n_{e_\ell} \tensor{{\Riemann}}{_{ij}^{\ell}_{k}}=0$,
\[
- \tsum_{i,j,k}g^{jk} \hookprod{e^i}{\n_{e_k} \Riemann_{e_i,e_j}} u=-\tsum_i\hookprod{(\Ric  e^i)}{\n_{e_i}u} -(\Riemann,\n,u)
\]
The involved step is treating 
$\tsum_{i,j,k} e^j\sp \hookprod{e^i}{\hookprod{e^k}{[\Riemann_{e_i,e_j}, \n_{e_k}]}}$.
We first obtain
\[
\tsum_{i,j,\ell} e^j\sp \hookprod{e^i}{\hookprod{e^\ell}{[\Riemann_{e_i,e_j}, \n_{e_\ell}]}} u 
=
\tsum_{i,k,\ell,m}\sum_{k_r\in K}([\tensor{\Riemann}{_{ji}^{k_r}_m},\n_{e_\ell}]u_K) \, e^j\sp \hookprod{e^i}{\hookprod{e^\ell}{e^\co{k_r}{m}{K}}}
\]
and it is important to realise that whenever the index $\ell$ contracts with $m$ (or $i$ or $j$), the resulting sum vanishes (as $\sum_\ell \n_{e_\ell} \tensor{{\Riemann}}{_{ij}^{\ell}_{k}}=0$). Similarly, if $i$ and $m$ are contracted then, as Ricci is parallel, the resulting sum vanishes. Expanding the final part of the previous display (and letting $\terms(g^{\ell m}, g^{im})$ denote any terms involving $g^{\ell m}$ or $g^{im}$) gives
\begin{align*}
e^j\sp \hookprod{e^i}{\hookprod{e^\ell}{e^\co{k_r}{m}{K}}}
&= \sum_{k_p\in\co{k_r}{}{K}} g^{\ell k_p} e^j \sp \hookprod{e^i}{ e^\ct{k_p}{m}{k_r}{}{K} } + \terms(g^{\ell m}) \\
&= \sum_{ \substack{ k_p\in\co{k_r}{}{K} \\ k_s\in \ct{k_p}{}{k_r}{}{K}} }	g^{\ell k_p} g^{i k_s} e^\cthree{k_s}{j}{k_p}{m}{k_r}{}{K}						+ \terms({g^{\ell m}, g^{i m}})
\end{align*}
and after a little rearrangement of dummy indices, this gives
\[
\tsum_{i,j,\ell} e^j\sp \hookprod{e^i}{\hookprod{e^\ell}{[\Riemann_{e_i,e_j}, \n_{e_\ell}]}} u = -(\n,\Riemann,u)
\]
whence \eqref{eqn:deltacurv} is obtained.

Combining \eqref{eqn:deltalaprough} with \eqref{eqn:deltacurv} gives $[ \div ,\Delta]u=-(\n,\Riemann,u)$. For symmetric tensors of rank two, such a summation (over $k_r,k_p,k_s$) does not arrive so such a term instantly vanishes and the result follows. For tensors of higher rank, one needs the Riemann curvature to be parallel. This is assured in the constant curvature setting of $\Hn$.
\end{proof}

The objects thus far introduced in this section all have natural analogues in the Lorentzian setting on $(\Mone,\eta)$. We denote by $\nM$ the Levi-Civita connection of $\eta$ extended to all associated vector bundles and $\MRiemann$ the Riemann curvature tensor of $\eta$. We let $\dETA$ and $\divETA$ denote the symmetric differential and the divergence with respect to $\eta$. Finally we let $\nM^*\nM$ denote the rough d'Alembertian and $\DeltaAmb$ the (Lichnerowicz) d'Alembertian both constructed with respect to the metric $\eta$.

\subsection{Minkowski scale and the operator $\QVasyUp$}

We define the first of our two main operators.
\begin{definition}
The second-order differential operator $\QVasyUp\in\Diff^2(\Mone; \End\F)$ is the following conjugation of the d'Alembertian:
\[
\QVasyUp : \left\{	 \begin{array}{rcl}
			C^\infty(\Mone ; \F) & \to & C^\infty(\Mone ; \F) \\
			u & \mapsto &  \s^{\frac n2-m +2} \DeltaAmb \s^{-\frac n2+m} \, u
		\end{array}
		\right.
\]
\end{definition}
\begin{lemma}\label{lem:Qselfadjoint}
The differential operator $\QVasyUp$ is formally self-adjoint with respect to the inner product
\[
\LinnprodSETA{u}{v} = \int_{\Mone} \innprodSETA{ u }{ v } \, \dss d\vol_g,
\qquad
u,v\in C^\infty_c (\Mone; \F).
\]
\end{lemma}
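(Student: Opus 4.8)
The plan is to reduce the formal self-adjointness of $\QVasyUp$ to the (standard) formal self-adjointness of the Lichnerowicz d'Alembertian $\DeltaAmb$ with respect to the \emph{geometric} $L^2$ pairing on $\Mone$ induced by $\gamb$, by means of a scalar conjugation that exactly absorbs the weight relating the two $L^2$ structures. Throughout write $\LinnprodETA{u}{v} := \int_\Mone \innprodETA{u}{v}\, d\vol_\gamb$ for $u,v\in C^\infty_c(\Mone;\F)$.

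First I would record the change of $L^2$ structure. Since $\gamb = -d\s\otimes d\s + \s^2 g$ on $\Mone=\RplusS\times\Xone$, its volume density is $d\vol_\gamb = \s^{n+2}\,\dss\,d\vol_g$; combined with \eqref{eqn:twoinnprods:eta}, i.e. $\innprodSETA{u}{v} = \s^{2m}\innprodETA{u}{v}$, this yields
\[
\LinnprodSETA{u}{v} = \int_\Mone \s^{2m}\innprodETA{u}{v}\,\dss\,d\vol_g = \int_\Mone \s^{2m-n-2}\,\innprodETA{u}{v}\,d\vol_\gamb,\qquad u,v\in C^\infty_c(\Mone;\F).
\]
Hence, setting $w:=\s^{m-\frac n2-1}$, multiplication by $w$ is a unitary isomorphism of $C^\infty_c(\Mone;\F)$ carrying the pairing $\LinnprodSETA{\cdot}{\cdot}$ onto the geometric pairing $\LinnprodETA{\cdot}{\cdot}$, with inverse multiplication by $w^{-1}$.

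Next I would conjugate $\QVasyUp$ by $w$. Writing $\QVasyUp = \s^{\frac n2-m+2}\,\DeltaAmb\,\s^{-\frac n2+m}$ as a composition of multiplication operators with $\DeltaAmb$, and using $w=\s^{m-\frac n2-1}$, the powers of $\s$ telescope:
\[
w\,\QVasyUp\,w^{-1} = \s^{(m-\frac n2-1)+(\frac n2-m+2)}\;\DeltaAmb\;\s^{(-\frac n2+m)-(m-\frac n2-1)} = \s\,\DeltaAmb\,\s.
\]
Now $\DeltaAmb = \nM^*\nM + q(\MRiemann)$ is formally self-adjoint with respect to $\LinnprodETA{\cdot}{\cdot}$: the rough d'Alembertian $\nM^*\nM$ is self-adjoint because $\nM^*$ is by definition the formal adjoint of the $\gamb$-compatible connection $\nM$ (integration by parts uses only the nondegeneracy of the fibre pairing, not its signature), while the curvature endomorphism $q(\MRiemann)$ is a pointwise symmetric endomorphism of $\F$ --- a purely algebraic, signature-independent fact about the Weitzenb\"ock curvature term, following from the symmetries of the Riemann tensor of a metric connection (cf. \cite{lichnerowicz,hms}). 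Since multiplication by the positive real function $\s$ is self-adjoint with respect to $\LinnprodETA{\cdot}{\cdot}$, and formal adjunction reverses the order of a composition, $(\s\,\DeltaAmb\,\s)^* = \s\,\DeltaAmb\,\s$. Transporting back through the unitary $w$, $\QVasyUp = w^{-1}(\s\,\DeltaAmb\,\s)\,w$ is formally self-adjoint with respect to $\LinnprodSETA{\cdot}{\cdot}$, which is the claim. (Equivalently one may avoid the unitary and simply insert the identities above into $\LinnprodSETA{\QVasyUp u}{v}$, move $\DeltaAmb$ across by its self-adjointness, and reverse the manipulations to reach $\LinnprodSETA{u}{\QVasyUp v}$.)

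The only genuinely non-mechanical ingredient is the formal self-adjointness of $\DeltaAmb$ on the Lorentzian cone --- concretely, the pointwise symmetry of $q(\MRiemann)$, which holds in any signature --- everything else being the bookkeeping of the conjugating weight $w$, and this weight is forced precisely by the requirement that $w\,\QVasyUp\,w^{-1}$ coincide with the manifestly self-adjoint operator $\s\,\DeltaAmb\,\s$. (This is also exactly why the asymmetric powers $\s^{\frac n2-m+2}$ and $\s^{-\frac n2+m}$ appear in the definition of $\QVasyUp$.)
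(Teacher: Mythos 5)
Your proof is correct and follows essentially the same route as the paper: both reduce the claim to the formal self-adjointness of $\DeltaAmb$ with respect to the geometric pairing $\LinnprodETA{\cdot}{\cdot}$ and then track how the powers of $\s$ relating $\QVasyUp$ to $\DeltaAmb$ interact with the identities $\innprodSETA{u}{v}=\s^{2m}\innprodETA{u}{v}$ and $d\vol_\eta=\s^{n+2}\,\dss\,d\vol_g$. Your explicit unitary weight $w=\s^{m-\frac n2-1}$ with $w\,\QVasyUp\,w^{-1}=\s\,\DeltaAmb\,\s$ is just a cleaner packaging of the bookkeeping the paper carries out informally, so no further changes are needed.
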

\begin{proof}
The d'Alembertian is self-adjoint with respect to the following inner product
\[
\LinnprodETA{ u}{ v } = \int_{\Mone} \innprodETA{ u }{ v } \, d\vol_\eta,
\qquad
u,v\in C^\infty_c (\Mone; \F).
\]

The two inner products on $\F$ are related via \eqref{eqn:twoinnprods:eta}. Tracking the effects of the conjugations by powers of $\s$ on $\DeltaAmb$ as well as the multiplication by $\s^2$ in order to obtain $\QVasyUp$ implies self-adjointness when using the inner product $\innprodSETA{\cdot}{\cdot}$ with the measure $\s^{-(n+2)}d\vol_\eta$ which gives the result as $d\vol_\eta = \s^{n+2}\dss  d\vol_g$.
\end{proof}

\begin{lemma}\label{lem:Qcommutestrace}
The operator $\QVasyUp$ commutes with the Lefschetz-type trace operator $\s^{-2}\traceSETA$.
\[
[ \s^{-2}\traceSETA, \QVasyUp ] u = 0 ,
\qquad
u\in C^\infty(\Mone;\F).
\]
\end{lemma}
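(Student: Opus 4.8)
The plan is to reduce the claimed commutation to the corresponding statement for the d'Alembertian $\DeltaAmb$, and then to invoke the fact that the Lichnerowicz d'Alembertian commutes with the ambient Lefschetz trace operator $\traceETA$. The first observation is that the operators by which we conjugate, namely the powers $\s^{\pm(\frac n2 - m)}$ and the multiplication by $\s^2$, are scalar functions on $\Mone$, so the essential content is really a commutation between $\DeltaAmb$ and a trace operator, twisted by the fact that $\QVasyUp$ involves the $\s^{-2}\gamb$-trace $\traceSETA$ while the geometric d'Alembertian naturally commutes with the $\gamb$-trace $\traceETA$. By \eqref{eqn:twoLambdas:eta} the two traces are related by $\traceSETA = \s^4 \traceETA$, so $\s^{-2}\traceSETA = \s^2 \traceETA$, and the problem becomes one of tracking how the scalar $\s$-weights pass through $\DeltaAmb$ and $\traceETA$.

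The key steps, in order, would be: (i) Recall (this is the Lorentzian analogue of the first assertion of Lemma~\ref{lem:laplaciancommutestracediv}, with the same proof: $\traceETA$ commutes with the Riemann-curvature derivation because $\gamb$ is parallel, and $\traceETA$ commutes with $\nM^*\nM$ as well via the Weitzenb\"ock-type identity $\DeltaAmb = \divETA\dETA - \dETA\divETA + 2q(\MRiemann)$ together with the commutation relations \eqref{eqn:HMScommutations}) that $[\traceETA, \DeltaAmb] = 0$ on $C^\infty(\Mone;\F)$. (ii) Write $\QVasyUp = \s^{\frac n2 - m + 2}\DeltaAmb\,\s^{-\frac n2 + m}$ and $\s^{-2}\traceSETA = \s^2\traceETA$, and compute the commutator $[\s^2\traceETA,\QVasyUp]$ by moving the scalar factors $\s^a$ and $\traceETA$ past each other. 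Since $\traceETA$ is $\gamb$-parallel and $\s$ is a function, $\traceETA$ commutes with multiplication by any power of $\s$; so all the rearrangement is bookkeeping of scalar powers, and the $\DeltaAmb$--$\traceETA$ commutation from step (i) is what makes the remaining terms cancel. (iii) Conclude that $[\s^{-2}\traceSETA, \QVasyUp]u = 0$.

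The only point requiring genuine care — and the step I expect to be the main obstacle — is verifying that $\traceETA$ (equivalently $\traceSETA$, up to the scalar $\s^4$) really does commute with the scalar conjugating factors in the presence of the d'Alembertian, i.e.\ that no derivatives of $\s$ spoil the cancellation. Concretely one must check that $\DeltaAmb(\s^{-\frac n2+m}u)$ and $\s^{-\frac n2+m}\DeltaAmb u$ differ only by terms that are themselves annihilated by, or commute appropriately with, $\traceETA$ — this uses that $\s^a$ is a function of the single Lorentzian scale variable and that the relevant first-order pieces ($\dETA$, $\divETA$ hitting $\s^a$) interact with $\traceETA$ through \eqref{eqn:HMScommutations} in a way that survives the trace. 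Once the $\gamb$-parallelism of $\traceETA$ is used to commute it with all scalar functions of $\s$, the identity collapses to step (i), and the lemma follows. The argument is essentially the Lorentzian mirror of the first half of Lemma~\ref{lem:laplaciancommutestracediv}, decorated with $\s$-weights.
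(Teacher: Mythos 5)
Your plan follows the paper's proof: establish the Lorentzian analogue $[\traceETA,\DeltaAmb]=0$ of the first part of Lemma~\ref{lem:laplaciancommutestracediv}, convert via $\traceSETA=\s^4\traceETA$, and then do a direct scalar bookkeeping computation. Steps (i)--(iii) are exactly what the paper does, so the proposal is essentially correct.

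One clarification on the point you flag as ``the main obstacle'': no such verification is needed, and as literally stated it would fail. You never have to compare $\DeltaAmb(\s^{-\frac n2+m}u)$ with $\s^{-\frac n2+m}\DeltaAmb u$ --- the commutator $[\DeltaAmb,\s^a]$ is a genuine first-order operator whose terms are not annihilated by $\traceETA$, and trying to prove otherwise would lead you astray. The actual computation never moves $\DeltaAmb$ past a power of $\s$: one moves only the fibrewise algebraic operator $\traceETA$ (which requires no parallelism, only that it is a bundle map) past the scalar factors, then past $\DeltaAmb$ using step (i), so that
\[
\s^{-2}\traceSETA\,\QVasyUp^{(m)}
=\s^{2}\,\s^{\frac n2-m+2}\,\DeltaAmb^{(m-2)}\,\s^{-\frac n2+m}\,\traceETA
=\s^{\frac n2-(m-2)+2}\,\DeltaAmb^{(m-2)}\,\s^{-\frac n2+(m-2)}\,\bigl(\s^{2}\traceETA\bigr)
=\QVasyUp^{(m-2)}\,\s^{-2}\traceSETA ,
\]
where the superscripts indicate the rank acted upon. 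The whole cancellation is thus carried by the fact that the conjugation exponent $-\tfrac n2+m$ in the definition of $\QVasyUp$ is rank-dependent: passing from rank $m$ to rank $m-2$ shifts the exponents by exactly the factor $\s^{2}$ sitting in $\s^{-2}\traceSETA=\s^{2}\traceETA$, so the powers of $\s$ regroup identically and nothing involving derivatives of $\s$ ever arises.
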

\begin{proof}
The Lorentzian analogue of Lemma~\ref{lem:laplaciancommutestracediv} is that the d'Alembertian commutes with $\traceETA$
\[
[\traceETA,\DeltaAmb]=0.
\]
This operator is related to our standard Lefschetz-type operator $\traceSETA$ via \eqref{eqn:twoLambdas:eta}. The result is now a direct calculation. For clarity we denote differential operators with a superscript $(m)$ to indicate that they act on symmetric cotensors of rank $m$. In particular, on $C^\infty(\Mone;\F)$ we have
\begin{align*}
\s^{-2}\traceSETA \QVasyUp^{(m)}
&= \s^2  \traceETA \s^{\frac n2-m+2} \DeltaAmb^{(m)} \s^{-\frac n2+m} \\
&= \s^2\s^{\frac n2-m+2} \DeltaAmb^{(m-2)} \s^{-\frac n2+m}  \traceETA \\
&= \s^{\frac n2-(m-2)+ 2} \DeltaAmb^{(m-2)} \s^{-\frac n2 + (m-2)}\s^2 \traceETA \\
&= \QVasyUp^{(m-2)} \s^{-2} \traceSETA.\qedhere
\end{align*}
\end{proof}

The rest of this subsection is dedicated to proving
\begin{proposition}\label{prop:decompQ:k:mink}
For $u\in C^\infty(\Mone; \F)$ decomposed relative to the Minkowski scale \eqref{eqn:decomp:mink}, the conjugated d'Alembertian $\QVasyUp$ is given by
\begin{align*}
a_{k+2}\,(\dss)^{m-k-2}&\sp\big(
					-b_kb_{k+1} \adjtrace
		\big)u^{(k)} +\\
a_{k+1}\,(\dss)^{m-k-1}&\sp\big(
					2b_k\d
		\big)u^{(k)} +\\
 \QVasyUp   a_k\, (\dss)^{m-k} \sp  u^{(k)} \,=\,
a_{k\phantom{+0}}\,(\dss)^{m-k\phantom{+0}}&\sp\big(
					\Delta + (\sds)^2 -  c_k -  \adjtrace\trace
		\big)u^{(k)} +\\
a_{k-1}\,(\dss)^{m-k+1}&\sp\big(
					-2b_{k-1}  \div 
		\big)u^{(k)} +\\
a_{k-2}\,(\dss)^{m-k+2}&\sp\big(
					-b_{k-2}b_{k-1} \trace
		\big)u^{(k)}
\end{align*}
with constants
\begin{align*}
a_k &= ((m-k)!)^{-1/2}, \\
b_k & = \sqrt{m-k}, \\
c_k &= \tfrac{n^2}{4} + m(n+2k+1) - k(2n+3k-1).
\end{align*}
Consequently, relative to this scale, there exist $\DVasyUp\in \Diff^1(\Mone;\End\F)$ and $\ZeroOrderVasyUp\in C^\infty(\Mone;\End\F)$ independent of $\s$ such that
\[
\QVasyUp = \n^*\n + (\sds)^2 + \DVasyUp + \ZeroOrderVasyUp
\]
\end{proposition}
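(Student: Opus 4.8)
The plan is to compute the d'Alembertian $\DeltaAmb$ directly from the metric-cone structure of $\eta = -d\s\otimes d\s + \s^2 g$ and then follow the effect of the two conjugations by powers of $\s$. First I would record the Levi-Civita connection $\nM$ of $\eta$ in terms of $\n$: one has $\nM_{\partial_\s}\partial_\s = 0$, $\nM_{\partial_\s}X = \nM_X\partial_\s = \s^{-1}X$, and $\nM_X Y = \n_X Y + \s g(X,Y)\partial_\s$ for $X,Y$ tangent to $\Xone$. Dualising gives the action on cotensors, and the identities I would use repeatedly are $\nM_X(\dss) = -X^\flat$, $\nM_{\partial_\s}(\dss) = -\s^{-1}\dss$, the fact that on $\Ek \hookrightarrow \F$ the operator $\nM_{\partial_\s}$ is the genuine $\s$-derivative of the coefficients minus $k\s^{-1}$, and, for $\psi \in \Ek$, $\nM_X\psi = \n_X\psi - \dss\sp\hookprod{X^\flat}{\psi}$. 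Applied to a homogeneous summand $(\dss)^{m-k}\sp u^{(k)}$ these show that $\nM_X$ equals the ``flat'' connection (acting as $\n$ on the $\Ek$-factor and leaving $\dss$ parallel) plus degree-shifting corrections $-(m-k)(\dss)^{m-k-1}\sp X^\flat\sp u^{(k)}$ and $-(\dss)^{m-k+1}\sp\hookprod{X^\flat}{u^{(k)}}$.

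With this in hand I would compute the rough d'Alembertian $\nM^*\nM = -\tr_\eta\nM\nM$ on such a summand, splitting the $\eta$-trace of the second covariant derivative into its $\partial_\s\partial_\s$-component (weighted by $\eta^{-1}(d\s,d\s) = -1$, which is the source of the $+(\sds)^2$ despite $\div$ being formally adjoint to $\d$) and its horizontal component (weighted by $\s^{-2}g^{-1}$). Iterating $\sum_i\nM_{e_i}\nM_{e_i}$ and collecting the degree-shifting corrections, the horizontal component produces on the diagonal block the rough Laplacian $\n^*\n u^{(k)}$ of $\Xone$ together with a scalar; on the adjacent blocks it produces the symmetric differential $\d u^{(k)}$ and the divergence $\div u^{(k)}$; and on the next-to-adjacent blocks it produces $\trace u^{(k)}$ and $\adjtrace u^{(k)}$. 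The scalar coefficients become exactly the stated $b_k = \sqrt{m-k}$ once the normalisation $a_k$ is divided out, because $a_{k+1}/a_k = \sqrt{m-k}$ and $a_k/a_{k-2} = \sqrt{(m-k+1)(m-k+2)}$. For the curvature term I would use $\DeltaAmb = \nM^*\nM + q(\MRiemann)$ together with the cone curvature identity $\MRiemann(X,Y)Z = \Riemann(X,Y)Z + g(Y,Z)X - g(X,Z)Y$ for horizontal $X,Y,Z$ and the vanishing of the mixed components of $\MRiemann$; since the horizontal curvature annihilates $\dss$, this gives $q(\MRiemann)\big((\dss)^{m-k}\sp u^{(k)}\big) = \s^{-2}(\dss)^{m-k}\sp\big(q(\Riemann) + k(n+k-1) - \adjtrace\trace\big)u^{(k)}$, the term $q(\Riemann_{+1})$ of a constant-curvature-one tensor on $\Sym^k$ being (by a short algebraic computation) $k(n+k-1)$ times the identity minus $\adjtrace\trace$. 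Combining $\n^*\n + q(\Riemann) = \Delta$ on the diagonal explains the appearance of the Lichnerowicz Laplacian and of $-\adjtrace\trace$.

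The final step is the conjugation $\QVasyUp = \s^{\frac{n}{2} - m + 2}\DeltaAmb\s^{-\frac{n}{2} + m}$. The operators $\n^*\n$, $q(\Riemann)$, $\d$, $\div$, $\trace$, $\adjtrace$ commute with multiplication by powers of $\s$ and involve no $\partial_\s$, so the factor $\s^2$ simply cancels the overall $\s^{-2}$ with which they occur; only the $\partial_\s$-terms are affected, each $\partial_\s$ shifting by $-(\tfrac{n}{2}-m)\s^{-1}$ while $\s^2\partial_\s^2$ becomes $(\sds)^2 - \sds$. Collecting the three numerical contributions --- the scalar from the horizontal part of $\nM^*\nM$, the $k(n+k-1)$ from the curvature, and the constant produced by conjugating the $\partial_\s$-terms --- the linear term in $\sds$ cancels and one is left with exactly $(\sds)^2 - c_k$, $c_k = \tfrac{n^2}{4} + m(n+2k+1) - k(2n+3k-1)$. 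The ``Consequently'' assertion is then immediate: the rank-raising and rank-lowering first-order blocks assemble into $\DVasyUp \in \Diff^1(\Mone;\End\F)$; while $q(\Riemann)$, the constants $c_k$, the diagonal $\adjtrace\trace$ and the two next-to-adjacent $\trace,\adjtrace$ blocks assemble into a bundle endomorphism $\ZeroOrderVasyUp$; and since every coefficient has been written in the frame adapted to the Minkowski scale with no remaining $\s$, both $\DVasyUp$ and $\ZeroOrderVasyUp$ are independent of $\s$. The main obstacle is the length and care of the symmetric-tensor bookkeeping in the horizontal part of $\nM^*\nM$: one must keep straight which degree-shift lands in which block of the decomposition and verify that all cross-terms collapse precisely to $\d$, $\div$, $\trace$, $\adjtrace$ and to the scalars that conspire into $c_k$, rather than to uncontrolled expressions; the explicit $m=2$ computation serves as a useful consistency check.
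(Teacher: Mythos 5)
Your proposal is correct and takes essentially the same route as the paper: it reproduces the cone connection of Lemma~\ref{lem:coneconnection}, the block-by-block computation of $\s^2\,\nM^*\nM$ on each summand $(\dss)^{m-k}\sp u^{(k)}$ yielding the $\d$, $\div$, $\trace$, $\adjtrace$ blocks together with the radial and scalar terms (Lemma~\ref{lem:decompk:rough:mink}), the cone curvature identity giving $\s^2 q(\MRiemann)u^{(k)}=(q(\Riemann)+k(n+k-1)-\adjtrace\trace)u^{(k)}$ (Lemma~\ref{lem:decompk:curvature}), and the final conjugation by $\s^{-\frac n2+m}$ that turns $(\sds+\tfrac n2-m)^2$ into $(\sds)^2$ and assembles the constants $c_k$. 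The differences are purely organizational (you state the connection on vector fields and dualize it yourself, and you carry out the conjugation at the end rather than folding it into the rough-d'Alembertian computation), so there is nothing of substance to add.
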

\begin{proof}
The result will follow from Lemmas~\ref{lem:decompk:rough:mink} and~\ref{lem:decompk:curvature}. The conjugation by $\s^{-\frac n2+m}$ is chosen so that the term
$(\sds+\frac n2 - m)^2$ in Lemma~\ref{lem:decompk:rough:mink} becomes simply $(\sds)^2$.
\end{proof}

Proposition~\ref{prop:decompQ:k:mink} is a direct calculation which we present in the rest of this subsection. To begin we announce the following lemma whose proof need not be detailed.
\begin{lemma}\label{lem:coneconnection}
In the Minkowski scale, with $\{e_i\}_{i=0}^n$ a local holonomic frame on $(\Xone,g)$ with dual frame $\{e^i\}_{i=0}^n$ such that $g=\sum_{i,j} g_{ij} e^i\otimes e^j$, the connection $\nM$ acts in the following manner:
\begin{align*}
\nM_{\sds} \dss &= - \dss,		
								&		\nM_{e_i} \dss &= -\textstyle\sum_{j=0}^n g_{ij} e^j, \\
\nM_{\sds} e^i &= -e^i	,	
								&		\nM_{e_i}e^j &= \delta_i^j \dss + \n_{e_i}e^j.
\end{align*}
\end{lemma}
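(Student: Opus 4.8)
The plan is to recognise this as a standard warped-product (metric cone) computation and to carry it out by the quickest route. The cleanest approach realises $\gamb$ as a conformal rescaling of a product metric. Passing to the logarithmic variable $\tau = \log\s$ (so that $\sds = \partial_\tau$ and $\dss = d\tau$), the rescaled metric $\s^{-2}\gamb = -d\tau\otimes d\tau + g$ becomes a genuine Lorentzian product on $\RplusS\times\Xone$, whose Levi-Civita connection $\bar\n$ is the direct sum of the flat connection on the $\tau$-line and the Levi-Civita connection $\n$ of $g$: it annihilates $\sds$, sends $e_i$ to $0$ in the $\partial_\tau$ direction, and sends $e_i$ to $\n_{e_i}e_j$ in the $e_j$ direction. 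Since $\gamb = e^{2\tau}(\s^{-2}\gamb)$ with conformal factor $\omega = \tau = \log\s$, I would then apply the standard conformal-change-of-connection identity
\[
\nM_X Y = \bar\n_X Y + (X\omega)\, Y + (Y\omega)\, X - (\s^{-2}\gamb)(X,Y)\,\grad_{\s^{-2}\gamb}\omega ,
\]
noting that $d\omega = \dss$ and that, because the $\tau$-direction is timelike for the product metric, $\grad_{\s^{-2}\gamb}\omega = -\partial_\tau = -\sds$. Feeding $X,Y \in \{\sds, e_i\}$ into this identity gives $\nM$ on the frame vector fields, and dualising through $(\nM_X\theta)(Z) = X(\theta(Z)) - \theta(\nM_X Z)$ with $\theta \in \{\dss, e^i\}$, $Z \in \{\sds, e_i\}$ — using $\dss(\sds) = 1$, $\dss(e_i) = 0 = e^i(\sds)$, $e^i(e_j) = \delta^i_j$ — yields the four displayed identities on the coframe.

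An equivalent route would be to apply the Koszul formula directly in the coordinate frame $\{\partial_\s, e_i\}$. The brackets $[\partial_\s, e_i]$ and $[e_i, e_j]$ both vanish — the former because the lift of $e_i$ is $\s$-independent, the latter because $\{e_i\}$ is holonomic — so Koszul collapses to derivatives of the metric coefficients $\gamb(\partial_\s,\partial_\s) = -1$, $\gamb(\partial_\s,e_i) = 0$, $\gamb(e_i,e_j) = \s^2 g_{ij}$. The single nonconstant coefficient contributes $\partial_\s(\s^2 g_{ij}) = 2\s g_{ij}$, and this one term accounts for every contribution to $\nM$ that is absent from $\n$; converting between the coordinate (co)frame $\{\partial_\s, d\s\}$ and the scale-invariant (co)frame $\{\sds, \dss\}$ then absorbs the surviving powers of $\s$.

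There is no genuine obstacle here, which is why the paper declines to detail the proof; the only points that call for care are the Lorentzian sign $\gamb(\partial_\s,\partial_\s) = -1$, which is exactly what separates this connection from that of the Riemannian cone $d\s^2 + \s^2 g$ (it controls the sign of the $\sds$-contributions to $\nM_{e_i}e_j$ and hence to its dual), and the bookkeeping of the factors of $\s$ incurred when translating between $\{\partial_\s, d\s\}$ and $\{\sds, \dss\}$.
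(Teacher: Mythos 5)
The paper offers no proof of this lemma at all (it is merely ``announced'', with the remark that its proof need not be detailed), so there is no argument to compare against; your two routes --- conformal rescaling of the product metric $-d\tau\otimes d\tau+g$ with $\tau=\log\s$, or a direct Koszul/Christoffel computation in the frame $\{\partial_\s,e_i\}$ --- are both standard and perfectly adequate for a cone metric.

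The genuine gap is that you never carry the dualization through, and at exactly the point where you assert that it ``yields the four displayed identities'' it does not. Your conformal-change formula gives, on vector fields, $\nM_{e_i}e_j=\n_{e_i}e_j+g_{ij}\,\sds$ and $\nM_{e_i}\sds=e_i$ (the Euler-field identity $\nM(\s\partial_\s)=\mathrm{Id}$), and then $(\nM_{e_i}e^j)(\sds)=-e^j(\nM_{e_i}\sds)=-\delta_i^j$, so the fourth identity comes out as $\nM_{e_i}e^j=-\delta_i^j\,\dss+\n_{e_i}e^j$, with the opposite sign on the radial term from the one displayed. The displayed sign is in fact incompatible with the other three identities: from $\nM_{\sds}e^i=-e^i$ and $\nM_{\sds}\dss=-\dss$ one gets by duality $\nM_{\sds}e_i=e_i$, hence $\nM_{e_i}\sds=e_i$ by torsion-freeness (since $[\sds,e_i]=0$), which forces the $\dss$-coefficient of $\nM_{e_i}e^j$ to be $-\delta_i^j$; equivalently, with the printed sign one checks $\nM_{e_i}\gamb\neq0$. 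Note also that this particular coefficient is signature-independent (for $dr^2+r^2\,d\theta^2$ one already has $\nM_{\partial_\theta}d\theta=-\tfrac{dr}{r}$); the Lorentzian sign you single out only flips $\nM_{e_i}\dss$, not this term. So a faithful execution of your own plan would have contradicted the statement rather than confirmed it: the lemma as printed carries a sign typo (propagated into the third line of \eqref{eqn:derivationbasis:X}), which the paper silently compensates --- in the proof of Lemma~\ref{lem:decompk:rough:mink} the term $\boxed{4}$ is written with an extra minus sign, i.e.\ with the sign your computation actually produces, so the subsequent formulas are unaffected. Claiming agreement without performing the final step, precisely at the one sign you yourself flag as the delicate point, is the step that fails.
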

This lemma provides the following two important formulae for the symmetrised basis
\begin{align}\label{eqn:derivationbasis:tau}
\nM_{\sds} (\dss)^{m-k}\sp e^K 
=
-m (\dss)^{m-k}\sp e^K
\end{align}
and
\begin{align}\label{eqn:derivationbasis:X}
(\dss)^{m-k-1}&\sp\left(
					-(m-k) g_{ij} e^\co{}{j}{K}
		\right) +\nonumber\\
\nM_{e_i} (\dss)^{m-k}\sp e^K  \,=\,
(\dss)^{m-k\phantom{+0}}&\sp\left(
					-\textstyle\sum_{k_r\in K} \chris{ij}{k_r} e^\co{k_r}{j}{K}
		\right)+\\
(\dss)^{m-k+1}&\sp\left(
					\textstyle\sum_{k_r\in K} \delta_i^{k_r} e^\co{k_r}{}{K}
		\right)\nonumber
\end{align}
where the second result is a consequence of
\[
\nM_{e_i} e^K =\sum_{k_r\in K} \delta_i^{k_r} \dss\sp e^\co{k_r}{}{K} + \n_{e_j} e^K
\]
and we recall that the connection coefficients were introduced in Lemma~\ref{lem:simpleformulae}. We split the calculation of the d'Alembertian into two calculations, treating the rough d'Alembertian separately from the curvature endomorphism.

\begin{lemma}\label{lem:decompk:rough:mink}
For $u\in C^\infty(\Mone;\F)$ decomposed relative to the Minkowski scale \eqref{eqn:decomp:mink}, the rough d'Alembertian is given by
\begin{align*}
a_{k+2}\,(\dss)^{m-k-2}&\sp\big(
					-b_{k}b_{k+1} \adjtrace
		\big)u^{(k)} +\\
a_{k+1}\,(\dss)^{m-k-1}&\sp\big(
					2b_k\d
		\big)u^{(k)} +\\
\s^2 \,\nM^*\nM \,  a_k\, (\dss)^{m-k} \sp  u^{(k)} \,=\,
a_{k\phantom{+0}}\,(\dss)^{m-k\phantom{+0}}&\sp\big(
					\n^*\n + (\sds + \tfrac n2 - m)^2 - \tilde c_k
		\big)u^{(k)} +\\
					a_{k-1}\,(\dss)^{m-k+1}&\sp\big(
					-2b_{k-1}  \div 
		\big)u^{(k)} +\\
a_{k-2}\,(\dss)^{m-k+2}&\sp\big(
					-b_{k-2}b_{k-1}\trace
		\big)u^{(k)}
\end{align*}
with modified constants
\[
\tilde c_k = \tfrac{n^2}{4} + m(n+2k+1) - k(n+2k).
\]
\end{lemma}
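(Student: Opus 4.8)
The plan is a long but direct computation, organised around the decomposition of the rough d'Alembertian as minus the $\eta$-trace of the Hessian. Writing $\nM^2_{X,Y}=\nM_X\nM_Y-\nM_{\nM_XY}$ and using that for the cone metric the inverse splits as $\eta^{-1}=-\partial_\s\otimes\partial_\s+\s^{-2}g^{-1}$ (with $g^{-1}$ pulled back from $\Xone$), one has, for $\{e_i\}$ a holonomic frame on $\Xone$,
\[
\s^2\,\nM^*\nM \;=\; \s^2\,\nM^2_{\partial_\s,\partial_\s} \;-\; \tsum_{i,j}g^{ij}\,\nM^2_{e_i,e_j}.
\]
It suffices to verify the stated formula at an arbitrary point at which the $\chris{ij}{k}$ vanish, both sides being differential operators. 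Beyond Lemma~\ref{lem:coneconnection} the only connection facts needed are $\nM_{\partial_\s}\partial_\s=0$ and $\nM_{e_i}e_j=g_{ij}\sds$ modulo a term in the $\n$ of $(\Xone,g)$ which vanishes at the chosen point; the latter turns the Hessian correction $\nM_{\nM_{e_i}e_j}$ into $g_{ij}\nM_{\sds}$ and, after the contraction $\tsum_{ij}g^{ij}g_{ij}=n+1$, contributes $(n+1)\nM_{\sds}$ to the spatial term.

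For the radial part, \eqref{eqn:derivationbasis:tau} shows that $\nM_{\sds}$ acts on $a_k(\dss)^{m-k}\sp u^{(k)}$ as $\sds-m$ on the $u^{(k)}$ slot, so $\s^2\nM^2_{\partial_\s,\partial_\s}=\nM_{\sds}(\nM_{\sds}-1)$ acts there as $(\sds-m)(\sds-m-1)$; adding the $(n+1)\nM_{\sds}$ from the Hessian correction gives $(\sds-m)(\sds-m+n)=(\sds+\tfrac n2-m)^2-\tfrac{n^2}4$, a rank-preserving contribution.

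The spatial part is obtained by feeding $a_k(\dss)^{m-k}\sp u^{(k)}$ through $\nM_{e_j}$ twice using \eqref{eqn:derivationbasis:X}. One application produces pieces of $\Ek$-rank $k+1$ (the $g_{ij}$ ``creation'' term), $k$ (the pull-back-of-$\n$ term), and $k-1$ (the $\delta_i^{k_r}$ ``annihilation'' term); a second application, followed by contraction with $\tsum g^{ij}$, produces ranks $k\pm2,\ k\pm1,\ k$, which are then sorted. Two creations resp. two annihilations give, after rebalancing the normalisations via $a_{k+2}/a_k=b_kb_{k+1}$ and $a_{k-2}/a_k=(b_{k-2}b_{k-1})^{-1}$, the terms $-b_kb_{k+1}\adjtrace u^{(k)}$ and $-b_{k-2}b_{k-1}\trace u^{(k)}$; a creation paired with the pull-back term (two routes, hence the factor $2$) and an annihilation paired with the pull-back term yield $2b_k\d u^{(k)}$ and $-2b_{k-1}\div u^{(k)}$ after matching against the coordinate expressions in Lemma~\ref{lem:simpleformulae}. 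The rank-$k$ output consists of the pull-back term applied twice, which by the formula for $\n^*\n$ in Lemma~\ref{lem:simpleformulae} (the terms in $e_\ell\chris{ij}{k_r}$ included) is exactly $\n^*\n u^{(k)}$, together with the ``creation-then-annihilation'' and ``annihilation-then-creation'' round trips; these are of order zero and, using that $\hookprod{e^i}{}$ is a derivation over $\sp$ and $\tsum_i e^i\sp\hookprod{e^i}{}=k$ on $\Ek$, combine to $-\bigl(m(n+2k+1)-k(n+2k)\bigr)u^{(k)}$.

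Adding the radial and spatial contributions yields the claimed formula with $\tilde c_k=\tfrac{n^2}4+m(n+2k+1)-k(n+2k)$. The laborious and error-prone step is the index bookkeeping in the spatial part --- tracking which of $i,j,k_r,k_p$ is contracted by $\tsum g^{ij}$ at each stage, and hence the precise constants, above all $\tilde c_k$; one can cross-check the first-order terms against the commutation relations \eqref{eqn:HMScommutations}, and $\tilde c_k$ against the special case $u^{(k)}=\pi^*\alpha$ (an $\s$-independent pull-back), for which only the radial and zeroth-order terms survive in the diagonal.
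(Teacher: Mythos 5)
Your proposal is correct and is essentially the paper's own proof: a direct computation of the $\eta$-traced second covariant derivative using the cone-connection formulas of Lemma~\ref{lem:coneconnection} (via \eqref{eqn:derivationbasis:tau} and \eqref{eqn:derivationbasis:X}), sorted by symmetric powers of $\dss$, with $\d$, $\div$, $\trace$, $\adjtrace$ and $\n^*\n$ recognized through Lemma~\ref{lem:simpleformulae} and the constants $a_k$ rebalanced at the end. Your radial/spatial Hessian split, treating $\s^2\nM^2_{\partial_\s,\partial_\s}=\nM_{\sds}(\nM_{\sds}-1)$ and the $(n+1)\nM_{\sds}$ correction up front, is only a cosmetic reorganization of the paper's expansion of $\nM u$ into four grouped terms, and all of your stated intermediate quantities --- the radial contribution $(\sds-m)(\sds-m+n)$, the rebalanced off-diagonal coefficients, and the zeroth-order constant $m(n+2k+1)-k(n+2k)$ --- agree with the paper's computation and with the statement of the lemma.
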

\begin{proof}
It suffices to consider a single term $u_K (\dss)^{m-k} \sp e^K$ and we will ignore the normalising constants $a_k$ until the final step. Upon a first application of $\nM$ we obtain a section of $\T^*\Mone\otimes \F$
\begin{align*}
\nM u_K (\dss)^{m-k} \sp e^K &=	\sds u_K \dss \otimes (\dss)^{m-k}\sp e^K \\
						&\quad	+ u_K \dss \otimes \nM_{\sds} \left( (\dss)^{m-k}\sp e^K \right) \\
						&\quad +\tsum_{i} e_i u_K e^i \otimes (\dss)^{m-k} \sp e^K \\
						&\quad + \tsum_{i} u_K e^i \otimes \nM_{e_i}  \left( (\dss)^{m-k}\sp e^K \right) .
\end{align*}
Using \eqref{eqn:derivationbasis:tau} and \eqref{eqn:derivationbasis:X} to develop the terms involving $\nM_{\sds}$ and $\nM_{e_i}$ we group the result in terms of symmetric powers of $\dss$. In order to handle the equations we write
\begin{align}\label{eqn:4terms}
\nM u_K (\dss)^{m-k} \sp e^K=\boxed{1}+\boxed{2}+\boxed{3}+\boxed{4}
\end{align}
where
\begin{align*}
\boxed{1} &=	- (m-k) \tsum_{i,j} u_K g_{ij} e^i \otimes (\dss)^{m-k-1}\sp e^\co{}{j}K, \\
\boxed{2} &=  (\sds-m)u_K \dss \otimes (\dss)^{m-k}\sp e^K,	 \\
\boxed{3} &= \tsum_i e_i u_K e^i \otimes (\dss)^{m-k} \sp e^K -\sum_{i,j}u_K e^i\otimes (\dss)^{m-k}\sp\textstyle\sum_{k_r\in K} \chris{ij}{k_r} e^\co{k_r}{j}{K}, \\
\boxed{4} &= - \tsum_iu_K e^i \otimes (\dss)^{m-k+1} \sp \textstyle\sum_{k_r\in K} \delta_i^{k_r} e^\co{k_r}{}{K}.
\end{align*}
Taking the second derivative, we calculate at a point about which $\{e_i\}$ are normal coordinates. Of course, we only need to keep track of terms which are not subsequently killed upon applying the trace $\trETA$ (which, as the notation suggests is the trace map from $\T^*\Mone\otimes \T^*\Mone\to\R$ built using the metric $\eta$).

$\boxed{1}$. Considering the first term in \eqref{eqn:4terms}, applying $\nM_{\sds}$ provides only terms in the kernel of $\trETA$ and applying $\nM_{e_i}$ gives
\begin{align*}
\tsum_\ell e^\ell\otimes \nM_{e_\ell} \boxed{1} & = 
							-  (m-k) \tsum_{i,j,\ell} e_\ell u_K g_{ij} e^\ell\otimes e^i \otimes (\dss)^{m-k-1}\sp e^\co{}{j}K  \\
			&	\quad			-  (m-k) \tsum_{i,j,\ell} u_K g_{ij} e^\ell\otimes e^i \otimes 
								\nM_{e_\ell} \left(
												(\dss)^{m-k-1}\sp e^\co{}{j}{K}
											\right)
							+ \ker \trETA 
\end{align*}
and we immediately apply $\trETA$ to get
\begin{align*}
-\s^2(\trETA \circ \nM) \boxed{1}
&=
				(m-k)(\dss)^{m-k-1}\sp \left(
					\tsum_i e_i u_Ke^\co{}{i}K 
				\right)
				\\
&\quad		+(m-k) u_K \tsum_j \nM_{e_j} \left(
						(\dss)^{m-k-1}\sp e^\co{}{j}{K}
					\right).
\end{align*}
The first term of the preceding display reduces to the symmetric differential $(m-k)(\dss)^{m-k-1}\sp \d(u_K e^K)$ by Lemma~\ref{lem:simpleformulae}. The second term of the preceding display is calculated with the aid of \eqref{eqn:derivationbasis:X} and remembering that the connection coefficients cancel at the point of interest. Specifically
\begin{align*}
\nM_{e_j} \left(
	(\dss)^{m-k-1}\sp e^\co{}{j}{K}
\right)
\,=\,
(\dss)^{m-k-2}&\sp\left(
					-\tsum_i (m-k-1) g_{ij} e^\ct{}{i}{}{j}K
		\right)+\\
(\dss)^{m-k\phantom{+0}}&\sp\left(
					\delta_j^j e^K + \textstyle\sum_{k_r\in K} \delta_j^{k_r} e^\ct{}{j}{k_r}{}{K}
		\right).
\end{align*}
Observe that $\tsum_j \textstyle\sum_{k_r\in K} \delta_j^{k_r} e^\ct{}{j}{k_r}{}{K} = k e^K$. Using Lemma~\ref{lem:simpleformulae} again this time to recover $\adjtrace$, the result is
\begin{align*}
(\dss)^{m-k-2}&\sp\left(
					-(m-k)(m-k-1)\adjtrace
		\right)u_Ke^K +\\
(\dss)^{m-k-1}&\sp\left(
					(m-k)\d
		\right)u_Ke^K +\\
-\s^2(\trETA \circ \nM) \boxed{1} \,=\,
(\dss)^{m-k\phantom{+0}}&\sp\left(
					-(m-k)(n+1+k)
		\right)u_Ke^K.
\end{align*}

$\boxed{2}$. Considering the second term in \eqref{eqn:4terms} is much simpler. A second application of $\nM$ provides
\begin{align*}
\nM  \boxed{2} & = (\sds-m-1)(\sds-m) )u_K \dss\otimes \dss \otimes (\dss)^{m-k}\sp e^K \\
			&	\quad			 
					-(\sds-m)u_K \sum_{i,j} g_{ij} e^i\otimes e^j \otimes (\dss)^{m-k}\sp e^K + \ker \trETA.
\end{align*}
and the desired result is
\begin{align*}
-\s^2(\trETA \circ \nM) \boxed{2} \,=\,
(\dss)^{m-k}&\sp\left(
					(\sds-m+n)(\sds-m)
		\right)u_Ke^K,
\end{align*}

$\boxed{3}$. Considering the third term in \eqref{eqn:4terms} is somewhat similar to the first term in that $\nM_{\sds}$ provides only terms in the kernel of $\trETA$. Remembering that at the point of interest, the connection coefficients vanish, applying $\nM_{e_j}$ gives
\begin{align*}
\tsum_j e^j\otimes \nM_{e_j} \boxed{3} & = 
							\tsum_{i,j} e_je_i u_K e^j\otimes e^i \otimes (\dss)^{m-k} \sp e^K	  \\
			&	\quad		+ \tsum_{i,j} e_i u_K e^j\otimes e^i \otimes \nM_{e_j}\left((\dss)^{m-k} \sp e^K	\right) \\
			&	\quad		 -\tsum_{i,j,\ell}u_K e^\ell\otimes e^i\otimes (\dss)^{m-k}\sp\textstyle\sum_{k_r\in K} \left( \n_{e_\ell} \chris{ij}{k_r} \right) e^\co{k_r}{j}{K}
							+ \ker \trETA 
\end{align*}
and we immediately apply $\trETA$ to recover the rough Laplacian from the first and third terms in the previous display
\begin{align*}
-\s^2(\trETA \circ \nM) \boxed{3}
&=
				(\dss)^{m-k}\sp 
					\n^*\n (u_Ke^K)
				\\
&\quad		- \tsum_{i,j} g^{ij} e_i u_K  \nM_{e_j}\left((\dss)^{m-k} \sp e^K	\right)
\end{align*}
while the second term in the previous display is first treated using \eqref{eqn:derivationbasis:X} and then Lemma~\ref{lem:simpleformulae} to recover the symmetric differential and the divergence
\begin{align*}
- \tsum_{i,j} g^{ij} e_i u_K  \nM_{e_j}\left((\dss)^{m-k} \sp e^K	\right)
&=
				\tsum_{i,j,\ell} g^{ij} e_i u_K 		(m-k)(\dss)^{m-k-1}\sp g_{\ell j} e^\co{}{\ell}{K} \\
& \quad			+\tsum_{i,j,\ell} g^{ij} e_i u_K  (\dss)^{m-k+1}\sp\sum_{k_r\in K}\delta_{j}^{k_r} e^\co{k_r}{}{K}
				\\
&=				(m-k)(\dss)^{m-k-1}\sp  \d (u_Ke^K) \\
& \quad			-(\dss)^{m-k+1}\sp \div (u_Ke^K).
\end{align*}
The result is
\begin{align*}
(\dss)^{m-k-1}&\sp\left(
					(m-k)\d
		\right)u_Ke^K +\\
-\s^2(\trETA \circ \nM) \boxed{3} \,=\,
(\dss)^{m-k\phantom{+0}}&\sp\left(
					\n^*\n
		\right)u_Ke^K +\\
(\dss)^{m-k+1}&\sp\left(
					- \div 
		\right)u_Ke^K.
\end{align*}

$\boxed{4}$. Considering finally the fourth term in \eqref{eqn:4terms} we immediately remove the sum over $i$ using the Kronecker delta. Again $\nM_{\sds}$ provides only terms in the kernel of $\trETA$ and applying $\nM_{e_i}$ gives
\begin{align*}
\tsum_i e^i\otimes \nM_{e_i} \boxed{4} & = 
							-\tsum_{i}\tsum_{k_r\in K} e_i u_K e^i\otimes e^{k_r} \otimes (\dss)^{m-k+1}\sp e^\co{k_r}{}{K} \\
&\quad				- \tsum_i\tsum_{k_r\in K}	u_K e^i\otimes e^{k_r} \otimes	\nM_{e_i} \left( (\dss)^{m-k+1}\sp e^\co{k_r}{}{K} \right)			
					+ \ker \trETA.
\end{align*}
and we immediately apply $\trETA$ to get
\begin{align*}
-\s^2(\trETA \circ \nM) \boxed{4}
&=
				(\dss)^{m-k+1}\sp \left(
					\tsum_i \sum_{k_r\in K} g^{i k_r}e_i u_Ke^\co{k_r}{}K 
				\right)
				\\
&\quad		+\tsum_i\tsum_{k_r\in K}	g^{ik_r}u_K\nM_{e_i}  \left( (\dss)^{m-k+1}\sp e^\co{k_r}{}{K} \right)
\end{align*}
The first term provides the divergence $-(\dss)^{m-k+1}\sp  \div (u_Ke^K)$ while the second term is treated using \eqref{eqn:derivationbasis:X} and then Lemma~\ref{lem:simpleformulae} to recover a multiple of $u_Ke^K$ and a term involving $\trace$
\begin{align*}
\tsum_i\tsum_{k_r\in K}	g^{ik_r}u_K\nM_{e_i}  &\left( (\dss)^{m-k+1}\sp e^\co{k_r}{}{K} \right) \\
&=
				-(m-k+1)(\dss)^{m-k}\sp \tsum_{i,j}\tsum_{k_r\in K} g^{ik_r}g_{ij} e^\ct{}{j}{k_r}{}{K} \\
&\quad			- (\dss)^{m-k+2} \sp\tsum_{k_r\in K} \tsum_{k_p\in \co{k_r}{}{K}} g^{k_rk_p} e^\ct{k_p}{}{k_r}{}{K} \\
&=
				-k (m-k+1)(\dss)^{m-k}\sp u_Ke^K \\
&\quad			- (\dss)^{m-k+2}\sp \trace (u_Ke^K).
\end{align*}
The result is
\begin{align*}
-\s^2(\trETA \circ \nM) \boxed{4} \,=\,
(\dss)^{m-k\phantom{+0}}&\sp\left(
					-k(m-k+1)
		\right)u_Ke^K +\\
(\dss)^{m-k+1}&\sp\left(
					- \div 
		\right)u_Ke^K +\\
(\dss)^{m-k+2}&\sp\left(
					-\trace
		\right)u_Ke^K.
\end{align*}

Upon summation of these four terms coming from \eqref{eqn:4terms} we obtain
\begin{align*}
(\dss)^{m-k-2}&\sp\left(
					-(m-k)(m-k-1) \adjtrace
		\right)u^{(k)} +\\
(\dss)^{m-k-1}&\sp\left(
					2(m-k)\d
		\right)u^{(k)} +\\
\s^2 \,\nM^*\nM  (\dss)^{m-k} \sp  u^{(k)} \,=\,
(\dss)^{m-k\phantom{+0}}&\sp\left(
					\n^*\n + (\sds+\tfrac n2 - m)^2 - \tilde c_k
		\right)u^{(k)} +\\
(\dss)^{m-k+1}&\sp\left(
					-2k  \div 
		\right)u^{(k)} +\\
(\dss)^{m-k+2}&\sp\left(
					-k(k-1)\trace
		\right)u^{(k)}
\end{align*}
with constant $\tilde c_k$ as announced in the proposition. The final step is to reintroduce the normalisation constants $a_k$. Treating, for example, the term containing $(\dss)^{m-k-1}$ amounts to observing
\[
a_{k+1}^{-1} (m-k) a_k = \sqrt{(m-k)}
\]
This completes the demonstration.
\end{proof}

\begin{lemma}\label{lem:decompk:curvature}
For $u\in C^\infty(\Mone; \F)$ decomposed relative to the Minkowski scale \eqref{eqn:decomp:mink}, the curvature endomorphism acts diagonally with respect to the Minkowski scale and is given by
\[
\s^2 q(\MRiemann) u^{(k)} = \left(
							q(\Riemann) + k(n+k-1) - \adjtrace\trace
						\right)u^{(k)}.
\]
\end{lemma}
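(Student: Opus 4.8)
The plan is to extract the Riemann tensor $\MRiemann$ of the Lorentzian cone $(\Mone,\eta)$ from Lemma~\ref{lem:coneconnection} and then feed it into the algebraic definition of the curvature endomorphism; the argument splits into a geometric identification of $\MRiemann$ and a fibrewise algebraic computation. First I would compute $\MRiemann_{\sds,\cdot}$ and $\MRiemann_{e_i,e_j}e^\ell$ from the covariant derivatives listed in Lemma~\ref{lem:coneconnection}, using $[\sds,e_i]=0$. Two features do all the work: (i) the radial direction is flat, $\MRiemann_{\sds,\cdot}=0$, hence also $\MRiemann_{\partial_\s,\cdot}=0$; and (ii) on horizontal cotensors $\MRiemann_{e_i,e_j}$ again produces horizontal cotensors, the potential $\dss$-components cancelling by symmetry of the Christoffel symbols of $g$, and equals $\Riemann_{e_i,e_j}+(\Riemann_0)_{e_i,e_j}$, where $\Riemann_0$ is the curvature tensor of a metric of constant sectional curvature $+1$, so $(\Riemann_0)_{e_i,e_j}e^\ell=\delta_j^\ell e^i-\delta_i^\ell e^j$ at a point in normal coordinates. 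Both facts are the Lorentzian-cone versions of the familiar statement that a Riemannian cone over the round sphere is flat; the correction tensor $\Riemann_0$ and its sign are pinned down by the requirement that the cone over $\Hn$, which has constant curvature $-1$, be flat, as in the model geometry of Section~\ref{sec:geom}.

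Next I would evaluate $q(\MRiemann)$ on $u^{(k)}\in\Ek$, regarded as a horizontal section of $\F$. Working in an $\eta$-orthonormal frame for $\Mone$ of the form $\{\partial_\s\}\cup\{\s^{-1}e_i\}$, where $\{e_i\}$ is a $g$-orthonormal frame for $\T\Xone$ and $\partial_\s$ is timelike, radial flatness annihilates every term of the double sum defining $q(\MRiemann)u^{(k)}$ in which the timelike direction occurs, so that the Lorentzian signature never intervenes. Tracking the powers of $\s$ coming from the frame normalisation — a factor $\s^{-2}$ from $\MRiemann$ evaluated on $\s^{-1}e_i$, $\s^{-1}e_j$, a factor $\s^{-1}$ from the $\eta$-contraction on horizontal tensors, and a factor $\s$ from multiplication by the dual covector, combining to $\s^{-2}$ — one obtains
\[
\s^2 q(\MRiemann)u^{(k)} = q(\Riemann)u^{(k)} + q(\Riemann_0)u^{(k)},
\]
the two endomorphisms on the right now being those of $(\Xone,g)$, built with $g$. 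In particular $q(\MRiemann)$ preserves $\Ek$, i.e.\ acts diagonally with respect to the Minkowski scale.

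It then remains to compute $q(\Riemann_0)$ on $\Sym^k\T^*\Xone$. Since $(\Riemann_0)_{e_i,e_j}$ acts on a symmetric $k$-cotensor as a derivation via $e^\ell\mapsto\delta_j^\ell e^i-\delta_i^\ell e^j$, I would substitute into $q(\Riemann_0)u=\sum_{i,j}e^j\sp\hookprod{e^i}{(\Riemann_0)_{e_i,e_j}u}$ and carry out the contractions using $\hookprod{e^i}{e^\ell}=\delta_i^\ell$, $\hookprod{e^i}{(e^a\sp e^b)}=\delta_i^a e^b+\delta_i^b e^a$, and $\sum_i 1=n+1$. Collecting the resulting terms, the contributions in which the slot created by the derivation is contracted away and re-supplied assemble into $k(n+k-1)$ times the identity of $\Ek$, while those in which two of the original slots of $u$ get paired by an inserted metric factor assemble into $-\adjtrace\trace$; hence $q(\Riemann_0)u^{(k)}=(k(n+k-1)-\adjtrace\trace)u^{(k)}$. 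As checks, $q(\Riemann_0)$ is $0$ for $k=0$, is $n$ times the identity for $k=1$, sends $g$ to $0$ for $k=2$ (consistent with $g$ being parallel), and on trace-free tensors over $\Hn$, where $\Riemann=-\Riemann_0$, recovers $q(\Riemann)=-m(n+m-1)$. Combining with the displayed identity proves the lemma. I expect this last fibrewise computation to be the main obstacle: the derivation-action of $\Riemann_0$ on a symmetric tensor, followed by the contraction and resymmetrisation built into $q$, must be tracked carefully so that precisely the Lefschetz operator $-\adjtrace\trace$, with coefficient $1$, appears alongside the scalar $k(n+k-1)$.
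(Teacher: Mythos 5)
Your proposal is correct and takes essentially the same route as the paper: from Lemma~\ref{lem:coneconnection} one gets radial flatness and the splitting $\MRiemann_{e_i,e_j}=\Riemann_{e_i,e_j}+(\Riemann_0)_{e_i,e_j}$ (which the paper records in indices as $\tensor{{\MRiemann}}{_{ij}^{k}_{\ell}} = g_{j\ell} \delta_i^k - g_{i\ell} \delta_j^k + \tensor{\Riemann}{_{ij}^k_\ell}$), your $\s$-bookkeeping in an $\eta$-orthonormal frame is the same scaling the paper encodes via $\s^2\hookprodETA{e^i}{\cdot}=\hookprod{e^i}{\cdot}$, and your fibrewise evaluation of $q(\Riemann_0)$ on $\Sym^k\T^*\Xone$ reproduces the paper's four terms $k(n+1)-k+k(k-1)-\adjtrace\trace=k(n+k-1)-\adjtrace\trace$. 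Your sign conventions and consistency checks (flat cone over $\Hn$, $q(\Riemann_0)g=0$) are all in agreement with the paper's conventions.
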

\begin{proof}
We need only concern ourselves with the effect of $q(\MRiemann)$ on $(\dss)^{m-k}\sp e^K$. It is easy to see from Lemma~\ref{lem:coneconnection} that ${\MRiemann}_{\sds, e_i}$ is the zero endomorphism, that ${\MRiemann}_{e_i, e_j} \dss=0$, and that $\eta( {\MRiemann}_{e_i, e_j} e_k^*, \dss)=0$. Therefore we need only calculate the effect of $q(\MRiemann)$ on $e^K$. The non-trivial information of $\MRiemann$ is encoded in the following equation:
\[
\tensor{{\MRiemann}}{_{ij}^{k}_{\ell}} = g_{j\ell} \delta_i^k - g_{i\ell} \delta_j^k + \tensor{\Riemann}{_{ij}^k_\ell}.
\]
We extend ${\MRiemann}_{e_i,e_j}$ to $\Ek$ giving
\[
{\MRiemann}_{e_i,e_j} e^K = \Riemann_{e_i,e_j} e^K + \sum_{k_r\in K} 
		\left( \delta_j^{k_r}g_{i\ell} - \delta_i^{k_r} g_{j\ell}\right)	
		e^\co{k_r}{\ell}{K}
\]
Calculating the interior product requires the metric, in particular
\[
\s^2 \hookprodETA{e^i}{  {\MRiemann}_{e_i,e_j} } = \hookprod{e^i}{ {\MRiemann}_{e_i,e_j} }
\]
where $\hookprodETA{}{}$ uses the metric $\eta$ to identify $\T\Mone$ with $\T^*\Mone$. Consequently calculating 
\[
\sum_i \left(
\s^2 \hookprodETA{e^i}{  {\MRiemann}_{e_i,e_j}   e^K }
-
\hookprod{e^i}{ \Riemann_{e_i,e_j} e^K }
\right)
\]
gives
\[
\sum_{i}
\sum_{k_r\in K} 
		\left( \delta_j^{k_r}g_{i\ell} - \delta_i^{k_r} g_{j\ell}\right)
			\left(
					g^{i\ell} e^\co{k_r}{}{K}
					+
					 \sum_{k_{p} \in \co{k_r}{}{K}} g^{ik_p} e^\ct{k_p}{}{k_r}{\ell}{K}
			\right).
\]
Applying $\sum_{j} e^j\sp$ to the preceding display provides $\s^2 q(\MRiemann)- q(\Riemann)$. Splitting the calculation into four terms, the results are
\begin{align*}
\tsum_{i,j}\tsum_{k_r\in K} e^j \sp \delta_j^{k_r}g_{i\ell}	
			g^{i\ell} e^\co{k_r}{}{K} 
			&= k(n+1) e^K, \\
-\tsum_{i,j}\tsum_{k_r\in K} e^j \sp \delta_i^{k_r} g_{j\ell} 
			g^{i\ell} e^\co{k_r}{}{K} 
			&= -k e^K,\\
\tsum_{i,j}\tsum_{k_r\in K}  \tsum_{k_{p} \in \co{k_r}{}{K}} e^j \sp  \delta_j^{k_r}g_{i\ell} 
			 g^{ik_p} e^\ct{k_p}{}{k_r}{\ell}{K}
			& = k(k-1) e^K, \\
-\tsum_{i,j}\tsum_{k_r\in K}\sum_{k_{p} \in \co{k_r}{}{K}} e^j \sp  \delta_i^{k_r} g_{j\ell} 
			  g^{ik_p} e^\ct{k_p}{}{k_r}{\ell}{K}
			 &=-\adjtrace\trace e^K.
\end{align*}
Upon summation of these four terms, the proof is complete.
\end{proof}

\begin{proposition}\label{prop:decompQ:k:mink:tracefree}
Suppose $u\in C^\infty(\Mone; \F)$, decomposed relative to the Minkowski scale \eqref{eqn:decomp:mink}, is trace-free with respect to the trace operator $\traceSETA$. Then the conjugated d'Alembertian $\QVasyUp$ is given by
\begin{align*}
a_{k+1}\,(\dss)^{m-k-1}&\sp\big(
					2b_k\d
		\big)u^{(k)} +\\
 \QVasyUp   a_k\, (\dss)^{m-k} \sp  u^{(k)} \,=\,
a_{k\phantom{+0}}\,(\dss)^{m-k\phantom{+0}}&\sp\big(
					\Delta + (\sds)^2 -  c_k'\,
		\big)u^{(k)} +\\
a_{k-1}\,(\dss)^{m-k+1}&\sp\big(
					-2b_{k-1}  \div 
		\big)u^{(k)}
\end{align*}
with constants $a_k$, $b_k$ announced in Proposition~\ref{prop:decompQ:k:mink} and the modified constants
\[
c_k' = c_k - (m-k)(m-k-1).
\]
\end{proposition}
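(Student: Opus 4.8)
The plan is to obtain Proposition~\ref{prop:decompQ:k:mink:tracefree} as a corollary of Proposition~\ref{prop:decompQ:k:mink}, the only genuinely new ingredient being Lemma~\ref{lem:trace:FE}. The observation that makes this work is that the metric $\s^{-2}\eta = -\tfrac12(\dss)^2 + g$ is, fibrewise, exactly the model Lorentzian metric $-f\otimes f + g$ on $F = \R\times E$ of Subsection~\ref{subsec:singlefibre}, with $\dss$ in the role of $f$, and that the decomposition \eqref{eqn:decomp:mink} is precisely the associated decomposition of $\Sym^m F^*$. Hence Lemma~\ref{lem:trace:FE} applies pointwise and yields, for $u \in \ker\traceSETA$ and each $k$,
\[
\trace u^{(k)} = -\,b_{k-2}b_{k-1}\,u^{(k-2)}.
\]

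Next I would reassemble the formula of Proposition~\ref{prop:decompQ:k:mink} by powers of $\dss$: summing the five lines over $k$ and collecting, the $(\dss)^{m-\ell}$-component of $\QVasyUp u$ is
\begin{align*}
\big(\Delta + (\sds)^2 - c_\ell - \adjtrace\trace\big)u^{(\ell)}
&\;+\; 2b_{\ell-1}\d u^{(\ell-1)} \;-\; 2b_\ell\div u^{(\ell+1)} \\
&\;-\; b_{\ell-2}b_{\ell-1}\adjtrace u^{(\ell-2)} \;-\; b_\ell b_{\ell+1}\trace u^{(\ell+2)}.
\end{align*}
Into this I substitute the two consequences of the displayed trace relation. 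First, $-\adjtrace\trace u^{(\ell)} = b_{\ell-2}b_{\ell-1}\adjtrace u^{(\ell-2)}$, which cancels the $\adjtrace u^{(\ell-2)}$ term exactly. Second, $-b_\ell b_{\ell+1}\trace u^{(\ell+2)} = (b_\ell b_{\ell+1})^2 u^{(\ell)} = (m-\ell)(m-\ell-1)u^{(\ell)}$, which combines with $-c_\ell u^{(\ell)}$ to produce $-c_\ell' u^{(\ell)}$ with $c_\ell' = c_\ell - (m-\ell)(m-\ell-1)$, exactly the modified constant in the statement. What is left is the tridiagonal expression $(\Delta + (\sds)^2 - c_\ell')u^{(\ell)} + 2b_{\ell-1}\d u^{(\ell-1)} - 2b_\ell\div u^{(\ell+1)}$, and re-attributing the two off-diagonal pieces to the slices $u^{(\ell-1)}$ and $u^{(\ell+1)}$ recovers the per-slice display in the proposition.

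For internal consistency it is worth recording that $\QVasyUp$ commutes with $\s^{-2}\traceSETA$ by Lemma~\ref{lem:Qcommutestrace}, so $\QVasyUp u$ is again trace-free and the reduced formula is compatible with \eqref{eqn:decomp:mink} applied to $\QVasyUp u$; no separate verification is needed. There is no real obstacle here --- the computation is pure bookkeeping --- the only points requiring a little care being the index shifts when passing between the ``action on the $k$-th slice'' form of Proposition~\ref{prop:decompQ:k:mink} and the ``$\ell$-th component of $\QVasyUp u$'' form, and the identification of the two Lefschetz trace operators $\trace$ on $E$ and $\traceSETA$ on $\F$ (fibrewise $\traceF$) used to invoke Lemma~\ref{lem:trace:FE}.
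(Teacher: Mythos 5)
Your proposal is correct and is essentially the paper's own argument: the paper's proof likewise deduces the statement directly from Proposition~\ref{prop:decompQ:k:mink} together with the relation $\trace u^{(k)} = -b_{k-2}b_{k-1}u^{(k-2)}$ supplied by Lemma~\ref{lem:trace:FE}. Your explicit bookkeeping of the cancellation of the $\adjtrace$ terms and the absorption of the trace term into the constant $c_k'$ is exactly the computation the paper leaves implicit.
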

\begin{proof}
This follows directly from the structure of $\QVasyUp$ given in Proposition~\ref{prop:decompQ:k:mink} and the condition that $\trace u^{(k)} = - b_{k-2}b_{k-1} u^{(k-2)}$ coming from Lemma~\ref{lem:trace:FE}.
\end{proof}


\subsection{The indicial family of $\QVasyUp$}

\begin{definition}\label{defn:QQ}
Denote by $\QVasyDown$ the indicial family of the operator $\QVasyUp \in \Diff^2_\bbb(\overline{\Mone} ; \F )$ relative to the Minkowski scale $\s$.
\[
\QVasyDown = \IF_\s(\QVasyUp;\lambda) \in \Diff^2(\Xone; \E).
\]
\end{definition}
The previous section introduced $\QVasyUp$ as a differential operator on $\F$ above $\Mone$ however, from the structure of $\QVasyUp$ given as announced in Proposition~\ref{prop:decompQ:k:mink}, it is clear that the operator extends to $\overline\Mone$. Moreover by the same proposition we immediately get the structure of $\QVasyDown$.
\begin{proposition}\label{prop:decompQQ:k:mink}
For $u=\sum_{k=0}^m u^{(k)}\in C^\infty(\Xone; \E)$, the operator $\QVasyDown$ is given by
\begin{align*}
&\big(
					-b_k b_{k+1} \adjtrace
		\big)u^{(k)} +\\
&\big(
					2 b_k \d
		\big)u^{(k)} +\\
\QVasyDOWNL  u^{(k)} \,=\,
&\big(
					\Delta +\lambda^2 -  c_k -  \adjtrace\trace
		\big)u^{(k)} +\\
&\big(
					-2b_{k-1}  \div 
		\big)u^{(k)} +\\
&\big(
					-b_{k-2}b_{k-1}\trace
		\big)u^{(k)}
\end{align*}
with constants
\[
b_k = \sqrt{m-k},
\qquad
c_k = \tfrac{n^2}{4} + m(n+2k+1) - k(2n+3k-1).
\]
Consequently, there exist $\DVasyDown\in \Diff^1(\Xone;\End\E)$ and $\ZeroOrderVasyDown\in C^\infty(\Xone;\End\E)$ independent of $\lambda$ such that
\[
\QVasyDOWNL = \n^*\n + \lambda^2 + \DVasyDown + \ZeroOrderVasyDown
\]
\end{proposition}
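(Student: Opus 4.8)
The plan is to obtain the formula for $\QVasyDOWNL$ by simply applying the indicial family map $\IF_\s$ to the expression for $\QVasyUp$ furnished by Proposition~\ref{prop:decompQ:k:mink}. The key structural observation, already recorded in the ``Consequently'' clause of that proposition, is that relative to the decomposition \eqref{eqn:decomp:mink} the operator $\QVasyUp$ is a matrix whose entries involve only the operators $\Delta,\d,\div,\trace,\adjtrace$ on $(\Xone,g)$, the constants $b_k,c_k$, and powers of $\sds$ (through the single diagonal term $(\sds)^2$); in particular every coefficient function is independent of $\s$ and no bare $\partial_\s$ occurs. Hence $\QVasyUp$ extends to $\overline\Mone$ and lies in $\Diff^2_\bbb(\overline\Mone;\bF)$, so that Definition~\ref{defn:QQ} applies.

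Next I would evaluate $\IF_\s(\QVasyUp,\lambda)(u)=\pi_{\s=0}\big(\s^{\lambda}\QVasyUp\s^{-\lambda}\pistarS u\big)$ for $u=\sum_k u^{(k)}\in C^\infty(\Xone;\E)$. Writing $\pistarS u=\sum_k a_k(\dss)^{m-k}\sp\pi^*u^{(k)}$ and using the identity $\s^{\lambda}\sds\s^{-\lambda}=\sds-\lambda$, the conjugation fixes the $\s$-independent coefficients and turns $(\sds)^2$ into $(\sds-\lambda)^2$; since $\sds$ annihilates each pullback $\pi^*u^{(k)}$, this reduces to multiplication by $\lambda^2$. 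Applying $\pi_{\s=0}$ then deletes the symmetric factors $a_k(\dss)^{m-k}\sp$ and restricts the (already $\s$-independent) coefficients and the $\pi^*u^{(k)}$ to the boundary $\{\s=0\}\simeq\Xone$. The outcome is exactly the block form in the statement, with the diagonal entry $\Delta+(\sds)^2-c_k-\adjtrace\trace$ of Proposition~\ref{prop:decompQ:k:mink} becoming $\Delta+\lambda^2-c_k-\adjtrace\trace$ and all other entries unchanged. The final assertion follows by the same computation applied to $\QVasyUp=\n^*\n+(\sds)^2+\DVasyUp+\ZeroOrderVasyUp$: one sets $\DVasyDown:=\IF_\s(\DVasyUp,\lambda)$ and $\ZeroOrderVasyDown:=\IF_\s(\ZeroOrderVasyUp,\lambda)$, which are independent of $\lambda$ because $\DVasyUp$ and $\ZeroOrderVasyUp$ are $\s$-independent and free of $\sds$, while $\IF_\s(\n^*\n,\lambda)=\n^*\n$ and $\IF_\s((\sds)^2,\lambda)=\lambda^2$.

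There is no genuine obstacle here: the proposition is a direct corollary of Proposition~\ref{prop:decompQ:k:mink}. The only points demanding a little care are verifying, in the frame $\{\dss,e^i\}$, that the coefficients of $\QVasyUp$ carry no $\s$-dependence (so that the freezing at $\s=0$ implicit in $\pi_{\s=0}$ does nothing) and that all $\s$-differentiation enters through $\sds$ alone (so that conjugation by $\s^{\pm\lambda}$ acts merely by the shift $\sds\mapsto\sds-\lambda$, i.e.\ by $-\lambda$ on pullbacks); both are immediate from the explicit formulas already established in Proposition~\ref{prop:decompQ:k:mink}.
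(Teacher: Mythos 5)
Your proposal is correct and is exactly the paper's route: the paper deduces the proposition immediately from Proposition~\ref{prop:decompQ:k:mink}, noting that the structure there shows $\QVasyUp$ extends to $\overline\Mone$ as a b-operator with $\s$-independent coefficients, so the indicial family (as described in the remark following its definition) simply strips the factors $a_k(\dss)^{m-k}\sp$, freezes nothing new, and replaces $\sds$ by $-\lambda$, turning $(\sds)^2$ into $\lambda^2$. Your identification of $\DVasyDown$ and $\ZeroOrderVasyDown$ as the indicial images of $\DVasyUp$ and $\ZeroOrderVasyUp$ matches the paper's conclusion as well.
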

\begin{proposition}\label{prop:QQ:selfadjoint}
The family of differential operators $\QVasyDown$ is, upon restriction to $\lambda\in i\R$, a family of formally self-adjoint operators with respect to the inner product
\[
\LinnprodSS{u}{v} = \int_{\Xone} \sum_{k=0}^m (-1)^{m-k} \innprod{ u^{(k)} }{ v^{(k)} } \, d\vol_g
\]
where $u={\sum_{k=0}^m u^{(k)}}$, $v=\sum_{k=0}^m v^{(k)}$ for $u^{(k)},v^{(k)} \in C^\infty_c (\Xone; \Ek)$. Moreover, for all $\lambda$, $\QVasyDOWNL^* = \QVasyDown_{-\bar\lambda}$.
\end{proposition}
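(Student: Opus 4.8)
The plan is to deduce both assertions from machinery already in place. Lemma~\ref{lem:Qselfadjoint} gives formal self-adjointness of $\QVasyUp$ with respect to $\LinnprodSETA{\cdot}{\cdot}$, and Lemma~\ref{lem:selfadjointpreservation} transports this property to the indicial family, provided $\QVasyUp$ is b-trivial. So the work reduces to two bookkeeping points: first, checking b-triviality of $\QVasyUp$; second, identifying the abstract inner product $\LinnprodSS{\cdot}{\cdot}$ appearing in Lemma~\ref{lem:selfadjointpreservation} with the explicit one in the statement. Granting these, Lemma~\ref{lem:selfadjointpreservation} delivers formal self-adjointness of $\QVasyDown$ for $\lambda\in i\R$ and the relation $\QVasyDOWNL^* = \QVasyDown_{-\bar\lambda}$ for all $\lambda$ simultaneously, and the remark following that lemma lets us work with $d\vol_g$ (rather than any other compatible measure).

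For b-triviality I would argue as follows. By Proposition~\ref{prop:decompQ:k:mink}, relative to the Minkowski-scale decomposition $u=\sum_{k=0}^m a_k (\dss)^{m-k}\sp u^{(k)}$ --- which, up to the constant rescalings $a_k$, is a frame for $\bF$ generated by $\{\dss,e^i\}$ --- we have $\QVasyUp = \n^*\n + (\sds)^2 + \DVasyUp + \ZeroOrderVasyUp$ with $\DVasyUp\in\Diff^1(\Mone;\End\F)$ and $\ZeroOrderVasyUp\in C^\infty(\Mone;\End\F)$ both independent of $\s$, assembled from $\d$, $\div$, $\trace$, $\adjtrace$, $q(\Riemann)$ and $\s$-independent constants. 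Hence, writing $\QVasyUp_{i,j} = \sum q_{i,j,k,\alpha}(\sds)^k\partial_x^\alpha$ in this frame, the coefficient functions $q_{i,j,k,\alpha}$ are pulled back from $C^\infty(\Xone)$ and so are constant along the $\RplusS$-factor; freezing them at any $\s=\s_0$ gives the same operator as freezing at $\s=0$, which is exactly the definition of b-triviality.

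For the second point I would unwind the definitions. The inner product supplied by Lemma~\ref{lem:selfadjointpreservation} is $\int_{\Xone}\innprodSS{u}{v}\,d\vol_g$ with $\innprodSS{u}{v} := \innprodSETA{\pistarS u}{\pistarS v}$. Since $\s^{-2}\eta = -\tfrac12(\dss)^2 + g$ is fibrewise the model Lorentzian inner product on $\Sym^m F^*$ of Subsection~\ref{subsec:singlefibre}, and since the constants $a_k=((m-k)!)^{-1/2}$ were chosen precisely so that $\innprodF{w}{w'}=\sum_{k=0}^m(-1)^{m-k}\innprod{w^{(k)}}{w'^{(k)}}$ for $w=\sum_k a_k f^{m-k}\sp w^{(k)}$, applying this with $w=\pistarS u$, $w'=\pistarS v$ yields $\innprodSS{u}{v}=\sum_{k=0}^m(-1)^{m-k}\innprod{u^{(k)}}{v^{(k)}}$, which is the inner product in the statement. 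This closes the argument.

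I expect the only genuine content --- as opposed to unpacking notation --- to be recognising that the apparent obstruction flagged in the introduction is a red herring: $\div$ being the $g$-adjoint of $\d$ would make the blocks $2b_k\d$ (above the diagonal) and $-2b_{k-1}\div$ (below) of $\QVasyDOWNL$ look mismatched for self-adjointness, but the alternating signs $(-1)^{m-k}$ in $\LinnprodSS{\cdot}{\cdot}$ --- forced by the Lorentzian signature of $\eta$ --- flip the adjoint relation between consecutive blocks and supply the compensating sign. Routing the proof through $\LinnprodSETA{\cdot}{\cdot}$ handles this automatically; the alternative would be a direct verification from the matrix in Proposition~\ref{prop:decompQQ:k:mink} using self-adjointness of $\Delta$ (via $\Delta = \div\d - \d\div + 2q(\Riemann)$ and the mutual adjointness of $\trace$ and $\adjtrace$), which is the longer route.
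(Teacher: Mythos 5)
Your proposal is correct and follows the same route as the paper, whose proof consists precisely of invoking Lemma~\ref{lem:selfadjointpreservation} together with Lemma~\ref{lem:Qselfadjoint}. The extra bookkeeping you supply --- b-triviality of $\QVasyUp$ via the $\s$-independence in Proposition~\ref{prop:decompQ:k:mink}, and the identification of $\innprodSS{\cdot}{\cdot}$ with $\sum_k(-1)^{m-k}\innprod{u^{(k)}}{v^{(k)}}$ via the choice of the constants $a_k$ --- is exactly what the paper leaves implicit, and both checks are valid.
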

\begin{proof}
Lemmas~\ref{lem:selfadjointpreservation} and~\ref{lem:Qselfadjoint}.
\end{proof}

The operator $\QVasyUp$ preserves the subbundle $\F\cap\ker\traceSETA$ by Lemma~\ref{lem:Qcommutestrace}. As $\pistarS$ is algebraic, we may consider it as a map from $\E$ over $\Xone$ to $\F$ over $\Mone$. We thus obtain the subbundle $\E\cap\ker(\traceSETA\circ\pistarS)$ over $\Xone$; symmetric tensors above $\Xone$ which are trace-free with respect to the ambient trace operator $\traceSETA$. It thus follows that $\QVasyDown$ may also be considered a family of differential operators on this subbundle and we obtain
\begin{proposition}\label{prop:decompQQ:k:mink:tracefree}
For $u=\sum_{k=0}^m u^{(k)}\in C^\infty(\Xone; \E)\cap\ker(\traceSETA\circ\pistarS)$, the operator $\QVasyDown$ is given by
\begin{align*}
&\big(
					2 b_k \d
		\big)u^{(k)} +\\
\QVasyDOWNL  u^{(k)} \,=\,
&\big(
					\Delta +\lambda^2 -  c_k'
		\big)u^{(k)} +\\
&\big(
					-2b_{k-1}  \div 
		\big)u^{(k)}.
\end{align*} 
\end{proposition}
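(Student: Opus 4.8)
The plan is to obtain this statement as the indicial family of Proposition~\ref{prop:decompQ:k:mink:tracefree}, in exactly the way that Proposition~\ref{prop:decompQQ:k:mink} was read off from Proposition~\ref{prop:decompQ:k:mink}. The first thing I would check is that the construction of $\QVasyDown$ restricts consistently to the subbundle $\E\cap\ker(\traceSETA\circ\pistarS)$. For $u\in C^\infty(\Xone;\E)\cap\ker(\traceSETA\circ\pistarS)$ the section $\pistarS u\in C^\infty(\Mone;\F)$ lies pointwise in $\ker\traceSETA$ by definition of the subbundle; since $\traceSETA$ is a bundle endomorphism it commutes with multiplication by the scalar functions $\s^{\pm\lambda}$; and $\QVasyUp$ preserves $\ker\traceSETA$ because Lemma~\ref{lem:Qcommutestrace} gives $\traceSETA\QVasyUp w=\s^2\QVasyUp^{(m-2)}(\s^{-2}\traceSETA w)=0$ whenever $\traceSETA w=0$. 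Hence $\s^\lambda\QVasyUp\s^{-\lambda}(\pistarS u)$ is again trace-free, and after applying $\pi_{\s=0}$ one concludes that $\QVasyDOWNL$ maps the subbundle to itself.

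With this in hand, I would simply push the formula of Proposition~\ref{prop:decompQ:k:mink:tracefree} through $\IF_\s$. By the remark following the definition of the indicial family, this amounts to three operations applied to that formula: strip off the factors $a_k(\dss)^{m-k}\sp$, so that the three displayed lines land respectively in the rank $k+1$, $k$ and $k-1$ slots; freeze the coefficient functions at $\s=0$, which has no effect because $\Delta$, $\d$, $\div$ on $(\Xone,g)$ and the constants $c_k'$ are already independent of $\s$; and replace each occurrence of $\sds$ by $-\lambda$, so that $(\sds)^2$ becomes $\lambda^2$. This produces exactly the claimed block-tridiagonal expression with $c_k'=c_k-(m-k)(m-k-1)$.

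An equivalent route, which I would mention as a sanity check, is to start instead from Proposition~\ref{prop:decompQQ:k:mink} and use Lemma~\ref{lem:trace:FE}: on the subbundle the components obey $\trace u^{(k)}=-b_{k-2}b_{k-1}u^{(k-2)}$, and feeding this into the two Lefschetz-type couplings $-b_kb_{k+1}\adjtrace$ and $-\adjtrace\trace$ makes the $\adjtrace u^{(k-2)}$ contributions cancel while converting $c_k$ into $c_k'$ --- the same bookkeeping that turns Proposition~\ref{prop:decompQ:k:mink} into Proposition~\ref{prop:decompQ:k:mink:tracefree}. I do not expect a genuine obstacle here: the statement is a formal corollary of Proposition~\ref{prop:decompQ:k:mink:tracefree}, and the only point deserving a line of justification is the compatibility of the trace-free condition with the indicial family construction, which is handled by Lemma~\ref{lem:Qcommutestrace} as above.
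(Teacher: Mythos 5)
Your proposal is correct and matches the paper's (implicit) argument: the paper likewise invokes Lemma~\ref{lem:Qcommutestrace} to see that the trace-free subbundle is preserved and then reads the formula off from Proposition~\ref{prop:decompQ:k:mink:tracefree} via the indicial family, just as Proposition~\ref{prop:decompQQ:k:mink} was read off from Proposition~\ref{prop:decompQ:k:mink}. Your alternative check via Lemma~\ref{lem:trace:FE} applied to Proposition~\ref{prop:decompQQ:k:mink} is the same bookkeeping the paper uses upstairs, so there is nothing missing.
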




\section{The operator $\PVasyUp$ and its indicial family}\label{sec:P}

This section introduces the operator $\PVasyUp$ on $\Mthree$ and its indicial family $\PVasyDown$ on $\Xthree$ and similar results to those presented for $\QVasyUp$ and $\QVasyDown$ are given. The relationship between these two constructions is also detailed.

\subsection{Euclidean scale}

The manifold $\Mthree=\RplusT\times \Xthree$ has been equipped with the Lorentzian metric $\eta$ which agrees with the Lorentzian cone metric put on $\Mone$. Recalling the smooth chart $U=(0,1)_\mu\times Y\subset \Xone\subset\Xthree$ the metric, on $\RplusT\times U$ takes the form of \eqref{eqn:rho-2eta} and we may assume that this is the form of $\eta$ on the larger chart $\RplusT\times U^2$ where $U^2=(-1,1)_\mu\times Y$. For later use we record the behaviour of $\nMthree$.
\begin{lemma}\label{lem:Mthreeconnection}
On the chart $\RplusT\times(-1,1)_\mu\times Y$ with $\{e_i\}_{i=1}^n$ a local holonomic frame on $Y$ with dual frame $\{e^i\}_{i=1}^n$ such that $h=\sum_{i,j} h_{ij} e^i\otimes e^j$, the connection $\nMthree$ acts in the following manner:
\begin{align*}
\nMthree_{\tdt} \dtt &= 0,		
					&\nMthree_{\partial_\mu}\dtt&=0, \\
\nMthree_{\tdt} d\mu &= -d\mu,		
					&\nMthree_{\partial_\mu}d\mu&=-\dtt, \\
\nMthree_{\tdt} e^i &= -e^i,		
					&\nMthree_{\partial_\mu}e^i&=-\tfrac 12 h^{ij}(\partial_\mu h_{jk}) e^k,
\end{align*}
and
\begin{align*}
\nMthree_{e_i}\dtt&= -(\partial_\mu h_{ij}) e^j,\\
\nMthree_{e_i}d\mu&= -2((1-\mu\partial_\mu)h_{ij}) e^j,\\
\nMthree_{e_i}e^j&= -\delta_i^j\dtt -\tfrac 12 h^{jk}(\partial_\mu h_{ik}) d\mu + \nY_{e_i} e^j.
\end{align*}
\end{lemma}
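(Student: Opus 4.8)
The strategy is a direct computation of the Levi-Civita connection $\nMthree$ of the metric $\eta$ on the chart $\RplusT\times(-1,1)_\mu\times Y$, starting from its explicit form $\t^{-2}\eta = -\tfrac\mu2(\dtt)^2 - \tfrac12\dtt\sp d\mu + h$ given in \eqref{eqn:rho-2eta}, equivalently $\eta = -\mu\, d\t\otimes d\t - \tfrac12\t(d\mu\otimes d\t + d\t\otimes d\mu) + \t^2 h$. The cleanest route is to work with the coframe $\{\dtt, d\mu, e^i\}$ (or dually the frame $\{\tdt, \partial_\mu, e_i\}$) and use the Koszul formula; since $h=h(\mu)$ is a $\mu$-dependent family of metrics on $Y$, one must carefully track the $\partial_\mu h_{ij}$ terms, and the stated formulas are organised precisely so that these appear explicitly. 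I would first record the metric coefficients in this frame: $\eta(\tdt,\tdt) = -\mu$, $\eta(\tdt,\partial_\mu) = -\tfrac12$, $\eta(\partial_\mu,\partial_\mu)=0$, $\eta(\tdt,e_i)=0$, $\eta(\partial_\mu,e_i)=0$, $\eta(e_i,e_j)=\t^2 h_{ij}$, together with the inverse metric coefficients needed to raise indices.

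The bulk of the work is applying the Koszul formula $2\eta(\nMthree_X Y, Z) = X\eta(Y,Z) + Y\eta(X,Z) - Z\eta(X,Y) + \eta([X,Y],Z) - \eta([X,Z],Y) - \eta([Y,Z],X)$ for each pair of frame vectors among $\{\tdt,\partial_\mu,e_i\}$ and each test vector $Z$. The only nonzero brackets are $[\tdt,\partial_\mu]$-type expressions involving the fact that $\tdt$ is a dilation field (so e.g. $[\tdt, e_i]$ and $[\tdt,\partial_\mu]$ vanish since $e_i,\partial_\mu$ commute with $\t$, whereas $[\tdt,\tdt]=0$ trivially) and $[e_i,e_j]$ on $Y$. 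Then I would convert the results on the frame into the action on the coframe via $\nMthree_X(d\xi) = -(\text{dual})$, i.e. using $(\nMthree_X\omega)(Y) = X(\omega(Y)) - \omega(\nMthree_X Y)$ applied to $\omega\in\{\dtt,d\mu,e^i\}$. The homogeneity of $\eta$ under $\tdt$-dilations immediately gives the first column entries $\nMthree_{\tdt}\dtt=0$, $\nMthree_{\tdt}d\mu=-d\mu$, $\nMthree_{\tdt}e^i=-e^i$ (weight $-1$ on the part of the coframe scaling with $\t$, weight $0$ on $\dtt$). The entries $\nMthree_{\partial_\mu}\dtt=0$, $\nMthree_{\partial_\mu}d\mu=-\dtt$, $\nMthree_{\partial_\mu}e^i=-\tfrac12 h^{ij}(\partial_\mu h_{jk})e^k$ come from differentiating the $\mu$-dependence of $\eta$; here the $-\dtt$ in $\nMthree_{\partial_\mu}d\mu$ tracks the $-\tfrac12\dtt\sp d\mu$ cross term. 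Finally the $\nMthree_{e_i}$ row is where the $Y$-connection $\nY$ enters, via the standard fact that the $h$-part of $\eta$ contributes $\nY$ plus correction terms from the $\mu$- and $\t$-directions; the factor $(1-\mu\partial_\mu)h_{ij}$ in $\nMthree_{e_i}d\mu$ arises from combining the $-\mu$ in $\eta(\tdt,\tdt)$ with $\t^2 h$.

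The main obstacle is purely bookkeeping: there is no conceptual difficulty, but one must be careful with (i) the asymmetry of the cross term $-\tfrac12\dtt\sp d\mu$, which makes the $\tdt$–$\partial_\mu$ block of the metric off-diagonal and degenerate in the $\partial_\mu$ direction, so raising indices requires the full inverse of the $2\times2$ block $\begin{pmatrix}-\mu & -\tfrac12\\ -\tfrac12 & 0\end{pmatrix}$; and (ii) correctly propagating the $\mu$-derivatives of $h_{ij}$ through the Koszul formula so that the coefficients $\partial_\mu h_{ij}$, $(1-\mu\partial_\mu)h_{ij}$ and $h^{jk}\partial_\mu h_{ik}$ land exactly as displayed. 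Since the statement says ``we record'' the formulas and the proof ``need not be detailed'' in the spirit of Lemma~\ref{lem:coneconnection}, I would present the Koszul computation for one representative entry in each row (say $\nMthree_{\tdt}d\mu$, $\nMthree_{\partial_\mu}d\mu$, and $\nMthree_{e_i}e^j$) and assert that the remaining entries follow by the identical procedure.
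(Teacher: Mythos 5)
Your route --- record the components of $\eta$ in the frame $\{\tdt,\partial_\mu,e_i\}$, run the Koszul formula, and dualize --- is exactly the computation the paper leaves implicit (the lemma is merely recorded, as with Lemma~\ref{lem:coneconnection}), so the approach itself is fine. One bookkeeping slip: the coefficients you list are those of $\t^{-2}\eta$, not of $\eta$ (in fact $\eta(\tdt,\tdt)=-\mu\t^2$, $\eta(\tdt,\partial_\mu)=-\tfrac12\t^2$, $\eta(e_i,e_j)=\t^2 h_{ij}$); since the Levi-Civita connection is not conformally invariant you must keep the overall $\t^2$, which is precisely what produces the $\nMthree_{\tdt}$ column and the $-\delta_i^j\dtt$ term in $\nMthree_{e_i}e^j$.

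The substantive problem is your treatment of the $\nMthree_{\tdt}$ column. The ``weight'' argument is not valid: all three coframe elements $\dtt$, $d\mu$, $e^i$ are invariant under the dilations generated by $\tdt$ (their Lie derivatives along $\tdt$ all vanish), so there is no weight distinction between $\dtt$ and the others to appeal to. What the Koszul formula actually gives is $\nMthree_X\tdt=X$ for every $X$ (equivalently $\nMthree_{\partial_\t}\partial_\t=0$, $\nMthree_{\partial_\t}\partial_\mu=\t^{-1}\partial_\mu$, $\nMthree_{\partial_\t}e_i=\t^{-1}e_i$): $\tdt=\sds$ is the Euler field of the cone. Since the frame fields commute with $\tdt$, torsion-freeness gives $\nMthree_{\tdt}Z=\nMthree_Z\tdt=Z$ on the frame, hence $\nMthree_{\tdt}$ acts as $-\mathrm{Id}$ on the entire dual coframe: $\nMthree_{\tdt}\dtt=-\dtt$, $\nMthree_{\tdt}d\mu=-d\mu$, $\nMthree_{\tdt}e^i=-e^i$. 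So a correct execution of your own plan yields $\nMthree_{\tdt}\dtt=-\dtt$, not the value $0$ printed in the statement; indeed $0$ is inconsistent with the paper's Lemma~\ref{lem:coneconnection}, because $\tdt=\sds$ and $\dss=\dtt+\tfrac12\tfrac{d\mu}{\mu}$ force $\nMthree_{\tdt}\dtt=\nM_{\sds}\dss-\tfrac12\mu^{-1}\nMthree_{\tdt}d\mu=-\dss+\tfrac12\tfrac{d\mu}{\mu}=-\dtt$. In other words the printed entry appears to be a typo, and your heuristic reproduces it only because the heuristic is wrong; you should compute this entry honestly (and flag the discrepancy) rather than assert it by a scaling count. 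The remaining entries of the lemma do come out of the Koszul computation exactly as you describe.
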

Motivated by the structure of $\QVasyUp$ from the previous section we define the second of a our two main operators
\begin{definition}
The second-order differential operator $\PVasyUp\in\Diff^2(\Mthree; \F)$ is the following conjugation of the d'Alembertian:
\[
\PVasyUp : \left\{	 \begin{array}{rcl}
			C^\infty(\Mone ; \F) & \to & C^\infty(\Mthree ; \F) \\
			u & \mapsto &  \t^{\frac n2-m+2} \DeltaAmb \t^{-\frac n2+m} \, u
		\end{array}
		\right.
\]
\end{definition}

Note that on $\Mone\subset\Mthree$ there is a trivial correspondence between $\PVasyUp$ and $\QVasyUp$,
\[
\PVasyUp = \rho^{-\frac n2+m-2} \QVasyUp \rho^{\frac n2-m}
\]
and that, since $\rho=1$ on $\Xone\backslash U$, we have equality $\PVasyUp=\QVasyUp$ on $\Mone\backslash (\R^+\times U)$.

\begin{lemma}
The operator $\PVasyUp\in \Diff^2(\Mthree;\F)$ naturally extends to an operator $\PVasyUp\in\Diff^2_\bbb(\overline\Mthree;\bF)$ and is b-trivial.
\end{lemma}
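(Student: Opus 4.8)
The plan is to verify the two claims—that $\PVasyUp$ extends to a b-differential operator on $\overline\Mthree$ and that it is b-trivial—by appealing to the explicit structure of the d'Alembertian computed in Section~\ref{sec:Lap.DAlem.Q}, transported to the Euclidean scale via Lemma~\ref{lem:Mthreeconnection}. First I would redo the calculation of Proposition~\ref{prop:decompQ:k:mink}, but now with respect to the Euclidean scale $\t$ and the coordinates $(\t,\mu,y)$ valid on the chart $\RplusT\times U^2$. Using Lemma~\ref{lem:Mthreeconnection} in place of Lemma~\ref{lem:coneconnection}, the derivation formulae \eqref{eqn:derivationbasis:tau} and \eqref{eqn:derivationbasis:X} get replaced by their Euclidean analogues; the key point is that every coefficient appearing is a polynomial in $\mu$, $h_{ij}$, $h^{ij}$ and their $\partial_\mu$-derivatives (together with the connection coefficients of $h$), and by the evenness hypothesis $h$ extends smoothly in $\mu$ across $\mu=0$ to the chart $U^2$. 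Consequently $\t^2\DeltaAmb$, and hence $\PVasyUp=\t^{\frac n2-m+2}\DeltaAmb\t^{-\frac n2+m}$, has coefficients—relative to the frame generated by $\{\dtt, d\mu, e^i\}$—which are smooth functions on $\overline\Mthree$ and which are moreover \emph{independent of $\t$} (all $\t$-dependence has been absorbed into appearances of $\tdt$, by the same mechanism as in the Minkowski-scale computation: $\nMthree_{\tdt}$ acts diagonally, scaling $\dtt$ by $0$ and each of $d\mu$, $e^i$ by $-1$).

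Granting that structure, the two assertions follow. That $\PVasyUp$ extends to $\Diff^2_\bbb(\overline\Mthree;\bF)$ is immediate: a b-differential operator is exactly one whose coefficients, in the frame $\{\dtt,\dots\}$ dual to the b-vector fields $\{\tdt, \partial_\mu, e_i\}$, are smooth on $\overline\Mthree$ and which involves only the b-derivatives $\tdt$, $\partial_\mu$, $e_i$; this is what the recomputation exhibits. For b-triviality I would invoke the definition from Subsection~\ref{subsec:b-calculus}: one must check that for every $\t_0\in\R^+$,
\[
\IF_\t(\PVasyUp,\lambda)(u) = \pi_{\t=\t_0}\big( \t^\lambda \PVasyUp \t^{-\lambda} (\pistarT u) \big),
\qquad u\in C^\infty(\Xthree;\E),
\]
i.e.\ that the right-hand side is independent of $\t_0$. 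Since the coefficients $q_{i,j,k,\alpha}$ of $\PVasyUp$ do not depend on $\t$ and the conjugation by $\t^\lambda$ replaces each $\tdt$ by $\tdt-\lambda$, the operator $\t^\lambda\PVasyUp\t^{-\lambda}$ again has $\t$-independent coefficients and, acting on $\pistarT u$, produces a section of $\F$ whose component functions along the $\Ek$-summands are $\t$-independent; evaluating at $\t=\t_0$ therefore gives the same result for all $\t_0$. Equivalently, one can argue via the already-established relation $\PVasyUp=\rho^{-\frac n2+m-2}\QVasyUp\rho^{\frac n2-m}$ on $\Mone$ together with Lemma~\ref{lem:indicialfamily:changeofscales}, noting that $\QVasyUp$ is b-trivial with respect to $\s$ (which itself follows from Proposition~\ref{prop:decompQ:k:mink}, whose right-hand side is visibly $\s$-independent once the $a_k(\dss)^{m-k}$ prefactors are stripped); b-triviality is a local condition near the relevant boundary, and on $\Mone\backslash(\R^+\times U)$ one has $\PVasyUp=\QVasyUp$ literally, while on $\R^+\times U$ the conjugation by a power of $\rho$ (a function on $\Xthree$, independent of $\t$) preserves the $\t$-independence of coefficients.

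The main obstacle is the bookkeeping in the Euclidean-scale recomputation: one must confirm that no negative powers of $\mu$ survive in the coefficients of $\PVasyUp$ near $\mu=0$. A priori the rough d'Alembertian on the cone metric $\eta$ contains factors like $h^{ij}$ and, from $\nMthree_{e_i}d\mu=-2((1-\mu\partial_\mu)h_{ij})e^j$, potential $\mu^{-1}$ singularities hidden in raising indices; the evenness hypothesis is precisely what was used in Section~\ref{sec:geom} to guarantee that $\eta$ (equivalently $\t^{-2}\eta$, displayed in \eqref{eqn:rho-2eta}) extends non-degenerately across $\RplusT\times Y$, so that $h$, $h^{-1}$, and all the Christoffel symbols of the extended metric are smooth on $U^2$. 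Granting this—which is exactly the content of the geometric set-up—the coefficients of $\t^2\DeltaAmb$ are smooth in $(\mu,y)$ up to and across $\mu=0$, and the lemma follows; I would phrase the proof as "the stated conclusion is a direct consequence of re-running the computation of Proposition~\ref{prop:decompQ:k:mink} in the Euclidean scale using Lemma~\ref{lem:Mthreeconnection} and the evenness of $g$," with the two structural observations (smooth coefficients in the b-frame; no $\tdt$-free $\t$-dependence) isolated explicitly.
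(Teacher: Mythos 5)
Your proposal is correct and rests on exactly the same ingredients as the paper's proof: Lemma~\ref{lem:Mthreeconnection}, the structure of $\t^{-2}\gamb$ (smooth across $\mu=0$ by evenness, with $\t$-independent coefficients in the frame $\{\dtt,d\mu,e^i\}$), and the observation that conjugation by powers of $\t$ only shifts $\tdt$. The only difference is one of economy: the paper avoids re-running the Proposition~\ref{prop:decompQ:k:mink} computation by noting directly that $\nMthree$ is a b-connection and $\t^{-2}\gamb$ a b-metric, so that $\t^2\,\nMthree^*\nMthree$ (and likewise the curvature term) is automatically a b-trivial b-differential operator, which is the structural shortcut your more computational verification would reproduce.
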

\begin{proof}
The important point is to verify that at $\mu=0$, $\PVasyUp$ fits into the b-calculus framework. This is reasonably clear from Lemma~\ref{lem:Mthreeconnection}. Indeed, the Lie algebra of b-vector fields is generated by $\{\tdt,\partial_\mu, e_i\}$ where $\{e_i\}_{i=1}^n$ is a local holonomic frame on $Y$, while the b-cotangent bundle has basis $\{\dtt,d\mu, e^i\}$ with $\{e^i\}_{i=1}^n$ the dual frame on $\T^*Y$. Lemma~\ref{lem:Mthreeconnection} thus shows that $\nMthree$ is a b-connection. Taking the trace using $\eta$ and then multiplying by $\t^2$ is equivalent to taking the trace with $\t^{-2}\eta$ whose structure \eqref{eqn:rho-2eta} indicates it is a b-metric. Therefore $\t^{2}\,\nMthree^*\nMthree$ is a b-differential operator. That $\t^2\,\nMthree^*\nMthree$ is b-trivial is also immediate from Lemma~\ref{lem:Mthreeconnection} and the structure of $\t^{-2}\eta$. A similar line of reasoning for $q(\MthreeRiemann)$ (which uses one application of the inverse of the metric $\eta$) shows that $\t^2 \DeltaAmb$ is also a b-differential operator. The final conjugation by powers of $\t$ preserves the b-structure (and its b-triviality) as it merely conjugates appearances of $\tdt$. This implies the result.
\end{proof}

\begin{lemma}\label{lem:Pselfadjoint}
The differential operator $\PVasyUp$ is formally self-adjoint with respect to the inner product
\[
\LinnprodTETA{ u}{v} = \int_{\Mthree} \innprodTETA{  u }{ v } \, \dtt    \meas,
\qquad
u,v\in C^\infty_c (\Mthree; \F).
\]
\end{lemma}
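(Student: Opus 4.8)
The plan is to argue exactly as in the proof of Lemma~\ref{lem:Qselfadjoint}, tracking powers of the Euclidean scale $\t$ in place of powers of $\s$. The starting point is that the Lichnerowicz d'Alembertian is of Laplace type, $\DeltaAmb = \nMthree^*\nMthree + q(\MthreeRiemann)$ on $\Mthree$; since $\nMthree$ is a metric connection and the curvature endomorphism $q(\MthreeRiemann)$ is pointwise symmetric (by the algebraic symmetries of $\MthreeRiemann$, just as on $\Xone$), $\DeltaAmb$ is formally self-adjoint with respect to the metrically natural pairing
\[
\LinnprodETA{u}{v} = \int_{\Mthree} \innprodETA{u}{v}\,d\vol_\eta, \qquad u,v\in C^\infty_c(\Mthree;\F).
\]
The Lorentzian signature of $\eta$ causes no difficulty here: this is still a nondegenerate bilinear pairing on compactly supported sections, and the integration-by-parts identity $\int \innprodETA{\nMthree^*\nMthree u}{v}\,d\vol_\eta = \int \innprodETA{\nMthree u}{\nMthree v}\,d\vol_\eta$ is insensitive to it.

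Next I would assemble three bookkeeping identities, each the Euclidean-scale counterpart of a relation already in hand. First, because the inner product on $\Sym^m$ involves $m$ copies of the inverse metric and $\t^{-2}\eta$ rescales $\eta$, one has $\innprodTETA{u}{v} = \t^{2m}\innprodETA{u}{v}$, mirroring \eqref{eqn:twoinnprods:eta}. Second, since $\t^{-2}\eta$ is a b-metric whose associated measure is $\dtt\,\meas$ (Section~\ref{sec:geom}) and $\Mthree$ has dimension $n+2$, the volume forms are related by $d\vol_\eta = \t^{n+2}\,\dtt\,\meas$. Third, the conjugating factors $\t^{\pm s}$ are scalar functions and may therefore be pulled out of, or moved between the two slots of, $\innprodETA{\cdot}{\cdot}$.

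Putting $\alpha = \tfrac n2 - m$, so that $\PVasyUp u = \t^{\alpha+2}\DeltaAmb(\t^{-\alpha}u)$, the computation then runs
\begin{align*}
\LinnprodTETA{\PVasyUp u}{v}
&= \int_{\Mthree}\innprodTETA{\t^{\alpha+2}\DeltaAmb(\t^{-\alpha}u)}{v}\,\dtt\,\meas
= \int_{\Mthree}\innprodETA{\DeltaAmb(\t^{-\alpha}u)}{v}\,\t^{2m+(\alpha+2)-(n+2)}\,d\vol_\eta \\
&= \int_{\Mthree}\innprodETA{\DeltaAmb(\t^{-\alpha}u)}{\t^{-\alpha}v}\,d\vol_\eta
= \int_{\Mthree}\innprodETA{\t^{-\alpha}u}{\DeltaAmb(\t^{-\alpha}v)}\,d\vol_\eta,
\end{align*}
where the middle step uses the exponent arithmetic $2m+(\alpha+2)-(n+2) = -\alpha$ (valid precisely because $\alpha=\tfrac n2-m$) together with the three identities above, and the final step is the formal self-adjointness of $\DeltaAmb$. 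Reversing the same manipulations on the right-hand side yields $\LinnprodTETA{u}{\PVasyUp v}$, which is the assertion.

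I do not expect a genuine obstacle; the only point demanding care is that the powers of $\t$ from the three sources — the conjugation in the definition of $\PVasyUp$, the rescaling of the fibre inner product, and the rescaling of the volume form — cancel exactly, which is exactly why the conjugating exponents were chosen to be $\tfrac n2 - m + 2$ and $-(\tfrac n2 - m)$. (One could alternatively try to transfer self-adjointness from $\QVasyUp$ via $\PVasyUp = \rho^{-\frac n2+m-2}\QVasyUp\rho^{\frac n2-m}$, but that identity only holds on $\Mone\subset\Mthree$, so the direct argument above is the cleaner route.)
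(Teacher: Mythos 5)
Your proposal is correct and follows essentially the paper's own route: formal self-adjointness of $\DeltaAmb$ with respect to $\LinnprodETA{\cdot}{\cdot}$, the fibre rescaling $\innprodTETA{\cdot}{\cdot}=\t^{2m}\innprodETA{\cdot}{\cdot}$, and the exact cancellation of the powers of $\t$ coming from the conjugation by $\t^{-\frac n2+m}$, the factor $\t^{2}$, and the measure relation $d\vol_\eta=\t^{n+2}\,\dtt\,\meas$. The only bookkeeping difference is that the paper first reduces to sections supported in $\RplusT\times U^2$ (using $\PVasyUp=\QVasyUp$ on $\Mone\setminus(\R^+\times U)$ together with Lemma~\ref{lem:Qselfadjoint}) and there derives the measure identity from $\det\eta=-\tfrac14\t^{2n+2}\det h$, whereas you invoke the measure identity globally; this is harmless provided you note that the Section~\ref{sec:geom} statement you cite is made on $\RplusT\times U^2$, while on the rest of $\Mthree$ one has $\rho=1$, $\t=\s$, $\meas=d\vol_g$, so the identity there is exactly the one already used in the proof of Lemma~\ref{lem:Qselfadjoint}.
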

\begin{proof}
By the correspondence between $\PVasyUp$ and $\QVasyUp$ on $\Mone\backslash(\R^+\times U)$ and Lemma~\ref{lem:Qselfadjoint}, it suffices to verify this claim when $u,v$ are supported on $\RplusT\times U^2$.
The d'Alembertian is self-adjoint with respect to the following inner product
\[
\LinnprodETA{ u}{ v } = \int_{\Mthree} \innprodETA{  u }{ v } \, d\vol_\eta,
\qquad
u,v\in C^\infty_c (\RplusT\times U^2; \F).
\]
The two inner products on the fibres of $\F$ are related via the Euclidean scale analog of \eqref{eqn:twoinnprods:eta}. Tracking the effects of the conjugations by powers of $\t$ on $\DeltaAmb$ as well as the multiplication by $\t^2$ in order to obtain $\PVasyUp$ implies self-adjointness when using the inner product $\innprodTETA{\cdot}{\cdot}$ with the measure $\t^{-n-2}d\vol_\eta$. As $\det \eta = - \tfrac14 \t^{2n+2} \det h$ we have
\[
\t^{-n-2}d\vol_\eta = \tfrac12  \dtt  d\mu \, d\vol_h. \qedhere
\]
\end{proof}

\subsection{The indicial family of $\PVasyUp$}\label{subsec:indicialfamilyP}

\begin{definition}
Denote by $\PVasyDown$ the indicial family of the operator $\PVasyUp \in \Diff^2_\bbb(\overline{\Mthree} ; \bF )$ relative to the Euclidean scale $\t$.
\[
\PVasyDOWNL = \IF_\t(\PVasyUp;\lambda) \in \Diff^2(\Xthree; \E).
\]
\end{definition}
Lemma~\ref{lem:indicialfamily:changeofscales} gives the following proposition (whose final statement follows as $\rho$ is constant on $\Xone\backslash U$).
\begin{proposition}\label{prop:PPandQQ:changeofscales}
On $\Xone\subset\Xthree$ the indicial family operators $\PVasyDown$ and $\QVasyDown$ are related by
\[
\PVasyDOWNL = \rho^{-\lambda - \frac n2 + m -2} J \QVasyDOWNL J^{-1} \rlnm
\]
with $J$ presented in Lemma~\ref{lem:innprod:changeofscales}. Moreover, on $\Xone\backslash U$, we have equality $\PVasyDown=\QVasyDown$.
\end{proposition}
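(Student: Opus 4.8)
The plan is to deduce Proposition~\ref{prop:PPandQQ:changeofscales} directly from the change-of-scales lemma for indicial families, Lemma~\ref{lem:indicialfamily:changeofscales}, specialised to the operator $\PVasyUp=\QVasyUp$ on the relevant region. First I would recall that on $\Mone\subset\Mthree$ the two operators agree under conjugation by powers of $\rho$, namely $\PVasyUp=\rho^{-\frac n2+m-2}\QVasyUp\rho^{\frac n2-m}$, which was recorded just after the definition of $\PVasyUp$. Consequently, taking the indicial family with respect to the Euclidean scale $\t$ of the operator $\PVasyUp$ is the same as taking the indicial family with respect to $\t$ of $\rho^{-\frac n2+m-2}\QVasyUp\rho^{\frac n2-m}$; since $\rho$ is a function on the base $\Xone$ (independent of $\t$), the conjugating powers of $\rho$ commute past the $\t^\lambda\,\cdot\,\t^{-\lambda}$ conjugation and the evaluation $\pi_{\t=0}$ in the definition of $\IF_\t$, so that $\IF_\t(\PVasyUp,\lambda)=\rho^{-\frac n2+m-2}\,\IF_\t(\QVasyUp,\lambda)\,\rho^{\frac n2-m}$, i.e. $\PVasyDOWNL=\rho^{-\frac n2+m-2}\,\IF_\t(\QVasyUp,\lambda)\,\rho^{\frac n2-m}$.

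Next I would invoke Lemma~\ref{lem:indicialfamily:changeofscales} applied to the b-operator $\QVasyUp\in\Diff^2_\bbb(\overline\Mone;\bF)$, which states $\IF_\s(\QVasyUp,\lambda)=\rho^{\lambda}J^{-1}\IF_\t(\QVasyUp,\lambda)J\rho^{-\lambda}$, equivalently $\IF_\t(\QVasyUp,\lambda)=J\rho^{-\lambda}\,\QVasyDOWNL\,\rho^{\lambda}J^{-1}$ since $\QVasyDown=\IF_\s(\QVasyUp,\cdot)$ by Definition~\ref{defn:QQ}. Substituting this into the expression from the previous step gives
\[
\PVasyDOWNL=\rho^{-\frac n2+m-2}\,J\rho^{-\lambda}\,\QVasyDOWNL\,\rho^{\lambda}J^{-1}\,\rho^{\frac n2-m}.
\]
Since $\rho$ is scalar it commutes with the endomorphism $J$ (note however that $J$ does \emph{not} commute with $\rho^\lambda$ trivially — but $\rho$ and $\rho^\lambda$ are both scalar multiplications, so they commute with each other, and the only subtlety is moving scalars past $J$, which is immediate). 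Collecting the scalar powers of $\rho$ on the two sides, the right-hand scalar factor is $\rho^{\lambda}\cdot\rho^{\frac n2-m}=\rho^{\lambda+\frac n2-m}=\rlnm$ (with $m=0$ in the exponent notation of $\rlnm$ adjusted — here one should simply write $\rho^{\lambda+\frac n2-m}$, matching the macro $\rlnm$), and the left-hand scalar factor is $\rho^{-\lambda-\frac n2+m-2}$, so one obtains exactly
\[
\PVasyDOWNL=\rho^{-\lambda-\frac n2+m-2}\,J\,\QVasyDOWNL\,J^{-1}\,\rho^{\lambda+\frac n2-m},
\]
which is the claimed identity once one writes $\rlnm$ for the final factor.

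Finally, the last sentence of the proposition — that $\PVasyDown=\QVasyDown$ on $\Xone\backslash U$ — follows because on $\Xone\backslash U$ we have $\rho\equiv 1$ (by construction of $\rho$, which is constant equal to $1$ away from the collar neighbourhood $U$ of $Y$), hence all powers of $\rho$ appearing above are the identity; moreover on this region $J$ reduces to the identity endomorphism since its off-diagonal entries are homogeneous polynomials in $\drr$ and $\rho$ being constant makes $d\rho=0$ there, killing every term with a positive power of $\drr$ and leaving only the diagonal identity part described in Lemma~\ref{lem:innprod:changeofscales}. Thus the conjugation becomes trivial and $\PVasyDOWNL=\QVasyDOWNL$ on $\Xone\backslash U$.

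The only genuinely delicate point is the commutation of the scalar powers $\rho^{\pm\lambda}$ and $\rho^{-\frac n2+m-2}$, $\rho^{\frac n2-m}$ past the endomorphism $J$ and the verification that the powers combine correctly; since all these $\rho$-powers are scalar multiplications they commute with everything, so the main obstacle is purely bookkeeping of exponents rather than any conceptual difficulty — one just has to check that $-\lambda+(-\tfrac n2+m-2)$ and $\lambda+(\tfrac n2-m)$ are the exponents appearing, which they are. I expect essentially no obstacle beyond this careful tracking, since Lemma~\ref{lem:indicialfamily:changeofscales} does all the real work.
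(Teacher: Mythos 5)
Your proof is correct and takes essentially the same route as the paper, whose one-line proof simply cites Lemma~\ref{lem:indicialfamily:changeofscales} (together with the recorded identity $\PVasyUp=\rho^{-\frac n2+m-2}\QVasyUp\rho^{\frac n2-m}$ and the constancy of $\rho$ away from $U$); your version just spells out the commutation of the scalar powers of $\rho$ past the bundle endomorphism $J$ and the exponent bookkeeping. The final statement is likewise handled as in the paper: on $\Xone\backslash U$ one has $\rho\equiv 1$, so all $\rho$-powers are trivial, $\drr=0$ forces $J=\mathrm{Id}$, and hence $\PVasyDOWNL=\QVasyDOWNL$ there.
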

\begin{proposition}\label{prop:PPselfadjoint}
The family of differential operators $\PVasyDown$ is, upon restriction to $\lambda\in i\R$, a family of formally self-adjoint operators with respect to the inner product
\[
\LinnprodTT{u}{v} = \int_{\Xthree} \innprodTT{ u }{ v } \, \meas,
\qquad
u,v\in C^\infty_c(\Xthree ; \E).
\]
Moreover, for all $\lambda$, $\PVasyDOWNL^* = \PVasyDown_{-\bar\lambda}$.
\end{proposition}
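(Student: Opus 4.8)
The plan is to invoke Lemma~\ref{lem:selfadjointpreservation} in the setting of $\overline\Mthree$ and $\Xthree$, exactly as Proposition~\ref{prop:QQ:selfadjoint} invoked it for $\overline\Mone$ and $\Xone$. Two hypotheses are required: that $\PVasyUp$ be b-trivial as an operator in $\Diff^2_\bbb(\overline\Mthree;\bF)$, which is the content of the preceding lemma, and that $\PVasyUp$ be formally self-adjoint with respect to the ambient inner product $\LinnprodTETA{\cdot}{\cdot}$, which is Lemma~\ref{lem:Pselfadjoint}. Granting these, the conclusion of Lemma~\ref{lem:selfadjointpreservation}, transported from the Minkowski scale $\s$ to the Euclidean scale $\t$, is precisely that $\PVasyDown$ is formally self-adjoint on $C^\infty_c(\Xthree;\E)$ for $\lambda\in i\R$ with respect to the inner product built from $\innprodTT{\cdot}{\cdot}$ and the measure $\meas$, and that $\PVasyDOWNL^* = \PVasyDown_{-\bar\lambda}$ for all $\lambda$.

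The one point to check is that Lemma~\ref{lem:selfadjointpreservation}, stated there for $\overline\Mone$ with the base measure $d\vol_g$ on $\Xone$, applies after replacing $\s$ by $\t$, $\Xone$ by $\Xthree$, and $d\vol_g$ by $\meas$. This is exactly what the remark following Lemma~\ref{lem:selfadjointpreservation} asserts: its proof only uses that the base measure enters the ambient inner product in the same way $d\vol_g$ enters $\LinnprodSETA{\cdot}{\cdot}$, and indeed $\LinnprodTETA{u}{v} = \int_{\Mthree}\innprodTETA{u}{v}\,\dtt\,\meas$ with base measure $\meas$. The heart of that lemma, the commutator computation $[\t^\lambda\PVasyUp_{i,j}\t^{-\lambda},\psi]$ with $\psi$ a cutoff of unit logarithmic mass in the scale variable, is insensitive to the ambient manifold and to the choice of base measure; it uses only b-triviality and the fundamental theorem of calculus in the scale variable. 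The hypersurface $\CS=\{\mu=-\tfrac12\}$ inside $\Xthree$ plays no role here, since $\Xthree=\{\mu>-1\}$ is an open manifold without boundary and self-adjointness is tested against compactly supported sections; the analysis at $\CS$ only enters the later Fredholm argument of Section~\ref{sec:analysis}.

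I do not expect a genuine obstacle. The only care required is bookkeeping: confirming which fibre inner product ($\innprodTT{\cdot}{\cdot}$) and which base measure ($\meas$) accompany the Euclidean scale, and verifying that the generality already built into Lemma~\ref{lem:selfadjointpreservation} via its remark covers the present case. In the written proof this should reduce to a single line: the statement follows from the preceding lemma (b-triviality of $\PVasyUp$) together with Lemmas~\ref{lem:selfadjointpreservation} and~\ref{lem:Pselfadjoint}.
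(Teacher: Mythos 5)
Your proposal is correct and matches the paper's intent exactly: the paper gives no separate argument for Proposition~\ref{prop:PPselfadjoint}, relying precisely on the b-triviality of $\PVasyUp$ on $\overline\Mthree$, Lemma~\ref{lem:Pselfadjoint}, and Lemma~\ref{lem:selfadjointpreservation} together with the remark that $d\vol_g$ may be replaced by any base measure (here $\meas$) entering the ambient inner product in the same way. Your observation that the hypersurface $\CS$ is irrelevant at this formal stage is also accurate.
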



\section{Microlocal Analysis}\label{sec:analysis}

This section constructs an inverse to the family $\PVasyDown$ introduced in the preceding section. This is done by first showing that the family is a family of Fredholm operators and then by considering a Cauchy problem which provides an inverse for $\Re\lambda\gg 1$. In \cite{v:ml:functions,zworski:vm}, the procedure is described for functions, rather than symmetric tensors. We are required to alter only minor details in order to apply the technique to symmetric tensors.

\subsection{Function spaces}\label{subsec:functionspaces}

From Subsection~\ref{subsec:vecbundles}, we have the space of $\Lsections$ sections $\LsectionsT(\Xthree;\E)$. This defines $H^s_\loc(\Xthree; \E)$, the space of (locally) $H^s$ sections for $s\in\R$. For all notions of Sobolev regularity, we will only use the Euclidean scale; we thus need not decorate these spaces with a subscript $\t$.

As is standard, we denote by $\dot C^\infty(\Xtwo ; \E)$ the set of smooth sections which are extensible to smooth sections over $\Xthree$ and whose support is contained in $\overline\Xtwo$. 
And by $C^\infty(\overline\Xtwo ; \E)$ all smooth sections which are smoothly extensible to $\Xthree$.

Following \cite[Appendix B.2]{hormander3} we obtain, for $s\in\R$, the Sobolev spaces
\[
\dot H^s(\overline \Xtwo ; \E )
\quad
\textrm{and}
\quad
\overlineH^s(\Xtwo; \E)
\] 
which are, respectively, the set of elements in $H^s_\loc(\Xthree; \E)$ supported by $\overline\Xtwo$ and the space of restrictions to $\Xtwo$ of $H^s_\loc(\Xthree; \E)$. Then $\dot H^s(\overline \Xtwo ; \E )$ gets its norm directly from that of $H^s_\loc(\Xthree; \E)$ while the norm of an element in $\overlineH^s(\Xtwo; \E)$ is that obtained by taking the infimum of the norms of all permissible extensions of the element which have compact support in $\Xthree$. (Such norms will be denoted, for simplicity, by $\norm{\cdot}{\dot H^s}$ and $\norm{\cdot}{\overlineH^s}$. Furthermore, if an object is supported away from $\CS$, these norms correspond and we may simply write $\norm{\cdot}{H^s}$.)

The inner product $\innprodTT{\cdot}{\cdot}$ gives the $\Lsections$ pairing
\[
\LinnprodTT{\cdot}{\cdot}
		 : \dot C^\infty(\Xtwo ; \E) \times C^\infty(\overline\Xtwo ; \E) \to \C
\]
which extends by density \cite[Theorem B.2.1]{hormander3} to a pairing between the spaces $\dot H^{-s}(\overline\Xtwo ; \E )$ and $\overlineH^s(\Xtwo ; \E )$ providing the identification of dual spaces
\begin{align}\label{eqn:dualHsdotbar}
(\overlineH^s(\Xtwo ; \E ))^* \simeq \dot H^{-s}(\overline \Xtwo ; \E ),
\qquad
s\in\R.
\end{align}

\begin{definition}
For $s\in\R$, let $\XVasyML^s$ and $\YVasyML^s$ be the following two spaces
\begin{align*}
\YVasyML^s	& = \overlineH^s (\Xtwo ; \E), \\
\XVasyML^s	& = \{ u : u\in \YVasyML^s, \PVasyDown u \in \YVasyML^{s-1} \}
\end{align*}
These spaces come with the standard norms, in particular,
\[
\| u \|_{\XVasyML^{s}} = \| u \|_{\YVasyML^{s}} + \|\PVasyDown  u \|_{\YVasyML^{s-1}},
\qquad
u\in \XVasyML^s.
\]
\end{definition}
\begin{remark}
It will be seen that $\lambda$ does not appear in the principal symbol of $\PVasyDown$, it is thus unimportant to explicit with respect to what value of $\lambda$ the preceding norm is taken as all such norms are equivalent.
\end{remark}

When restricting to $U^2\subset\Xthree$ we will let $\{e_i\}_{i=1}^n$ denote an orthonormal frame for $(Y,h)$ (which depends on $\mu\in (-1,1)$) and by $\{e^i\}_{i=1}^n$ its dual frame. The frames are completed to frames for $TU^2$ and $\T^*U^2$ by including $\partial_\mu$ and $d\mu$ respectively. A dual vector will take the notation
\begin{align}\label{eqn:localbasisT*U}
\xi d\mu + \sum_{i=0}^n \eta_i e^i \in \T^*U^2.
\end{align}

The following subsection proves the following two propositions.
\begin{proposition}\label{prop:fredholm}
For fixed $s$, the family of operators
\[
\PVasyDown : \XVasyML^{s} \to \YVasyML^{s-1}
\]
is Fredholm for $\Re \lambda > \frac12 - s$.
\end{proposition}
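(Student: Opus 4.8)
The plan is to follow Vasy's argument, as presented for functions in \cite{zworski:vm} and \cite[E.5.2]{dyatlov-zworski:book}, and to check that the passage to the vector bundle $\E$ requires only cosmetic changes. The key structural input is Proposition~\ref{prop:decompQQ:k:mink}: $\QVasyDOWNL = \n^*\n + \lambda^2 + \DVasyDown + \ZeroOrderVasyDown$ with $\DVasyDown$ of first order and $\ZeroOrderVasyDown$ an endomorphism, and via Proposition~\ref{prop:PPandQQ:changeofscales} the operator $\PVasyDOWNL$ on $\Xthree$ inherits the same schematic form with $\lambda$ entering only in lower order terms. Consequently $\principalsymbol{\PVasyDown}$ is scalar and independent of $\lambda$ (this is the point of the remark following the definition of $\XVasyML^s$), so $\PVasyDown \in \Psi^2_\scal(\Xtwo;\End\E)$ in the sense of Subsection~\ref{subsec:microlocal}, and the microlocal estimates of Lemmas~\ref{lem:highregularity} and~\ref{lem:lowregularity} apply with scalar principal symbol and scalar Hamilton flow.

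First I would compute $\principalsymbol{\PVasyDown}$ explicitly on $U^2$ using Lemma~\ref{lem:Mthreeconnection}: in the coordinates \eqref{eqn:localbasisT*U} it is (up to sign conventions) $-\mu\xi^2 - \xi\,(\text{something}) + |\eta|_h^2$, the same symbol as in the scalar case studied by Vasy, whose characteristic set $\Char(\PVasyDown)$ meets the boundary $\{\mu=0\}$ in two conic components, a radial source $\cSource$ and a radial sink $\cSink$ for the rescaled Hamilton flow, with $\mu$ a boundary-defining function for the ``event horizon''. The threshold quantity $\japbrak{\zeta}^{-1}(\principalsymbol{\Im \PVasyDown} + (s - \tfrac12)\Hamiltonian{\principalsymbol{\PVasyDown}}\log\japbrak{\zeta})$ governs the radial points; here $\principalsymbol{\Im\PVasyDown}$ is a genuine endomorphism (not scalar), but one checks that at $\SourceSink$ the dominant contribution is the scalar multiple of $\Hamiltonian{\principalsymbol{\PVasyDown}}\log\japbrak{\zeta}$ coming from the $\lambda^2$ and $\sds$-terms, so the definiteness condition reduces to $\Re\lambda > \tfrac12 - s$, exactly as in the scalar setting (the endomorphism part of $\principalsymbol{\Im\PVasyDown}$ is of the wrong order to affect the leading threshold and can be absorbed). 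With the threshold identified, Lemma~\ref{lem:highregularity} gives high-regularity control near $\cSource$ and Lemma~\ref{lem:lowregularity} low-regularity control near $\cSink$; away from the radial points, elliptic regularity (in the interior and up to the artificial boundary $\CS$, using the complex-absorbing / Cauchy-problem trick near $\CS$ as in \cite{v:ml:functions,zworski:vm}) and real-principal-type propagation of singularities along $\Hamiltonian{\principalsymbol{\PVasyDown}}$ glue these local estimates together.

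Combining these yields the two-sided estimate: for $\Re\lambda > \tfrac12 - s$ there is $C>0$ and $N$ with
\[
\norm{u}{\YVasyML^s} \le C\big( \norm{\PVasyDown u}{\YVasyML^{s-1}} + \norm{u}{\YVasyML^{-N}} \big),
\qquad u\in\XVasyML^s,
\]
and, by applying the same reasoning to the adjoint $\PVasyDown_{-\bar\lambda}$ (Proposition~\ref{prop:PPselfadjoint}) acting on the dual spaces $\dot H^{1-s}(\overline\Xtwo;\E)$, identified with $(\YVasyML^{s-1})^*$ via \eqref{eqn:dualHsdotbar}, a dual estimate controlling the cokernel. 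The inclusion $\XVasyML^s \hookrightarrow \YVasyML^{-N}$ being compact (Rellich, since $s > -N$), the first estimate shows $\PVasyDown:\XVasyML^s\to\YVasyML^{s-1}$ has finite-dimensional kernel and closed range, and the dual estimate shows the range has finite codimension; hence $\PVasyDown$ is Fredholm on this range of $\lambda$. I expect the main obstacle to be the bookkeeping at the radial points: verifying carefully that the non-scalar lower-order endomorphism terms $\ZeroOrderVasyDown$ and $\principalsymbol{\Im\PVasyDown}$ do not spoil the source/sink dynamics or shift the threshold, i.e. that the definiteness hypotheses of Lemmas~\ref{lem:highregularity} and~\ref{lem:lowregularity} genuinely hold with the stated bound $\Re\lambda>\tfrac12-s$ uniformly in the bundle directions. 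Everything else is a direct transcription of the scalar argument.
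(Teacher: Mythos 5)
Your proposal follows essentially the same route as the paper: Fredholm property from a kernel estimate combining ellipticity in $\{\mu>0\}$, hyperbolic (Cauchy-problem) estimates near $\CS$, propagation of singularities, and the high/low regularity radial estimates of Lemmas~\ref{lem:highregularity} and~\ref{lem:lowregularity} at $\cSourceSink$, together with the dual estimate for $\PVasyDOWNL^*=\PVasyDown_{-\bar\lambda}$ on $\dot H^{1-s}$ to control the cokernel, with the threshold $\Re\lambda>\tfrac12-s$. The only slips are cosmetic: the paper computes $\principalsymbol{\PVasyDown}=4\mu\xi^2+|\eta|^2$ and shows $\principalsymbol{\Im\PVasyDOWNL}=-4\Re\lambda\,\xi$ is scalar, coming from the first-order term $-4\lambda\partial_\mu$ (not from $\lambda^2$, which is real), with the endomorphism terms of order zero and hence irrelevant to the threshold --- which is exactly the harmlessness check you anticipated.
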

\begin{proof}
Lemmas~\ref{lem:Pfinitekernel} and~\ref{lem:Pfinitecokernel}.
\end{proof}
\begin{proposition}\label{prop:inverse:existence}
For fixed $s$, the Fredholm operator $\PVasyDOWNL: \XVasyML^s\to \YVasyML^{s-1}$ are Fredholm of index 0 for $\Re \lambda > m+\frac12 - s$ and it has a meromorphic inverse 
\[
\PVasyDown^{-1} : \YVasyML^{s-1} \to\XVasyML^s
\]
with poles of finite rank.
\end{proposition}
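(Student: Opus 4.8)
The plan is to combine Proposition~\ref{prop:fredholm} with an index computation and an invertibility statement for $\Re\lambda$ large, and then to appeal to analytic Fredholm theory. On the connected region $\{\Re\lambda>\tfrac12-s\}$ — which contains $\{\Re\lambda>m+\tfrac12-s\}$ — the map $\lambda\mapsto\PVasyDOWNL : \XVasyML^s\to\YVasyML^{s-1}$ is holomorphic (by Propositions~\ref{prop:decompQQ:k:mink} and~\ref{prop:PPandQQ:changeofscales} the $\lambda$-dependence is carried by terms of order $\le1$ — the principal symbol, hence the $\XVasyML^s$-norm up to equivalence, is $\lambda$-independent) and, by Proposition~\ref{prop:fredholm}, Fredholm; consequently $\mathrm{ind}\,\PVasyDOWNL$ is constant there. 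Everything will follow once $\PVasyDOWNL$ is shown invertible at a single point, which I would take with $\Re\lambda$ large: this pins the index to $0$ and provides the base point for the meromorphic continuation.

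The main step is thus this invertibility for $\Re\lambda$ large, which I would assemble from the two faces of $\PVasyDOWNL$ on $\Xthree$. On $\{\mu<0\}$ the operator $\PVasyDOWNL$ is principally scalar and hyperbolic with $\mu$ as time variable, so one solves the Cauchy problem with vanishing Cauchy data on $\CS=\{\mu=-\tfrac12\}$, obtaining a solution operator on $\{-\tfrac12\le\mu<0\}$ regular up to $Y=\{\mu=0\}$. On $X=\{\mu>0\}$, Proposition~\ref{prop:PPandQQ:changeofscales} relates $\PVasyDOWNL$ to $\QVasyDOWNL=\QVasyDown_0+\lambda^2$, where $\QVasyDown_0=\n^*\n+\DVasyDown+\ZeroOrderVasyDown$ is self-adjoint (Proposition~\ref{prop:QQ:selfadjoint}) and bounded below on $\LsectionsS(X;\E)$ as a lower-order perturbation of $\n^*\n\ge0$; hence $\QVasyDOWNL$ is invertible on $\LsectionsS(X;\E)$ for $\Re\lambda\gg1$, and conjugating back via Lemma~\ref{lem:innprod:changeofscales} yields a parametrix for $\PVasyDOWNL$ over $X$ whose leading behaviour at $Y$ is $\rho^{\lambda+\frac n2-m}$ times a bounded factor — so it lies in $\overlineH^s$ near $Y$ exactly when $\Re\lambda>m+\tfrac12-s$, which is where the threshold in the statement originates. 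Gluing the two pieces by a partition of unity produces a global parametrix with smoothing error, and I would match the pieces across $Y$ by a boundary-pairing argument built on the radial estimates of Lemmas~\ref{lem:highregularity} and~\ref{lem:lowregularity}: for $\Re\lambda$ large the order $s$ satisfies the threshold conditions at the source $\cSource$ and the sink $\cSink$, which forces any $u\in\XVasyML^s$ with $\PVasyDOWNL u=0$ to have its Cauchy data at $Y$ inherited from the de Sitter side and then to vanish on $X$ by the $\LsectionsS$ estimate — giving injectivity — while surjectivity follows by removing the smoothing error in the glued parametrix, which for $\Re\lambda$ large has small norm, via a Neumann series.

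With $\PVasyDOWNL$ invertible for $\Re\lambda\gg1$ and Fredholm of constant index on $\{\Re\lambda>m+\tfrac12-s\}$, that index is $0$. The analytic Fredholm theorem (in the Gohberg--Sigal form, e.g.~\cite{dyatlov-zworski:book}) then shows that $\lambda\mapsto\PVasyDOWNL^{-1} : \YVasyML^{s-1}\to\XVasyML^s$ is meromorphic on $\{\Re\lambda>m+\tfrac12-s\}$, its Laurent coefficients at each pole factoring through the (finite-dimensional) generalised kernel of $\PVasyDOWNL$, so the poles have finite rank. (Index $0$ can also be seen more directly: Proposition~\ref{prop:PPselfadjoint} gives $\PVasyDOWNL^*=\PVasyDown_{-\bar\lambda}$, which by the duality \eqref{eqn:dualHsdotbar} is realised as a Fredholm operator between the supported spaces $\dot H^{1-s}\to\dot H^{-s}$, whence $\mathrm{ind}\,\PVasyDOWNL=-\mathrm{ind}\,\PVasyDown_{-\bar\lambda}$; but pinning this common value to $0$ still needs an invertibility statement somewhere.)

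I expect the invertibility step to be the main obstacle: setting up and solving the de Sitter Cauchy problem, proving the large-parameter estimate on $X$ in the $\LsectionsS$-pairing in the presence of the off-diagonal couplings $\DVasyDown$, and — above all — gluing the two constructions across the characteristic hypersurface $Y$ while tracking the Sobolev thresholds so that the resulting inverse genuinely maps $\YVasyML^{s-1}$ into $\XVasyML^s$; this last point is precisely where the shift by $m$ enters, inherited from the weight $\rho^{\lambda+\frac n2-m}$ and the degree-$m$ conjugator $J$ of Lemma~\ref{lem:indicialfamily:changeofscales}.
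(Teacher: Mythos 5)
Your overall skeleton — Proposition~\ref{prop:fredholm} plus holomorphy, constancy of the index on a connected set, invertibility at one point with $\lambda$ real and large, then analytic Fredholm theory — is exactly the architecture the paper uses; its proof of this proposition is literally to cite the two invertibility lemmas (Lemmas~\ref{lem:Ptrivialkernel} and~\ref{lem:Ptrivialcokernel}). The problem is that your treatment of that invertibility step, which you yourself identify as the main obstacle, has two genuine gaps. For injectivity: an element $u\in\XVasyML^s=\overlineH^s(\Xtwo;\E)$ of the kernel carries no support or vanishing condition at $\CS$, so solving the Cauchy problem from $\CS$ gives you nothing about $u$ on the de Sitter side, and your phrase ``Cauchy data at $Y$ inherited from the de Sitter side'' has the logic reversed. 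The actual order of argument (Lemma~\ref{lem:Ptrivialkernel}) is: the coercive $\LsectionsS$ estimate kills $u$ on the asymptotically hyperbolic side $\{\mu>0\}$ for $\lambda$ real and large; smoothness then gives vanishing to all orders at $Y$; and the genuinely hard step is unique continuation \emph{across} $Y$ into $\{\mu<0\}$, which is delicate because $Y$ is characteristic for $\PVasyDOWNL$ (the coefficient of $\partial_\mu^2$ is $-4\mu$), so standard hyperbolic uniqueness from $Y$ does not apply. The paper handles this with the weighted energy functional $\HVasyDown$ and the $|\mu|^{-N}$ (Carleman-type) argument; your ``boundary-pairing argument built on the radial estimates'' cannot replace it — the radial estimates of Lemmas~\ref{lem:highregularity} and~\ref{lem:lowregularity} are regularity/propagation statements at $N^*Y$ and do not yield unique continuation across $Y$.

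For surjectivity, the Neumann-series step fails as stated: gluing the elliptic parametrix on $\{\mu>0\}$, the hyperbolic solution operator on $\{\mu<0\}$, and the radial-point estimates at $Y$ produces an error that is smoothing (hence compact), but nothing in the non-semiclassical framework of Section~\ref{sec:analysis} makes its norm small as $\Re\lambda\to\infty$; smallness of such errors is precisely what the high-energy/semiclassical estimates of Section~\ref{sec:highenergyestimates} would provide, and those are not proved here. The paper avoids this entirely by proving triviality of the cokernel directly through the adjoint (Lemma~\ref{lem:Ptrivialcokernel}): for $v\in\dot H^{1-s}(\overline\Xtwo;\E)\cap\ker\PVasyDOWNL^*$, hyperbolicity and the support condition at $\CS$ give $v=0$ on $\{\mu\le0\}$, the radial estimates give $v$ vanishing to high order $N\sim\lambda/2$ at $Y$, and then a conjugation of $\QVasyDOWNL$ by $\rho^N$, a coercive estimate, and a Riesz-representation solvability argument produce enough approximate solutions $u$ with $\PVasyDOWNL u=f$ to pair against $v$ and force $v=0$. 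So while your index-zero and meromorphy conclusions would follow correctly once invertibility at a single large real $\lambda$ is in hand, the two mechanisms you propose for that invertibility (radial-estimate ``matching'' for injectivity, small-norm Neumann series for surjectivity) would not go through without being replaced by arguments of the above type.
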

\begin{proof}
Lemmas~\ref{lem:Ptrivialkernel} and~\ref{lem:Ptrivialcokernel}.
\end{proof}

\subsection{Proofs of Propositions~\ref{prop:fredholm} and~\ref{prop:inverse:existence}}

On $\RplusT\times U^2$, the inverse of the metric $\eta$ takes the form
\begin{align}\label{eqn:structurerho2eta-1}
\t^2 \eta^{-1} = -2 \tdt \sp \partial_\mu + 2\mu \partial_\mu\sp\partial_\mu + h^{-1}
\end{align}
which implies to highest order for $\t^2\,\nMthree^*\nMthree$, that
\[
\t^2\,\nMthree^*\nMthree = -4\mu \partial_\mu^2 + 4\tdt \partial_\mu + \Delta_h + \Diff^1(\RplusT\times U^2 ; \End\F)
\]
where $\Delta_h$ may be considered the rough Laplacian on $(Y,h)$. Considering $\PVasyDown$, conjugation by $\t^{-\frac n2 + m}$ replaces $\tdt$ by $(\tdt - \tfrac n2 + m)$ and we can absorb the newly created term $4(-\tfrac n2 + m)\partial_\mu$ into $\Diff^1(\RplusT\times U^2 ; \End\F)$. Also, the curvature term is of order zero so
\[
\PVasyUp = -4\mu \partial_\mu ^2 + 4\tdt \partial_\mu + \Delta_h + \LowerOrderTermVasyUp
\]
for some $\LowerOrderTermVasyUp \in \Diff^1(\RplusT\times U^2 ; \End\F)$. This structure of $\PVasyUp$ immediately gives the structure of $\PVasyDown$ to highest order. Keeping track of the term $4\tdt\partial_\mu$ for the moment, we write
\begin{align}\label{eqn:PPl-using-LOT}
\PVasyDOWNL = -4\mu\partial_\mu^2 - 4\lambda \partial_\mu + \Delta_h + \LowerOrderTermVasyDOWNL.
\end{align}
where $\LowerOrderTermVasyDOWNL\in\Diff^1(U^2 ; \End\E)$ is the indicial family of $\LowerOrderTermVasyUp$. The most obvious conclusion we draw from such a presentation of $\PVasyDown$ is that $\PVasyDown$ is a family of elliptic operators on $U^2\cap\{\mu>0\}$ and a family of strictly hyperbolic operators for $\{\mu<0\}$ (with respect to the level sets $\{\mu=\mathrm{constant}\}$). Of course the ellipticity extends to all of $\Xone$. The principal symbol on $U^2$ is also immediately recognisable as
\[
\principalsymbol{\PVasyDown} = 4\mu \xi^2 + |\eta|^2
\]
using the notation from \eqref{eqn:localbasisT*U} and $|\eta|^2=\sum_{i=1}^n \eta_i^2$. And on $U^2$, the Hamiltonian vector field associated with $\principalsymbol{\PVasyDown}$ is
\[
\Hamiltonian{\principalsymbol{\PVasyDown}} = 8\mu \xi \partial_\mu - 4\xi^2 \partial_\xi + \Hamiltonian{|\eta|^2}.
\]

The strategy to obtain a Fredholm problem is to combine standard results for elliptic and hyperbolic operators with some analysis performed at the junction $Y=\{\mu=0\}$. The analysis was first presented in \cite[Section 4.4]{v:ml:inventiones}. It turns out the dynamics of interest are those of radial sources and sinks \cite[Definition E.52]{dyatlov-zworski:book}. The original radial estimates of Melrose \cite{melrose:scat} on asymptotically Euclidean spaces have been adapted to functions on asymptotically hyperbolic spaces by Vasy \cite{v:ml:inventiones}. Indeed, to see that such dynamics are relevant for $\PVasyDown$, consider $\principalsymbol{\PVasyDown}$ and $\Hamiltonian{\principalsymbol{\PVasyDown}}$ given in the preceding displays. Define the characteristic variety $\Sigma\subset \T^*\Xtwo\backslash 0$ which is contained in $\T^*U$. As $(\mu,y,0,\eta)\not\in\Sigma$, we may split $\Sigma=\Source\sqcup\Sink$ given by $\SourceSink=\Sigma\cap\{\pm\xi>0\}$. At $Y$ remark that
\[
\Sigma\cap \T^*_YU = \{ (0,y,\xi,0) : \xi\neq 0 \} \subset N^*Y
\]
and recalling the projection $\kappa:\T^*U\backslash0\to\partial\rct U$ define
\[
\cSource=\kappa(\Source\cap Y),
\quad
\cSink=\kappa(\Sink\cap Y).
\]
In \cite[Section 3.2]{v:ml:functions}, it is shown that $\cSourceSink$ are respectively a source and a sink for $\principalsymbol{\PVasyDown}$. In order to apply Lemmas~\ref{lem:highregularity} and~\ref{lem:lowregularity}, we introduce the principal symbol of the imaginary part of $\PVasyDown$. 
By Remark~\ref{rem:highlowregularity}, $\Hamiltonian{\principalsymbol{\PVasyDown}}=\Hamiltonian{\principalsymbol{\PVasyDown^*}}$ and by Proposition~\ref{prop:PPselfadjoint}, $\PVasyDOWNL^*=\PVasyDown_{-\bar\lambda}$ hence $\principalsymbol{\Im \PVasyDown}=-\principalsymbol{\Im \PVasyDown^*}$. Also, by a direct calculation using the structure of $ \Hamiltonian{\principalsymbol{\PVasyDown}}$,
\begin{align}\label{eqn:HamiltonianSourceSink}
\japbrak{\xi+\eta}^{-1} \Hamiltonian{\principalsymbol{\PVasyDown}}\log\japbrak{\xi+\eta} = \mp 4,
\quad
\textrm{on $\cSourceSink$.}
\end{align}
In fact Proposition~\ref{prop:PPselfadjoint} along with \eqref{eqn:PPl-using-LOT} gives more precisely
\[
\Im \PVasyDOWNL = \frac{ \PVasyDOWNL - \PVasyDOWNL^*}{2i} = 4i (\Re\lambda) \partial_\mu + \frac{ \LowerOrderTermVasyDown_{\lambda} - \LowerOrderTermVasyDown_{-\bar\lambda} }{2i}
\]
however as $\LowerOrderTermVasyUp$ is first order, $\LowerOrderTermVasyDOWNL$ may be written as the sum of a first order operator independent of $\lambda$ and a zeroth order operator (which may depend on $\lambda$). Therefore
\begin{align}\label{eqn:imaginary:subprincipalsymbol}
\principalsymbol{ \Im \PVasyDOWNL } = -4 \Re\lambda \, \xi.
\end{align}
Bringing this altogether in preparation for the proof of Proposition~\ref{prop:fredholm} we have
\begin{lemma}\label{lem:thresholdconditions}
For $\PVasyDown$, $\cSource$ is a source, while $\cSink$ is a source for $-\PVasyDown$. In both situations, the threshold condition, when working on $H^s(\Xtwo;\E)$, is satisfied if
\[
s > - \Re \lambda + \tfrac12.
\]
For $\PVasyDown^*$, $\cSink$ is a sink, while $\cSource$ is a sink for $-\PVasyDown^*$. In both situations, the threshold condition, when working on $H^{\tilde s}(\Xtwo;\E)$, is satisfied if
\[
\tilde s < \Re \lambda + \tfrac 12.
\]
\end{lemma}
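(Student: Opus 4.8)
The plan is to substitute the quantities already computed on $U^2$ in \eqref{eqn:imaginary:subprincipalsymbol} and \eqref{eqn:HamiltonianSourceSink} into the threshold conditions stated in Lemmas~\ref{lem:highregularity} and~\ref{lem:lowregularity}, specialised to the second-order operator $P=\PVasyDown$, so that $\japbrak{\zeta}^{1-p}=\japbrak{\zeta}^{-1}$ and $\tfrac{1-p}{2}=-\tfrac12$. I would first record the source/sink structure: that $\cSource,\cSink$ are a source and a sink for the scalar symbol $\principalsymbol{\PVasyDown}$ (so in particular $\japbrak{\zeta}^{-1}\Hamiltonian{\principalsymbol{\PVasyDown}}$ is radial, hence vanishes, on $\cSourceSink$, making the two lemmas applicable) is exactly \cite[Section~3.2]{v:ml:functions}; since $\principalsymbol{\PVasyDown}$ is scalar and independent of $\lambda$, the same holds for $\PVasyDown^*=\PVasyDown_{-\bar\lambda}$, and the first point of Remark~\ref{rem:highlowregularity} converts a source of $P$ into a sink of $-P$. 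These observations give the four source/sink assertions. Since $\principalsymbol{\Im\PVasyDown}$ in \eqref{eqn:imaginary:subprincipalsymbol} is a scalar function, in each case ``negative definite'' reduces to ``negative''.

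Next I would do the source estimate. On $\Source=\Sigma\cap\{\xi>0\}$ we have $\eta=0$, so $\japbrak{\zeta}\sim|\xi|$ and the degree-zero symbol $\japbrak{\zeta}^{-1}\xi$ restricts to $+1$ on $\cSource$; thus $\japbrak{\zeta}^{-1}\principalsymbol{\Im\PVasyDown}=-4\Re\lambda$ there by \eqref{eqn:imaginary:subprincipalsymbol}, while \eqref{eqn:HamiltonianSourceSink} gives $\japbrak{\zeta}^{-1}\Hamiltonian{\principalsymbol{\PVasyDown}}\log\japbrak{\zeta}=-4$. The threshold condition of Lemma~\ref{lem:highregularity} on $H^s$ then reads $-4\Re\lambda+(s-\tfrac12)(-4)<0$, i.e. $s>-\Re\lambda+\tfrac12$. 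For $-\PVasyDown$ at $\cSink$, where now $\xi<0$ so $\japbrak{\zeta}^{-1}\xi$ restricts to $-1$, the passage $\PVasyDown\mapsto-\PVasyDown$ flips the signs of both $\principalsymbol{\Im\PVasyDown}$ and $\Hamiltonian{\principalsymbol{\PVasyDown}}$; together with the change of sign of $\xi$ these leave the inequality $s>-\Re\lambda+\tfrac12$ unchanged.

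Then I would do the sink estimate, with $P=\PVasyDown^*$. By Remark~\ref{rem:highlowregularity} and Proposition~\ref{prop:PPselfadjoint}, $\Hamiltonian{\principalsymbol{\PVasyDown^*}}=\Hamiltonian{\principalsymbol{\PVasyDown}}$ and $\principalsymbol{\Im\PVasyDown^*}=-\principalsymbol{\Im\PVasyDown}=4(\Re\lambda)\xi$. At $\cSink$ (a sink of $\PVasyDown^*$) we have $\xi<0$, hence $\japbrak{\zeta}^{-1}\principalsymbol{\Im\PVasyDown^*}=-4\Re\lambda$ and, by \eqref{eqn:HamiltonianSourceSink}, $\japbrak{\zeta}^{-1}\Hamiltonian{\principalsymbol{\PVasyDown^*}}\log\japbrak{\zeta}=+4$; the negative-definiteness condition of Lemma~\ref{lem:lowregularity} on $H^{\tilde s}$ becomes $-4\Re\lambda+(\tilde s-\tfrac12)(4)<0$, i.e. $\tilde s<\Re\lambda+\tfrac12$. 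The remaining case, $-\PVasyDown^*$ at $\cSource$, is identical after the same triple cancellation of signs.

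I do not expect a genuine obstacle: the statement is entirely a matter of bookkeeping once \eqref{eqn:imaginary:subprincipalsymbol} and \eqref{eqn:HamiltonianSourceSink} are in hand and the principal symbol is scalar. The only point that needs care is keeping track of three independent sign inputs — the sign of $\xi$ on $\Source$ versus $\Sink$, the simultaneous flip of $\principalsymbol{\Im P}$ and $\Hamiltonian{\principalsymbol{P}}$ under $P\mapsto-P$, and the flip of $\principalsymbol{\Im P}$ alone under $\PVasyDown\mapsto\PVasyDown^*$ (the Hamiltonian being unchanged because the symbol is real) — so that the four cases collapse to the two stated inequalities. One should also remember that the source/sink geometry, and hence the hypotheses of Lemmas~\ref{lem:highregularity} and~\ref{lem:lowregularity}, are inherited verbatim from the scalar analysis of \cite{v:ml:functions}, the extension to the bundle $\E$ causing no trouble precisely because $\principalsymbol{\PVasyDown}$ is scalar.
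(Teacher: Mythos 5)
Your proposal is correct and follows essentially the same route as the paper: substitute \eqref{eqn:HamiltonianSourceSink} and \eqref{eqn:imaginary:subprincipalsymbol} into the threshold conditions of Lemmas~\ref{lem:highregularity} and~\ref{lem:lowregularity} with $p=2$, obtaining $-4(\Re\lambda+s-\tfrac12)<0$ on $\cSource$ and its analogues. The only difference is that the paper computes the single case of $\PVasyDown$ at $\cSource$ and dismisses the remaining three via Remark~\ref{rem:highlowregularity}, whereas you carry out the sign bookkeeping for all four cases explicitly — a harmless elaboration, and your signs all check out.
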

\begin{proof}
We explain the first result, all others are similar after taking into account Remark~\ref{rem:highlowregularity}. On $\cSource$, by \eqref{eqn:HamiltonianSourceSink} and \eqref{eqn:imaginary:subprincipalsymbol},
\[
\japbrak{\xi+\eta}^{-1} ( \principalsymbol{\Im \PVasyDown} + (s-\tfrac 12) \Hamiltonian{\principalsymbol{\PVasyDown}}\log\japbrak{\xi+\eta})
=
-4 (\Re\lambda+s-\tfrac12).
\]
For this to be negative definite requires precisely that $s >- \Re \lambda + \tfrac12$.
\end{proof}

\begin{lemma}\label{lem:Pfinitekernel}
Restricting to $s>-\Re\lambda + \frac 12$, the operators $\PVasyDOWNL : \XVasyML^s\to \YVasyML^{s-1}$ have finite dimensional kernels.
\end{lemma}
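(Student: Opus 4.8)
The goal is the standard Fredholm-theory consequence of an elliptic+radial estimate: a global \emph{a priori} estimate of the form
\[
\norm{u}{\YVasyML^s} \le C\left( \norm{\PVasyDOWNL u}{\YVasyML^{s-1}} + \norm{u}{\YVasyML^{s_0}}\right)
\]
for some fixed $s_0 < s$, with the inclusion $\YVasyML^s \hookrightarrow \YVasyML^{s_0}$ compact; this immediately forces $\ker \PVasyDOWNL$ (inside $\XVasyML^s$) to be finite dimensional, since on the kernel the estimate reads $\norm{u}{\YVasyML^s} \le C\norm{u}{\YVasyML^{s_0}}$, so the unit ball of the kernel in the $\YVasyML^s$-norm is precompact in $\YVasyML^{s_0}$ and hence (by the estimate again) precompact in $\YVasyML^s$, i.e. the kernel is locally compact and therefore finite dimensional.

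\textbf{Assembling the global estimate.} I would cover $\overline\Xtwo$ by three regions and patch microlocal estimates. First, on the elliptic region --- all of $\Xone$ together with the part of $U^2$ where $\mu > 0$, where $\principalsymbol{\PVasyDown} = 4\mu\xi^2 + |\eta|^2$ is elliptic --- elliptic regularity for the $\End\E$-valued operator $\PVasyDown$ (scalar principal symbol, so Subsection~\ref{subsec:b-calculus}'s microlocal calculus applies verbatim) gives $\norm{B_0 u}{H^s} \le C(\norm{\PVasyDOWNL u}{H^{s-1}} + \norm{u}{H^{-N}})$ for $B_0$ microlocally supported there. Second, near the two radial sets $\cSourceSink$: here I apply Lemma~\ref{lem:highregularity} with $P = \PVasyDown$ at $\cSource$ and with $P = -\PVasyDown$ at $\cSink$ (using Remark~\ref{rem:highlowregularity}, $\cSink$ is a source for $-\PVasyDown$), whose threshold conditions are exactly $s > -\Re\lambda + \tfrac12$ by Lemma~\ref{lem:thresholdconditions}; this produces $\norm{A u}{H^s} \le C(\norm{B_1 \PVasyDOWNL u}{H^{s-1}} + \norm{u}{H^{-N}})$ with $\Char(A)$ missing $\cSourceSink$. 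Third, on the hyperbolic region $\{\mu < 0\}$ away from $\cSourceSink$: propagation of singularities along the null bicharacteristics of $\principalsymbol{\PVasyDown}$ (again the vector-bundle version is routine given scalar principal symbol) propagates the control from a neighbourhood of $\cSourceSink$ --- where the high-regularity estimate already gives control --- throughout the characteristic set, while off the characteristic set ellipticity applies. Finally one must check that the propagation actually terminates: near $\CS = \partial\overline\Xtwo$ the operator is strictly hyperbolic with respect to the level sets of $\mu$ and one uses energy estimates up to the boundary (as in \cite{v:ml:functions,zworski:vm}, or via the $\overlineH^s$/$\dot H^{-s}$ framework of \cite[Appendix B.2]{hormander3}) so that no regularity requirement is imposed at $\CS$. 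Patching these with a partition of unity subordinate to a microlocal cover of $\overline{\T}\vphantom{\T}^*\Xtwo$, and absorbing error terms, yields the global estimate with $s_0 = -N$ for $N$ large.

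\textbf{Expected main obstacle.} The genuinely delicate point is the behaviour at the radial sets $\cSourceSink\subset N^*Y$ and the matching of the elliptic estimate (valid on $\mu>0$) with the hyperbolic/propagation estimate (valid on $\mu<0$) \emph{across} $\mu = 0$. Everywhere else the only novelty compared to the scalar case of \cite{v:ml:functions,zworski:vm} is that lower-order terms are $\End\E$-valued and need not be diagonal, but since $\PVasyDown$ has \emph{scalar} principal symbol (Proposition~\ref{prop:decompQQ:k:mink} gives $\PVasyDOWNL = \n^*\n + \lambda^2 + \DVasyDown + \ZeroOrderVasyDown$ with $\DVasyDown$ first order, $\ZeroOrderVasyDown$ zeroth order) both the Hamiltonian flow and the radial-point dynamics are exactly as in the scalar theory; the sign of $\principalsymbol{\Im\PVasyDOWNL} = -4\Re\lambda\,\xi$ computed in \eqref{eqn:imaginary:subprincipalsymbol} is what makes the threshold work for $\Re\lambda$ large enough, which is precisely the regime $s > -\Re\lambda + \tfrac12$. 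So the real work is simply to verify that Lemmas~\ref{lem:highregularity}–\ref{lem:lowregularity} and the standard propagation/energy estimates transfer without change to sections of $\E$, which they do because the principal symbol is scalar; this is the content of the sentence in Section~\ref{sec:analysis} that ``we are required to alter only minor details.'' Once the global estimate is in hand the finite-dimensionality of $\ker\PVasyDOWNL$ is the soft functional-analytic argument sketched above.
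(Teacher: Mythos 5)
Your proposal follows essentially the same route as the paper: establish the a priori estimate $\norm{u}{\overlineH^s}\le C(\norm{\PVasyDOWNL u}{\overlineH^{s-1}}+\norm{\psi u}{H^{-N}})$ by splitting $\overline\Xtwo$ into the elliptic region, the junction near $Y$ (where the high-regularity radial estimates of Lemma~\ref{lem:highregularity} apply to $\PVasyDown$ at $\cSource$ and $-\PVasyDown$ at $\cSink$, with the threshold $s>-\Re\lambda+\tfrac12$ from Lemma~\ref{lem:thresholdconditions}, plus propagation and ellipticity off $\Sigma$), and the hyperbolic region treated by energy estimates, then conclude finite-dimensionality of the kernel by the standard compactness argument. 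This matches the paper's proof, so no further comparison is needed.
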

\begin{proof}
It suffices to obtain an estimate, for $u\in\XVasyML^s$, of the form
\[
\norm{ u }{\overlineH^s} \le C \left(	\norm{ \PVasyDOWNL u }{\overlineH^{s-1}}	+ \norm{\psi u}{H^{-N}}		\right).
\]
for some $\psi$ supported on $\{\mu>-\frac12\}$ and such that $\psi=1$ near $\{\mu>-\frac12+\ve\}$. This is done by writing $u=(\psi_- + \psi_0 + \psi_+)u$ with the supports of $\psi_-, \psi_0, \psi_+$ respectively contained in $\{\mu<-\ve\}, \{|\mu|<2\ve\}, \{\mu>\ve\}$. 
The estimate for $\psi_+u$ is due to ellipticity of $\PVasyDown$. 
The estimate for $\psi_-u$ is due to hyperbolicity which allows us to reduce to the estimate for $\psi_0 u$:
\[
\norm{\psi_- u}{\overlineH^s} \le C \left( \norm{ \PVasyDOWNL u}{\overlineH^{s-1}} + \norm{ \psi_0 u}{H^{s}} \right).
\]
The estimate for $\psi_0 u$ is obtained by microlocalising. Away from $\Sigma$, ellipticity gives the result, while near $\Sigma$, propagation of singularities implies that the norms can be controlled by $\cSourceSink$. The high regularity results for $\cSource$ and $\cSink$ from Lemma~\ref{lem:highregularity} are applicable as these are sources for $\PVasyDown$ and $-\PVasyDown$ respectively. Lemma~\ref{lem:thresholdconditions} ensures that the threshold conditions are satisfied (by hypothesis of this proposition). The desired estimate is obtained.
\end{proof}

\begin{lemma}\label{lem:Pfinitecokernel}
Restricting to $s>-\Re\lambda + \frac 12$, the operators $\PVasyDOWNL : \XVasyML^s\to \YVasyML^{s-1}$ have finite dimensional cokernels.
\end{lemma}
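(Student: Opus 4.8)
The plan is to establish finite-dimensionality of the cokernel of $\PVasyDOWNL : \XVasyML^s \to \YVasyML^{s-1}$ by passing to the adjoint and using the duality \eqref{eqn:dualHsdotbar}. Since $(\overlineH^s(\Xtwo;\E))^* \simeq \dot H^{-s}(\overline\Xtwo;\E)$ and similarly at level $s-1$, the cokernel of $\PVasyDOWNL$ acting $\overlineH^s \to \overlineH^{s-1}$ is naturally identified with the kernel of the formal adjoint $\PVasyDOWNL^* = \PVasyDown_{-\bar\lambda}$ (by Proposition~\ref{prop:PPselfadjoint}) acting on the dual spaces, i.e. $\PVasyDown_{-\bar\lambda} : \dot H^{-(s-1)}(\overline\Xtwo;\E) \to \dot H^{-s}(\overline\Xtwo;\E)$. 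So it suffices to show this kernel is finite dimensional, and as in Lemma~\ref{lem:Pfinitekernel} this will follow from an a priori estimate of the form
\[
\norm{ v }{\dot H^{1-s}} \le C \left( \norm{ \PVasyDown_{-\bar\lambda} v }{\dot H^{-s}} + \norm{\psi v}{H^{-N}} \right)
\]
for $v$ in this kernel (more precisely for $v$ in the relevant dual domain), with $\psi$ a cutoff supported in $\{\mu > -\tfrac12\}$ equal to $1$ near $\{\mu > -\tfrac12 + \ve\}$.

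The estimate is proved by the same three-region decomposition $v = (\psi_- + \psi_0 + \psi_+)v$ used in Lemma~\ref{lem:Pfinitekernel}, but now one must be attentive to the directions in which the relevant propagation and radial estimates run. On $\{\mu > \ve\}$ the operator $\PVasyDown_{-\bar\lambda}$ is elliptic, so elliptic regularity gives control of $\psi_+ v$. On $\{\mu < -\ve\}$ the operator is strictly hyperbolic with respect to the level sets of $\mu$; for the adjoint problem the energy estimate propagates in the opposite $\mu$-direction compared to Lemma~\ref{lem:Pfinitekernel}, which is exactly why one works with $\dot H$ spaces (sections supported in $\overline\Xtwo$, hence vanishing Cauchy data at $\CS = \{\mu = -\tfrac12\}$) — this reduces the estimate for $\psi_- v$ to the estimate for $\psi_0 v$. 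Near the junction $Y = \{\mu = 0\}$, away from $\Sigma$ ellipticity suffices, while near $\Sigma$ one microlocalises and propagates into the radial sets $\cSourceSink$. Here is the key sign flip: by Remark~\ref{rem:highlowregularity}, $\cSink$ is a sink for $\PVasyDown$ and hence a source for $-\PVasyDown$; but for the adjoint, Proposition~\ref{prop:PPselfadjoint} shows $\cSink$ is a sink for $\PVasyDown^*$ and $\cSource$ is a sink for $-\PVasyDown^*$, so one applies the low-regularity radial estimate Lemma~\ref{lem:lowregularity} rather than Lemma~\ref{lem:highregularity}. Lemma~\ref{lem:thresholdconditions} records that the relevant threshold for the adjoint, working at regularity level $\tilde s = 1 - s$, reads $\tilde s < \Re\lambda + \tfrac12$, i.e. $1 - s < \Re(-\bar\lambda) + \tfrac 12$ is what one needs; under the standing hypothesis $s > -\Re\lambda + \tfrac12$ this is exactly satisfied, so the low-regularity estimates apply and close the argument. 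Combining the three regional estimates yields the displayed inequality, whence the kernel of $\PVasyDown_{-\bar\lambda}$ on the dual space — and therefore the cokernel of $\PVasyDOWNL$ — is finite dimensional.

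The main obstacle is bookkeeping rather than any new idea: one must carefully track how the duality \eqref{eqn:dualHsdotbar} interchanges the roles of $\overlineH$ and $\dot H$ spaces, how this interacts with the direction of hyperbolic propagation (so that the Cauchy data automatically vanish on $\CS$ for the adjoint problem, giving a genuine estimate and not merely a semi-Fredholm statement in the wrong space), and how the source/sink dichotomy for $\PVasyDown$ becomes the sink/source dichotomy for $\PVasyDown^*$, forcing the use of Lemma~\ref{lem:lowregularity} in place of Lemma~\ref{lem:highregularity}. Once the threshold inequalities of Lemma~\ref{lem:thresholdconditions} are matched to the adjoint regularity level, the estimate assembles exactly as in the scalar case treated in \cite{v:ml:functions,zworski:vm}, with only the cosmetic change that lower-order terms are endomorphism-valued (the principal symbol is scalar, cf. the discussion in Subsection~\ref{subsec:b-calculus} and the microlocal preliminaries), so the radial and propagation estimates go through verbatim.
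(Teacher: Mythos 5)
Your overall strategy coincides with the paper's: identify the cokernel of $\PVasyDOWNL$ with the kernel of $\PVasyDOWNL^*=\PVasyDown_{-\bar\lambda}$ acting on $\dot H^{1-s}(\overline\Xtwo;\E)$ via \eqref{eqn:dualHsdotbar}, use the cutoff decomposition $v=(\psi_-+\psi_0+\psi_+)v$, ellipticity for $\psi_+v$, hyperbolicity together with the support condition at $\CS$ for $\psi_-v$, and the low-regularity radial estimate (Lemma~\ref{lem:lowregularity}) at the thresholds of Lemma~\ref{lem:thresholdconditions}. However, there is a concrete gap at the junction. Lemma~\ref{lem:lowregularity} yields, besides the term involving $\PVasyDown^*v$, the extra term $\norm{B\psi_0 v}{H^{1-s}}$ with $\WF(B)\cap\cSourceSink=\varnothing$, and you never explain how this term is controlled; without that, the $\psi_0$ estimate does not close and no Fredholm conclusion follows. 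The paper controls it by first showing that any $v\in\dot H^{1-s}(\overline\Xtwo;\E)\cap\ker\PVasyDown^*$ vanishes identically on $\{\mu<0\}$ (hyperbolic energy estimates run from $\CS$, where the Cauchy data vanish because $v$ is supported in $\overline\Xtwo$) and is smooth on $\{\mu>0\}$ (ellipticity); consequently $\WF(B\psi_0 v)\cap\Char(\PVasyDown^*)=\varnothing$, and microellipticity bounds $\norm{B\psi_0 v}{H^{1-s}}$ by $\norm{\psi v}{H^{-N}}$.

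Relatedly, your handling of the hyperbolic region is stated in the wrong direction: you claim the vanishing data at $\CS$ ``reduces the estimate for $\psi_-v$ to the estimate for $\psi_0v$'', whereas the point is the opposite of the kernel case --- propagation from $\CS$ makes the region $\{\mu<0\}$ self-contained (indeed $v\equiv0$ there for kernel elements), and it is precisely this vanishing that must then be fed into the junction analysis to absorb the $B$-term. By phrasing the reduction the other way, you deprive yourself of the one ingredient that distinguishes this lemma from Lemma~\ref{lem:Pfinitekernel}. A minor further point: your rewriting of the threshold as $1-s<\Re(-\bar\lambda)+\tfrac12$ contains a sign slip; the condition of Lemma~\ref{lem:thresholdconditions} for the adjoint is $1-s<\Re\lambda+\tfrac12$, which is equivalent to the standing hypothesis $s>-\Re\lambda+\tfrac12$, as you in fact conclude.
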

\begin{proof}
To show that the range is of finite codimension we study the adjoint operator $\PVasyDown^*$. By \eqref{eqn:dualHsdotbar} the dual space of $\overlineH^{s-1}(\Xtwo; \E)$ is $\dot H^{1-s}(\overline\Xtwo;\E)$ and the dimension of the kernel of $\PVasyDown^*$ equals the dimension of the cokernel of $\PVasyDown$. It suffices to obtain an estimate of the form
\[
v\in \dot H^{1-s}(\overline\Xtwo;\E)\cap \ker \PVasyDown^*
\quad
\implies
\quad
\norm{ v }{\dot H^{1-s}} \le C  \norm{\psi v }{H^{-N}}		
\]
with $\psi$ as defined in the previous proof. Again, we use the partition $v=(\psi_- + \psi_0 + \psi_+)v$. 
Again, the estimate for $\psi_+v$ is due to ellipticity of $\PVasyDown^*$. This time, the estimates for $\psi_-v$ are immediate due to hyperbolicity and the requirement at $\CS$ that $v$ vanish to all orders which implies that $v=0$ on $\{\mu<0\}$.
The estimate for $\psi_0v$ is obtained by microlocalising. 
(Away from $\Char(P)$, the result is obtained by ellipticity.)
The low regularity results for $\cSink$ and $\cSource$ from Lemma~\ref{lem:lowregularity} are applicable as these are sinks for $\PVasyDown^*$ and $-\PVasyDown^*$ respectively. Lemma~\ref{lem:thresholdconditions} ensures that the threshold conditions are satisfied. Therefore there exists $A,B\in\Psi^0(\Xtwo)$ with $\Char(A)\cap\cSourceSink=\varnothing$ and $\WF(B)\cap\cSourceSink=\varnothing$ such that $\norm{ A\psi_0v}{H^{1-s}}\le C( \norm{B\psi_0v}{H^{1-s}} + \norm{\psi v}{H^{-N}})$.
As $v=0$ on $\{\mu<0\}$ and is smooth (by ellipticity of $\PVasyDown^*$) on $\{\mu>0\}$, we have $\WF(B\psi_0v)\cap \Char(\PVasyDown^*)=\varnothing$ so microellipticty gives $\norm{B\psi_0v}{H^{1-s}} \le C \norm{\psi v}{H^{-N}}$.
The desired estimate is obtained.
\end{proof}

\begin{lemma}\label{lem:Ptrivialkernel}
For $\PVasyDOWNL$ with $\lambda\in\R$ acting on $\overlineH^s(\Xtwo;\E)$, the kernel of $\PVasyDOWNL$ is trivial for $\lambda\gg1$.
\end{lemma}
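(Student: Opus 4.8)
The plan is to obtain an a priori estimate that rules out a kernel for $\PVasyDOWNL$ acting on $\overlineH^s(\Xtwo;\E)$ when $\lambda \in \R$ is large. First I would note that, by elliptic regularity of $\PVasyDown$ on $\{\mu > 0\}$ together with the hyperbolic region $\{\mu < 0\}$ and the propagation/radial estimates underlying Proposition~\ref{prop:fredholm}, any element of the kernel is automatically smooth on $\Xone = \{\mu > 0\}$ and vanishes on $\{\mu < 0\}$ (by the same vanishing-at-$\CS$ and hyperbolicity argument used in Lemma~\ref{lem:Pfinitecokernel}), so it is enough to control $u$ on $\Xone$ itself. On $\Xone$ the relevant operator is $\QVasyDOWNL$: by Proposition~\ref{prop:PPandQQ:changeofscales} we have $\PVasyDOWNL = \rho^{-\lambda - \frac n2 + m - 2} J \QVasyDOWNL J^{-1} \rlnm$ on $\Xone \subset \Xthree$, so $\PVasyDOWNL u = 0$ on $\Xone$ translates into $\QVasyDOWNL w = 0$ for a corresponding $w$; the point is that $\QVasyDOWNL$ is a genuine self-adjoint (for $\lambda \in i\R$, Proposition~\ref{prop:QQ:selfadjoint}) or, for $\lambda \in \R$, positive-type elliptic operator on $\LsectionsS(\Xone;\E)$.

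Next I would run the standard positivity/pairing argument. By Proposition~\ref{prop:decompQQ:k:mink}, $\QVasyDOWNL = \n^*\n + \lambda^2 + \DVasyDown + \ZeroOrderVasyDown$, where $\DVasyDown$ is first order (built from $\d$ and $\div$) and $\ZeroOrderVasyDown$ is a fixed bounded endomorphism, both independent of $\lambda$. Pairing $\QVasyDOWNL w$ against $w$ in $\LsectionsS(\Xone;\E)$ gives, schematically,
\[
\LinnprodSS{\QVasyDOWNL w}{w} = \norm{\n w}{\Lsections}^2 + \lambda^2 \norm{w}{\Lsections}^2 + \LinnprodSS{(\DVasyDown + \ZeroOrderVasyDown)w}{w},
\]
and the cross term is bounded by $C(\norm{\n w}{\Lsections} + \norm{w}{\Lsections})\norm{w}{\Lsections}$, which for $|\lambda|$ large enough is strictly dominated by $\lambda^2 \norm{w}{\Lsections}^2$ (absorbing the $\norm{\n w}{\Lsections}\norm{w}{\Lsections}$ piece into $\norm{\n w}{\Lsections}^2$ plus a lower-order remainder by Cauchy--Schwarz with a small constant). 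Hence $\LinnprodSS{\QVasyDOWNL w}{w} \geq \tfrac12 \lambda^2 \norm{w}{\Lsections}^2$ for $\lambda \gg 1$, which forces $w = 0$ and therefore a trivial kernel. The one thing that needs care is the integration by parts: $w$ is obtained from an element of $\overlineH^s(\Xtwo;\E)$, so I must check that the decay of $w$ near $Y$ (governed by the weight $\rho^{\lambda + \frac n2 - m}$ appearing in $\rlnm$ and by the mapping properties of $J$) is strong enough, for $\Re\lambda$ large, to justify the pairing and the vanishing of boundary terms, i.e. that $w \in \LsectionsS(\Xone;\E)$ with $\n w$ also square-integrable.

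The main obstacle I anticipate is precisely this boundary-regularity bookkeeping: one must verify that, under the change of scales and the conjugation by powers of $\rho$, an $\overlineH^s$ section of $\E$ over $\Xtwo$ lying in $\ker\PVasyDOWNL$ transfers to an honest $L^2$ (indeed $H^1$) element on which $\QVasyDOWNL$ is symmetric and the formal positivity computation is rigorous — this is where the hypothesis $\Re\lambda \gg 1$ enters, aligning the domain with the physical $L^2$ domain as remarked after the definition of the indicial family. Once that is in place, the positivity estimate is routine and the conclusion is immediate.
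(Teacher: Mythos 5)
Your positivity argument on $\Xone$ is essentially the paper's first step: conjugating by Proposition~\ref{prop:PPandQQ:changeofscales}, setting $\tilde u = J^{-1}\rlnm u$, and pairing $\QVasyDOWNL \tilde u$ against $\tilde u$ in $\LsectionsS(\Xone;\E)$ to get $|\LinnprodSS{\QVasyDOWNL \tilde u}{\tilde u}| \ge C^{-1}\|\n\tilde u\|_\s^2 + (\lambda^2-C)\|\tilde u\|_\s^2$, which forces $\tilde u = 0$ on $\{\mu>0\}$ for $\lambda\gg1$ (the integrability bookkeeping you worry about is handled because the Fredholm estimate of Lemma~\ref{lem:Pfinitekernel} already gives $u\in C^\infty(\overline\Xtwo;\E)$, so $\rlnm u$ decays as fast as you like when $\Re\lambda$ is large). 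Up to here you agree with the paper.

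The genuine gap is your first step: you claim that a kernel element of $\PVasyDOWNL$ on $\overlineH^s(\Xtwo;\E)$ ``vanishes on $\{\mu<0\}$ by the same vanishing-at-$\CS$ and hyperbolicity argument used in Lemma~\ref{lem:Pfinitecokernel}.'' That argument is not available here. In Lemma~\ref{lem:Pfinitecokernel} one works with the adjoint acting on the supported space $\dot H^{1-s}(\overline\Xtwo;\E)$, whose elements vanish to all orders at $\CS$; hyperbolic energy estimates then propagate this zero Cauchy data from $\CS$ into $\{\mu<0\}$. An element of $\overlineH^s(\Xtwo;\E)=\YVasyML^s$ carries \emph{no} boundary condition at $\CS$ whatsoever (it is merely the restriction of an $H^s_{\loc}$ section), so hyperbolicity alone gives nothing in $\{\mu<0\}$: the equation there is a wave equation with unconstrained data. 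The correct logic runs in the opposite direction, and this is where the real work of the lemma lies: after the positivity argument one knows $u=0$ on $\{\mu\ge 0\}$ together with all derivatives at $Y$, and one must then prove a unique continuation statement across the surface $\{\mu=0\}$ — which is characteristic for $\PVasyDOWNL = -4\mu\partial_\mu^2 - 4\lambda\partial_\mu + \Delta_h + \LowerOrderTermVasyDOWNL$, since the coefficient of $\partial_\mu^2$ degenerates there — to conclude $u=0$ on $\{\mu>-\ve\}$; only then do standard hyperbolic estimates, now with Cauchy surface inside $\{\mu<0\}$, finish the proof. The paper carries this out with a weighted energy functional $\HVasyDown(\mu)$ and weights $|\mu|^{-N}$ with $N$ large (a Carleman-type argument exploiting the infinite-order vanishing at $\mu=0$). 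This step cannot be bypassed by appealing to the cokernel argument, and without it your proposal does not control the kernel element on $\{-\tfrac12<\mu<0\}$.
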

\begin{proof}
Consider $u\in\ker\PVasyDOWNL$. By the estimate obtained in Lemma~\ref{lem:Pfinitekernel}, $u\in C^\infty(\overline\Xtwo;\E)$. Restricting our attention to $\{\mu>0\}$, Proposition~\ref{prop:PPandQQ:changeofscales} gives
\[
\rho^{-\lambda - \frac n2 + m -2} J \QVasyDOWNL J^{-1} \rlnm u =0
\]
so defining $\tilde u = J^{-1} \rlnm u$ we get $\QVasyDOWNL \tilde u =0$. Or by Proposition~\ref{prop:decompQQ:k:mink},
\[
( \n^*\n + \lambda^2 + \DVasyDown + \ZeroOrderVasyDown ) \tilde u = 0.
\]
Now $\DVasyDown$ may be bounded (up to a constant) by $\n$ (and $\ZeroOrderVasyDown$ by a constant as the curvature is bounded on $\Xone$) so we can find $C$ independent of $\lambda$ such that
\[
|\LinnprodSS{\QVasyDOWNL \tilde u}{\tilde u} |\ge  C^{-1} \|\n \tilde u\|_\s^2 + (\lambda^2 - C)\|\tilde u\|_\s^2
\]
and taking $\lambda\gg \sqrt{C}$ shows $\tilde u=0$ on $\{\rho>0\}$. By smoothness, $u$ vanishes on $\{\mu\ge0\}$ (and so to do all its derivates on $Y$). Standard hyperbolic estimates give the desired result $u=0$ if we can show a type of unique continuation result that $u=0$ on $\{\mu>-\ve\}$. 

To this end we work on $U^2$ and consider $\PVasyUp$ written in the following form
\[
\PVasyDOWNL = -\mu\partial_\mu^2 + \Delta_h + \BVasyDOWNL
\]
for $\BVasyDOWNL=-4\lambda\partial_\mu + \LowerOrderTermVasyDOWNL \in \Diff^1(U^2;\End\E)$. Let $\innprodHT{\cdot}{\cdot}$ on $\T^*Y\otimes \E$ denote the coupling of the metrics $h$ on $\T^*Y$ with $\innprodTT{\cdot}{\cdot}$ on $\E$. For ease of presentation, we will assume throughout this demonstration that all objects are real-valued.  
Consider $u,v\in C_c^\infty(U^2,\E)$ (and we may assume $\supp \, u \subset(-1,0]\times Y$) then we have the following formula
\[
\innprodHT{\nY u}{ \nY v} = \innprodTT{\Delta_h u}{v} + \divterm
\]
where $\divterm$ denotes any term which is of divergence nature on $Y$, hence vanishes upon integrating over $Y$ (using $d\vol_h$). Indeed such an equation is obtained by considering $f\in C^\infty(Y)$ and calculating, at some value $\mu$,
\begin{align*}
\int_{Y} \innprodHT{\nY u}{\nY v} f d\vol_h
& = \int_Y \innprodHT{\nY u}{ \nY ( f v)} - \innprodHT{\nY u}{\nY f\otimes v} \,d\vol_h \\
& = \int_Y (\innprodTT{\Delta_h u}{v} + \divterm)f \, d\vol_h
\end{align*}
where the second term was dealt with in the following way:
\begin{align*}
\int_Y \innprodHT{ \nY u }{ \nY  f \otimes v }\, d\vol_h 
&= \int_Y \sum_i \innprodTT{ \nY_{e_i} u }{ v }  \trH (e^i \otimes \nY  f) \, d\vol_h  \\
&= \int_Y \nY^* ( \sum_i  \innprodTT{\nY_{e_i} u}{v} e^i ) f  \, d\vol_h.
\end{align*}
With this formula established we define, for given $u$,
\[
\HVasyDown (\mu) = |\mu| \innprodTT{\partial_\mu u}{\partial_\mu u} + \innprodHT{\nY u}{ \nY  u} + \innprodTT{u}{u}
\]
and on $\{\mu<0\}$ (using $v=\partial_\mu u$ in the previously established formula)
\[
-\partial_\mu\HVasyDown  = -2 \innprodTT{ \PVasyDown u }{ \partial_\mu u} + \innprodTT{ (2\BVasyDOWNL-\partial_\mu)u }{\partial_\mu u} + \divterm - \tilde \HVasyDown.
\]
where $\tilde\HVasyDown$ has the same structure as $\HVasyDown$ but with appearances of $h$ (used to construct the various inner products) replaced by its Lie derivative, $\mathcal{L}_{\partial_\mu} h$. Recall that $\supp\, u\subset (-1,0]\times Y$ and $u$ is smooth, hence $\partial_\mu^N u=0$ at $\{\mu=0\}$ for all $N$. Continuing to work on $\{\mu<0\}$,
\begin{align*}
-\partial_\mu ( |\mu|^{-N}\HVasyDown) &+ |\mu|^{-N}\divterm \\
&=
-N|\mu|^{-N-1}\HVasyDown - 2|\mu|^{-N} \Re \innprodTT{\PVasyDOWNL u}{\partial_\mu u} + |\mu|^{-N}  \innprodTT{(2\BVasyDOWNL-\partial_\lambda)u}{\partial_\mu u} - |\mu|^{-N} \tilde \HVasyDown.
\end{align*}
Now suppose that $u\in \ker\PVasyDOWNL$. Fix $\delta>0$ small and let $0<\ve<\delta$. We take the previous display and insert it into the operator $\int_{-\delta}^{-\ve}\int_Y\dots \,d\mu\, d\vol_h$. The first term on the left hand side of the previous display is treated with the fundamental theorem of calculus, the second term vanishes due to the appearance of $\int_Y \divterm \,d\vol_h$. We claim the right hand side is negative for large $N$. Indeed the second term vanishes as $u$ is assumed in the kernel of $\PVasyDOWNL$. Considering the third term, $\innprodTT{(2\BVasyDOWNL-\partial_\lambda)u}{\partial_\mu u}$ is quadratic in $u$, $\nY u$, and $\partial_\mu u$ hence for $N$ large enough, it may be bounded by $N|\mu|^{-1}\HVasyDown$, thus the third term's potential positivity may be absorbed by the negativity of the first term. The fourth term may be treated in a similar manner upon consideration of the Taylor expansion of $h$ at $Y$. We obtain
\[
\delta^{-N} \int_Y \HVasyDown(-\delta) \, d\vol_h \le \ve^{-N} \int_Y \HVasyDown(-\ve) \, d\vol_h.
\]
As $u$ is smooth and vanishes to all orders at $\mu=0$, we may bound $ \int_Y \HVasyDown(-\ve) \, d\vol_h$ by $C |\mu|^K$ on $[-\ve,0]$ for arbitrarily large $K$. We can obtain a similar bound for $ \int_Y \HVasyDown(-\ve) \, d\vol_h$. In particular, for $K>N$. This produces
\[
\delta^{-N} \int_Y \HVasyDown(-\delta) \, d\vol_h \le  C \ve^{-N+K}
\]
and letting $\ve\to 0^+$ shows $\int_Y \HVasyDown(-\delta)\,\d\vol_h=0$ hence $\HVasyDown(-\delta)=0$. Doing this for all $\delta$ less than the original $\delta$ gives $\HVasyDown=0$ near $0$. Hence $\partial_\mu u$ and $\n^Y u$ vanish and $u=0$ near $0$. This suffices to conclude the proof.
\end{proof}
\begin{lemma}\label{lem:Ptrivialcokernel}
For $\PVasyDOWNL^*$ with $\lambda\in\R$ acting on $\dot H^{1-s}(\overline\Xtwo;\E)$, the kernel of $\PVasyDOWNL^*$ is trivial for $\lambda\gg1$.
\end{lemma}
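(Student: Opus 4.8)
The plan is to mirror the structure of the proof of Lemma~\ref{lem:Ptrivialkernel}, exploiting the relation $\PVasyDOWNL^* = \PVasyDown_{-\bar\lambda}$ from Proposition~\ref{prop:PPselfadjoint}. Since we are taking $\lambda\in\R$, this says $\PVasyDOWNL^* = \PVasyDown_{-\lambda}$, so the kernel of $\PVasyDOWNL^*$ on $\dot H^{1-s}(\overline\Xtwo;\E)$ is the kernel of $\PVasyDown_{-\lambda}$ acting on sections which vanish to infinite order at $\CS=\{\mu=-\tfrac12\}$. The key structural difference from the previous lemma is the boundary behaviour: an element $v\in\dot H^{1-s}(\overline\Xtwo;\E)$ is supported in $\overline\Xtwo=\{\mu\ge-\tfrac12\}$ and vanishes to all orders at $\CS$, whereas in Lemma~\ref{lem:Ptrivialkernel} the element was smooth up to $\CS$ but could be supported all the way to $\CS$. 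So the roles of ``the good end'' (where ellipticity/energy estimates start) and ``the end to propagate to'' are essentially swapped.

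First I would note that by the finite-cokernel estimate already established in the proof of Lemma~\ref{lem:Pfinitecokernel} (which produces, via ellipticity of $\PVasyDown^*$ on $\{\mu>0\}$ and the low-regularity radial estimates at $\cSourceSink$, the bound $\norm{v}{\dot H^{1-s}}\le C\norm{\psi v}{H^{-N}}$ for $v\in\ker\PVasyDOWNL^*$), any such $v$ is in fact smooth on $\{\mu>0\}\cup\{\mu<0\}$ — indeed $\PVasyDown^*$ is elliptic on $\{\mu>0\}$ and strictly hyperbolic on $\{\mu<0\}$. Restricting to $\{\mu>0\}\simeq\Xone$, Proposition~\ref{prop:PPandQQ:changeofscales} converts the equation $\PVasyDown_{-\lambda}v=0$ into $\QVasyDown_{-\lambda}\tilde v=0$ for $\tilde v = J^{-1}\rho^{\lambda+\frac n2-m}v$ — note the sign of $\lambda$ here; alternatively one uses the formula with $\PVasyDOWNL^*$ directly and $\QVasyDown_{-\lambda}$. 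Then, exactly as in Lemma~\ref{lem:Ptrivialkernel}, the coercivity estimate
\[
|\LinnprodSS{\QVasyDown_{-\lambda} \tilde v}{\tilde v}|\ge C^{-1}\|\n\tilde v\|_\s^2 + (\lambda^2-C)\|\tilde v\|_\s^2,
\]
valid because $\DVasyDown$ is bounded by $\n$ and $\ZeroOrderVasyDown$ by a constant (the curvature of $\Xone$ is bounded), forces $\tilde v=0$ on $\{\rho>0\}$ once $\lambda\gg\sqrt C$, hence $v$ vanishes to infinite order at $Y=\{\mu=0\}$ from the $\{\mu>0\}$ side; by smoothness across $\{\mu>0\}$, $v$ and all its derivatives vanish along $Y$.

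It then remains to propagate this vanishing into $\{\mu<0\}$, i.e. to show $v=0$ on a strip $\{-\ve<\mu<0\}$; standard hyperbolic (energy) estimates on $\{-\tfrac12<\mu<0\}$, together with the fact that $v$ already vanishes identically near $\CS$ (being in $\dot H^{1-s}(\overline\Xtwo;\E)$, it vanishes to all orders at $\mu=-\tfrac12$, and in the hyperbolic region the wave equation propagates this), then close the argument to give $v\equiv 0$. The unique-continuation step across $Y$ is the heart of the matter and is carried out by repeating verbatim the weighted-energy argument of Lemma~\ref{lem:Ptrivialkernel}: write $\PVasyDown_{-\lambda}=-\mu\partial_\mu^2+\Delta_h+\BVasyDown_{-\lambda}$ on $U^2$, form the quantity $\HVasyDown(\mu)=|\mu|\innprodTT{\partial_\mu v}{\partial_\mu v}+\innprodHT{\nY v}{\nY v}+\innprodTT{v}{v}$, compute $-\partial_\mu(|\mu|^{-N}\HVasyDown)$, absorb the first-order term $\BVasyDown_{-\lambda}$ and the Taylor-tail term $\tilde\HVasyDown$ into $N|\mu|^{-1}\HVasyDown$ for $N$ large, integrate over $\{-\delta<\mu<-\ve\}\times Y$, use that $v$ vanishes to infinite order at $\mu=0$, and let $\ve\to0^+$ to conclude $\HVasyDown\equiv0$ near $0$, hence $v=0$ near $0$. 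The main obstacle — as in the previous lemma — is precisely this unique-continuation-across-the-degeneracy step; but since the degenerate-hyperbolic structure of $\PVasyDown_{-\lambda}$ at $\mu=0$ is identical to that of $\PVasyDOWNL$, and the direction of vanishing (infinite order at $\mu=0$, propagating into $\mu<0$) is the same, the argument transfers with only cosmetic changes (replacing $\lambda$ by $-\lambda$ throughout, which affects none of the estimates since only $\lambda^2$ and the boundedness of first-order terms are used).
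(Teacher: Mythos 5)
Your reduction to the $\{\mu>0\}$ side has a genuine gap, and it is exactly the point the paper's proof is engineered to avoid. Writing $\PVasyDOWNL^*=\PVasyDown_{-\lambda}$ and conjugating via Proposition~\ref{prop:PPandQQ:changeofscales} (with $\lambda$ replaced by $-\lambda$) gives $\tilde v = J^{-1}\rho^{-\lambda+\frac n2-m}v$, not $J^{-1}\rho^{\lambda+\frac n2-m}v$: the conjugating weight now \emph{grows} like $\rho^{-\lambda}$ at $Y$. Moreover a cokernel element $v$ is only known, via the low-regularity radial estimates (Lemma~\ref{lem:lowregularity}, used as in Lemma~\ref{lem:Pfinitecokernel}), to lie in $\dot H^{\tilde s}$ for $\tilde s<\lambda+\tfrac12$, i.e.\ to vanish at $Y$ to the finite order $\sim\lambda/2$ only; ellipticity gives smoothness in the interior of $\Xone$ but not up to $Y$, so your claim of infinite-order vanishing at $Y$ is not available a priori. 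Consequently $\tilde v$ behaves like $\rho^{2N-\lambda+\cdots}$ with $2N\le\lambda$, it need not belong to $\LsectionsS(\Xone;\E)$, and the integration by parts behind the coercive estimate $|\LinnprodSS{\QVasyDown_{-\lambda}\tilde v}{\tilde v}|\ge C^{-1}\|\n\tilde v\|_\s^2+(\lambda^2-C)\|\tilde v\|_\s^2$ is not justified; enlarging $\lambda$ does not help, since the decay required of $v$ and the vanishing order supplied by the radial estimates both scale like $\lambda$. So the step ``forces $\tilde v=0$ on $\{\rho>0\}$'' is unsupported, and the rest of your argument collapses with it.

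The paper proceeds instead by duality rather than by estimating $v$ directly: it records $v|_{\Xone}\in\rho^{2N}C^\infty_\even(\overline\Xone;\E)$ with $N\le\tfrac12\lambda$, proves a coercive estimate for the shifted conjugate $\rho^{N}\QVasyDOWNL\rho^{-N}$ with lower bound $\lambda^2-N^2-CN\ge\tfrac12\lambda^2$ for $N=\lfloor\tfrac12\lambda\rfloor$, and uses the Riesz representation theorem to solve $\PVasyDOWNL u=f$ for arbitrary $f\in C_c^\infty(\Xone;\E)$ with $u\in\rho^{-\frac12\lambda+1}\LsectionsT(\Xone;\E)$; this $u$ has just enough decay against the $\rho^{2N}$ vanishing of $v$ to justify the pairing $\LinnprodTT{f}{v}=\LinnprodTT{\PVasyDOWNL u}{v}=\LinnprodTT{u}{\PVasyDOWNL^*v}=0$, whence $v=0$ on $\Xone$. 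Note also that your third paragraph is superfluous and oriented the wrong way: since $v$ is supported in $\overline\Xtwo$ and $\PVasyDown^*$ is strictly hyperbolic on $\{\mu<0\}$, one gets $v=0$ on $\{\mu\le0\}$ immediately (as in Lemma~\ref{lem:Pfinitecokernel}); no weighted-energy unique continuation across $Y$ into $\{\mu<0\}$ is needed here --- the entire difficulty of this lemma sits on $\Xone$, precisely where your argument is incomplete.
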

\begin{proof}
Take $\lambda$ satisfying the threshold condition and consider $v\in\ker\PVasyDOWNL^*$. Hyperbolicity, as used in Lemma~\ref{lem:Pfinitecokernel}, implies $v=0$ on $\{\mu\le0\}$, and that $v$ is smooth on $\Xone$ due to ellipticity. 
The strategy given in Lemma~\ref{lem:Pfinitecokernel} implies $v\in \dot H^{\tilde s}(\overline\Xtwo;\E)$ for all $\tilde s < \lambda + \tfrac 12$ which with $\lambda\gg n$ implies $v$ is continuous. 
By the same logic, again by taking $\lambda$ sufficiently large, we may assume $v$ is regular enough to conclude ${\partial_\mu^N v}_{|Y}=0$ for $N\le \tfrac 12 \lambda$. 
Equivalently,
$v_{|\Xone} \in \rho^{2N} C^\infty_\even(\overline\Xone;\E)$.
Meanwhile, direct calculations on $C^\infty(\Xone;\E)$ give
\begin{align*}
\rho^{N}\n^*\n \rho^{-N} &= \n^*\n - N^2 - N(\Delta \log \rho) +2N\n_{\rdr}, \\
\rho^{N}\d \rho^{-N} &= d - N \drr\sp, \\
\rho^{N}  \div  \rho^{-N} &= \div + N \hookprod{\drr}{}
\end{align*}
where $\Delta \log \rho = n - (\tfrac 12 \sum_{ij} h^{ij} \rdr h_{ij})\in n-\rho^2 C^\infty_\even(\overline\Xone;\E)$. Also for $\tilde u \in C^\infty_c(\Xone;\E)$ we have
\[
|\LinnprodSS{ 2N \n_{\rdr} \tilde u }{\tilde u }|
=
|N \int_\Xone \| u \|_\s^2 \partial_\rho \left(\tfrac{d\rho \, d\vol_h}{\rho^{n}}\right)|
\le
CN \| u \|_\s^2.
\]
So consider the difference operator $(\QVasyDOWNL - N^2+2N\n_{\rdr}) - \rho^{N}\QVasyDOWNL \rho^{-N}$ acting on $\tilde u\in C^\infty_c(\Xone;\E)$. All terms are of order $N$ and of differential order 0. Similar to the previous proof (and using the preceding remark in order to treat the term involving $N\n_{\rdr}$) we may obtain
\[
|\LinnprodSS{\rho^{N} \QVasyDOWNL \rho^{-N} \tilde u }{ \tilde u} | \ge C^{-1} \|\n \tilde u\|_\s^2 + (\lambda^2 - N^2 - CN)\| \tilde u\|_\s^2
\]
and provided $N\gg C$, the final term in the preceding display may be written with coefficient $\lambda^2-2N^2$. Set $N=\lfloor \frac 12 \lambda \rfloor$ with $\lambda\gg 2C$. So that
\[
|\LinnprodSS{\rho^{N} \QVasyDOWNL \rho^{-N} \tilde u }{ \tilde u} | \ge C^{-1} \|\n \tilde u\|_\s^2 + \tfrac12 \lambda^2 \| \tilde u\|_\s^2.
\]
Considering the Hilbert space $\{ w\in \LsectionsS(\Xone;\E) : B(w,w)<\infty \}$ with $B(w,w)=\| \rho^N \QVasyDOWNL \rho^{-N} w\|_\s^2<\infty$, the previous inequality shows that $w\mapsto \LinnprodSS{w}{\tilde f}$ is a linear functional for $\tilde f\in \LsectionsS(\Xone;\E)$ so by the Riesz representation theorem, there exists $\tilde u\in \LsectionsS(\Xone;\E)$ with $\LinnprodSS{\rho^N\QVasyDOWNL \rho^{-N} w}{\tilde u} = \LinnprodSS{w}{\tilde f}$ for all $w$. To show $v$ vanishes on $\Xone$, it suffices to show $\LinnprodTT{f}{v}=0$ for all $f\in C^\infty_c(\Xone;\E)$. Let $f\in C^\infty_c(\Xone;\E)$ and
\[
\tilde f = \rho^{\lambda+\frac n2 - m + 2} J^{-1} \rho^{-N} f \in C^\infty_c(\Xone;\E)
\]
Then the preceding argument gives $\tilde u\in \LsectionsS(\Xone;\E)$ such that $\rho^{-N} \QVasyDOWNL \rho^N \tilde u = \tilde f$ hence $\PVasyDOWNL u = f$ where
\[
u = J \rho^{-\lambda - \frac n2 + m} \rho^{N} \tilde u \in \rho^{-\frac 12 \lambda + 1} \LsectionsT(\Xone;\E)
\]
(the inclusion is a consequence of Lemma~\ref{lem:innprod:changeofscales}). This gives $u$ enough regularity to perform the following pairing which provides the desired result
\[
\LinnprodTT{f}{v} = \LinnprodTT{\PVasyDOWNL u}{v} = \LinnprodTT{u}{\PVasyDOWNL^* v} = \LinnprodTT{u}{0} = 0. \qedhere
\] 
\end{proof}


\section{Proofs of Theorems}\label{sec:proofs:main}

\subsection{Proof of Theorem~\ref{thm:QQ}}
Proposition~\ref{prop:PPandQQ:changeofscales} gives
\[
\QVasyDOWNL = J^{-1} \rho^{\lambda + \frac n2 - m +2} \PVasyDOWNL \rho^{-\lambda  - \frac n2 + m} J
\]
By Propositions~\ref{prop:fredholm} and~\ref{prop:inverse:existence}, there is a meromorphic family $\PVasyDown^{-1}$ on $\C$ mapping $\dot C^\infty(\Xone;\E)$ to $C^\infty(\Xtwo;\E)$. Hence an extension of $\QVasyDown^{-1}$ from $\Re\lambda\gg1$ to all of $\C$ as a meromorphic family is given by
\[
\QVasyDOWNL^{-1} = J^{-1} \rlnm r_\Xone \PVasyDOWNL^{-1} \rho^{-\lambda - \frac n2 + m - 2} J
\]
where $r_\Xone$ is the restriction of sections above $\Xtwo$ to sections above $\Xone$. The previous display implies
\[
\QVasyDOWNL^{-1} : \dot C^\infty ( \Xone ; \E )\to \rlnm J^{-1} C^{\infty}_\even ( \overline\Xone ; \E )
\]
and for $f\in \dot C^\infty(\Xone;\E)$, we may write near $\partial\overline\Xone$
\[
{\QVasyDOWNL^{-1} f}_{|U} = \mu^{\frac\lambda2 + \frac n4-\frac m2}J^{-1} \sum_{k=0}^m \sum_{\ell=0}^k (d\mu)^{k-\ell}\sp \tilde u^{(\ell)},
\qquad
\tilde u^{(\ell)} \in C^\infty_\even([0,1)\times Y ; \Sym^\ell \T^*Y)
\]
The proof of Lemma~\ref{lem:innprod:changeofscales} shows that the part of $J$ (or $J^{-1}$) which sends $\Ek$ to $\E^{(k+p)}$ for $0\le p \le m-k$ is, up to a constant, $(\frac{d\mu}{\mu})^p$. Therefore,
\[
{\QVasyDOWNL^{-1} f}_{|U} \in\mu^{\frac\lambda2 + \frac n4-\frac m2}\bigoplus_{k=0}^m \bigoplus_{p=0}^{m-k} (\tfrac{d\mu}{\mu})^p\sp \bigoplus_{\ell=0}^{k} (d\mu)^{k-\ell} \sp C^{\infty}_\even ( [0,1)\times Y ; \Sym^\ell \T^*Y )
\]
hence on $\Xone$,
\[
\QVasyDOWNL^{-1} f \in \rlnm \bigoplus_{k=0}^m \bigoplus_{p=0}^{m-k} \rho^{-2p} C^\infty_\even(\overline\Xone;\E^{(k+p)})
\]
which is contained in $\rlnm \bigoplus_{k=0}^m \rho^{-2k} C^\infty_\even(\overline\Xone;\Ek)$.
\begin{remark}\label{rem:asymptotics}
Suppose that, for $f\in \dot C^\infty(\Xone;\E)$, it were possible to write in the preceeding proof that near $\partial\overline\Xone$
\[
{\QVasyDOWNL^{-1} f}_{|U} = \rlnm J^{-1} \tilde u^{(m)},
\qquad
\tilde u^{(m)} \in C^\infty_\even(\overline U ; \E^{(m)})
\]
then as $J^{-1}$ acts as the identity upon restriction to $\E^{(m)}$, we would obtain
\[
\QVasyDOWNL^{-1} f \in \rlnm C^\infty_\even(\overline\Xone;\E^{(m)})
\]
This will be useful for the asymptotics given in Theorems~\ref{thm:main:2} and~\ref{thm:main:m}.
\end{remark}

\subsection{Proof of Theorem~\ref{thm:QQ:tracefree}}

The meromorphic inverse of $\QVasyDOWNL$ is precisely that given in the preceding proof
\[
\QVasyDOWNL^{-1} = J^{-1} \rlnm r_\Xone \PVasyDOWNL^{-1} \rho^{-\lambda - \frac n2 + m - 2} J.
\]
All we must check is, given $f\in \dot C^\infty(\Xone;\E)\cap\ker(\traceSETA\circ\pistarS)$, that the resulting section $u=\QVasyDOWNL^{-1}f$ is indeed trace-free with respect to the ambient trace operator. To this end, we first lift the equation $\QVasyDOWNL u=f$ to an equation on $\Mone$ involving $\QVasyUp$ giving
\[
\s^{\lambda} \QVasyUp \s^{-\lambda} \left(
													\pistarS u
										\right)
= \pistarS f.
\]
We apply $\traceSETA$ to obtain an equation on $\F^{(m-2)}$. Using the hypothesis $\traceSETA\pistarS f=0$ and Lemma~\ref{lem:Qcommutestrace} to commute $\s^{-2}\traceSETA$ with $\QVasyUp$ gives
\[
\s^2\s^{\lambda}\QVasyUp\s^{-\lambda}\s^{-2}
 \traceSETA\left( \pistarS u \right) = 0.
\]
Freezing this differential equation at $\s=0$ with $\pi_{\s=0}$ to obtain the indicial family of $\QVasyUp$ provides the equation
\[
\IF_\s(\QVasyUp,\lambda+2) \pi_{\s=0}  \traceSETA\left( \pistarS u \right) = 0.
\]
Section~\ref{sec:analysis} ensures that for $\Re\lambda\gg1$, this operator has trival kernel hence
\[
\pi_{\s=0}  \traceSETA\left( \pistarS u \right) = 0
\]
and $u\in\ker(\traceSETA\circ\pistarS)$ as required.

\subsection{Proof of Theorems~\ref{thm:main:2} and~\ref{thm:main:m}}

We are finally in a position to consider the original problem of proving Theorems~\ref{thm:main:2} and~\ref{thm:main:m}. Let 
\[
f\in \dot C^\infty(\Xone; \E^{(m)})\cap\ker\trace\cap\ker \div 
\]
and define, using Theorem~\ref{thm:QQ},
\[
u
=
\sum_{k=0}^m u^{(k)}
= 
\QVasyDOWNL^{-1} f 
,\qquad
u^{(k)} \in  \rho^{\lambda + \frac n2 - m  - 2k} C^\infty_\even (\overline\Xone; \Ek).
\]
Note that the growth near $\partial \overline\Xone$ of $u^{(k)}$ and $ \div  u^{(k)}$ may be controlled by the size of $\Re\lambda$ hence for $\Re\lambda\gg1$ we may assume that they are sections of $\LsectionsS(\Xone;\Ek)$ and $\LsectionsS(\Xone;\E^{(k-1)})$ respectively. We claim, for $\Re\lambda\gg1$ and $|\Im \lambda| \ll 1$, that 
\[
u=u^{(m)} \in \rlnm C^\infty_\even (\overline\Xone; \Ek) \cap \ker \trace \cap \ker  \div 
\]
at which point the equation $\QVasyDOWNL u = f$ decouples giving
\[
(\Delta+\lambda^2-c_m)u = f
\]
and by uniqueness of the $\Lsections$ inverse of the Laplacian, we have the formula, for $\Re\lambda\gg1$ and $|\Im \lambda| \ll 1$,
\[
(\Delta +\lambda^2 - c_m)^{-1} = J^{-1} \rlnm r_\Xone \PVasyDOWNL^{-1} \rho^{-\lambda-\frac n2 +m-2} J.
\]
with the right hand side giving the meromorphic extension of the resolvent announced in the theorems.

To this end take $\Re\lambda\gg1$ and $|\Im \lambda| \ll 1$. By Theorem~\ref{thm:QQ:tracefree}, we deduce $u$ is trace-free with respect to the ambient trace operator thus $\QVasyDOWNL$ takes the form detailed in Proposition~\ref{prop:decompQQ:k:mink:tracefree}.
We begin by remarking, that while working on $\LsectionsS(\Xone;\Ek)$ if $\Resl^{(k)}$ is any operator of the form $(\Delta+\lambda^2+O(1))^{-1}$ (which has order $O(|\lambda|^{-2})$, then the operator $\d\Resl^{(k)} \div $ has norm of order $O(1)$. We define $\Resl^{(0)}=(\Delta+\lambda^2-c_0')^{-1}$ and for $0<k<m$,
\[
\Resl^{(k)} = \left(\Delta+\lambda^2 -c_k' + 4(m-k+1) \d\Resl^{(k-1)} \div \right)^{-1}.
\]
The component of $\QVasyDOWNL u = f$ in $\E^{(0)}$ reads,
\[
(\Delta+\lambda^2 - c_0')u^{(0)} = 2\sqrt m  \div  u^{(1)}
\]
hence $u^{(0)} = 2\sqrt{m} \Resl^{(0)}  \div  u^{(1)}$. The component of $\QVasyDOWNL u = f$ in $\E^{(1)}$ now reads,
\[
(\Delta+\lambda^2 - c_1' + 4m\d\Resl^{(0)} \div )u^{(1)} = 2\sqrt{m-1}  \div  u^{(2)}
\]
hence $u^{(1)} = 2\sqrt{m-1} \Resl^{(1)}  \div  u^{(2)}$. Continuing, we obtain on $\E^{(m)}$,
\[
(\Delta+\lambda^2 - c_m + 4 \d\Resl^{(m-1)} \div )u^{(m)} = f.
\]
Applying the divergence, we recall Lemma~\ref{lem:laplaciancommutestracediv}. For this, we must assume that if $m=2$ then $\Xone$ has parallel Ricci curvature, and if $m\ge 3$ then $\Xone$ is locally isomorphic to $\Hn$. We obtain,
\[
(\Delta+\lambda^2 - c_m + 4  \div \d\Resl^{(m-1)}) \div u^{(m)}=0.
\]
Again, $ \div \d\Resl^{(m-1)}$ has norm of order $O(1)$ so we may invert this equation and deduce that $\div  u^{(m)}=0$. This implies, for all $k<m$, 
\[
u^{(k)} = 2\sqrt{m-k} \Resl^{(k)}  \div  u^{k+1} = 0.
\]
Therefore $u=u^{(m)}$. By Remark~\ref{rem:asymptotics}, $u\in \rlnm C^\infty_\even (\overline\Xone; \E^{(m)})$. By Theorem~\ref{thm:QQ:tracefree}, $u\in\ker\trace$. And as previously mentioned $u\in\ker \div $. This completes the proof.

\section{Symmetric cotensors of rank 2}\label{sec:m=2}

This section details the results announced in Sections~\ref{sec:Lap.DAlem.Q} and~\ref{sec:proofs:main} for rank 2 symmetric cotensors. In this low rank, writing the action of the d'Alembertian, or its conjugation $\QVasyUp$, on $\F=\Sym^2\T^*\Mone$ is tractable.

\subsection{The operator $\QVasyUp$ for 2-cotensors}\label{subsec:Minkowski:2}

Using the decomposition given by the Minkowksi scale, we write
\[
u = 
\threerow{1}{\dss\sp}{\tfrac{1}{\sqrt2}(\dss)^2}
\threevector{u^{(2)}}{u^{(1)}}{u^{(0)}},
\qquad
u\in C^\infty(\Mone; \F), 
u^{(k)}\in C^\infty(\Mone;\Ek)
\]
The change of basis matrix $J$ takes the form
\begin{align*}
J & = \threematrix	{1}{\drr\sp}{\tfrac{1}{\sqrt2}(\drr)^2}
				{0}{1}{\sqrt2\drr\sp}
				{0}{0}{1}.
\end{align*}
Propositions~\ref{prop:decompQ:k:mink} and~\ref{prop:decompQ:k:mink:tracefree} become
\begin{proposition}\label{prop:decompQ:2:mink}
For $u\in C^\infty(\Mone; \F)$ decomposed relative to the Minkowski scale \eqref{eqn:decomp:mink}, the conjugated d'Alembertian $\QVasyUp$ is given by
\[
\QVasyUp u = 	\threerow{1}{\dss\sp}{\tfrac{1}{\sqrt2}(\dss)^2}
		\threematrix	{\Delta +(\sds)^2 - c_2-\adjtrace\trace}		{2\d}							{-\sqrt2\adjtrace}
							{-2 \div}			{\Delta + (\sds)^2 - c_1}			{2\sqrt2\d}
							{-\sqrt2\trace}		{-2\sqrt2 \div}				{\Delta + (\sds)^2 - c_0}
	\threevector{u^{(2)}}{u^{(1)}}{u^{(0)}}
\]
with constants 
\[
c_2 = \tfrac{1}{4}n(n-8), \qquad c_1=\tfrac{1}{4}(n^2+16), \qquad c_0=\tfrac{1}{4}(n^2+8n+8).
\]
If, furthermore, $u$ is trace-free with respect to the trace operator $\traceSETA$, then $\trace u^{(2)}=-\sqrt 2u^{(0)}$, and
\[
\QVasyUp   u = 	\threerow{1}{\dss\sp}{\tfrac{1}{\sqrt2}(\dss)^2}
\threematrix	{\Delta +(\sds)^2 - c_2'}		{2\d}					{0}
							{-2\div}			{\Delta +(\sds)^2 - c_1'}		{2\sqrt2\d}
							{0}					{-2 \sqrt2\div}				{\Delta +(\sds)^2 - c_0'}
	\threevector{u^{(2)}}{u^{(1)}}{u^{(0)}}
\]
with modified constants
\[
c_2' = c_2,
\qquad
c_1'=c_1,
\qquad
c_0'=\tfrac{1}{4}(n^2+8n).
\]
\end{proposition}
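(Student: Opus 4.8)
The plan is to obtain both displays as the specialization to $m=2$ of Propositions~\ref{prop:decompQ:k:mink} and~\ref{prop:decompQ:k:mink:tracefree}, so that no new geometric input is required; the work is purely the explicit transcription, and one could equally redo the $\boxed1$--$\boxed4$ bookkeeping of Lemma~\ref{lem:decompk:rough:mink} together with Lemma~\ref{lem:decompk:curvature} by hand at $m=2$ if a self-contained derivation is preferred.

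First I would evaluate the constants at $m=2$: one has $a_0=\tfrac1{\sqrt2}$, $a_1=a_2=1$, $b_0=\sqrt2$, $b_1=1$, $b_2=0$, and the formula $c_k=\tfrac{n^2}4+2(n+2k+1)-k(2n+3k-1)$ collapses, after elementary algebra, to $c_2=\tfrac{n(n-8)}4$, $c_1=\tfrac{n^2+16}4$ and $c_0=\tfrac{n^2+8n+8}4$. The trace-free constants come from $c_k'=c_k-(2-k)(1-k)$, giving $c_2'=c_2$, $c_1'=c_1$ and $c_0'=c_0-2=\tfrac{n^2+8n}4$.

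Next I would assemble the $3\times3$ matrix. Proposition~\ref{prop:decompQ:k:mink} applied to each of $k=0,1,2$ yields $\QVasyUp\big(a_k(\dss)^{2-k}\sp u^{(k)}\big)$ as a sum of at most five terms in the powers $(\dss)^{2-k-2},\dots,(\dss)^{2-k+2}$; collecting, for each fixed $j\in\{0,1,2\}$, the coefficient of $a_j(\dss)^{2-j}$ across all three values of $k$ reads off row $j$. Here the only things to watch are that $\trace$ lowers tensor rank by two, so $\trace u^{(1)}$ and $\trace u^{(0)}$ vanish and the $\adjtrace\trace$ term survives only in the top-left slot, and that the row prefactor $\threerow{1}{\dss\sp}{\tfrac1{\sqrt2}(\dss)^2}$ is exactly $(a_2,\,a_1(\dss)\sp,\,a_0(\dss)^2\sp)$, so that off-diagonal coefficients $2b_k\d$, $-2b_{k-1}\div$, $-b_{k-2}b_{k-1}\trace$, $-b_kb_{k+1}\adjtrace$ pass into the matrix entries verbatim; this I would verify entry by entry.

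Finally, for the trace-free display I would invoke Lemma~\ref{lem:trace:FE} at $m=2$, which gives $\trace u^{(2)}=-b_0b_1u^{(0)}=-\sqrt2\,u^{(0)}$; feeding this into the $-\sqrt2\adjtrace$ and $-\sqrt2\trace$ corner entries of the first matrix (equivalently, quoting Proposition~\ref{prop:decompQ:k:mink:tracefree} directly) cancels the two corner couplings and replaces each diagonal $c_k$ by $c_k'$, producing the second display. I do not expect a genuine obstacle: everything reduces to the constant arithmetic and to matching the $a_k$-normalisations when rewriting the term-by-term identity of Proposition~\ref{prop:decompQ:k:mink} as a matrix acting on $\threevector{u^{(2)}}{u^{(1)}}{u^{(0)}}$, the most error-prone point being simply the evaluation of the $c_k$ and $c_k'$.
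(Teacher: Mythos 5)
Your proposal is correct and follows exactly the paper's route: the paper obtains Proposition~\ref{prop:decompQ:2:mink} precisely by specializing Propositions~\ref{prop:decompQ:k:mink} and~\ref{prop:decompQ:k:mink:tracefree} (together with Lemma~\ref{lem:trace:FE}) to $m=2$, and your evaluation of the constants $a_k$, $b_k$, $c_k$, $c_k'$ and the matrix assembly, including the cancellation of the corner couplings via $\trace u^{(2)}=-\sqrt2\,u^{(0)}$, is accurate.
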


\subsection{The indicial family of $\QVasyUp$ for 2-cotensors}

Propositions~\ref{prop:decompQQ:k:mink} and~\ref{prop:decompQQ:k:mink:tracefree} become

\begin{proposition}\label{prop:decompQQ:2:mink}
For $u=\sum_{k=0}^2 u^{(k)} \in C^\infty(\Xone; \E)$ the operator $\QVasyDown$ is given by
\[
\QVasyDOWNL  u = \threematrix	{\Delta +\lambda^2 - c_2-\adjtrace\trace}		{2\d}							{-\sqrt2\adjtrace}
							{-2 \div}			{\Delta + \lambda^2 - c_1}			{2\sqrt2\d}
							{-\sqrt2\trace}		{-2\sqrt2 \div}				{\Delta + \lambda^2 - c_0}
	\threevector{u^{(2)}}{u^{(1)}}{u^{(0)}}
\]
and if, furthermore, $u\in\ker(\traceSETA\circ\pistarS)$ then
\[
\QVasyDOWNL u = 	\threematrix	{\Delta +\lambda^2 - c_2'}		{2\d}					{0}
							{-2\div}			{\Delta +\lambda^2 - c_1'}		{2\sqrt2\d}
							{0}					{-2 \sqrt2\div}				{\Delta +\lambda^2 - c_0'}
	\threevector{u^{(2)}}{u^{(1)}}{u^{(0)}}
\]
with previously announced constants.
\end{proposition}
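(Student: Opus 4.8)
The plan is to read off Proposition~\ref{prop:decompQQ:2:mink} directly as the $m=2$ case of Propositions~\ref{prop:decompQQ:k:mink} and~\ref{prop:decompQQ:k:mink:tracefree}; equivalently, one may apply the indicial family $\IF_\s$ to the operator $\QVasyUp$ of Proposition~\ref{prop:decompQ:2:mink}, which strips the symmetric powers $a_k(\dss)^{m-k}\sp$ and replaces $(\sds)^2$ by $\lambda^2$. Either way, no genuinely new computation is involved, only careful substitution of constants and indices. First I would set $m=2$ in $b_k=\sqrt{m-k}$, obtaining $b_1=1$ and $b_0=\sqrt2$, and in $c_k=\tfrac{n^2}{4}+m(n+2k+1)-k(2n+3k-1)$, which after collecting terms gives $c_2=\tfrac14 n(n-8)$, $c_1=\tfrac14(n^2+16)$, $c_0=\tfrac14(n^2+8n+8)$, exactly the constants reused from Proposition~\ref{prop:decompQ:2:mink}.

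Next I would assemble the matrix. Proposition~\ref{prop:decompQQ:k:mink} tells us that $\QVasyDOWNL u^{(k)}$ contributes a term $-b_kb_{k+1}\adjtrace$ to the $\E^{(k+2)}$ slot, $2b_k\d$ to $\E^{(k+1)}$, $\Delta+\lambda^2-c_k-\adjtrace\trace$ to $\E^{(k)}$, $-2b_{k-1}\div$ to $\E^{(k-1)}$, and $-b_{k-2}b_{k-1}\trace$ to $\E^{(k-2)}$. For $k\in\{0,1,2\}$ all slots with index outside $\{0,1,2\}$ are vacuous, and since $\trace$ lowers rank by two it annihilates $\E^{(1)}$ and $\E^{(0)}$, so the diagonal entries on those two blocks reduce to $\Delta+\lambda^2-c_1$ and $\Delta+\lambda^2-c_0$. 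Substituting $b_1=1$, $b_0=\sqrt2$ (so that $b_0b_1=\sqrt2$ appears in the two far corners, while $2b_1=2$ and $2b_0=2\sqrt2$ appear in the near-diagonal $\d$ and $\div$ entries) produces precisely the first displayed $3\times3$ matrix.

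For the trace-free case I would instead invoke Proposition~\ref{prop:decompQQ:k:mink:tracefree}: with $c_k'=c_k-(m-k)(m-k-1)$ and $m=2$ one gets $c_2'=c_2$, $c_1'=c_1$, and $c_0'=c_0-2=\tfrac14(n^2+8n)$, while the surviving off-diagonal structure is the tridiagonal pair $2b_k\d$, $-2b_{k-1}\div$; this yields the second displayed matrix. The auxiliary identity $\trace u^{(2)}=-\sqrt2\,u^{(0)}$ quoted in the statement is just Lemma~\ref{lem:trace:FE}, $\trace u^{(k)}=-b_{k-2}b_{k-1}u^{(k-2)}$, evaluated at $k=2$ with $b_0b_1=\sqrt2$.

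The only point demanding attention---hence the ``main obstacle'', modest as it is---is the arithmetic collapsing the three instances of $c_k$ and the two nonzero $c_k'$ to their stated closed forms, together with keeping straight which $b_k$ multiplies which differential operator in the off-diagonal entries. Everything else is a transcription from the general propositions already proved.
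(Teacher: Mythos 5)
Your proposal is correct and coincides with the paper's route: the paper likewise obtains Proposition~\ref{prop:decompQQ:2:mink} by specialising Propositions~\ref{prop:decompQQ:k:mink} and~\ref{prop:decompQQ:k:mink:tracefree} to $m=2$, and your evaluations $b_1=1$, $b_0=\sqrt2$, $c_2=\tfrac14 n(n-8)$, $c_1=\tfrac14(n^2+16)$, $c_0=\tfrac14(n^2+8n+8)$, $c_0'=\tfrac14(n^2+8n)$, together with the observation that $\adjtrace\trace$ drops out on the rank $0$ and $1$ blocks, are exactly what is needed.
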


\subsection{Illustration of proof for 2-cotensors}

Let $f\in \dot C^\infty(\Xone; \E^{(2)})\cap\ker\trace\cap\ker\div$ and define 
\[
\threevector{u^{(2)}}{u^{(1)}}{u^{(0)}}
= J^{-1} \rho^{\lambda + \frac n2 - 2} r_\Xone\PVasyDown^{-1} \rho^{-\lambda-\frac n2} J
\threevector{f}{0}{0}
\]
Take $\Re\lambda\gg1$ and $|\Im \lambda| \ll 1$. By Theorem~\ref{thm:QQ}
\[
u^{(k)} \in  \rho^{\lambda + \frac n2 - 2  - 2k} C^\infty_\even (\overline\Xone; \Ek)
\]
and by Proposition~\ref{prop:PPandQQ:changeofscales}, $\QVasyDOWNL u =f$. Theorem~\ref{thm:QQ:tracefree} forces
\[
\traceSETA \left( u^{(2)} + \dss \sp u^{(1)} + \tfrac1{\sqrt2} (\dss)^2.u^{(0)}
			\right) = 0
\]
hence $\trace u^{(2)}=-\sqrt 2u^{(0)}$. And $\QVasyDOWNL u =f$ reads explicitly
\[
\threematrix	{\Delta +\lambda^2 - c_2}		{2\d}					{0}
							{-2\div}			{\Delta +\lambda^2 - c_1}		{2\sqrt2\d}
							{0}					{-2 \sqrt2\div}				{\Delta +\lambda^2 - c_0'}
	\threevector{u^{(2)}}{u^{(1)}}{u^{(0)}}
	=
	\threevector{f}{0}{0}
\]
Introducing the resolvents $\Resl^{(0)}$ and $\Resl^{(1)}$ provides
\[
\threematrix	{\Delta +\lambda^2 - c_2 + 4\d\Resl^{(1)}\div}		{0}					{0}
							{-2\div}			{\Delta +\lambda^2 - c_1+ 8\d\Resl^{(0)}\div}		{0}
							{0}					{-2 \sqrt2\div}				{\Delta +\lambda^2 - c_0'}
	\threevector{u^{(2)}}{u^{(1)}}{u^{(0)}}
	=
	\threevector{f}{0}{0}
\]
and applying $\div$ assuming that $\Xone$ is Einstein provides the homogeneous equation
\[
\threematrix	{\Delta +\lambda^2 - c_2 + 4\div\d\Resl^{(1)}}		{0}					{0}
							{-2\div}			{\Delta +\lambda^2 - c_1+ 8\div\d\Resl^{(0)}}		{0}
							{0}					{-2 \sqrt2\div}				{\Delta +\lambda^2 - c_0'}
	\threevector{\div u^{(2)}}{\div  u^{(1)}}{\div u^{(0)}}
	=
	\threevector{0}{0}{0}.
\]
The lower triangular nature of this system implies $\div u^{(k)}=0$ for all $k$. Hence the system $\QVasyDOWNL u = f$ collapses. So $u^{(0)}$ and $u^{(1)}$ vanish and by Remark~\ref{rem:asymptotics},
\[
u=u^{(2)} \in  \rho^{\lambda + \frac n2 - 2} C^\infty_\even (\overline\Xone; \Ek)
\]
giving $(\Delta +\lambda^2 - c_2) u = f$.


\section{High Energy Estimates via Semiclassical Analysis}\label{sec:highenergyestimates}

This article shows the meromorphic continuation of the resolvent of the Laplacian on symmetric tensors using microlocal techniques. This direction means one does not talk about introducing complex absorbers but rather studies the problem on a manifold with boundary. If one were to follow more closely the track established by Vasy, one obtains semiclassical estimates. We state these estimates.

On $\Xone$, whose smooth structure at infinity is the even structure given by $\mu$ rather than $\rho$, we have the semiclassical spaces $H^s_{|\lambda|^{-1}}(\Xone;\E)$.
\begin{theorem}
Suppose that $\Xone$ is an even asymptotically hyperbolic manifold which is non-trapping. Then the meromorphic continuation, written $\QVasyDOWNL^{-1}$ of the inverse of $\QVasyDOWNL$ initially acting on $\LsectionsS( \Xone ; \E )$
has non-trapping estimates holding in every strip $|\Re\lambda|<C, |\Im\lambda|\gg0$: for $s > \frac 12 + C$
\[
\| \rho^{-\lambda -\frac n2 + m} \QVasyDOWNL^{-1} f \|_{H^s_{|\lambda|^{-1}}(\Xone;\E)}
\le
C |\lambda|^{-1} \| \rho^{-\lambda-\frac n2 + m-2} f\|_{H^{s-1}_{|\lambda|^{-1}}(\Xone;\E)}.
\]
If $\Xone$ is furthermore Einstein, then restricting to symmetric 2-cotensors, the meromorphic continuation $\Resl$ of the inverse of
\[
\Delta - \frac{n(n-8)}{4} +\lambda^2
\]
initially acting on $\Lsections( \Xone ; \E^{(2)} )\cap\ker\trace\cap\ker\div$
has non-trapping estimates holding in every strip $|\Re\lambda|<C, |\Im\lambda|\gg0$: for $s > \frac 12 + C$
\[
\| \rho^{-\lambda -\frac n2 + 2} \Resl f \|_{H^s_{|\lambda|^{-1}}(\Xone;\E^{(2)})}
\le
C |\lambda|^{-1} \| \rho^{-\lambda-\frac n2} f\|_{H^{s-1}_{|\lambda|^{-1}}(\Xone;\E^{(2)})}.
\]
\end{theorem}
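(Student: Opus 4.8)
The plan is to rerun the Fredholm argument of Section~\ref{sec:analysis} inside the semiclassical calculus, following Vasy~\cite{v:ml:inventiones} and Zworski~\cite{zworski:vm}; since --- as exploited throughout Sections~\ref{sec:bcalc:microlocal}--\ref{sec:analysis} --- the principal symbols of $\PVasyDown$ and $\QVasyDown$ are scalar, the bundle $\E$ enters only through subprincipal (endomorphism-valued) terms, so every microlocal estimate used transfers verbatim from the scalar case. Concretely, normalise $|\Im\lambda|=h^{-1}$, write $\lambda=z/h$ with $z$ in a fixed compact subset of $\{\,|\Re z|\le Ch,\ \Im z=\pm 1\,\}$, and study the semiclassical operator $h^2\PVasyDOWNL$ on $\Xtwo$. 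By \eqref{eqn:PPl-using-LOT} its semiclassical principal symbol is $4\mu\xi^2+|\eta|^2$ on $U^2$ and, after the conjugation of Proposition~\ref{prop:PPandQQ:changeofscales} (which, being conjugation by the exponential weight $\rho^{\pm\lambda}=e^{\pm(z/h)\log\rho}$, complex-shifts the symbol exactly as in Vasy's construction), is the outgoing resolvent symbol $|\zeta|_g^2-z^2$ on $\{\mu>0\}$, whose characteristic set over $\Xone$ is the unit cosphere bundle of $g$; the non-trapping hypothesis is precisely that the null bicharacteristics there --- the geodesics --- all escape in finite time, tending to $\cSink$ in backward and to $\cSource$ in forward time.

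First one assembles the semiclassical ingredients: the semiclassical elliptic estimate off the characteristic variety; lossless semiclassical propagation of singularities; the semiclassical radial point estimates at $\cSource$ and $\cSink$ from \cite[\S2.4]{v:ml:inventiones} (equivalently \cite[E.5.2]{dyatlov-zworski:book}), whose threshold is exactly that of Lemma~\ref{lem:thresholdconditions} --- the $O(h)$ contribution of the endomorphism part of $\Im\PVasyDown$ being irrelevant because $\principalsymbol{\Im\PVasyDown}$ in \eqref{eqn:imaginary:subprincipalsymbol} is scalar; and semiclassical hyperbolic energy estimates on $\{\mu<0\}$ together with the vanishing condition at $\CS$. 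Chaining these under the non-trapping hypothesis --- controlling regularity $s>C+\tfrac12$ at $\cSource$, propagating along the flow, and closing at $\cSink$, with $1-s<\tfrac12-C$ serving the adjoint --- yields, as in the proofs of Propositions~\ref{prop:fredholm} and~\ref{prop:inverse:existence}, the lossless a priori bound $\|u\|_{\YVasyML^s_h}\le Ch^{-1}\|h^2\PVasyDOWNL u\|_{\YVasyML^{s-1}_h}$ for $u\in\XVasyML^s_h$, on the semiclassical analogues $\XVasyML^s_h,\YVasyML^s_h$ of the spaces of Subsection~\ref{subsec:functionspaces}; the residual $O(h^\infty)\|u\|$ term is absorbed because $\PVasyDOWNL$ is invertible for $\Re\lambda\gg1$ by Lemmas~\ref{lem:Ptrivialkernel} and~\ref{lem:Ptrivialcokernel}, hence, by analytic Fredholm theory, off a discrete set in each strip. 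Undoing the $h^2$ this reads $\|\PVasyDOWNL^{-1}f\|_{H^s_h}\le Ch\,\|f\|_{H^{s-1}_h}$ away from the artificial boundary.

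Next one transfers the bound to $\QVasyDOWNL$ through the identity $\QVasyDOWNL^{-1}=J^{-1}\rlnm r_\Xone\PVasyDOWNL^{-1}\rho^{-\lambda-\frac n2+m-2}J$ from the proof of Theorem~\ref{thm:QQ}: the scalar factors $\rho^{\pm\lambda}$ commute with $J$ (whose entries are symmetric multiplications by powers of $\drr$), they are precisely the weights appearing in the statement, and $J^{\pm1}$ is a $\lambda$-independent, semiclassically order-zero operator on the weighted spaces of Subsection~\ref{subsec:functionspaces}, so the first displayed inequality follows. For the Einstein, rank-$2$ refinement one reruns the decoupling of Section~\ref{sec:proofs:main}: the auxiliary resolvents $\Resl^{(k)}=(\Delta+\lambda^2+O(1))^{-1}$ on $\Ek$ inherit the same non-trapping bound $\|\Resl^{(k)}\|=O(|\lambda|^{-1})$ (identical underlying geodesic dynamics), and the corrections $4(m-k+1)\,\d\Resl^{(k-1)}\div$ are pseudodifferential of order $0$ with $\lambda$-uniformly bounded operator norm, so composing them with $(\Delta+\lambda^2-c_k')^{-1}$ --- itself of norm $O(|\lambda|^{-1})$ --- gives contractions; the Neumann series then produces the $\Resl^{(k)}$ uniformly in the strip, the equation $\QVasyDOWNL u=f$ decouples exactly as in Section~\ref{sec:proofs:main} (using Lemma~\ref{lem:laplaciancommutestracediv}, hence the Einstein hypothesis when $m=2$), one finds $u=u^{(2)}=\Resl f$, and the bound transfers via the same conjugation identity.

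The main obstacle is the semiclassical radial point and propagation analysis at conformal infinity for a bundle-valued operator; but, the principal symbol being scalar, this is nothing more than the scalar estimates of \cite{v:ml:inventiones} and \cite[E.5]{dyatlov-zworski:book} applied entry-wise, the bundle entering only through lower-order terms that leave the thresholds unchanged. The remaining effort is book-keeping: tracking the $\lambda$-uniformity of the weights $\rho^{\pm\lambda}$, of the factor $J$, and of the decoupling of Section~\ref{sec:proofs:main} through each semiclassical strip, together with the careful identification of the weighted semiclassical Sobolev spaces on $\Xone$ (equipped with the even structure and the pulled-back fibre inner product $\innprodSS{\cdot}{\cdot}$) in which the estimates are stated.
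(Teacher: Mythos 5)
The theorem you are proving is stated in Section~\ref{sec:highenergyestimates} without proof: the paper only announces these estimates as what ``one would obtain'' by redoing the Fredholm analysis of Section~\ref{sec:analysis} semiclassically, following Vasy's original track, and then feeding the result through the conjugation identity of Proposition~\ref{prop:PPandQQ:changeofscales} and the decoupling of Section~\ref{sec:proofs:main}. Your proposal is exactly that intended route (scalar principal symbol, so the bundle only perturbs subprincipal terms; semiclassical elliptic/propagation/radial-point/hyperbolic estimates chained under non-trapping; transfer to $\QVasyDOWNL$ by the weights $\rho^{\pm\lambda}$ and $J$; Einstein hypothesis entering only through Lemma~\ref{lem:laplaciancommutestracediv}), so at the level of strategy there is nothing to compare against and nothing to object to.

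Several details, however, are stated incorrectly and would need repair in a full write-up. First, the semiclassical principal symbol of $h^2\PVasyDOWNL$ with $\lambda=z/h$ is \emph{not} $4\mu\xi^2+|\eta|^2$: the term $-4\lambda\partial_\mu$ in \eqref{eqn:PPl-using-LOT} is of size $h^{-1}$ and contributes a term of the form $-4z\xi$ at semiclassical principal level. It is this term --- not the conjugation by $\rho^{\pm\lambda}$, which merely relates $\PVasyDOWNL$ to $\QVasyDOWNL$ over $\{\mu>0\}$ --- that carries the $z$-dependence, produces the interior symbol of resolvent type, makes the operator non-self-adjoint at principal level for $\Im z\neq 0$, and drives the radial-point thresholds; with your stated symbol the characteristic set over the interior would be empty and the non-trapping dynamics you invoke would not appear. (Relatedly, bicharacteristics tend to the \emph{source} $\cSource$ in backward time and to the \emph{sink} $\cSink$ in forward time, not the reverse, and with $\Im z=\pm1$ the interior symbol is $|\zeta|_g^2+z^2$, i.e.\ $|\zeta|_g^2-1+O(h)$.) Second, the $O(h^\infty)\|u\|$ remainder is absorbed simply by taking $h$ small in the a priori estimate; this yields invertibility and the lossless bound at \emph{every} point of the strip with $|\Im\lambda|$ large (absence of resonances there), so the appeal to analytic Fredholm theory ``off a discrete set'' both undersells and slightly misstates the conclusion --- the theorem's estimate admits no exceptional set. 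Third, for the Einstein rank-$2$ statement your Neumann-series rerun requires $\|\Resl^{(k)}\|=O(|\lambda|^{-1})$ and $\lambda$-uniform bounds for $\d\Resl^{(k-1)}\div$ on the semiclassical spaces throughout the strip; the paper only establishes such bounds in the physical region $\Re\lambda\gg1$, and proving them in the strip is essentially another instance of the theorem, so this step is circular as written. The economical fix is to note that Section~\ref{sec:proofs:main} identifies $\Resl f=\QVasyDOWNL^{-1}f$ for $f\in\ker\trace\cap\ker\div$ as an identity of meromorphic families (established for $\Re\lambda\gg1$, $|\Im\lambda|\ll1$, hence everywhere by continuation), after which the second displayed estimate is an immediate corollary of the first with $m=2$; the Einstein hypothesis is used only there.
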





\def\arXiv#1{\href{http://arxiv.org/abs/#1}{arXiv:#1}}
\bibliographystyle{alpha}

\begin{thebibliography}{{Wan}09}

\bibitem[AG11]{aubry-g}
Erwann Aubry and Colin Guillarmou.
\newblock Conformal harmonic forms, {B}ranson-{G}over operators and {D}irichlet
  problem at infinity.
\newblock {\em J. Eur. Math. Soc. (JEMS)}, 13(4):911--957, 2011.

\bibitem[BEG94]{branson-eastwood-gover}
T.~N. Bailey, M.~G. Eastwood, and A.~R. Gover.
\newblock Thomas's structure bundle for conformal, projective and related
  structures.
\newblock {\em Rocky Mountain J. Math.}, 24(4):1191--1217, 1994.

\bibitem[BG05]{branson-gover}
Thomas Branson and A.~Rod Gover.
\newblock Conformally invariant operators, differential forms, cohomology and a
  generalisation of {$Q$}-curvature.
\newblock {\em Comm. Partial Differential Equations}, 30(10-12):1611--1669,
  2005.

\bibitem[Biq00]{biquard}
Olivier Biquard.
\newblock M\'etriques d'{E}instein asymptotiquement sym\'etriques.
\newblock {\em Ast\'erisque}, (265):vi+109, 2000.

\bibitem[BVW15]{baskin-v-wunsch}
Dean Baskin, Andr\'as Vasy, and Jared Wunsch.
\newblock Asymptotics of radiation fields in asymptotically {M}inkowski space.
\newblock {\em Amer. J. Math.}, 137(5):1293--1364, 2015.

\bibitem[Del99]{delay:el}
Erwann Delay.
\newblock {\'E}tude locale d'op\'erateurs de courbure sur l'espace
  hyperbolique.
\newblock {\em J. Math. Pures Appl. (9)}, 78(4):389--430, 1999.

\bibitem[Del02]{delay:es}
Erwann Delay.
\newblock Essential spectrum of the {L}ichnerowicz {L}aplacian on two tensors
  on asymptotically hyperbolic manifolds.
\newblock {\em J. Geom. Phys.}, 43(1):33--44, 2002.

\bibitem[Del07]{delay:tt}
Erwann Delay.
\newblock T{T}-eigentensors for the {L}ichnerowicz {L}aplacian on some
  asymptotically hyperbolic manifolds with warped products metrics.
\newblock {\em Manuscripta Math.}, 123(2):147--165, 2007.

\bibitem[DFG15]{dfg}
Semyon Dyatlov, Fr\'ed\'eric Faure, and Colin Guillarmou.
\newblock Power spectrum of the geodesic flow on hyperbolic manifolds.
\newblock {\em Anal. PDE}, 8(4):923--1000, 2015.

\bibitem[DG16]{dyatlov-g}
Semyon Dyatlov and Colin Guillarmou.
\newblock Pollicott-{R}uelle resonances for open systems.
\newblock {\em Ann. Henri Poincar\'e}, 17(11):3089--3146, 2016.

\bibitem[DGH08]{djadli-g-herzlich}
Zindine Djadli, Colin Guillarmou, and Marc Herzlich.
\newblock {\em Op\'erateurs g\'eom\'etriques, invariants conformes et
  vari\'et\'es asymptotiquement hyperboliques}, volume~26 of {\em Panoramas et
  Synth\`eses [Panoramas and Syntheses]}.
\newblock Soci\'et\'e Math\'ematique de France, Paris, 2008.

\bibitem[DZ16]{dyatlov-zworski:book}
Semyon Dyatlov and Maciej Zworski.
\newblock Mathematical theory of scattering resonances.
\newblock Version 0.1, 12 2016.

\bibitem[FG85]{fefferman-graham:ci}
Charles Fefferman and C.~Robin Graham.
\newblock Conformal invariants.
\newblock {\em Ast\'erisque}, (Num\'ero Hors S\'erie):95--116, 1985.
\newblock The mathematical heritage of \'Elie Cartan (Lyon, 1984).

\bibitem[FG12]{fefferman-graham:am}
Charles Fefferman and C.~Robin Graham.
\newblock {\em The ambient metric}, volume 178 of {\em Annals of Mathematics
  Studies}.
\newblock Princeton University Press, Princeton, NJ, 2012.

\bibitem[FT13]{faure-tsujii}
Fr\'ed\'eric Faure and Masato Tsujii.
\newblock Band structure of the {R}uelle spectrum of contact {A}nosov flows.
\newblock {\em C. R. Math. Acad. Sci. Paris}, 351(9-10):385--391, 2013.

\bibitem[GHW16]{ghw}
C.~{Guillarmou}, J.~{Hilgert}, and T.~{Weich}.
\newblock {Classical and quantum resonances for hyperbolic surfaces}.
\newblock {\em \arXiv{1605.08801}, to appear in Math. Ann.}, May 2016.

\bibitem[GL91]{graham-lee}
C.~Robin Graham and John~M. Lee.
\newblock Einstein metrics with prescribed conformal infinity on the ball.
\newblock {\em Adv. Math.}, 87(2):186--225, 1991.

\bibitem[Gui05]{g:duke}
Colin Guillarmou.
\newblock Meromorphic properties of the resolvent on asymptotically hyperbolic
  manifolds.
\newblock {\em Duke Math. J.}, 129(1):1--37, 2005.

\bibitem[GZ95]{guillope-zworski:pb}
Laurent Guillop\'e and Maciej Zworski.
\newblock Polynomial bounds on the number of resonances for some complete
  spaces of constant negative curvature near infinity.
\newblock {\em Asymptotic Anal.}, 11(1):1--22, 1995.

\bibitem[GZ03]{graham-zworski}
C.~Robin Graham and Maciej Zworski.
\newblock Scattering matrix in conformal geometry.
\newblock {\em Invent. Math.}, 152(1):89--118, 2003.

\bibitem[{Had}17]{h:qr}
C.~{Hadfield}.
\newblock {Ruelle and Quantum Resonances for Open Hyperbolic Manifolds}.
\newblock {\em \arXiv{1708.01200}, ArXiv e-prints}, August 2017.

\bibitem[HMS16]{hms}
Konstantin Heil, Andrei Moroianu, and Uwe Semmelmann.
\newblock Killing and conformal {K}illing tensors.
\newblock {\em J. Geom. Phys.}, 106:383--400, 2016.

\bibitem[H{\"o}r07]{hormander3}
Lars H{\"o}rmander.
\newblock {\em The analysis of linear partial differential operators. {III}}.
\newblock Classics in Mathematics. Springer, Berlin, 2007.
\newblock Pseudo-differential operators, Reprint of the 1994 edition.

\bibitem[Lic61]{lichnerowicz}
Andr\'e Lichnerowicz.
\newblock Propagateurs et commutateurs en relativit\'e g\'en\'erale.
\newblock {\em Inst. Hautes \'Etudes Sci. Publ. Math.}, (10):56, 1961.

\bibitem[Maz88]{Mazzeo}
Rafe Mazzeo.
\newblock The {H}odge cohomology of a conformally compact metric.
\newblock {\em J. Differential Geom.}, 28(2):309--339, 1988.

\bibitem[Mel93]{melrose:aps}
Richard~B. Melrose.
\newblock {\em The {A}tiyah-{P}atodi-{S}inger index theorem}, volume~4 of {\em
  Research Notes in Mathematics}.
\newblock A K Peters, Ltd., Wellesley, MA, 1993.

\bibitem[Mel94]{melrose:scat}
Richard~B. Melrose.
\newblock Spectral and scattering theory for the {L}aplacian on asymptotically
  {E}uclidian spaces.
\newblock In {\em Spectral and scattering theory ({S}anda, 1992)}, volume 161
  of {\em Lecture Notes in Pure and Appl. Math.}, pages 85--130. Dekker, New
  York, 1994.

\bibitem[MM87]{mazzeo-melrose}
Rafe~R. Mazzeo and Richard~B. Melrose.
\newblock Meromorphic extension of the resolvent on complete spaces with
  asymptotically constant negative curvature.
\newblock {\em J. Funct. Anal.}, 75(2):260--310, 1987.

\bibitem[Vas13a]{v:ml:inventiones}
Andr\'as Vasy.
\newblock Microlocal analysis of asymptotically hyperbolic and {K}err-de
  {S}itter spaces (with an appendix by {S}emyon {D}yatlov).
\newblock {\em Invent. Math.}, 194(2):381--513, 2013.

\bibitem[Vas13b]{v:ml:functions}
Andr\'as Vasy.
\newblock Microlocal analysis of asymptotically hyperbolic spaces and
  high-energy resolvent estimates.
\newblock In {\em Inverse problems and applications: inside out. {II}},
  volume~60 of {\em Math. Sci. Res. Inst. Publ.}, pages 487--528. Cambridge
  Univ. Press, Cambridge, 2013.

\bibitem[Vas17]{v:ml:forms}
Andr\'as Vasy.
\newblock Analytic continuation and high energy estimates for the resolvent of
  the {L}aplacian on forms on asymptotically hyperbolic spaces.
\newblock {\em Adv. Math.}, 306:1019--1045, 2017.

\bibitem[{Wan}09]{wang}
F.~{Wang}.
\newblock {Dirichlet-to-Neumann Map for Poincar{\'e}-Einstein Metrics in Even
  Dimensions}.
\newblock {\em \arXiv{0905.2457}, ArXiv e-prints}, May 2009.

\bibitem[Zwo16]{zworski:vm}
Maciej Zworski.
\newblock Resonances for asymptotically hyperbolic manifolds: {V}asy's method
  revisited.
\newblock {\em J. Spectr. Theory}, 6(4):1087--1114, 2016.

\end{thebibliography}



\end{document}